\newdimen\bibspace
\renewenvironment{thebibliography}[1]{%
 \section*{\refname 
       \@mkboth{\MakeUppercase\refname}{\MakeUppercase\refname}}%
     \list{\@biblabel{\@arabic\c@enumiv}}%
          {\settowidth\labelwidth{\@biblabel{#1}}%
           \leftmargin\labelwidth
           \advance\leftmargin\labelsep
           \itemsep\bibspace
           \parsep\z@skip     %
           \@openbib@code
           \usecounter{enumiv}%
           \let\p@enumiv\@empty
           \renewcommand\theenumiv{\@arabic\c@enumiv}}%
     \sloppy\clubpenalty4000\widowpenalty4000%
     \sfcode`\.\@m}
    {\def\@noitemerr
      {\@latex@warning{Empty `thebibliography' environment}}%
     \endlist}
\newtheorem{thm}{Theorem}[section]
\newtheorem{lem}[thm]{Lemma}
\newtheorem{prop}[thm]{Proposition}
\newtheorem{defn}[thm]{Definition}
\newtheorem{rem}[thm]{Remark}
\def\XXint#1#2#3{{\setbox0=\hbox{$#1{#2#3}{\int}$}
  \vcenter{\hbox{$#2#3$}}\kern-.5\wd0}}
                \newcommand{\lda}{\lambda}
                \newcommand{\pa}{\partial}
\newcommand{\va}{\varepsilon}           \newcommand{\ud}{\mathrm{d}}
\newcommand{\be}{\begin{equation}}      \newcommand{\ee}{\end{equation}}
\newcommand{\Lda}{\Lambda}              
\newcommand{\R}{\mathbb{R}}
\DeclareMathOperator{\dist}{dist}
\begin{document}

\title{\textbf{H\"older regularity for the linearized porous medium equation in bounded domains}
\bigskip}

\author{\medskip  Tianling Jin\footnote{T. Jin was partially supported by NSFC grant 12122120, Hong Kong RGC grants GRF 16302519, GRF 16306320 and GRF 16303822}\quad and \quad
Jingang Xiong\footnote{J. Xiong was is partially supported by  the National Key R\&D Program of China No. 2020YFA0712900, and NSFC grants 12325104 and 12271028.}}

\date{\today}

\maketitle

\begin{abstract}
In this paper, we systematically study weak solutions of a linear singular or degenerate parabolic equation in a mixed divergence form and nondivergence form, which arises from the linearized fast diffusion equation and the linearized porous medium equation with the homogeneous Dirichlet boundary condition. We prove the H\"older regularity of their weak solutions.
\end{abstract}

\section{Introduction} 

Let $\Omega\subset\R^n$, $n\ge 1$, be a smooth bounded open set, and $\omega$ be a smooth function in $\overline\Omega$ comparable to the distance function $d(x):=\dist(x,\partial\Omega)$, that is, $0<\inf_{\Omega}\frac{\omega}{d}\le \sup_{\Omega}\frac{\omega}{d}<\infty$. For example, $\omega$ can be taken as the positive normalized first eigenfunction of $-\Delta$ in $\Omega$ with Dirichlet zero boundary condition. Let 
\begin{equation}\label{eq:rangep}
p>-1
\end{equation}
be a fixed constant throughout the paper unless otherwise stated.
 
In this paper, we would like to study regularity of weak solutions to
\begin{equation} \label{eq:general}
\begin{split}
a\omega^{p} \pa_t u-D_j(a_{ij} D_i u+d_j u)+b_iD_i u+\omega^pcu+c_0u&=\omega^pf+f_0 -D_if_i\quad \mbox{in }\Omega \times(-1,0],\\
u&=0\quad \mbox{on }\partial\Omega \times(-1,0],
\end{split}
\end{equation}
where all $a,a_{ij}, d_j,b_i,c, c_0,f, f_0,f_i$ are functions of $(x,t)$, $D_i=\partial_{x_i}$, and the summation convention is used. Throughout this paper, we always assume the ellipticity condition, that is,  $(a_{ij})$ is a matrix satisfying
\be \label{eq:ellip}
\forall\ (x,t)\in \Omega\times[-1,0],\ \lda\le a(x,t)\le \Lda,  \quad \lda |\xi|^2 \le \sum_{i,j=1}^na_{ij}(x,t)\xi_i\xi_j\le \Lda |\xi|^2 \quad\forall\ \xi\in\R^n,
\ee
where $0<\lda\le \Lda<\infty$. 

The study of the equation \eqref{eq:general} is motivated by the linearized equation of the fast diffusion equations (corresponding to $p>0$ in \eqref{eq:fde}) or slow diffusion equations (corresponding to $-1<p<0$ in \eqref{eq:fde}, which are also called porous medium equations) 
\begin{equation} \label{eq:fde}
\begin{split}
\pa_t v^{p+1}&=\Delta v\quad \mbox{in }\Omega \times(0,\infty),\\
v&=0\quad \mbox{on }\partial\Omega \times(0,\infty).
\end{split}
\end{equation}
From DiBenedetto-Kwong-Vespri \cite{DKV}, we know that the solution $v$ of \eqref{eq:fde} with $p>0$ satisfies the global Harnack inequality
\begin{equation}\label{eq:boundarybehavior}
0<\inf_{\Omega}\frac{v(t,x)}{d(x)}\le \sup_{\Omega}\frac{v(t,x)}{d(x)}<\infty
\end{equation} 
before its extinction time. See Bonforte-Figalli \cite{BFi} for a survey. From Aronson-Peletier \cite{AP}, we also know that the solution $v$ of \eqref{eq:fde} with $-1<p<0$ satisfies \eqref{eq:boundarybehavior} as well after certain waiting time. Therefore, the linearized equation of \eqref{eq:fde}, which plays an important role in proving optimal regularity of solutions to \eqref{eq:fde} in \cite{JX19, JX22, JRX}, falls into a form of the equation \eqref{eq:general}. In our earlier work \cite{JX19}, we have obtained many properties for equations like \eqref{eq:general} with $p>0$, such as well-posedness, local boundedness and Schauder estimates. In this paper, we study  the equation \eqref{eq:general} in a more general and systematic way. The main goal of this paper is the H\"older regularity of its weak solutions to \eqref{eq:general} up to the boundary $\{x_n=0\}$.

After the De Giorgi-Nash-Moser theory on the H\"older regularity for uniformly elliptic and uniformly parabolic equations, there have been many investigations on regularity for degenerate or singular elliptic and parabolic equations. By the work of Fabes-Kenig-Serapioni \cite{FKS}, we still have H\"older regularity for elliptic equations whose coefficients are of $A_2$ weight. See also earlier work of Kruzkov \cite{Kruzkov}, Murthy-Stampacchia \cite{MS}, Trudinger \cite{Trudinger1, Trudinger2}, as well as recent work Sire-Terracini-Vita \cite{STV1,STV2} and Wang-Wang-Yin-Zhou \cite{WWYZ}, on degenerate elliptic equations. However, Chiarenza-Serapioni \cite{CS} provided several counterexamples showing that the aforementioned elliptic results do not carry over directly to the parabolic case. Nevertheless, H\"older regularity and Harnack inequality for degenerate or singular parabolic equations with various conditions and structures have  been obtained in, e.g., Chiarenza-Serapioni \cite{CS2,CS3} and Guti\'errez-Wheeden \cite{GW0,GW}, with either the same weight or different weights of singular/degenerate coefficients of $u_t$ and $D^2u$. Recently, in a series of papers \cite{DP21-1,DP21-2,DP21-3,DP21-4}, Dong-Phan obtained results on the wellposedness and regularity estimates in weighted Sobolev spaces for parabolic equations with singular-degenerate coefficients, where the weights of singular/degenerate coefficients of $u_t$ and $D^2u$ appeared in a balanced way. Such Sobolev regularity was obtained later in Dong-Phan-Tran \cite{DPT} for equations similar to our equation \eqref{eq:general}  for $-2<p<0$. Note that although our results on the boundedness of the weak solutions hold for $p>-2$ as well, our H\"older regularity results require the assumption \eqref{eq:rangep} that $p>-1$, and thus, $x_n^p$ is locally integrable. The assumption \eqref{eq:rangep} is used in Proposition \ref{prop:weightedpoincare2}, and also in the beginning of Section \ref{subsec:holderregularity} when defining the measure $\mu_p$, that is the natural choice to measure the improvement of the oscillation of the solution. We need the measure $\mu_p$ to be locally finite in this step. H\"older estimates and Schauder estimates for $p=-1$ with a special structure that the coefficients in the drift terms are positive have been studied in Daskalopoulos-Hamilton \cite{DH}, Koch \cite{Koch} and Feehan-Pop \cite{FP}. The literature on regularity theory for degenerate elliptic and parabolic equations is vast, and one can refer to the above papers for more references.

Under the condition \eqref{eq:ellip}, the equation \eqref{eq:general} is uniformly  parabolic (in a mixed divergence and nondivergence form) when $x$ stays away from the boundary $\partial\Omega$. Therefore, to obtain global estimates for \eqref{eq:general}, we need to establish estimates near $\partial\Omega$, that is in $(B_r(x_0)\cap\Omega)\times(-1,0]$, where $x_0\in\partial\Omega,r>0$ and $B_r(x_0)$ is the open ball in $\R^n$ centered at $x_0$ with radius $r$. By the standard flattening the boundary techniques for studying boundary estimates, we only need to consider the equation in the half ball case.

Now we suppose $\Omega$ is a half ball.  For $\bar x=(\bar x', 0)$,  denote $B_R^+(\bar x) =B_R(\bar x)\cap \{(x',x_n):x_n>0\}$,
\[
Q_R^+(\bar x, \bar t)= B_R^+(\bar x)  \times [\bar t-R^2, \bar t], \quad \mathcal{Q}_R^+(\bar x,\bar t)= B_R^+(\bar x) \times [\bar t-R^{p+2}, \bar t].
\] 
For brevity, we drop $(\bar x)$ and $(\bar x,\bar t)$ in the above notations if $\bar x=0$ or $(\bar x,\bar t)=(0,0)$.  

Consider the equation
\be \label{eq:linear-eq}
ax_n^{p} \pa_t u-D_j(a_{ij} D_i u+d_j u)+b_iD_i u+cx_n^pu+c_0 u=x_n^pf+f_0 -D_if_i \quad \mbox{in }Q_1^+
\ee
with partial Dirichlet condition
\be \label{eq:linear-eq-D}
u=0 \quad \mbox{on }\pa' B_1^+\times[-1,0],
\ee
where $$\pa' B_R^+= B_R \cap \{x_n=0\}.$$ 
We also denote $$\pa'' B_R^+=\pa B_R^+\setminus \pa' B_R^+,$$ and 
\[
\partial_{pa} Q_R^+ \mbox{ as the standard parabolic boundary of } Q_R^+. 
\]

 We  establish H\"older regularity estimates for solutions of \eqref{eq:linear-eq} and \eqref{eq:linear-eq-D} up to the boundary $\{x_n=0\}$, that is, in $\overline B_{1/2}^+\times[-1/2,0]$. If it additionally satisfies $u(\cdot,-1)=0$, then we also  establish H\"older regularity up to the initial time, that is, in $\overline B_{1/2}^+\times[-1,0]$. 
 
 \bigskip

Our results are scattered in the following four sections. 
\begin{itemize}
\item In Section \ref{sec:sobolev}, we introduce a corresponding weighted Sobolev space. We prove a weighted parabolic Sobolev inequality in Theorem \ref{thm:weightedsobolev} and Theorem \ref{thm:weightedsobolev2},  and a De Giorgi type isoperimetric inequality in Theorem \ref{thm:degiorgiisoperimetricelliptic}.

\item In Section \ref{sec:weaksolution}, we introduce the definition of weak solutions in Definition \ref{defn:weaksolutionp}, and establish the wellposedness in Theorem \ref{thm:existenceofweaksolution}. 

\item in Section \ref{sec:bound}, we prove the local-in-time boundedness up to  $\{x_n=0\}$ of weak solutions in Theorems \ref{thm:localboundedness},  and space-time global boundedness in Theorem \ref{thm:localboundednessglobal},

\item In Section \ref{sec:holderregularity}, we prove local-in-time H\"older estimates up to  $\{x_n=0\}$ of weak solutions in Theorems \ref{thm:holdernearboundary}, and space-time global H\"older estimates in Theorem \ref{thm:uniformholderglobal}. In the end of the paper, we show the well-posedness of the Cauchy-Dirichlet problem \eqref{eq:general}. 

\end{itemize}

Our proof of the boundedness and H\"older estimates of weak solutions uses the De Giorgi iteration.  The local-in-time boundedness and H\"older estimates for \eqref{eq:linear-eq} with $-1<p<1$ and $a\equiv 1$ but without lower order terms follow from Guti\'errez-Wheeden \cite{GW0,GW}.

\section{Sobolev spaces and inequalities}\label{sec:sobolev}

\subsection{Some weighted Sobolev spaces}

In this section, we will introduce several  Sobolev spaces that will be needed to define and study  weak solutions of \eqref{eq:linear-eq}. Denote
\[
Q^+_{R,T}=B_R^+\times (-T,0].
\]

Let 
\begin{align}
W^{1,1}_2(Q^+_{R,T})&:=\{g\in L^2(Q^+_{R,T}): \pa_tg\in L^2(Q^+_{R,T}), D_i g \in L^2(Q^+_{R,T}),\ i=1,\cdots,n\}\label{eq:standardspace},\\
\|g\|_{W^{1,1}_2(Q^+_{R,T})}&:=\|g\|_{L^2(Q^+_{R,T})}+\|\pa_t g\|_{L^2(Q^+_{R,T})}+ \sum_{i=1}^n\|D_i g\|_{L^2(Q^+_{R,T})}\nonumber
\end{align} 
be the standard Sobolev space with the standard Sobolev norm.  

Let $p>-1$. Let 
\begin{align}
&V^{1,1}_2(Q^+_{R,T}):=\{g\in L^2(Q^+_{R,T}): \partial_tg\in L^2(Q^+_{R,T},x_n^{p}\ud x\ud t), D_i g \in L^2(Q^+_{R,T}), i=1,\cdots,n\}\label{eq:standardspaceweighted},\\
&\|g\|_{V^{1,1}_2(Q^+_{R,T})}:=\|g\|_{L^2(Q^+_{R,T})}+\|\pa_t g\|_{L^2(Q^+_{R,T},x_n^{p}\ud x\ud t)}+ \sum_{i=1}^n\|D_i g\|_{L^2(Q^+_{R,T})}\nonumber
\end{align} 
be a weighted Sobolev space, with the weight $x_n^p$ only applied on $\pa_t g$. Let
\begin{align}
V_2(Q^+_{R,T}) &:=L^\infty ((-T,0]; L^2(B_R^+,x_n^{p}\ud x)) \cap  L^2((-T,0];H^1(B_R^+)), \label{eq:weightedspaceinfinity}\\
  \|u\|_{V_2(Q^+_{R,T})}&:= \left(\sup_{-T<t<0} \int_{B_R^+ }u^2 x_n^{p}\,\ud x +  \|\nabla u\|_{L^2(B_R^+ \times(-T,0])} ^2\right)^{1/2}\label{eq:V2norm},
\end{align}
and 
\begin{equation}\label{eq:weightedspaceC}
V_2^{1,0}(Q^+_{R,T}) =C ([-T,0]; L^2(B_R^+,x_n^{p}\ud x)) \cap  L^2((-T,0];H^1(B_R^+)) 
\end{equation}
be a subspace of $V_2(Q^+_{R,T})$ endowed with the norm \eqref{eq:V2norm}. 

Then all of $W^{1,1}_2(Q^+_{R,T})$, $V^{1,1}_2(Q^+_{R,T})$,   $V_2^{1,0}(Q^+_{R,T})$ and $V_2(Q^+_{R,T}) $ are Banach spaces. If $p\ge 0$, then
\[
W^{1,1}_2(Q^+_{R,T})\subset V^{1,1}_2(Q^+_{R,T})\subset V_2^{1,0}(Q^+_{R,T})\subset V_2(Q^+_{R,T}).
\]
If $-1<p<0$, then
\[
V^{1,1}_2(Q^+_{R,T})\subset W^{1,1}_2(Q^+_{R,T}),\quad  V^{1,1}_2(Q^+_{R,T})\subset V_2^{1,0}(Q^+_{R,T})\subset V_2(Q^+_{R,T}).
\]
In fact, $V_2^{1,0}(Q^+_{R,T})$ is the closure of $V^{1,1}_2(Q^+_{R,T})$ under the norm $  \|\cdot \|_{V_2(Q^+_{R,T})}$.

We also denote $$\mathring W^{1,1}_2(Q^+_{R,T}),\ \mathring V^{1,1}_2(Q^+_{R,T}),\ \mathring V_2^{1,0}(Q^+_{R,T}),\ \mathring V_2(Q^+_{R,T})$$ as the set of functions in 
\[
W^{1,1}_2(Q^+_{R,T}), \  V^{1,1}_2(Q^+_{R,T}),\  V_2^{1,0}(Q^+_{R,T}),\  V_2(Q^+_{R,T}) \mbox{ vanishing a.e. on } \partial B_R^+\times[-T,0]
\] 
in the trace sense, respectively.

\begin{lem}\label{lem:sobolevdense}
For $p>0$, $\mathring W^{1,1}_2(Q^+_{R,T})$ is dense in $\mathring V^{1,1}_2(Q^+_{R,T})$.
\end{lem}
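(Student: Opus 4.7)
The plan is to approximate any $g \in \mathring V^{1,1}_2(Q^+_{R,T})$ by truncating away from the degenerate face $\{x_n=0\}$. Since $p>0$ the weight $x_n^p$ is bounded on $Q^+_{R,T}$, so the only obstruction to $g$ lying in $\mathring W^{1,1}_2$ is that $\partial_t g$ is only assumed to be in $L^2(x_n^p\,\ud x\,\ud t)$, which is strictly weaker than $L^2$ near $\{x_n=0\}$ because $x_n^p$ is small there. Fix a one-dimensional cutoff $\eta_\varepsilon\in C^\infty([0,\infty))$ with $0\le\eta_\varepsilon\le 1$, $\eta_\varepsilon(s)=0$ for $s\le\varepsilon$, $\eta_\varepsilon(s)=1$ for $s\ge 2\varepsilon$, and $|\eta_\varepsilon'|\le C/\varepsilon$, and define $g_\varepsilon(x,t):=\eta_\varepsilon(x_n)\,g(x,t)$.

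\textbf{Membership in $\mathring W^{1,1}_2$.} Because $\eta_\varepsilon$ is supported in $\{x_n\ge \varepsilon\}$, on this set $x_n^{-p}\le\varepsilon^{-p}$, and so
\[
\int_{Q^+_{R,T}}|\partial_t g_\varepsilon|^2\,\ud x\,\ud t \le \varepsilon^{-p}\int_{Q^+_{R,T}}|\partial_t g|^2\, x_n^p\,\ud x\,\ud t <\infty,
\]
giving $\partial_t g_\varepsilon\in L^2(Q^+_{R,T})$. The spatial derivatives $D_i g_\varepsilon=\eta_\varepsilon D_ig+g\,\eta_\varepsilon'\,\delta_{in}$ lie in $L^2$ since $|\eta_\varepsilon|\le 1$, $|\eta_\varepsilon'|\le C/\varepsilon$, and $\eta_\varepsilon'$ is supported in the compact strip $\{\varepsilon\le x_n\le 2\varepsilon\}$. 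The trace vanishes on $\partial B^+_R\times[-T,0]$: on the curved piece $\partial''B^+_R\times[-T,0]$ because $g$ does, and on the flat piece $\partial'B^+_R\times[-T,0]$ because $\eta_\varepsilon(0)=0$. Hence $g_\varepsilon\in \mathring W^{1,1}_2(Q^+_{R,T})$.

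\textbf{Convergence in $V^{1,1}_2$.} For the $L^2$ norm, the weighted $\partial_t$ norm, and the piece $(1-\eta_\varepsilon)D_ig$ of the spatial derivative, the dominated convergence theorem gives convergence to $0$, with pointwise-a.e.\ convergence and dominants $|g|$, $|\partial_t g|$ (against $x_n^p\,\ud x\,\ud t$), and $|D_i g|$, respectively. The only delicate term is $g\,\eta_\varepsilon'$, which we estimate by
\[
\int_{Q^+_{R,T}} |g\,\eta_\varepsilon'|^2\,\ud x\,\ud t \le \frac{C}{\varepsilon^2}\int_{\{\varepsilon\le x_n\le 2\varepsilon\}} |g|^2\,\ud x\,\ud t \le 4C\int_{\{\varepsilon\le x_n\le 2\varepsilon\}} \frac{|g|^2}{x_n^2}\,\ud x\,\ud t,
\]
where in the second inequality we used $1/\varepsilon^2\le 4/x_n^2$ on the relevant strip. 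Since $g(\cdot,t)\in H^1_0(B^+_R)$ for a.e.\ $t$, Hardy's inequality applied slicewise yields $g/x_n\in L^2(Q^+_{R,T})$, and then dominated convergence on the measure $x_n^{-2}\,\ud x\,\ud t$ drives the last integral to $0$ as $\varepsilon\to 0$. Putting the pieces together, $g_\varepsilon\to g$ in $\|\cdot\|_{V^{1,1}_2(Q^+_{R,T})}$.

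\textbf{Main obstacle.} The delicate step is the $g\eta_\varepsilon'$ contribution to the spatial derivative, since the naive bound gives a prefactor $\varepsilon^{-2}$ that a priori is not absorbed by $\int_{\{x_n\le 2\varepsilon\}}|g|^2$. Hardy's inequality, available precisely because $g$ vanishes on the flat face of the half-ball, is what trades the $1/\varepsilon^2$ factor for a locally integrable $1/x_n^2$ factor and enables dominated convergence.
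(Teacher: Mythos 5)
Your proof is correct and takes essentially the same approach as the paper: the paper multiplies by the smooth factor $e^{-\varepsilon/x_n}$ rather than your sharp cutoff $\eta_\varepsilon(x_n)$, but in both arguments the membership in $\mathring W^{1,1}_2$ comes from the rapid vanishing of the multiplier near $\{x_n=0\}$ (using $p>0$), and the key step is identical—Hardy's inequality, available because the function vanishes on the flat face, absorbs the term created by differentiating the multiplier in $x_n$, after which dominated convergence gives convergence in the $V^{1,1}_2$ norm.
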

\begin{proof}
For $\varphi\in \mathring V^{1,1}_2(Q^+_{R,T})$ and $\va>0$, let
\[
\varphi_\va(x,t):=e^{-\va /x_n}\varphi(x,t).
\]
Then
\[
\partial_t \varphi_\va=e^{-\va /x_n}\partial_t\varphi, \quad D_i \varphi_\va=e^{-\va /x_n}D_i\varphi,  \ i=1,\cdots,n-1;
\]
and
\[
D_{n} \varphi_\va=e^{-\va /x_n}D_n\varphi+\frac{\va e^{-\va /x_n}}{x_n}\frac{\varphi}{x_n}.
\]
Hence, $\varphi_\va \in \mathring W^{1,1}_2(Q_1^+)$. By Hardy's inequality, we have
\[
\int_{Q_{R,T}^+} \frac{\varphi^2}{x_n^2}\,\ud x\ud t\le C \int_{Q_{R,T}^+} |\nabla\varphi|^2\,\ud x\ud t. 
\] 
Therefore, it follows from Lebesgue's dominated convergence theorem that  $\|\varphi_\va-\varphi\|_{V^{1,1}_2(Q_1^+)}\to 0$ as $\va\to 0^+$.
\end{proof}
This density fact will be used for the  existence of weak solutions to \eqref{eq:linear-eq} (see Theorem \ref{thm:existenceofweaksolution}).

\begin{lem}\label{lem:cutoffconstantspace}
Let $u\in V_2^{1,0}(Q^+_{R,T})$. Then for every $k\in\R$,
\[
(u-k)^+:=\max(u-k,0)\in V_2^{1,0}(Q^+_{R,T}).
\]
\end{lem}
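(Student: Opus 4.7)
The plan is to verify the two defining conditions of $V_2^{1,0}(Q^+_{R,T})$ separately for $(u-k)^+$: namely, that it lies in $L^2((-T,0];H^1(B_R^+))$ and that it is continuous from $[-T,0]$ into $L^2(B_R^+, x_n^p\,\ud x)$. Both properties rest on the classical fact that the truncation map $v\mapsto (v-k)^+$ is pointwise $1$-Lipschitz and preserves Sobolev regularity.

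First, for the spatial regularity, I would invoke the Stampacchia-type result: for almost every $t\in(-T,0]$, the slice $u(\cdot,t)$ belongs to $H^1(B_R^+)$, hence $(u(\cdot,t)-k)^+\in H^1(B_R^+)$ with the pointwise identity
\[
\nabla (u(\cdot,t)-k)^+ = \chi_{\{u(\cdot,t)>k\}}\,\nabla u(\cdot,t) \quad \text{a.e. in }B_R^+.
\]
This yields $|\nabla(u-k)^+|\le |\nabla u|$ a.e.\ on $Q^+_{R,T}$, and combined with the trivial bound $|(u-k)^+|\le |u|+|k|$ and the fact that $B_R^+\times(-T,0]$ has finite Lebesgue measure, one obtains $(u-k)^+\in L^2((-T,0];H^1(B_R^+))$.

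Next, for the continuity in $t$, I would use that the composition $v\mapsto (v-k)^+$ is a $1$-Lipschitz map from $\R$ to $\R$, so for any $t_1,t_2\in[-T,0]$,
\[
\bigl|(u(x,t_1)-k)^+ - (u(x,t_2)-k)^+\bigr| \le |u(x,t_1)-u(x,t_2)|\quad \text{for a.e. } x\in B_R^+.
\]
Multiplying by $x_n^p$ and integrating, this gives
\[
\|(u(\cdot,t_1)-k)^+ - (u(\cdot,t_2)-k)^+\|_{L^2(B_R^+, x_n^p\ud x)} \le \|u(\cdot,t_1)-u(\cdot,t_2)\|_{L^2(B_R^+, x_n^p\ud x)},
\]
and the right-hand side tends to $0$ as $t_1\to t_2$ since $u\in C([-T,0];L^2(B_R^+,x_n^p\ud x))$. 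This shows $(u-k)^+\in C([-T,0];L^2(B_R^+, x_n^p\ud x))$.

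Combining the two steps yields $(u-k)^+\in V_2^{1,0}(Q^+_{R,T})$. I do not expect any serious obstacle; the only point needing care is that the contraction property $|(a-k)^+-(b-k)^+|\le |a-b|$ transfers cleanly to the weighted space $L^2(B_R^+, x_n^p\ud x)$, which it does because the inequality is pointwise and $x_n^p$ is a nonnegative weight (locally integrable by \eqref{eq:rangep}).
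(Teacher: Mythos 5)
Your proof is correct and follows essentially the same route as the paper: the paper dismisses membership in $V_2(Q^+_{R,T})$ as clear (which you fill in via the standard Stampacchia truncation fact) and then, exactly as you do, uses the pointwise $1$-Lipschitz bound $|(r_1-k)^+-(r_2-k)^+|\le|r_1-r_2|$ to transfer the continuity of $t\mapsto u(\cdot,t)$ in $L^2(B_R^+,x_n^p\,\ud x)$ to $(u-k)^+$.
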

\begin{proof}
It is clear that $(u-k)^+\in V_2(Q^+_{R,T})$.  For two real numbers $r_1$ and $r_2$, we have the pointwise estimate
\begin{equation}\label{eq:cutoffconstant}
|(r_1-k)^+-(r_2-k)^+|\le |r_1-r_2|.
\end{equation}
Hence
\[
\|(u-k)^+(\cdot,t+h)-(u-k)^+(\cdot,t)\|_{L^2(B_R^+,x_n^p\ud x)}\le \|u(\cdot,t+h)-u(\cdot,t)\|_{L^2(B_R^+,x_n^p\ud x)}.
\]
Since $u\in C ([-T,0]; L^2(B_R^+,x_n^{p}\ud x))$, then $(u-k)^+\in C ([-T,0]; L^2(B_R^+,x_n^{p}\ud x))$ as well.
\end{proof}

\begin{lem}\label{lem:cutoffconstantconvergence}
Suppose $\{u_j\}\subset V_2^{1,0}(Q^+_{R,T})$ converges to $u$ in $V_2^{1,0}(Q^+_{R,T})$. Then for every $k\in\R$, 
\[
(u_j-k)^+\to (u-k)^+\quad\mbox{in }V_2^{1,0}(Q^+_{R,T})\quad\mbox{as }j\to\infty.
\] 
\end{lem}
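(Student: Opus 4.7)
The plan is to control the two pieces of the $V_2^{1,0}$-norm separately. For the time-supremum part, I would exploit the pointwise Lipschitz estimate \eqref{eq:cutoffconstant} already used in Lemma \ref{lem:cutoffconstantspace}: for every $t\in[-T,0]$,
\[
\int_{B_R^+}\bigl((u_j-k)^+-(u-k)^+\bigr)^2 x_n^p\,\ud x\le \int_{B_R^+}(u_j-u)^2x_n^p\,\ud x,
\]
so taking the supremum in $t$ gives convergence of this part for free from the convergence $u_j\to u$ in $V_2^{1,0}(Q^+_{R,T})$.

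The gradient part is the interesting step. I would write
\[
\nabla (u_j-k)^+-\nabla (u-k)^+ = (\nabla u_j-\nabla u)\chi_{\{u_j>k\}}+\nabla u\bigl(\chi_{\{u_j>k\}}-\chi_{\{u>k\}}\bigr),
\]
which holds a.e.\ since $\nabla (u-k)^+=\chi_{\{u>k\}}\nabla u$ a.e.\ for $H^1$ functions. The first term has $L^2(Q^+_{R,T})$-norm bounded by $\|\nabla u_j-\nabla u\|_{L^2(Q^+_{R,T})}$, which tends to zero. For the second, I would rely on Stampacchia's lemma, which gives $\nabla u=0$ a.e.\ on $\{u=k\}$; consequently, wherever $u\neq k$ the characteristic functions $\chi_{\{u_j>k\}}$ converge pointwise to $\chi_{\{u>k\}}$ along any subsequence where $u_j\to u$ a.e. Since $V_2^{1,0}$-convergence implies $L^2(Q^+_{R,T},x_n^p\,\ud x\ud t)$-convergence and $x_n^p>0$ a.e.\ in $Q_{R,T}^+$, from any subsequence of $\{u_j\}$ I can extract a further subsequence converging a.e. Along such a subsequence, $\nabla u(\chi_{\{u_j>k\}}-\chi_{\{u>k\}})\to 0$ a.e., and since the integrand is dominated by $2|\nabla u|\in L^2(Q_{R,T}^+)$, Lebesgue's dominated convergence theorem gives the required $L^2$-convergence.

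Combining the two parts along every such subsequence yields $(u_j-k)^+\to (u-k)^+$ in $V_2^{1,0}(Q^+_{R,T})$ by the standard subsequence-of-subsequence principle (every subsequence has a further subsequence along which the desired convergence holds, hence the whole sequence converges). The main obstacle is the jump of $\chi_{\{u>k\}}$ across the level set $\{u=k\}$; the resolution is the pairing of Stampacchia's vanishing gradient identity with a subsequence-and-dominated-convergence argument, which is the only nontrivial ingredient beyond what was already used for Lemma \ref{lem:cutoffconstantspace}.
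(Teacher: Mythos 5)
Your proof is correct, and it is noticeably more careful than the paper's. The paper dispatches this lemma with a single line, ``It follows from \eqref{eq:cutoffconstant},'' which directly handles only the $\sup_t\|\cdot\|_{L^2(B_R^+,x_n^p\ud x)}$ piece of the $V_2$-norm; the pointwise Lipschitz bound $|(r_1-k)^+-(r_2-k)^+|\le|r_1-r_2|$ says nothing immediate about $\nabla(u_j-k)^+-\nabla(u-k)^+$. You correctly isolate the gradient term as the genuine issue and supply the standard resolution: the decomposition $\chi_{\{u_j>k\}}(\nabla u_j-\nabla u)+(\chi_{\{u_j>k\}}-\chi_{\{u>k\}})\nabla u$, Stampacchia's identity $\nabla u=0$ a.e.\ on $\{u=k\}$ to neutralize the troublesome level set, a.e.\ convergence along subsequences extracted from the $L^2(x_n^p\,\ud x\ud t)$-convergence, dominated convergence with dominator $2|\nabla u|$, and the subsequence-of-subsequences principle to conclude for the full sequence. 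This is precisely the argument underlying the classical fact that $v\mapsto (v-k)^+$ is continuous on $H^1$ in the strong topology; the paper implicitly appeals to it, while you make it explicit. The two approaches are therefore the same in spirit, but yours actually closes the step the paper leaves to the reader.
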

\begin{proof}
It follows from \eqref{eq:cutoffconstant}.
\end{proof}

Denote
\[
u_h(x,t)=\frac{1}{h}\int_{t-h}^{t}u(x,s)\,\ud s
\]
as the Steklov average of $u$.
\begin{lem}\label{lem:steklovaverageconvergence}
Let $u\in V_2^{1,0}(Q^+_{R,T})$, and $\delta\in(0,T)$. Then for every $h\in (0,\delta)$, $u_h\in V^{1,1}_2(Q^+_{R,T-\delta})$, and 
\[
u_h\to u\quad\mbox{in }V_2(Q^+_{R,T-\delta})\mbox{ as }h\to 0.
\]
\end{lem}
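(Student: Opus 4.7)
The plan is to verify each requirement for membership in $V^{1,1}_2(Q^+_{R,T-\delta})$ directly from the definition of the Steklov average, and then obtain the convergence by separating the sup-in-time part (controlled by the $C([-T,0];L^2(B_R^+, x_n^p\,\ud x))$ regularity) from the spatial $L^2$ part (controlled by standard continuity of translations in $L^2$). The weight $x_n^p$ depends only on $x$, not on $t$, so it passes freely through every time integral or difference quotient, which keeps the weighted estimates formally identical to the unweighted ones.

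First, for $t\in(-(T-\delta),0]$ and $h\in(0,\delta)$ we have $[t-h,t]\subset(-T,0]$, so $u_h(x,t)$ is well defined. Jensen's inequality in time yields $\|u_h\|_{L^2(Q^+_{R,T-\delta})}\le \|u\|_{L^2(Q^+_{R,T})}$. The spatial weak gradient commutes with the time average, so $D_i u_h(x,t)=\frac{1}{h}\int_{t-h}^t D_i u(x,s)\,\ud s$ and the same Jensen bound gives $D_i u_h\in L^2(Q^+_{R,T-\delta})$. For the time derivative, a direct calculation gives
\[
\pa_t u_h(x,t)=\frac{u(x,t)-u(x,t-h)}{h}.
\]
Since $u\in C([-T,0];L^2(B_R^+,x_n^p\,\ud x))$, the map $t\mapsto \|u(\cdot,t)-u(\cdot,t-h)\|_{L^2(B_R^+,x_n^p\,\ud x)}$ is continuous on the compact interval $[-(T-\delta),0]$, hence bounded there, so $\pa_t u_h\in L^2(Q^+_{R,T-\delta},x_n^p\,\ud x\ud t)$. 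This establishes $u_h\in V^{1,1}_2(Q^+_{R,T-\delta})$.

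For the convergence $u_h\to u$ in $V_2(Q^+_{R,T-\delta})$, I would split into two estimates. For the sup-in-time part, write
\[
u_h(x,t)-u(x,t)=\frac{1}{h}\int_{t-h}^t\bigl(u(x,s)-u(x,t)\bigr)\,\ud s
\]
and apply Jensen's inequality to get
\[
\|u_h(\cdot,t)-u(\cdot,t)\|_{L^2(B_R^+,x_n^p\,\ud x)}^2\le \frac{1}{h}\int_{t-h}^t \|u(\cdot,s)-u(\cdot,t)\|_{L^2(B_R^+,x_n^p\,\ud x)}^2\,\ud s.
\]
Since $u:[-T,0]\to L^2(B_R^+,x_n^p\,\ud x)$ is uniformly continuous on the compact interval $[-T,0]$, the right-hand side tends to zero uniformly in $t\in[-(T-\delta),0]$ as $h\to 0^+$, giving $\sup_t\|u_h(\cdot,t)-u(\cdot,t)\|_{L^2(B_R^+,x_n^p\,\ud x)}\to 0$.

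For the gradient part, since $D_i u\in L^2(Q^+_{R,T})$ (via $u\in L^2((-T,0];H^1(B_R^+))$) and $D_i u_h$ equals the Steklov average of $D_i u$, the convergence $\|D_i u_h-D_i u\|_{L^2(Q^+_{R,T-\delta})}\to 0$ follows from the standard continuity of translations in $L^2$, together with a direct estimate on the difference using Jensen again. Combining the two pieces yields convergence in the $V_2$ norm. The only subtle point, and the main thing to watch, is the use of the continuity condition baked into $V_2^{1,0}$: without it one would only get strong convergence in $L^2(Q^+_{R,T-\delta},x_n^p\,\ud x\ud t)$ for the weighted part, not the uniform-in-$t$ convergence. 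Everything else is a routine Steklov-average calculation.
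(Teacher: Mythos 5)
Your proof is correct and follows essentially the same route as the paper's: both bound $\|u_h(\cdot,t)-u(\cdot,t)\|_{L^2(B_R^+,x_n^p\,\ud x)}$ by an integral average of $\|u(\cdot,s)-u(\cdot,t)\|_{L^2(B_R^+,x_n^p\,\ud x)}$ (the paper via Minkowski, you via Jensen, which are interchangeable here), then invoke uniform continuity of $t\mapsto u(\cdot,t)$ from the $V_2^{1,0}$ hypothesis for the sup-in-time part, and continuity of translations in $L^2$ for the gradient part. You supply a bit more detail than the paper on why $u_h\in V^{1,1}_2(Q^+_{R,T-\delta})$ (explicitly writing $\partial_t u_h$ as a difference quotient), which the paper leaves as "straightforward to verify."
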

\begin{proof}
It is straightforward to verify that $u_h\in V^{1,1}_2(Q^+_{R,T-\delta})$. Also, by the Minkowski inequality, we have
\begin{align*}
\|(u_h-u)(\cdot,t)\|_{L^2(B_R^+,x_n^p\ud x)}&\le \frac 1h \int_{t-h}^t\|u(\cdot,s)-u(\cdot,t)\|_{L^2(B_R^+,x_n^p\ud x)}\,\ud s\\
&\le \sup_{t-h\le s\le t}\|u(\cdot,s)-u(\cdot,t)\|_{L^2(B_R^+,x_n^p\ud x)}\\
&\to 0\mbox{ as }h\to 0,
\end{align*}
where $u\in V_2^{1,0}(Q^+_{R,T})$ is used in the last inequality. Similarly,
\begin{align*}
\|D_x u_h-D_x u\|_{L^2(Q^+_{R,T-\delta})}&\le \frac 1h \int_{-h}^0\|D_x u(x,t+s)-D_x u(x,t)\|_{L^2(Q^+_{R,T-\delta})}\,\ud s\\
&\le \sup_{-h\le s\le 0}\|D_x u(x,t+s)-D_xu(x,t)\|_{L^2(Q^+_{R,T-\delta})}\\
&\to 0\mbox{ as }h\to 0,
\end{align*}
where we used the continuity of Lebesgue integrals with respect to translations in the last inequality. 
\end{proof}

\subsection{Sobolev inequalities}
Next, we will prove a Sobolev inequality for functions in $\mathring V_2(Q^+_{R,T})$ (in fact, in a slightly larger space). To accommodate the partial boundary condition \eqref{eq:linear-eq-D}, we define the following space:
\[
H^1_{0,L}(B_R^+)=\{u\in H^1(B_R^+): u\equiv 0\ \mbox{on}\ \partial'B_R^+\}.
\]
Then we have the well-known Hardy inequality.
\begin{lem}[Hardy's inequality]\label{lem:hardyinequality}
For every $u\in H^1_{0,L}(B_R^+)$, there holds
\[
\int_{B_R^+}\frac{u(x)^2}{x_n^2}\,\ud x\le 4 \int_{B_R^+}|\nabla u(x)|^2\,\ud x.
\]
\end{lem}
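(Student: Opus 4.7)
The plan is to reduce the inequality to a one-dimensional statement along segments parallel to the $x_n$-axis and then apply the classical one-dimensional Hardy inequality.

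First I would verify that smooth functions vanishing in a neighborhood of $\partial' B_R^+$ are dense in $H^1_{0,L}(B_R^+)$. This can be done by standard approximation: extend $u$ by zero across $\partial' B_R^+$ (the resulting function lies in $H^1$ of the full ball because its trace on $\{x_n=0\}$ vanishes), translate downward by a small amount $\delta>0$, mollify, and restrict back to $B_R^+$. The resulting smooth functions vanish in $\{x_n<\delta/2\}$ and converge to $u$ in $H^1(B_R^+)$. Once the inequality is established for such functions, both sides are continuous under $H^1$-convergence, so the inequality passes to the limit. In particular, for the approximating function both integrands are bounded and there are no integrability issues near $x_n=0$.

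For a smooth $u$ that vanishes near $\{x_n=0\}$, I would fix $x'\in B_R\cap\{x_n=0\}$ and let $L(x')>0$ denote the length of the slice $\{x_n:(x',x_n)\in B_R^+\}$. On each such slice, writing $\varphi(x_n)=u(x',x_n)$, integration by parts gives
\[
\int_0^{L(x')} \frac{\varphi(x_n)^2}{x_n^2}\,\ud x_n=\left[-\frac{\varphi(x_n)^2}{x_n}\right]_0^{L(x')}+\int_0^{L(x')}\frac{2\varphi(x_n)\varphi'(x_n)}{x_n}\,\ud x_n.
\]
The boundary term at $0$ vanishes because $\varphi$ is identically zero near $0$, and the boundary term at $L(x')$ is nonpositive, so Cauchy--Schwarz yields
\[
\int_0^{L(x')}\frac{\varphi^2}{x_n^2}\,\ud x_n\le 2\left(\int_0^{L(x')}\frac{\varphi^2}{x_n^2}\,\ud x_n\right)^{1/2}\left(\int_0^{L(x')}(\varphi')^2\,\ud x_n\right)^{1/2},
\]
which after dividing and squaring gives the one-dimensional Hardy inequality with sharp constant $4$.

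Finally I would integrate this pointwise-in-$x'$ inequality over $x'$, using Fubini and the bound $|\partial_{x_n}u|^2\le |\nabla u|^2$ to conclude. The only step that requires any care is the initial density argument near $\partial' B_R^+$, since the other pieces are completely routine; everything else is a direct slice-by-slice computation.
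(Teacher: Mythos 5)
Your proof is correct. The paper does not prove this lemma at all --- it is stated as the ``well-known Hardy inequality'' --- so there is no argument to compare against; your slice-by-slice reduction to the one-dimensional Hardy inequality is the standard way to establish it, and it delivers the stated constant $4$. The two points that actually need attention are handled properly: the density of smooth functions vanishing near $\{x_n=0\}$ in $H^1_{0,L}(B_R^+)$ (zero extension across the flat boundary, downward translation, mollification at a scale small compared with the translation), and the fact that the boundary term at the curved end of each slice, $-\varphi(L(x'))^2/L(x')$, is nonpositive --- this is precisely what lets the argument go through for functions that vanish only on $\partial' B_R^+$ and not on $\partial'' B_R^+$. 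Dividing by $\bigl(\int_0^{L(x')}\varphi^2 x_n^{-2}\,\ud x_n\bigr)^{1/2}$ is legitimate because this quantity is finite (the approximant vanishes near $x_n=0$) and the inequality is trivial when it vanishes. Integrating in $x'$ and using $|\partial_{x_n}u|\le|\nabla u|$ finishes the proof, and both sides pass to the limit under $H^1$-convergence (for the left-hand side, by Fatou or by the uniform bound).
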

Consequently, we have
\begin{lem}\label{lem:interpolation}
Let $p>0$. For every $u\in H^1_{0,L}(B_R^+)$ and every $\va>0$, there holds
\[
\int_{B_R^+}u^2\,\ud x\le 4 \va \int_{B_R^+}|\nabla u|^2\,\ud x+\va ^{-\frac{p}{2}} \int_{B_R^+}x_n^pu^2\,\ud x.
\]
\end{lem}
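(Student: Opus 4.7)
The plan is to split the domain into the region where $x_n$ is small (where Hardy's inequality applies effectively) and the region where $x_n$ is not small (where the weight $x_n^p$ is bounded below).

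Fix a threshold $\delta>0$ to be chosen at the end, and write
\[
\int_{B_R^+} u^2 \,\ud x = \int_{B_R^+\cap\{x_n\le\delta\}} u^2 \,\ud x + \int_{B_R^+\cap\{x_n>\delta\}} u^2 \,\ud x.
\]
On the set $\{x_n\le\delta\}$ I would use the bound $1\le \delta^2/x_n^2$ and then invoke Hardy's inequality (Lemma \ref{lem:hardyinequality}) to obtain
\[
\int_{B_R^+\cap\{x_n\le\delta\}} u^2 \,\ud x \le \delta^2 \int_{B_R^+} \frac{u^2}{x_n^2}\,\ud x \le 4\delta^2 \int_{B_R^+} |\nabla u|^2 \,\ud x.
\]
On the complementary set, since $p>0$, I use the bound $1\le x_n^p/\delta^p$ to get
\[
\int_{B_R^+\cap\{x_n>\delta\}} u^2 \,\ud x \le \delta^{-p} \int_{B_R^+} x_n^p u^2\,\ud x.
\]

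Adding the two estimates and then setting $\delta=\va^{1/2}$ so that $\delta^2=\va$ and $\delta^{-p}=\va^{-p/2}$ yields exactly the claimed inequality. There is no real obstacle here; the only structural ingredient is Hardy's inequality, and the form of the exponent $\va^{-p/2}$ is dictated by matching $\delta^2=\va$ in the first term to $\delta^{-p}$ in the second.
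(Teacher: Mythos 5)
Your argument is correct, and it is a genuinely different route from the one in the paper. The paper proves the lemma by a pointwise interpolation: it writes $u^2=\big(x_n^{p}u^{2}\big)^{\frac{2}{p+2}}\big(x_n^{-2}u^{2}\big)^{\frac{p}{p+2}}$, applies H\"older's inequality with exponents $\frac{p+2}{2}$ and $\frac{p+2}{p}$ to get the multiplicative bound
\[
\int_{B_R^+}u^2\,\ud x\le \Big(\int_{B_R^+}x_n^{p}u^{2}\,\ud x\Big)^{\frac{2}{p+2}}\Big(\int_{B_R^+}x_n^{-2}u^{2}\,\ud x\Big)^{\frac{p}{p+2}},
\]
and then uses Young's inequality with weight $\va$ followed by Hardy's inequality. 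You instead split the domain at the level $x_n=\delta$, use $1\le\delta^2/x_n^2$ together with Hardy on $\{x_n\le\delta\}$ and $1\le x_n^p/\delta^p$ (valid precisely because $p>0$) on $\{x_n>\delta\}$, and optimize by taking $\delta=\va^{1/2}$; every step is justified and the constants come out exactly as stated. Your Chebyshev-type splitting is more elementary — no H\"older or Young step — and makes transparent why the exponent $\va^{-p/2}$ appears, while the paper's route passes through the stronger multiplicative interpolation inequality displayed above, which encodes the whole family of $\va$-inequalities at once; for the purposes of this lemma the two are equivalent in strength.
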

\begin{proof}
We have
\begin{align*}
\int_{B_R^+}u^2\,\ud x&= \int_{B_R^+}x_n^{\frac{2p}{p+2}}u^{\frac{4}{p+2}} x_n^{-\frac{2p}{p+2}}u^{\frac{2p}{p+2}}\,\ud x\\
&\le \left(\int_{B_R^+}x_n^{p}u^{2}\,\ud x  \right)^\frac{2}{p+2}
\left(\int_{B_R^+}x_n^{-2}u^{2} \,\ud x  \right)^\frac{p}{p+2}\\
&\le \va \int_{B_R^+}x_n^{-2}u^{2}\,\ud x +\va ^{-\frac p2}\int_{B_R^+}x_n^{p}u^{2} \,\ud x \\
&\le 4\va \int_{B_R^+}|\nabla u(x)|^2\,\ud x +\va ^{-\frac p2}\int_{B_R^+}x_n^{p}u^{2}\,\ud x,
\end{align*}
where we used H\"older's inequality, Young's inequality and Lemma \ref{lem:hardyinequality}.
\end{proof}

By the usual Sobolev inequality,  Hardy's inequality, H\"older's inequality, and a scaling argument, we have the following Sobolev inequality for functions in $H^1_{0,L}(B_R^+)$.
\begin{lem}[Sobolev's inequality]\label{lem:Sobolevinequalitynot}
There exists $C>0$ depending only on $n$ such that for every $u\in H^1_{0,L}(B_R^+)$, there holds
\begin{align*}
\|u\|_{L^\frac{2n}{n-2}(B_R^+)}&\le C \|\nabla u\|_{L^2(B_R^+)}\quad\mbox{if }n\ge 3,\\
\|u\|_{L^q(B_R^+)}&\le CR^{\frac{n}{q}+\frac{2-n}{2}} \|\nabla u\|_{L^2(B_R^+)}\ \forall\,q>0\quad\mbox{if }n=1, 2.
\end{align*}
\end{lem}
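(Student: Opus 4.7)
The proof plan is to reduce to the case $R=1$ by scaling, and then to prove the inequality on $B_1^+$ by extending the function oddly across $\{x_n=0\}$, applying the classical Sobolev embedding on $B_1$, and absorbing the $L^2$ term via Hardy's inequality (Lemma~\ref{lem:hardyinequality}).

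For the scaling step, I would set $v(y)=u(Ry)$ for $y\in B_1^+$; then $v\in H^1_{0,L}(B_1^+)$, with
\[
\|v\|_{L^q(B_1^+)} = R^{-n/q}\|u\|_{L^q(B_R^+)}, \qquad \|\nabla v\|_{L^2(B_1^+)} = R^{(2-n)/2}\|\nabla u\|_{L^2(B_R^+)}.
\]
When $n\ge 3$ and $q=\tfrac{2n}{n-2}$ the two exponents coincide, giving a scale-invariant inequality; when $n=1,2$ the factors combine to exactly $R^{n/q+(2-n)/2}$. In both cases it therefore suffices to prove $\|v\|_{L^q(B_1^+)}\le C\|\nabla v\|_{L^2(B_1^+)}$ for $v\in H^1_{0,L}(B_1^+)$.

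For the main step, define the odd reflection
\[
\tilde v(x',x_n)=\begin{cases} v(x',x_n), & x_n\ge 0,\\ -v(x',-x_n), & x_n<0.\end{cases}
\]
Since $v$ has zero trace on $\partial' B_1^+$ by the definition of $H^1_{0,L}(B_1^+)$, we have $\tilde v\in H^1(B_1)$ with $\|\tilde v\|_{L^p(B_1)}^p=2\|v\|_{L^p(B_1^+)}^p$ and $\|\nabla \tilde v\|_{L^2(B_1)}^2=2\|\nabla v\|_{L^2(B_1^+)}^2$. The classical Sobolev embedding on the bounded domain $B_1$ yields
\[
\|\tilde v\|_{L^q(B_1)} \le C\bigl(\|\tilde v\|_{L^2(B_1)}+\|\nabla \tilde v\|_{L^2(B_1)}\bigr),
\]
with $q=\tfrac{2n}{n-2}$ if $n\ge 3$, and any $q>0$ if $n=1,2$ (for small $q$ one passes from a large exponent down via H\"older's inequality, using $|B_1|<\infty$). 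To eliminate the $L^2$ term, I would use $x_n^2\le 1$ on $B_1$ together with Hardy's inequality applied to $v$:
\[
\|\tilde v\|_{L^2(B_1)}^2 \le \int_{B_1}\frac{\tilde v^2}{x_n^2}\,\ud x = 2\int_{B_1^+}\frac{v^2}{x_n^2}\,\ud x \le 8\int_{B_1^+}|\nabla v|^2\,\ud x = 4\|\nabla \tilde v\|_{L^2(B_1)}^2.
\]
Combining gives $\|v\|_{L^q(B_1^+)}\le C\|\nabla v\|_{L^2(B_1^+)}$, and undoing the scaling completes the proof.

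I do not expect a real obstacle. The only point needing care is the verification that the odd extension lies in $H^1(B_1)$, which is precisely where the partial boundary vanishing condition built into $H^1_{0,L}$ enters; every other ingredient (the classical Sobolev embedding, Hardy's inequality from Lemma~\ref{lem:hardyinequality}, and H\"older for the $n=1,2$ case with small $q$) is standard.
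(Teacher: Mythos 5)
Your proof is correct and fills in precisely the argument the paper gestures at: the paper gives no written proof for this lemma, only the one-sentence remark that it follows from ``the usual Sobolev inequality, Hardy's inequality, H\"older's inequality, and a scaling argument,'' and your scaling, odd reflection to apply Sobolev on $B_1$, absorption of the $L^2$ term via Lemma~\ref{lem:hardyinequality}, and H\"older for small $q$ when $n=1,2$ is exactly that route.
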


Combining Hardy's inequality and Sobolev's inequality, we have the following Hardy-Sobolev inequality for functions in $H^1_{0,L}(B_R^+)$.

\begin{lem}[Hardy-Sobolev inequality]\label{lem:Hardy-Sobolev} Let $s\in (0,2)$. Then
\begin{equation}\label{eq:hardysobolev}
\Big(\int_{B_R^+}\frac{|u(x)|^{\frac{2(n-s)}{n-2}}}{x_n^s}\Big)^{\frac{n-2}{n-s}}\le C(n)^{\frac{n-2}{n-s}}\int_{B_R^+}|\nabla u|^2\quad\forall\ u\in H_{0,L}^1(B_R^+),
\end{equation}
when $n\ge 3$, and for $s\le r<\infty$,
\begin{equation}\label{eq:hardysobolev-2d}
\Big(\int_{B_R^+}\frac{|u(x)|^{r}}{x_n^s}\Big)^{\frac{2}{r}}\le C(r,s)R^{\frac{2(n-s)}{r}+2-n}\int_{B_R^+}|\nabla u|^2\quad\forall\ u\in H_{0,L}^1(B_R^+),
\end{equation}
when $n=1,2$.
\end{lem}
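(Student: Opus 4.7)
The plan is to interpolate Hardy's inequality and Sobolev's inequality via H\"older's inequality. For $n\ge 3$, I would begin from the arithmetic identity
\[
\frac{2(n-s)}{n-2}-s=\frac{n(2-s)}{n-2},
\]
which lets me write
\[
\frac{|u|^{\frac{2(n-s)}{n-2}}}{x_n^s}=\frac{|u|^s}{x_n^s}\cdot|u|^{\frac{n(2-s)}{n-2}}.
\]
H\"older's inequality with conjugate exponents $2/s$ and $2/(2-s)$ — both of which lie in $(1,\infty)$ precisely because $s\in(0,2)$ — together with the relation $\tfrac{n(2-s)}{n-2}\cdot\tfrac{2}{2-s}=\tfrac{2n}{n-2}$, yields
\[
\int_{B_R^+}\frac{|u|^{\frac{2(n-s)}{n-2}}}{x_n^s}\le\Bigl(\int_{B_R^+}\frac{u^2}{x_n^2}\Bigr)^{s/2}\Bigl(\int_{B_R^+}|u|^{\frac{2n}{n-2}}\Bigr)^{(2-s)/2}.
\]
Lemma \ref{lem:hardyinequality} controls the first factor by a power of $\|\nabla u\|_{L^2}^2$, and Lemma \ref{lem:Sobolevinequalitynot} controls the second. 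The exponents of $\|\nabla u\|_{L^2}^2$ add to $\tfrac{s}{2}+\tfrac{n(2-s)}{2(n-2)}=\tfrac{n-s}{n-2}$, so raising the resulting inequality to the $\tfrac{n-2}{n-s}$ power immediately gives \eqref{eq:hardysobolev}.

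For $n\in\{1,2\}$ I would mimic this argument with the analogous splitting $\frac{|u|^r}{x_n^s}=\frac{|u|^s}{x_n^s}\cdot|u|^{r-s}$ (valid since $r\ge s$), H\"older with the same exponent pair $2/s,\ 2/(2-s)$, and setting $q:=2(r-s)/(2-s)$. Hardy treats the weighted factor as before, while the $R$-dependent Sobolev inequality from Lemma \ref{lem:Sobolevinequalitynot} contributes a factor $R^{n/q+(2-n)/2}$ inside the $L^q$-norm. A short bookkeeping calculation, using the identity $q(2-s)=2(r-s)$, collapses the total exponent of $R$ to
\[
\frac{n(2-s)+(r-s)(2-n)}{r}=\frac{2(n-s)}{r}+2-n,
\]
which is exactly the exponent appearing in \eqref{eq:hardysobolev-2d}.

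This is essentially a clean interpolation argument, and I do not anticipate any real obstacle. The only step requiring genuine care is tracking the $R$-power in the $n=1,2$ case, since Sobolev's embedding there is not scale invariant; every factor of $R$ picked up from an application of Lemma \ref{lem:Sobolevinequalitynot} must be reconciled with the homogeneity of the two factors emerging from H\"older's inequality.
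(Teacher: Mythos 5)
Your argument is correct and is essentially the paper's own proof: the same splitting $\frac{|u|^r}{x_n^s}=\frac{|u|^s}{x_n^s}\cdot|u|^{r-s}$, H\"older with exponents $2/s$ and $2/(2-s)$, then Hardy (Lemma \ref{lem:hardyinequality}) and Sobolev (Lemma \ref{lem:Sobolevinequalitynot}). The only cosmetic difference is that the paper rescales to $R=1$ first while you track the $R$-power directly through the non-scale-invariant Sobolev inequality for $n=1,2$, and your exponent bookkeeping matches \eqref{eq:hardysobolev-2d}.
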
 

\begin{proof}  By scaling, we only need to prove for $R=1$. If $n\ge 3$, using the H\"older inequality,  Hardy inequality and Sobolev inequality,  we have 
\begin{align*}
\int_{B_1^+}\frac{|u(x)|^{\frac{2(n-s)}{n-2}}}{x_n^s}& = \int_{B_1^+}\frac{|u(x)|^{s}}{x_n^s} |u(x)|^{\frac{2n-sn}{n-2}} 
\\&  \le \Big( \int_{B_1^+}\frac{|u(x)|^{2}}{x_n^2} \Big)^{\frac{s}{2}}  \Big( \int_{B_1^+}|u|^{\frac{2n}{n-2}} \Big)^{\frac{2-s}{2}} \\&
\le C(n)\Big( \int_{B_1^+} |\nabla u|^2 \Big)^{\frac{s}{2}} \Big( \int_{B_1^+} |\nabla u|^2 \Big)^{\frac{n}{n-2}\frac{2-s}{2}} \\&= C(n)\Big( \int_{B_1^+} |\nabla u|^2 \Big)^{\frac{n-s}{n-2}}  .
\end{align*}
If $n=1,2$, we have 
\begin{align*}
\int_{B_1^+}\frac{|u(x)|^{r}}{x_n^s} &= \int_{B_1^+}\frac{|u(x)|^{s}}{x_n^s} |u|^{r-s}  \\&
\le \Big( \int_{B_1^+}\frac{|u(x)|^{2}}{x_n^2} \Big)^{\frac{s}{2}}  \Big( \int_{B_1^+}|u|^{\frac{2(r-s)}{2-s}} \Big)^{\frac{2-s}{2}}  \\&
\le C(r,s)  \Big( \int_{B_1^+} |\nabla u|^2 \Big)^{\frac{r}{2}} . 
\end{align*}
Therefore, we complete the proof. 
\end{proof}

The next theorem is a mild generalization of Lemma 2.2 in \cite{JX19}.

\begin{thm}\label{thm:weightedsobolev} 
Let $p\ge 0$. For every $u\in L^\infty ((-T,0]; L^2(B_R^+,x_n^{p}\ud x)) \cap  L^2((-T,0];H^1_{0,L}(B_R^+)) $ (in particular, $u\in  \mathring V_2(Q^+_{R,T})$), we have
 \[
 \left(\int_{Q^+_{R,T}} |u|^{2\chi}\ud x \ud t \right)^{\frac{1}{\chi}} \le C \|u\|_{V_2(Q^+_{R,T})}^2,
 \]
 where  $\chi =\frac{n+p+2}{n+p}$ and $C$ depends only on $n$ and $p$  if $n\ge 3$;  while $\chi=\frac{p+2}{p+1}$ and $C=C(p)R^{\frac{p+2-n}{p+2}}$ with the constant $C(p)$ depending only on $p$ if $n=1,2$.
\end{thm}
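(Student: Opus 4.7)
The plan is to prove this as a weighted parabolic Sobolev inequality by the classical Ladyzhenskaya-style interpolation: apply Hölder in space to split the integrand into a factor controlled by the time-uniform weighted $L^2$ norm and a factor controlled by $\int|\nabla u|^2$ via Hardy-Sobolev (Lemma 2.5), then integrate in time. The key is choosing the Hölder exponent so that the time integration produces a \emph{linear} power of $\int|\nabla u|^2$, which forces $\theta = \chi - 1$ on the weighted factor.

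For $n \ge 3$, I would write, for each fixed $t$,
\[
\int_{B_R^+} |u|^{2\chi}\,\ud x
= \int_{B_R^+} \bigl(|u|^2 x_n^p\bigr)^{\theta}\bigl(|u|^{(2\chi-2\theta)/(1-\theta)} x_n^{-p\theta/(1-\theta)}\bigr)^{1-\theta}\,\ud x,
\]
and apply Hölder with exponents $(1/\theta, 1/(1-\theta))$. Setting $s = p\theta/(1-\theta)$, I want the second factor to match the exponents in the Hardy-Sobolev inequality \eqref{eq:hardysobolev}, namely $(2\chi - 2\theta)/(1-\theta) = 2(n-s)/(n-2)$. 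Solving this together with $\theta + 1 = \chi$ (which is what guarantees the time integration yields exactly $\|\nabla u\|_{L^2}^2$) gives $\theta = 2/(n+p)$ and $s = 2p/(n+p-2) \in [0,2)$, consistent with $\chi = (n+p+2)/(n+p)$. Thus
\[
\int_{B_R^+} |u|^{2\chi}\,\ud x \le M(t)^{\theta}\Bigl(\int_{B_R^+}\frac{|u|^{2(n-s)/(n-2)}}{x_n^s}\,\ud x\Bigr)^{1-\theta} \le C\,M(t)^{\theta}\int_{B_R^+}|\nabla u|^2\,\ud x,
\]
where $M(t) = \int_{B_R^+} u(\cdot,t)^2 x_n^p\,\ud x$ and the last step uses $(n-s)(1-\theta)/(n-2) = 1$. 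Integrating in time and bounding $\sup_t M(t)$ and $\int\int|\nabla u|^2$ separately by $\|u\|_{V_2(Q^+_{R,T})}^2$ gives the claim after raising to the $1/\chi$ power, since $2\theta+2 = 2\chi$.

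For $n = 1, 2$, the strategy is identical except that Hardy-Sobolev \eqref{eq:hardysobolev-2d} is used with free parameter $r$. Choosing $r$ so that $r(1-\theta)/2 = 1$, together with $\theta + 1 = \chi = (p+2)/(p+1)$, forces $\theta = 1/(p+1)$, $r = 2(p+1)/p$, $s = 1 \in (0,2)$, and $r \ge s$ so the inequality applies. I would carry the $R$-dependent prefactor $R^{2(n-s)/r + 2 - n}$ from \eqref{eq:hardysobolev-2d} through the calculation; the exponent on $R$ in the final inequality is
\[
\frac{1}{\chi}\Bigl[(n-s) + r - \tfrac{rn}{2}\Bigr](1-\theta) = \frac{p+2-n}{p+2},
\]
a short algebraic verification matching the stated constant. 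Time integration and the $1/\chi$ root are performed as before.

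The main obstacle is less a conceptual one and more the bookkeeping: ensuring that the single choice of $\theta$ simultaneously (i) produces an exponent on $|u|$ in the second Hölder factor admissible for Hardy-Sobolev, (ii) makes the power on the spatial $\|\nabla u\|_{L^2}^2$ equal to one so that trivial time integration suffices, and (iii) matches $2\theta + 2 = 2\chi$ so that the final $V_2$ norm appears to exactly the power $2$. The case $p=0$ in low dimension requires a small separate remark (e.g., reverting to Gagliardo-Nirenberg), but otherwise the scheme is uniform and the calculation self-closing.
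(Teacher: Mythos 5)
Your proposal is correct and follows essentially the same route as the paper: the spatial H\"older split with weight $x_n^{p}$ (your $\theta=\chi-1$, $s=p\theta/(1-\theta)$ is exactly the paper's choice $s(n-2)/(2-s)=p$, and likewise $s=1$, $r=2(p+1)/p$ for $n=1,2$), followed by the Hardy--Sobolev inequality of Lemma \ref{lem:Hardy-Sobolev} and integration in time. The only cosmetic differences are that you bound the two factors directly by powers of $\|u\|_{V_2}^2$ where the paper invokes Young's inequality, and that you explicitly flag the degenerate case $p=0$, $n\le 2$, which the paper's computation also only covers after the same trivial modification.
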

\begin{proof} We prove the case $n\ge 3$ first.
Let $s\in(0,2)$ be such that $\frac{s(n-2)}{2-s}=p$. By \eqref{eq:hardysobolev} and the H\"older inequality, we have
\begin{align*}
\int_{B_R^+  }|u|^{\frac{2(n+2-2s)}{n-s} } \,\ud x &=\int_{B_R^+  }|u|^{2} x_n^{-\frac{s(n-2)}{n-s}}|u|^{\frac{2(2-s)}{n-s} } x_n^{\frac{s(n-2)}{n-s}}\,\ud x  \\
&\le \Big(\int_{B_R^+}\frac{|u|^{\frac{2(n-s)}{n-2}}}{x_n^s}\,\ud x\Big)^{\frac{n-2}{n-s}} \Big( \int_{B_R^+ } u^2 x_n^{\frac{s(n-2)}{2-s}}\,\ud x\Big)^{\frac{2-s}{n-s}} \\&
\le C(n,p) \Big(\int_{B_R^+  } |\nabla u|^2\,\ud x\Big)   \Big( \int_{B_R^+ } u^2 x_n^{p}\,\ud x\Big)^{\frac{2-s}{n-s}}.
\end{align*}
Integrating the above inequality in $t$, we have
\begin{align*}
&\Big(\int_{-T}^0\int_{B_R^+  }|u(x,t)|^{\frac{2(n+2-2s)}{n-s}} \,\ud x \ud t\Big)^{\frac{n-s}{n+2-2s}}\\
& \le C(n,p)  \sup_{-T<t<0}\Big( \int_{B_R^+  }u^2 x_n^{p}\,\ud x\Big)^{\frac{2-s}{n+2-2s}} \Big(\int_{B_R^+ \times[-T,0]} |\nabla u|^2 \,\ud x \ud t  \Big)^{\frac{n-s}{n+2-2s}}\\&
\le C(n,p) \Big( \|\nabla u\|_{L^2(B_R^+ \times(-T,0])} ^2 + \sup_{-T<t<0} \int_{B_R^+  }u^2 x_n^{p}\,\ud x \Big),
\end{align*}
where we have used the Young inequality in the last inequality.

If $n=1,2$, using \eqref{eq:hardysobolev-2d} and the H\"older inequality, we have 
\begin{align*}
\int_{B_R^+  }|u|^{2+\frac{2}{p+1}} \,\ud x&= \int_{B_R^+  }|u|^{2} x_n^{-\frac{p}{p+1}} |u|^{\frac{2}{p+1}} x_n^{\frac{p}{p+1}}  \,\ud x \\&
\le \Big( \int_{B_R^+  }  \frac{|u|^{\frac{2(p+1)}{p}}}{x_n}\,\ud x \Big)^{\frac{p}{p+1}} \Big( \int_{B_R^+ } u^2 x_n^{p}\,\ud x\Big)^{\frac{1}{p+1}}  \\&
\le C R^{\frac{p+2-n}{p+1}}\Big( \int_{B_R^+  }  |\nabla u|^2\,\ud x \Big) \Big( \int_{B_R^+ } u^2 x_n^{p}\,\ud x\Big)^{\frac{1}{p+1}} .
\end{align*}
Integrating the above inequality in $t$, we have
\begin{align*}
&\Big(\int_{-T}^0\int_{B_R^+  }|u(x,t)|^{\frac{2(p+2)}{p+1}} \,\ud x \ud t\Big)^{\frac{p+1}{p+2}}\\
& \le C(p)R^{\frac{p+2-n}{p+2}}  \sup_{-T<t<0}\Big( \int_{B_R^+  }u^2 x_n^{p}\,\ud x\Big)^{\frac{1}{p+2}} \Big(\int_{B_R^+ \times[-T,0]} |\nabla u|^2 \,\ud x \ud t  \Big)^{\frac{p+1}{p+2}}\\&
\le C(p) R^{\frac{p+2-n}{p+2}}\Big( \|\nabla u\|_{L^2(B_R^+ \times(-T,0])} ^2 + \sup_{-T<t<0} \int_{B_R^+  }u^2 x_n^{p}\,\ud x \Big),
\end{align*}
where we have used the Young inequality in the last inequality.
\end{proof}
For $-2<p<0$, then we have another parabolic Sobolev inequality.
\begin{thm}\label{thm:weightedsobolev2} 
For every $u\in L^\infty ((-T,0]; L^2(B_R^+,x_n^{p}\ud x)) \cap  L^2((-T,0];H^1_{0,L}(B_R^+)) $ (in particular, $u\in  \mathring V_2(Q^+_{R,T})$), where $-2<p<0$, we have
 \[
 \left(\int_{Q^+_{R,T}} |u|^{2\chi}x_n^p\ud x \ud t \right)^{\frac{1}{\chi}} \le C \|u\|_{V_2(Q^+_{R,T})}^2,
 \]
 where  $\chi =\frac{n+2p+2}{n+p}$ and $C$ depends only on $n$ and $p$  if $n\ge 3$;  while $\chi=\frac{3}{2}$ and $C=C(p)R^{\frac{p+4-n}{3}}$ with the constant $C(p)$ depending only on $p$ if $n=1,2$.
\end{thm}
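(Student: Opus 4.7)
The plan is to mimic the proof of Theorem \ref{thm:weightedsobolev}, only with a different choice of the parameter $s$ in the Hardy--Sobolev inequality, dictated by the fact that the weight $x_n^p$ now also appears on the left-hand side and that $p$ is negative.

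First, I would establish the slice-wise (spatial) inequality. The natural choice is $s=-p$, which belongs to $(0,2)$ precisely because $-2<p<0$. For $n\ge 3$, set $\alpha=\frac{n-2}{n+p}$ and $\beta=\frac{p+2}{n+p}$, so that $\alpha+\beta=1$ (and both are strictly positive since $-2<p<0$). A direct exponent check shows
\[
|u|^{2\chi}x_n^p=\bigl(|u|^{2(n+p)/(n-2)}x_n^{p}\bigr)^{\alpha}\cdot(u^2x_n^p)^{\beta}
=\Bigl(\tfrac{|u|^{2(n-s)/(n-2)}}{x_n^s}\Bigr)^{\alpha}\cdot(u^2x_n^p)^{\beta},
\]
with $\chi=\frac{n+2p+2}{n+p}$. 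H\"older's inequality with exponents $1/\alpha$ and $1/\beta$ and then the Hardy--Sobolev inequality \eqref{eq:hardysobolev} (with $s=-p$) give, at each time slice,
\[
\int_{B_R^+}|u|^{2\chi}x_n^p\,\ud x
\le C\Bigl(\int_{B_R^+}|\nabla u|^2\,\ud x\Bigr)\Bigl(\int_{B_R^+}u^2x_n^p\,\ud x\Bigr)^{(p+2)/(n+p)},
\]
the key algebraic miracle being that the exponent $\alpha(n-s)/(n-2)$ collapses to $1$.

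Next, integrate in $t\in(-T,0]$, estimating the last factor by $M:=\sup_{-T<t<0}\int_{B_R^+}u^2x_n^p\,\ud x$ uniformly. Writing $A:=\|\nabla u\|_{L^2(Q^+_{R,T})}^2$, this yields
\[
\int_{Q^+_{R,T}}|u|^{2\chi}x_n^p\,\ud x\,\ud t\le C\,A\,M^{(p+2)/(n+p)}.
\]
Raising to the power $1/\chi=\frac{n+p}{n+2p+2}$ produces $A^{(n+p)/(n+2p+2)}M^{(p+2)/(n+2p+2)}$, and since these two exponents sum to $1$, Young's inequality gives the desired bound by $A+M\le \|u\|_{V_2(Q^+_{R,T})}^2$.

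For the low-dimensional case $n=1,2$, the structure is identical but with the two-dimensional Hardy--Sobolev inequality \eqref{eq:hardysobolev-2d} in place of \eqref{eq:hardysobolev}. Taking $r=4$ and again $s=-p\in(0,2)\subset[s,\infty)$, one writes
\[
|u|^{3}x_n^p=\Bigl(\tfrac{|u|^{4}}{x_n^{-p}}\Bigr)^{1/2}(u^2x_n^p)^{1/2},
\]
applies Cauchy--Schwarz in space, and uses \eqref{eq:hardysobolev-2d} to get $\left(\int|u|^4/x_n^{-p}\right)^{1/2}\le CR^{(p+4-n)/2}\int|\nabla u|^2$. Integrating in time and taking the $2/3=1/\chi$ power produces $CR^{(p+4-n)/3}A^{2/3}M^{1/3}$, and Young's inequality with exponents $3/2$ and $3$ finishes the proof with the stated $R$-factor.

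The only real hurdle is bookkeeping: verifying that $s=-p$ is the unique compatible choice (the equation $-s\alpha+p\beta=p$ with $\alpha+\beta=1$ forces $\alpha(s+p)=0$), and that the resulting H\"older exponents $\alpha,\beta$ as well as the Young exponents at the end genuinely lie in $(0,1)$ and sum to $1$. These are all guaranteed precisely by the hypothesis $-2<p<0$, which is why the theorem is stated in that range.
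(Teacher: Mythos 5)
Your proposal is correct and follows essentially the same route as the paper: the same pointwise factorization $|u|^{2\chi}x_n^p=\bigl(|u|^{2(n+p)/(n-2)}x_n^{p}\bigr)^{(n-2)/(n+p)}(u^2x_n^p)^{(p+2)/(n+p)}$, the Hardy--Sobolev inequality with $s=-p\in(0,2)$ (and $r=4$ when $n=1,2$), integration in time against $\sup_t\int u^2x_n^p\,\ud x$, and Young's inequality after taking the $1/\chi$ power. The only difference is presentational (naming $\alpha,\beta$ and noting that $s=-p$ is forced), so no further comparison is needed.
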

\begin{proof} We prove the case $n\ge 3$ first. By \eqref{eq:hardysobolev} and the H\"older inequality, we have
\begin{align*}
\int_{B_R^+  }|u|^{\frac{2(n+2+2p)}{n+p} } x_n^p \,\ud x &=\int_{B_R^+  }|u|^{2} x_n^{\frac{p(n-2)}{n+p}}|u|^{\frac{2(2+p)}{n+p} } x_n^{\frac{p(p+2)}{n+p}}\,\ud x  \\
&\le \Big(\int_{B_R^+}\frac{|u|^{\frac{2(n+p)}{n-2}}}{x_n^{-p}}\,\ud x\Big)^{\frac{n-2}{n+p}} \Big( \int_{B_R^+ } u^2 x_n^{p}\,\ud x\Big)^{\frac{2+p}{n+p}} \\&
\le C(n,p) \Big(\int_{B_R^+  } |\nabla u|^2\,\ud x\Big)   \Big( \int_{B_R^+ } u^2 x_n^{p}\,\ud x\Big)^{\frac{2+p}{n+p}}.
\end{align*}
Integrating the above inequality in $t$, we have
\begin{align*}
&\Big(\int_{-T}^0\int_{B_R^+  }|u(x,t)|^{\frac{2(n+2+2p)}{n+p}} x_n^p\,\ud x \ud t\Big)^{\frac{n+p}{n+2+2p}}\\
& \le C(n,p)  \sup_{-T<t<0}\Big( \int_{B_R^+  }u^2 x_n^{p}\,\ud x\Big)^{\frac{2+p}{n+2+2p}} \Big(\int_{B_R^+ \times[-T,0]} |\nabla u|^2 \,\ud x \ud t  \Big)^{\frac{n+p}{n+2+2p}}\\&
\le C(n,p) \Big( \|\nabla u\|_{L^2(B_R^+ \times(-T,0])} ^2 + \sup_{-T<t<0} \int_{B_R^+  }u^2 x_n^{p}\,\ud x \Big),
\end{align*}
where we have used the Young inequality in the last inequality.

If $n=1,2$, using \eqref{eq:hardysobolev-2d} and the H\"older inequality, we have 
\begin{align*}
\int_{B_R^+  }|u|^{3} x_n^p\,\ud x&= \int_{B_R^+  }|u|^{2} x_n^{\frac{p}{2}} |u| x_n^{\frac{p}{2}}  \,\ud x \\&
\le \Big( \int_{B_R^+  }  \frac{|u|^{4}}{x_n^{-p}}\,\ud x \Big)^{\frac{1}{2}} \Big( \int_{B_R^+ } u^2 x_n^{p}\,\ud x\Big)^{\frac{1}{2}}  \\&
\le C R^{\frac{p+4-n}{2}}\Big( \int_{B_R^+  }  |\nabla u|^2\,\ud x \Big) \Big( \int_{B_R^+ } u^2 x_n^{p}\,\ud x\Big)^{\frac{1}{2}} .
\end{align*}
Integrating the above inequality in $t$, we have
\begin{align*}
&\Big(\int_{-T}^0\int_{B_R^+  }|u(x,t)|^{3} x_n^p\,\ud x \ud t\Big)^{\frac{2}{3}}\\
& \le C(p)R^{\frac{p+4-n}{3}} \sup_{-T<t<0}\Big( \int_{B_R^+  }u^2 x_n^{p}\,\ud x\Big)^{\frac{1}{3}} \Big(\int_{B_R^+ \times[-T,0]} |\nabla u|^2 \,\ud x \ud t  \Big)^{\frac{2}{3}}\\&
\le C(p) R^{\frac{p+4-n}{3}} \Big( \|\nabla u\|_{L^2(B_R^+ \times(-T,0])} ^2 + \sup_{-T<t<0} \int_{B_R^+  }u^2 x_n^{p}\,\ud x \Big),
\end{align*}
where we have used the Young inequality in the last inequality. Note that
\[
\frac{p+4-n}{3}>0
\]
if $-2<p<0$ and $n=1,2$.
\end{proof}

Uing the idea of Fabes-Kenig-Serapioni \cite{FKS}, we have the following  Poincar\'e inequality.
\begin{prop}\label{prop:weightedpoincare2}
Let $n\ge 1$, $p>-1$ and $r>0$. Then there exists a constant $C>0$ depending only on $n$ and $p$ such that 
\[
\int_{B_r}|u(x)-(u)_{p,r}| |x_n|^{p}\,\ud x\le C r^{1+p} \int_{B_r}|\nabla u(x)|\,\ud x
\]
for all $u\in H^1(B_r)$, where 
\[
(u)_{p,r}=\frac{\int_{B_r}u(x) |x_n|^{p}\,\ud x}{\int_{B_r} |x_n|^{p}\,\ud x}.
\]
\end{prop}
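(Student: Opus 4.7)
The approach is to represent $u-\bar u$ via a Riesz potential of $|\nabla u|$ and then establish the inequality by a uniform bound on a mixed weighted kernel. By scaling $x\mapsto rx$, both sides of the claimed inequality transform as $r^{n+p}$ times the corresponding integrals on $B_1$, so I may assume $r=1$. Set $\bar u=|B_1|^{-1}\int_{B_1}u\,\ud x$. Since $(u)_{p,1}$ is a constant,
\[
\bigl|(u)_{p,1}-\bar u\bigr|\int_{B_1}|x_n|^p\,\ud x=\Bigl|\int_{B_1}(u-\bar u)\,|x_n|^p\,\ud x\Bigr|\le \int_{B_1}|u-\bar u|\,|x_n|^p\,\ud x,
\]
so by the triangle inequality the task reduces to proving
\[
\int_{B_1}|u-\bar u|\,|x_n|^p\,\ud x\le C(n,p)\int_{B_1}|\nabla u|\,\ud x.
\]

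For this I invoke the classical pointwise Riesz-type bound for $u\in W^{1,1}(B_1)$,
\[
|u(x)-\bar u|\le C(n)\int_{B_1}\frac{|\nabla u(y)|}{|x-y|^{n-1}}\,\ud y\qquad\text{a.e.\ }x\in B_1,
\]
multiply by $|x_n|^p$, integrate over $B_1$, and exchange the order of integration by Fubini. The whole matter is then reduced to the uniform kernel bound
\[
K(y):=\int_{B_1}\frac{|x_n|^p}{|x-y|^{n-1}}\,\ud x\le C(n,p),\qquad y\in B_1.
\]

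To verify this, I slice in $x_n$ and estimate the resulting $x'$-integral on an enlarged domain. For $x_n$ fixed, the $x'$-slice of $B_1$ is contained in the ball of radius $2$ in $\R^{n-1}$ centered at $y'$ (since $|x'|\le 1$ and $|y'|\le 1$), so after setting $z'=x'-y'$ and using polar coordinates in $z'$,
\[
\int_{\{x':(x',x_n)\in B_1\}}\frac{\ud x'}{|x-y|^{n-1}}\le C(n)\bigl(1+\bigl|\log|x_n-y_n|\bigr|\bigr)
\]
for $n\ge 2$ (the logarithm arises because the radial integrand behaves like $\sigma^{-1}$ for $\sigma\gg|x_n-y_n|$); for $n=1$ the step is trivial. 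Hence
\[
K(y)\le C(n)\int_{-1}^{1}|x_n|^p\bigl(1+\bigl|\log|x_n-y_n|\bigr|\bigr)\,\ud x_n.
\]
If $p\ge 0$ this is immediately bounded. For $-1<p<0$ I apply H\"older's inequality with an exponent $q\in(1,1/|p|)$, which is nonempty precisely because $p>-1$; then $|x_n|^p\in L^q(-1,1)$ and the $L^{q'}$-norm of $\log|\cdot-y_n|$ over $[-1,1]$ is uniformly bounded in $y_n\in[-1,1]$. This yields $K(y)\le C(n,p)$ uniformly, and rescaling completes the proof.

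\textbf{Main obstacle.} The delicate case is $-1<p<0$, where the weight $|x_n|^p$ blows up on the hyperplane $\{x_n=0\}$ exactly where the Riesz kernel can concentrate (when $y_n$ is small), so the two singularities interact. The assumption $p>-1$ is what gives just enough room for H\"older's inequality to decouple $|x_n|^p$ from the logarithmic singularity produced by the transverse $x'$-integration; this is the only place in the argument where the full range $p>-1$, rather than merely $p\ge 0$, is used.
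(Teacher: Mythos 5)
Your proposal is correct and follows essentially the same route as the paper: reduce to $r=1$, compare the weighted average with the unweighted one by the triangle inequality, use the Riesz-potential bound for $u-\bar u$ (the paper cites Lemma 1.4 of Fabes--Kenig--Serapioni), and conclude via Fubini and a uniform bound on the kernel $\int_{B_1}|x_n|^p\,|x-y|^{1-n}\,\ud x$, with $p>-1$ absorbing the logarithmic singularity produced by the $x'$-integration. The only difference is minor and technical: the paper reduces the kernel bound to the origin-centered case and integrates $|x_n|^p\,|\log|x_n||$ directly, whereas you keep the dependence on $y$ and decouple $|x_n|^p$ from $\log|x_n-y_n|$ by H\"older's inequality.
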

\begin{proof}
By scaling, we only need to prove it for $r=1$. By a density argument, we only need to show it for Lipschitz continuous (in $\overline B_1$) functions.

Using the triangle inequality and Lemma 1.4 of  Fabes-Kenig-Serapioni \cite{FKS}, we have for all $x\in B_1$ that
\[
\left|u(x)-(u)_{0}\right|\le \frac{1}{|B_1|}\int_{B_1}|u(x)-u(y)|\,\ud y\le C \int_{B_1}\frac{|\nabla u(z)|}{|x-z|^{n-1}}\,\ud z.
\]
where 
\[
(u)_{0}= \frac{1}{|B_1|}\int_{B_1}u(y)\,\ud y.
\]
Then
\[
\int_{B_1} \left|u(x)-(u)_{0}\right| |x_n|^p\,\ud x\le C \int_{B_1}\left(\int_{B_1}\frac{|x_n|^p}{|x-z|^{n-1}} \,\ud x\right) |\nabla u(z)|\,\ud z.
\]
Since 
\begin{align*}
\int_{B_1}\frac{|x_n|^p}{|x-z|^{n-1}} \,\ud x &\le \int_{\{|x_n\le 1, |x'|\le 1|\}}\frac{|x_n|^p}{|x-z|^{n-1}} \,\ud x\\
&\le C\int_{\{|x_n\le 1, |x'|\le 1|\}}\frac{|x_n|^p}{|x|^{n-1}} \,\ud x\\
&=C\int_{-1}^1 \left(\int_{|x'|\le \frac{1}{|x_n|}}\frac{1}{(1+|x'|^2)^{\frac{n-1}{2}}}\,\ud x'\right) |x_n|^p\,\ud x_n\\
&\le C\int_{-1}^1 |\log |x_n||\cdot |x_n|^p\,\ud x_n\\
&\le C,
\end{align*}
where we used $p>-1$ in the last inequality,  we have
\[
\int_{B_1} \left|u(x)-(u)_{0}\right| |x_n|^p\,\ud x\le C \int_{B_1}|\nabla u(z)|\,\ud z.
\]
Then the conclusion follows from the fact that
\begin{align*}
|(u)_{p,1}-(u)_{0}|=\left|\frac{\int_{B_1} (u-(u)_{0}) |x_n|^{p}\,\ud x}{\int_{B_1} |x_n|^{p}\,\ud x} \right|\le C \int_{B_1} |u-(u)_{0}| |x_n|^{p}\,\ud x\le C \int_{B_1}|\nabla u(z)|\,\ud z.
\end{align*}
\end{proof}

The last inequality is a De Giorgi type isoperimetric inequality.

\begin{thm}\label{thm:degiorgiisoperimetricelliptic}
Let $p>-1$, $k<\ell, r>0$ and $u\in H^1(B_r)$. For every
$
0<\va<\min\left(\frac{1}{2},\frac{1}{2(p+1)}\right),
$
there exists a positive constant $C$ depending only on $n,p$ and $\va$ such that
\begin{align*}
&(\ell-k)\int_{\{u\ge l\}\cap B_r}  |x_n|^{p}\,\ud x\int_{\{u\le k\}\cap B_r} |x_n|^{p}\,\ud x\\
& \le C r^{n+2p+1+\frac{n(1-2\va)}{2}-\va p} \left(\int_{\{k<u<\ell\}\cap B_r} |\nabla  u|^2\,\ud x\right)^{1/2} \left(\int_{\{k<u<\ell\}\cap B_r}|x_n|^{p}\,\ud x\right)^{\va}.
\end{align*}
\end{thm}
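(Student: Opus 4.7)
The plan is to reduce the isoperimetric-type estimate to the weighted Poincar\'e inequality in Proposition \ref{prop:weightedpoincare2} applied to a suitable truncation of $u$, and then use two Hölder-type inequalities to move from an $L^1$ gradient bound to an $L^2$ one and to convert unweighted Lebesgue measure into the weighted measure $|x_n|^p\,\ud x$.

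First, I would introduce the standard De Giorgi truncation
\[
v := (u-k)^+ - (u-\ell)^+,
\]
which equals $0$ on $\{u\le k\}$, equals $u-k$ on $\{k<u<\ell\}$, and equals $\ell-k$ on $\{u\ge\ell\}$, so $v\in H^1(B_r)$ with $\nabla v=\nabla u\,\mathbf{1}_{\{k<u<\ell\}}$. Applying Proposition \ref{prop:weightedpoincare2} to $v$, setting $\bar v=(v)_{p,r}$, gives
\[
\int_{B_r}|v-\bar v|\,|x_n|^p\,\ud x\le Cr^{1+p}\int_{\{k<u<\ell\}\cap B_r}|\nabla u|\,\ud x.
\]
Now do the usual case split on $\bar v$. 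If $\bar v\le(\ell-k)/2$, then $v-\bar v\ge(\ell-k)/2$ on $\{u\ge\ell\}$, which yields $(\ell-k)\int_{\{u\ge\ell\}\cap B_r}|x_n|^p\,\ud x\le 2Cr^{1+p}\int_{\{k<u<\ell\}\cap B_r}|\nabla u|\,\ud x$; if instead $\bar v>(\ell-k)/2$, then on $\{u\le k\}$ we have $\bar v-v>(\ell-k)/2$ and the analogous bound holds for $\int_{\{u\le k\}\cap B_r}|x_n|^p\,\ud x$. In either case
\[
(\ell-k)\min\!\left(\int_{\{u\ge\ell\}\cap B_r}|x_n|^p\,\ud x,\int_{\{u\le k\}\cap B_r}|x_n|^p\,\ud x\right)\le Cr^{1+p}\int_{\{k<u<\ell\}\cap B_r}|\nabla u|\,\ud x.
\]
Multiplying both sides by the other (larger) factor and bounding it crudely by $\int_{B_r}|x_n|^p\,\ud x= Cr^{n+p}$ gives
\[
(\ell-k)\int_{\{u\ge\ell\}\cap B_r}|x_n|^p\,\ud x\int_{\{u\le k\}\cap B_r}|x_n|^p\,\ud x\le Cr^{n+2p+1}\int_{\{k<u<\ell\}\cap B_r}|\nabla u|\,\ud x.
\]

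Next, by the Cauchy--Schwarz inequality,
\[
\int_{\{k<u<\ell\}\cap B_r}|\nabla u|\,\ud x\le |\{k<u<\ell\}\cap B_r|^{1/2}\left(\int_{\{k<u<\ell\}\cap B_r}|\nabla u|^2\,\ud x\right)^{1/2}.
\]
Finally, I would convert the Lebesgue measure of $E:=\{k<u<\ell\}\cap B_r$ to the weighted measure. Writing $1=|x_n|^{p\cdot 2\va}\cdot |x_n|^{-p\cdot 2\va}$ and applying Hölder with exponents $1/(2\va)$ and $1/(1-2\va)$ yields
\[
|E|\le \left(\int_E|x_n|^p\,\ud x\right)^{2\va}\left(\int_E|x_n|^{-s}\,\ud x\right)^{1-2\va},\qquad s=\frac{2p\va}{1-2\va}.
\]
The constraint $\va<\min(1/2,1/(2(p+1)))$ is exactly what guarantees $s<1$ when $p>0$ (for $-1<p\le 0$ the exponent $-s$ is nonnegative so no issue), so $|x_n|^{-s}$ is locally integrable and a scaling computation gives $\int_{B_r}|x_n|^{-s}\,\ud x\le Cr^{n-s}$. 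Thus
\[
|E|^{1/2}\le Cr^{(n-s)(1-2\va)/2}\!\left(\int_E|x_n|^p\,\ud x\right)^\va=Cr^{\,n(1-2\va)/2-p\va}\!\left(\int_E|x_n|^p\,\ud x\right)^\va,
\]
since $(n-s)(1-2\va)/2=n(1-2\va)/2-p\va$. Combining the three estimates gives exactly the claimed exponent $n+2p+1+n(1-2\va)/2-\va p$.

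The step I expect to demand the most care is the final Hölder conversion: one must check that the parameter range $\va<\min(1/2,1/(2(p+1)))$ is precisely what is needed to keep $|x_n|^{-s}$ locally integrable across the full range $p>-1$ (in particular confirming that for $-1<p\le 0$ the bound $\va<1/2$ alone suffices because $s\le 0$), and that the arithmetic of the resulting exponent in $r$ produces the stated expression. Everything else is a standard De Giorgi/Poincaré argument together with Cauchy--Schwarz.
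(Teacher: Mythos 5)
Your proof is correct and follows essentially the same route as the paper: the same truncation of $u$ at levels $k$ and $\ell$, the same application of the weighted Poincar\'e inequality of Proposition \ref{prop:weightedpoincare2}, and the same H\"older conversion of $\int_{\{k<u<\ell\}}|\nabla u|$ into the $L^2$ gradient term and the weighted measure $\left(\int_{\{k<u<\ell\}}|x_n|^p\right)^{\va}$, with the identical integrability constraint on $\va$ and the same final exponent of $r$. The only cosmetic differences are that the paper bounds $\int_{\{v=0\}\cap B_r}(v)_{p,r}|x_n|^p$ from below directly (so no case split on $\bar v$ is needed) and merges your Cauchy--Schwarz plus two-factor H\"older into a single three-factor H\"older inequality.
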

\begin{proof}
Let
\[
v=\sup(k, \inf(u,\ell))-k,\quad (v)_{p,r}=\frac{\int_{B_r} v(x) |x_n|^{p}\,\ud x}{\int_{B_r} |x_n|^{p}\,\ud x}.
\]
Then by Proposition \ref{prop:weightedpoincare2},
\begin{align*}
\int_{\{v=0\}\cap B_r} (v)_{p,r}  |x_n|^{p}\,\ud x&\le \int_{B_r}|v(x)-(v)_{p,r}| |x_n|^{p}\,\ud x\\
&\le Cr^{1+p} \int_{B_r}|\nabla v(x)| \,\ud x\\
&=Cr^{1+p} \int_{\{k<u<\ell\}\cap B_r}|\nabla u(x)| \,\ud x.
\end{align*}
Using H\"older's inequality, we have
\begin{align*}
&\int_{\{k<u<\ell\}\cap B_r}|\nabla u(x)| \,\ud x \\
& \le C \left(\int_{\{k<u<\ell\}\cap B_r} |\nabla u|^2 \,\ud x\right)^{1/2}  \left(\int_{\{k<u<\ell\}\cap B_r}|x_n|^{p}\,\ud x\right)^{\va} \left(\int_{\{k<u<\ell\}\cap B_r}|x_n|^{-\frac{2p\va}{1-2\va}}\,\ud x\right)^{\frac{1-2\va}{2}}\\
& \le C r^{\frac{n(1-2\va)}{2}-\va p} \left(\int_{\{k<u<\ell\}\cap B_r} |\nabla  u|^2\,\ud x\right)^{1/2} \left(\int_{\{k<u<\ell\}\cap B_r}|x_n|^{p}\,\ud x\right)^{\va},
\end{align*}
where we used
$
0<\va<\min\left(\frac{1}{2},\frac{1}{2(p+1)}\right),
$
so that we can use H\"older's inequality and $|x_n|^{-\frac{2p\va}{p-2\va}}$ is integrable. 

On the other hand, we have
\begin{align*}
\int_{\{v=0\}\cap B_r} (v)_{p,r}  |x_n|^{p}\,\ud x&=\frac{\int_{B_r} v(x) |x_n|^{p}\,\ud x}{\int_{B_r} |x_n|^{p} \,\ud x}\cdot \int_{\{u\le k\}\cap B_r} |x_n|^{p} \,\ud x\\
& \ge \frac{(\ell-k)\int_{\{u\ge l\}\cap B_r}  |x_n|^{p} \,\ud x}{\int_{B_r} |x_n|^{p} \,\ud x}\cdot \int_{\{u\le k\}\cap B_r} |x_n|^{p} \,\ud x\\
&\ge C r^{-n-p} (\ell-k)\int_{\{u\ge l\}\cap B_r}  x_n^{p}\,\ud x\int_{\{u\le k\}\cap B_r} x_n^{p}\,\ud x.
\end{align*}
Hence, the conclusion follows. 
\end{proof}

\section{Weak solutions}\label{sec:weaksolution}

\subsection{Definitions}

Regarding the coefficients of the equation \eqref{eq:linear-eq}, besides \eqref{eq:rangep}, we assume that 
\begin{itemize}
\item there exist $0<\lambda\le\Lambda<\infty$ such that 
\be \label{eq:ellip2}
\lda\le a(x,t)\le \Lda,  \quad \lda |\xi|^2 \le \sum_{i,j=1}^na_{ij}(x,t)\xi_i\xi_j\le \Lda |\xi|^2, \quad\forall\ (x,t)\in Q_1^+,\ \forall\ \xi\in\R^n;
\ee
\item 
\begin{equation}\label{eq:assumptioncoefficient}
\Big\||\pa_t a| + |c|\Big\|_{L^q(Q_1^+,x_n^p\ud x\ud t)} +\left\|\sum_{j=1}^n(b_j^2+d_j^2)+|c_0|\right\|_{L^q(Q_1^+)}\le\Lambda 
\end{equation}
for some $q>\frac{\chi}{\chi-1}$;
\item
\begin{equation}\label{eq:assumptionf}
F_0:=\|f\|_{L^\frac{2\chi}{2\chi-1}(Q_1^+,x_n^p\ud x\ud t)}+\|f_0\|_{L^\frac{2\chi}{2\chi-1}(Q_1^+)}+ \sum_{j=1}^n\|f_j\|_{L^2(Q_1^+)}<\infty,
\end{equation}
\end{itemize}
where $\chi>1$ is the constant in Theorem \ref{thm:weightedsobolev} or Theorem \ref{thm:weightedsobolev2} depending on the value of $p$.

\begin{defn}\label{defn:weaksolutionp}
We say $u$ is a weak solution of \eqref{eq:linear-eq} with the partial boundary condition \eqref{eq:linear-eq-D} if $u\in C ((-1,0]; L^2(B_1^+,x_n^{p}\ud x)) \cap  L^2((-1,0];H_{0,L}^1(B_1^+)) $ and satisfies
\begin{equation}\label{eq:definitionweaksolution}
\begin{split}
&\int_{B^+_1}a(x,s) x_n^{p}  u(x,s) \varphi(x,s)\,\ud x-\int_{-1}^s\int_{B_1^+} x_n^{p}(\varphi\partial_t a+a\partial_t \varphi)u\,\ud x\ud t\\
&\quad+ \int_{-1}^s\int_{B_1^+} \big(a_{ij}D_iuD_j\varphi+d_juD_j\varphi+b_jD_ju\varphi+c x_n^p u \varphi+c_0u\varphi\big)\,\ud x\ud t\\
&=\int_{-1}^s\int_{B_1^+} (x_n^p f\varphi+f_0\varphi+f_jD_j\varphi)\,\ud x\ud t\quad\mbox{a.e. }s\in (-1,0]
\end{split}
\end{equation}
for every $\varphi\in \mathring V^{1,1}_2(Q^+_1)$ satisfying $\varphi (\cdot,-1)\equiv 0$ in $B_1^+$ (in the trace sense).
\end{defn}
Using Theorem \ref{thm:weightedsobolev} and Theorem \ref{thm:weightedsobolev2}, one can verify that  under the assumptions \eqref{eq:rangep}, \eqref{eq:ellip2}, \eqref{eq:assumptioncoefficient} and  \eqref{eq:assumptionf}, each integral in \eqref{eq:definitionweaksolution} is finite. 

\begin{defn}\label{defn:weaksolutionwithinitialtime}
We say that $u$ is a weak solution of \eqref{eq:linear-eq} with the partial boundary condition \eqref{eq:linear-eq-D} and the initial condition $u(\cdot,-1)\equiv 0$, if  $u\in C ([-1,0]; L^2(B_1^+,x_n^{p}\ud x)) \cap  L^2((-1,0];H_{0,L}^1(B_1^+)) $, $u(\cdot,-1)\equiv 0$, and satisfies \eqref{eq:definitionweaksolution} for all $\varphi\in \mathring V^{1,1}_2(Q^+_1)$. 
\end{defn}

\begin{defn}\label{defn:weaksolutionglobal}
We say that $u$ is a weak solution of \eqref{eq:linear-eq} with the full boundary condition $u\equiv 0$ on $\pa_{pa} Q_1^+$, if  $u\in \mathring V_2^{1,0}(Q^+_{1})$, $u(\cdot,-1)\equiv 0$, and satisfies \eqref{eq:definitionweaksolution} for all $\varphi\in \mathring V^{1,1}_2(Q^+_1)$.
\end{defn}


\begin{defn}\label{defn:weaksolutionglobalin}
Let $g\in V^{1,1}_2(Q_1^+)$. We say that $u$ is a weak solution of \eqref{eq:linear-eq} with the inhomogeneous boundary condition $u\equiv g$ on $\pa_{pa} Q_1^+$, if  $u\in V_2^{1,0}(Q^+_{1})$, $u= g$ on $\pa_{pa} Q_1^+$, and $v:=u-g$ is a weak solution of
\begin{align*}
&a x_n^{p} \pa_t v -D_j(a_{ij} D_i v+d_j v)+b_iD_i v+cx_n^pv+c_0 v\\
&=x_n^{p} (f-a \pa_t g -cg) + (f_0-b_iD_i g -c_0 g)- D_i(f_i-a_{ij} D_i g-d_j g).
\end{align*}
with homogeneous boundary condition $v\equiv 0$ on $\pa_{pa} Q_1^+$.
\end{defn}

\subsection{Energy estimates, uniqueness and existence}
We start with energy estimates.
\begin{lem}\label{lem:Steklovapproximation}
Suppose $u\in C ([-1,0]; L^2(B_1^+,x_n^{p}\ud x)) \cap  L^2((-1,0];H_{0,L}^1(B_1^+))$ is a weak solution of \eqref{eq:linear-eq} with the partial boundary condition \eqref{eq:linear-eq-D}, where the coefficients of the equation satisfy \eqref{eq:rangep}, \eqref{eq:ellip2},  \eqref{eq:assumptioncoefficient} and \eqref{eq:assumptionf}. 
Let $k\ge \sup_{\pa_{pa}Q_1^+}|u|$ and $\varphi=(u-k)^+$. Then there exists $C>0$ depending only on $n,p,\lambda,\Lambda$ such that
 \begin{equation}\label{eq:uastestfunction}
\begin{split} 
&\int_{B^+_1}x_n^{p}  \varphi(x,s)^2\,\ud x+ \int_{-1}^s\int_{B_1^+} |\nabla\varphi|^2\,\ud x\ud t \\
&\le C\int_{-1}^s\int_{B_1^+} \varphi^2 \Big[(|\partial_t a|+|c|)x_n^p+\sum_{j=1} ^n(d_j^2+b_j^2)+|c_0|\Big]\,\ud x\ud t\\
&\quad + C\int_{-1}^s\int_{B_1^+\cap\{u>k\}}  k^2 \Big(|c|x_n^p+|c_0|+\sum_j d_j^2\Big)\,\ud x\ud t\\
&\quad+C\int_{-1}^s\int_{B_1^+\cap\{u>k\}} \Big(x_n^p f\varphi + f_0\varphi+\sum_{j=1}^n f_j^2\Big)\,\ud x\ud t\quad\mbox{a.e. }s\in (-1,0].
\end{split}
\end{equation}
\end{lem}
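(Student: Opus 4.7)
My plan is to test the weak form \eqref{eq:definitionweaksolution} with $\varphi := (u-k)^+$. The hypothesis $k \geq \sup_{\partial_{pa}Q_1^+}|u|$ guarantees $\varphi \equiv 0$ on $\partial B_1^+\times[-1,0]$ and at $t=-1$, and Lemma \ref{lem:cutoffconstantspace} gives $\varphi \in V_2^{1,0}(Q_1^+)$. The obstruction is that admissible test functions in Definition \ref{defn:weaksolutionp} must lie in $\mathring V^{1,1}_2(Q_1^+)$, which requires $\partial_t \varphi \in L^2(Q_1^+, x_n^p\,\ud x\,\ud t)$ — regularity we do not have a priori for $(u-k)^+$. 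I would resolve this by time-Steklov averaging: let $u_h$ be the Steklov average of $u$, which lies in $V^{1,1}_2$ by Lemma \ref{lem:steklovaverageconvergence}, set $\varphi_h:=(u_h-k)^+ \in \mathring V^{1,1}_2$, plug $\varphi_h$ into a Steklov-averaged version of \eqref{eq:definitionweaksolution}, and pass $h\to 0^+$ using Lemmas \ref{lem:steklovaverageconvergence} and \ref{lem:cutoffconstantconvergence}.

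With this regularization, the parabolic term, after integration by parts in time, produces in the limit
\[
\frac{1}{2}\int_{B_1^+} a x_n^p \varphi(s)^2\,\ud x - \frac{1}{2}\int_{-1}^s\int_{B_1^+} x_n^p (\partial_t a)\varphi^2\,\ud x\,\ud t,
\]
with the boundary contribution at $t=-1$ vanishing. By ellipticity $a \geq \lambda$ from \eqref{eq:ellip2}, the first term controls the weighted $L^2$ contribution on the left of \eqref{eq:uastestfunction}, while the second is bounded by the $|\partial_t a|x_n^p\varphi^2$ piece of the right-hand side. For the spatial part, the identities $D_j\varphi = \chi_{\{u>k\}} D_j u$ and $u = \varphi + k$ on $\{u>k\}$ give
\[
\int a_{ij} D_i u\, D_j\varphi = \int a_{ij} D_i\varphi\, D_j\varphi \;\ge\; \lambda \int |\nabla\varphi|^2,
\]
supplying the gradient term on the left. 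The remaining lower-order contributions $\int d_j(\varphi+k)D_j\varphi$, $\int b_i D_i\varphi\cdot\varphi$, $\int c\, x_n^p(\varphi+k)\varphi$, $\int c_0(\varphi+k)\varphi$, together with the sources $\int f_j D_j\varphi$, $\int x_n^p f\varphi$, $\int f_0\varphi$, are all estimated by Young's inequality $|ab|\le \eta a^2 + C(\eta) b^2$, with $\eta$ chosen small to absorb every $|\nabla\varphi|^2$-piece into the left-hand side. The residual quantities then distribute exactly into the three right-hand-side integrals of \eqref{eq:uastestfunction}: coefficient-times-$\varphi^2$ terms, $k^2$-times-coefficient terms on $\{u>k\}$, and the source terms.

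The main technical obstacle will be the clean limit passage $h\to 0^+$ in the Steklov-averaged equation: since the coefficients $a$, $a_{ij}$, $d_j$, $b_i$, $c$, $c_0$ are only measurable in $t$, the averaged equation involves averaged coefficients whose convergence must be coordinated term-by-term with $\varphi_h \to \varphi$ in $V_2^{1,0}$ guaranteed by Lemmas \ref{lem:steklovaverageconvergence} and \ref{lem:cutoffconstantconvergence}. The principal term requires in particular strong convergence of $D\varphi_h$ on the truncation set $\{u_h>k\}$ in order to recover the coercive quadratic form $\int a_{ij} D_i\varphi D_j\varphi$ in the limit, which is where the bulk of the technical bookkeeping lies. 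Once the limit is justified, assembling the estimates above gives \eqref{eq:uastestfunction}.
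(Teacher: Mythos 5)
Your proposal follows essentially the same route as the paper's proof: legitimize $(u-k)^+$ via Steklov averaging in time (the paper obtains your ``Steklov-averaged version'' of \eqref{eq:definitionweaksolution} by testing with $v_{-h}$ and changing the order of integration, then takes $v=\xi_\delta(t)(u_h-k)^+$), and then use ellipticity together with Cauchy--Schwarz/Young to absorb the gradient terms and distribute the remainders into the three right-hand-side integrals. The only remark is that the delicate point in the limit $h\to 0$ is not the convergence of $D(u_h-k)^+$ on the truncation set (which Lemmas \ref{lem:cutoffconstantconvergence} and \ref{lem:steklovaverageconvergence} handle), but the commutator $u(\cdot,t-h)\partial_t a_h-(u\partial_t a)_h$ produced by the $a\,x_n^{p}\partial_t u$ structure, which the paper controls by approximating $u$ in $L^{2\chi}(Q_1^+,x_n^p\,\ud x\ud t)$ by smooth functions, together with the time cutoff $\xi_\delta$ near $t=-1$ and $t=s$ that uses $u\in C([-1,0];L^2(B_1^+,x_n^p\ud x))$ and $k\ge\sup_{\pa_{pa}Q_1^+}|u|$ to recover the two terms on the left of \eqref{eq:uastestfunction}.
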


\begin{proof}
If $u\in V^{1,1}_2(Q^+_1)$ (cf. \eqref{eq:standardspaceweighted}), then $\varphi\in \mathring V^{1,1}_2(Q^+_1)$ and $\varphi (\cdot,-1)\equiv 0$ in $B_1^+$. Then \eqref{eq:uastestfunction} follows from \eqref{eq:definitionweaksolution}, by using \eqref{eq:ellip2} and H\"older's inequality.

In the following, we will show that we do not need to assume $u\in V^{1,1}_2(Q^+_1)$, and that $u\in C ([-1,0]; L^2(B_1^+,x_n^{p}\ud x)) \cap  L^2((-1,0];H_{0,L}^1(B_1^+))$ would be sufficient.

Denote
\[
u_h(x,t)=\frac{1}{h}\int_{t-h}^{t}u(x,s)\,\ud s
\]
as the Steklov average of $u$. Then for every $v\in V_2^{1,1}(B^+_1\times(-1+h,0))$ such that $v=0$ on $\pa (B^+_1\times(-1+h,0))$, by taking $v_{-h}$ as the test function in \eqref{eq:definitionweaksolution}, we have 
\[
\begin{split}
&-\iint_{Q_1^+} x_n^{p}(v_{-h}\partial_t a+a\partial_t v_{-h})u\,\ud x\ud t\\
&+ \iint_{Q_1^+} \big(a_{ij}D_iuD_jv_{-h}+d_j u D_j v_{-h}+b_jD_juv_{-h}+cx_n^puv_{-h}+c_0uv_{-h}\big)\,\ud x\ud t\\
&=\iint_{Q_1^+} (x_n^pfv_{-h}+f_0v_{-h}+f_jD_jv_{-h})\,\ud x\ud t.
\end{split}
\]
By changing the order of the integration, we have
\begin{align}
&\iint_{Q_1^+} \big(a_{ij}D_iuD_jv_{-h}+d_j u D_j v_{-h}+b_jD_juv_{-h}+cx_n^puv_{-h}+c_0uv_{-h}\big)\,\ud x\ud t\nonumber\\
&\quad= \iint_{B_1^+\times(-1+h,0)} \big((a_{ij}D_iu+d_j u)_h D_jv+(b_jD_ju+cx_n^p u+c_0u)_hv\big)\,\ud x\ud t,\label{eq:localmaxapp1}\\
&\iint_{Q_1^+} (x_n^pfv_{-h}+f_0v_{-h}+f_jD_jv_{-h})\,\ud x\ud t\nonumber\\
&\quad=\iint_{B_1^+\times(-1+h,0)} ((x_n^pf+f_0)_hv+(f_j)_hD_jv)\,\ud x\ud t,\label{eq:localmaxapp2}
\end{align}
and 
\begin{align}
&-\iint_{Q_1^+} x_n^{p}(v_{-h}\partial_t a+a\partial_t v_{-h})u\,\ud x\ud t\nonumber\\
&=\iint_{B_1^+\times(-1+h,0)} x_n^{p}v\{\partial_t [(au)_{h}]-(u\partial_t a)_h\}\,\ud x\ud t\nonumber\\
&= \iint_{B_1^+\times(-1+h,0)} x_n^{p}v\{a\partial_t u_h+u(\cdot,t-h)\partial_t a_h-(u\partial_t a)_h\}\,\ud x\ud t.\label{eq:localmaxapp3}
\end{align}
Furthermore,
\begin{align}
 &\iint_{B_1^+\times(-1+h,0)} x_n^{p}v\{u(\cdot,t-h)\partial_t a_h-(u\partial_t a)_h\}\,\ud x\ud t\nonumber\\
 &=\frac{1}{h}\iint_{B_1^+\times(-1+h,0)} x_n^{p}v \int_{t-h}^{t}[u(x,t-h)-u(x,s)]\partial_s a(x,s)\,\ud s\ud x\ud t\nonumber\\
 &\to 0\quad\mbox{as }h\to 0.\label{eq:localmaxapp4}
\end{align}
The proof of \eqref{eq:localmaxapp4} is as follows. By Theorem \ref{thm:weightedsobolev} and Theorem \ref{thm:weightedsobolev2},  $u\in L^{2\chi}(Q_1^+,x_n^p\ud x\ud t)$. For every $\va>0$, there exists $\phi\in C^1(\overline Q_1^+)$ such that
\[
\|u-\phi\|_{L^{2\chi}(Q_1^+,x_n^p\ud x\ud t)}\le\va.
\]
Using $\phi\in C^1(\overline Q_1^+)$ and the dominated convergence theorem,
\[
\lim_{h\to 0}\frac{1}{h}\iint_{B_1^+\times(-1+h,0)} x_n^{p}v \int_{t-h}^{t}[\phi(x,t-h)-\phi(x,s)]\partial_s a(x,s)\,\ud s\ud x\ud t=0.
\]
Then
\begin{align*}
&\lim_{h\to 0^+}\left|\frac{1}{h}\iint_{B_1^+\times(-1+h,0)} x_n^{p}v \int_{t-h}^{t}[u(x,t-h)-u(x,s)]\partial_s a(x,s)\,\ud s\ud x\ud t\right|\\
&\le \|u-\phi\|_{L^{2\chi}(Q_1^+,x_n^p\ud x\ud t)}\|v\|_{L^{2\chi}(Q_1^+,x_n^p\ud x\ud t)}\|\pa_t a\|_{L^{\frac{\chi}{\chi-1}}(Q_1^+,x_n^p\ud x\ud t)}\\
&\le C(n,p,\lambda,\Lambda)\va.
\end{align*}
Since $\va$ is arbitrary, the conclusion \eqref{eq:localmaxapp4} follows.

For $0<\delta<1/4$, $3\delta-1<\tau<0$, define
\[
\xi_\delta(t)=
\begin{cases}
0, \mbox{ when } t<\delta-1,\\
\frac{t+\tau+\delta}{\delta}, \mbox{ when } \delta-1 \le t<2\delta-1,\\
1, \mbox{ when } 2\delta-1\le t<\tau-\delta, \\
\frac{-t}{\delta}, \mbox{ when } \tau-\delta\le t<\tau,\\
0, \mbox{ when } t>\tau.
\end{cases}
\]
Take $v=\xi_\delta(t)(u_h-k)^+$. Combining \eqref{eq:localmaxapp3} and \eqref{eq:localmaxapp4}, and using $u\in C ([-1,0]; L^2(B_1^+,x_n^{p}\ud x))$, Lemma \ref{lem:steklovaverageconvergence}, Theorem \ref{thm:weightedsobolev} and Theorem \ref{thm:weightedsobolev2},   we have
\begingroup
\allowdisplaybreaks
\begin{align}
&-\lim_{h\to 0}\iint_{Q_1^+} x_n^{p}(v_{-h}\partial_t a+a\partial_t v_{-h})u\,\ud x\ud t\nonumber\\
&= \lim_{h\to 0}\iint_{B_1^+\times(-1+h,0)} x_n^{p}v a\partial_t u_h\,\ud x\ud t\nonumber\\
&=\lim_{h\to 0}\frac12 \iint_{B_1^+\times(-1+h,0)} x_n^{p}a(x,t)\xi_\delta(t)\partial_t [(u_h-k)^+]^2\,\ud x\ud t\nonumber\\
&=-\lim_{h\to 0}\frac12 \iint_{B_1^+\times(-1+h,0)} x_n^{p}\big\{[(u_h-k)^+]^2 a\pa_t\xi_\delta(t) +[(u_h-k)^+]^2\xi_\delta(t) \pa_ta\big\} \,\ud x\ud t\nonumber\\
&\ge -\frac{\Lambda}{2\delta} \iint_{B_1^+\times(-1+\delta,-1+2\delta)} x_n^{p}[(u-k)^+]^2 +\frac{\lambda}{2\delta} \iint_{B_1^+\times(\tau-\delta,\tau)} x_n^{p}[(u-k)^+]^2  \nonumber\\
&\quad-\frac12 \iint_{B_1^+\times(-1,0)}  [(u-k)^+]^2 |\pa_ta|\,\ud x\ud t\nonumber\\
&\to \left.\frac{\lambda}{2} \int_{B_1^+} x_n^{p}[(u-k)^+]^2 \,\ud x\right\vert_{\tau}-\frac12 \iint_{B_1^+\times(-1,0)}  [(u-k)^+]^2 |\pa_ta|\,\ud x\ud t \quad\mbox{as }\delta\to 0.\label{eq:localmaxapp22}
\end{align}
\endgroup
Also, by the proof of Lemma \ref{lem:steklovaverageconvergence}, we have 
\begin{align*}
&\lim_{\delta\to 0} \lim_{h\to 0}\iint_{B_1^+\times(-1+h,0)} \big((a_{ij}D_iu)_h D_jv+(d_j u)_h D_j v+(b_jD_ju+cx_n^p u+c_0u)_hv\big)\,\ud x\ud t\\
&\quad =\iint_{B_1^+\times(-1,\tau)} \big[(a_{ij}D_iu+ d_j u) D_j(u-k)^+   +(b_jD_ju+cx_n^p u+c_0u)(u-k)^+\big]\,\ud x\ud t,\\
& \lim_{\delta\to 0} \lim_{h\to 0} \iint_{B_1^+\times(-1+h,0)} ((x_n^pf+f_0)_hv+(f_j)_hD_jv)\,\ud x\ud t\\
&\quad = \iint_{B_1^+\times(-1,\tau)} ((x_n^pf+f_0)(u-k)^++f_jD_j(u-k)^+)\,\ud x\ud t.
\end{align*}
Therefore,  \eqref{eq:uastestfunction} follows from \eqref{eq:localmaxapp1}, \eqref{eq:localmaxapp2},  \eqref{eq:localmaxapp22}, and the Cauchy-Schwarz inequality.
\end{proof}

We have the following uniqueness of weak solutions.

\begin{thm}\label{thm:uniquenessofweaksolution}
Suppose $u$ is a weak solution of \eqref{eq:linear-eq} with the full boundary condition $u\equiv 0$ on $\pa_{pa} Q_1^+$, where the coefficients of the equation satisfy \eqref{eq:rangep}, \eqref{eq:ellip2},  \eqref{eq:assumptioncoefficient} and \eqref{eq:assumptionf}. Then there exists $C>0$ depending only on $n$, $\lda, \Lda$ and $p$ such that
\begin{equation}\label{eq:energyestimateu}
\|u\|_{V_2(Q_1^+)}\le C \|f\|_{L^\frac{2\chi}{2\chi-1}(Q_1^+,x_n^p\ud x\ud t)}+C\|f_0\|_{L^\frac{2\chi}{2\chi-1}(Q_1^+)}+ C\sum_{j=1}^n\|f_j\|_{L^2(Q_1^+)}.
\end{equation}
Consequently,
there exists at most one weak solution of \eqref{eq:linear-eq} with the full boundary condition $u\equiv 0$ on $\pa_{pa} Q_1^+$.
\end{thm}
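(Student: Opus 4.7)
The plan is to derive \eqref{eq:energyestimateu} from the Caccioppoli-type estimate in Lemma \ref{lem:Steklovapproximation} applied with $k=0$, then iterate on short time slabs; uniqueness follows because the difference of two solutions with identical data satisfies the fully homogeneous problem and therefore has zero $V_2$-norm by \eqref{eq:energyestimateu}.

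Since $u\equiv 0$ on $\pa_{pa}Q_1^+$, the level $k=0$ is admissible in Lemma \ref{lem:Steklovapproximation}, and the same is true when the lemma is applied to $-u$ (which solves \eqref{eq:linear-eq} with $f,f_0,f_i$ replaced by their negatives). Adding the two resulting inequalities, the terms proportional to $k^2$ drop out, and one obtains, for a.e.\ $s\in(-1,0]$,
\[
\int_{B_1^+}x_n^p u(x,s)^2\,\ud x+\int_{-1}^s\!\int_{B_1^+}|\nabla u|^2\,\ud x\ud t\le C\!\iint u^2W\,\ud x\ud t+C\!\iint\!\bigl(x_n^p|fu|+|f_0u|+\sum_j f_j^2\bigr)\ud x\ud t,
\]
where $W=(|\pa_t a|+|c|)x_n^p+\sum_j(b_j^2+d_j^2)+|c_0|$. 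Taking the supremum in $s$ controls the full $\|u\|_{V_2(Q_1^+)}^2$ by the same right-hand side.

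Next I bound the right-hand side by $\tfrac{1}{2}\|u\|_{V_2}^2+CF_0^2$ on a sufficiently short time slab. Fix $Q=B_1^+\times(t_0,t_1)\subset Q_1^+$. By Hölder with exponents $q$ and $q'=q/(q-1)<\chi$ (available since $q>\chi/(\chi-1)$), followed by a second Hölder that interpolates $L^{2q'}$ against the weighted Sobolev bound supplied by Theorem \ref{thm:weightedsobolev} (case $p\ge 0$, unweighted $L^{2\chi}$) or Theorem \ref{thm:weightedsobolev2} (case $-2<p<0$, $L^{2\chi}(x_n^p\,\ud x\ud t)$), one obtains
\[
\iint_Q u^2 W\,\ud x\ud t\le C\eta(Q)\|u\|_{V_2(Q)}^2,\qquad \iint_Q(x_n^p|fu|+|f_0u|)\,\ud x\ud t\le C\eta(Q)\|u\|_{V_2(Q)}F_0,
\]
with $\eta(Q)\to 0$ as $|t_1-t_0|\to 0$, thanks to absolute continuity of the $L^q$-norms of the coefficients and to the positive power of the slab measure produced by $q'<\chi$; the terms $\iint_Q f_j^2$ and the cross terms with $F_0$ are absorbed by Young's inequality. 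Partition $(-1,0]$ into finitely many subintervals $(t_i,t_{i+1})$ on each of which $C\eta\le\tfrac12$; finiteness is guaranteed by absolute continuity of the coefficient integrals. On $(-1,t_1)$ the initial data vanish, so absorption yields $\|u\|_{V_2(B_1^+\times(-1,t_1))}^2\le CF_0^2$. On each subsequent slab, Lemma \ref{lem:Steklovapproximation} (started at $t=t_i$) produces an additional term $\int_{B_1^+}x_n^p u(t_i)^2\,\ud x\le \|u\|_{V_2(B_1^+\times(-1,t_i))}^2$ that is already controlled by the induction hypothesis, so the estimate $\|u\|_{V_2}^2\le CF_0^2$ propagates to all slabs and gives \eqref{eq:energyestimateu}.

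For uniqueness, apply \eqref{eq:energyestimateu} to $w=u_1-u_2$, which is a weak solution of \eqref{eq:linear-eq} with zero full boundary condition and $f\equiv f_0\equiv f_j\equiv 0$; thus $\|w\|_{V_2(Q_1^+)}=0$, so $w\equiv 0$. The principal obstacle is the absorption step: it relies crucially on the strict inequality $q>\chi/(\chi-1)$, which produces a positive exponent of the slab measure after interpolating $L^{2q'}$ between the $V_2$-Sobolev bound and volume; for the endpoint case $q=\chi/(\chi-1)$ one would only obtain a qualitative smallness insufficient to close the estimate unless $\Lambda$ itself were assumed small.
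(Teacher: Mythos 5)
Your argument is correct and its core is the same as the paper's: take $k=0$ in Lemma \ref{lem:Steklovapproximation} (applied to $\pm u$), then control the lower-order terms by H\"older's inequality together with Theorem \ref{thm:weightedsobolev} / Theorem \ref{thm:weightedsobolev2} (and, for $p\ge 0$, Lemma \ref{lem:interpolation}), using $q>\chi/(\chi-1)$ to absorb into $\|u\|_{V_2}$. Where you diverge is the final propagation in time: the paper keeps the inequality on $(-1,s]$ only, arrives at $\|u\|^2_{V_2(B_1^+\times[-1,s])}\le C\int_{-1}^s\int_{B_1^+}x_n^pu^2+CF_0^2$, and closes with Gronwall's inequality applied to $\int_{B_1^+}x_n^pu(\cdot,s)^2$, whereas you partition $(-1,0]$ into short slabs and iterate. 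Both work; note two small points about your route. First, Lemma \ref{lem:Steklovapproximation} as stated always starts the time integration at $-1$, so the restarted inequality on $(t_i,s]$ with the extra term $\int_{B_1^+}x_n^pu(t_i)^2\,\ud x$ is not literally the lemma; it follows by rerunning the Steklov-average argument with the time cutoff $\xi_\delta$ ramping at $t_i$ instead of $-1$ (this is exactly the structure of Theorem \ref{thm:caccipolli}, with $\xi\equiv 1$ in space, admissible here since $u$ vanishes on the full lateral boundary), so you should say this rather than cite the lemma as is. Second, to keep $C$ depending only on $n,p,q,\lambda,\Lambda$ you should choose the slab length uniformly from the positive power of the slab measure coming from $q'<\chi$ (and from $\nu_p(Q)\le C|t_1-t_0|$, which uses $p>-1$), rather than from absolute continuity of the coefficient integrals, which would make the number of slabs depend on the particular coefficients. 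The Gronwall route buys you exactly this: no restart and no uniform-slab bookkeeping; your route avoids Gronwall at the cost of a slab-version energy inequality. The uniqueness deduction is identical in both arguments.
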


\begin{proof}
By letting $k=0$ in Lemma \ref{lem:Steklovapproximation}, we have
\begin{align}
&\int_{B^+_1}x_n^{p}  u(x,s)^2\,\ud x+\int_{-1}^s\int_{B_1^+} |\nabla u|^2\,\ud x\ud t\nonumber\\
&\le C \int_{-1}^s\int_{B_1^+} \Big[\Big(\sum_{j=1}^n(d_j^2+b_j^2)+|c_0|+(|\pa_t a|+|c|)x_n^p\Big)u^2 +\sum_{j=1}^n f_j^2 +|f_0u|+|x_n^pfu|\Big]\,\ud x\ud t\nonumber\\
& \le C\|u\|^2_{L^{\frac{2q}{q-1}}(B_1^+\times[-1,s])}+C\|u\|^2_{L^{\frac{2q}{q-1}}(B_1^+\times[-1,s],x_n^p\ud x\ud t)}\nonumber\\
&\quad+ C \int_{-1}^s\int_{B_1^+} (\sum_{j=1}^n f_j^2+|f_0u|+|x_n^pfu|)\,\ud x\ud t.\label{eq:auxenergyestimate}
\end{align}
Since $q>\frac{\chi}{\chi-1}$, it follows from Theorem \ref{thm:weightedsobolev}, Theorem \ref{thm:weightedsobolev2} and   Young's inequality that
\begin{align*}
\|u\|^2_{L^{\frac{2q}{q-1}}(B_1^+\times[-1,s])}&\le \delta \|u\|^2_{V_2(B_1^+\times[-1,s])} + C(\delta)  \|u\|^2_{L^{2}(B_1^+\times[-1,s])},\\
\|u\|^2_{L^{\frac{2q}{q-1}}(B_1^+\times[-1,s],x_n^p\ud x)}&\le \delta \|u\|^2_{V_2(B_1^+\times[-1,s])} + C(\delta)  \|u\|^2_{L^{2}(B_1^+\times[-1,s],x_n^p\ud x)},\\
 \int_{-1}^s\int_{B_1^+}|f_0u|\,\ud x\ud t&\le \delta \|u\|^2_{V_2(B_1^+\times[-1,s])} + C(\delta)\|f_0\|^2_{L^{\frac{2\chi}{2\chi-1}}(B_1^+\times[-1,s])},\\
  \int_{-1}^s\int_{B_1^+}|x_n^p f u|\,\ud x\ud t&\le \delta \|u\|^2_{V_2(B_1^+\times[-1,s])} + C(\delta)\|f\|^2_{L^{\frac{2\chi}{2\chi-1}}(B_1^+\times[-1,s],x_n^p\ud x\ud t)}.
\end{align*}
Plugging these to \eqref{eq:auxenergyestimate} and using Lemma \ref{lem:interpolation}, we obtain
\begin{align}\label{eq:beforegronwall}
\|u\|^2_{V_2(B_1^+\times[-1,s])} \le C \int_{-1}^s\int_{B_1^+} x_n^pu^2\,\ud x\ud t +CF_0^2,
\end{align}
where $F_0$ is defined in \eqref{eq:assumptionf}. In particular, 
\[
\|u(\cdot,s)\|_{L^2(B_1^+,x_n^p\ud x)}^2\le C\|u\|_{L^2(B_1^+\times(-1,s],x_n^p\ud x\ud t)}^2+CF_0^2.
\]
By Gronwall's inequality, we have
\[
\|u\|_{L^2(B_1^+\times(-1,s],x_n^p\ud x\ud t)}^2\le CF_0^2.
\]
Plugging this back to \eqref{eq:beforegronwall}, the estimate \eqref{eq:energyestimateu} follows. Therefore, the uniqueness holds. 
\end{proof}



\begin{thm}\label{thm:existenceofweaksolution}
Suppose $a$ is continuous in $\overline Q_1^+$, and the conditions \eqref{eq:rangep},  \eqref{eq:ellip2},  \eqref{eq:assumptioncoefficient} and \eqref{eq:assumptionf} hold. Then there exists a unique weak solution of \eqref{eq:linear-eq} with the full boundary condition $u\equiv 0$ on $\pa_{pa} Q_1^+$. 
\end{thm}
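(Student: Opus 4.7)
The plan is to construct a weak solution via the Galerkin method and then pass to the limit using the a priori estimate of Theorem \ref{thm:uniquenessofweaksolution}.

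Fix a countable family $\{\varphi_k\}_{k\ge1}\subset C_c^\infty(B_1^+)$ that is dense in $H^1_0(B_1^+)$ and, after orthonormalization, orthonormal in $L^2(B_1^+,x_n^p\,\ud x)$; such a family exists since \eqref{eq:rangep} makes $x_n^p$ locally integrable on $B_1^+$. For each $N\ge1$, seek $u^N(x,t)=\sum_{k=1}^Nc_k^N(t)\varphi_k(x)$ with $c_k^N(-1)=0$ such that the pointwise-in-time analog of \eqref{eq:definitionweaksolution} holds with test function $\varphi_j$ for every $1\le j\le N$. This amounts to a linear ODE system $M(t)\dot c^N(t)+A(t)c^N(t)=F(t)$ in which the mass matrix $M_{jk}(t)=\int_{B_1^+}a(x,t)x_n^p\varphi_j\varphi_k\,\ud x$ is symmetric, bounded, and uniformly positive-definite thanks to \eqref{eq:ellip2}, and continuous in $t$ by the continuity of $a$, while the entries of $A$ and $F$ lie in $L^1([-1,0])$ by \eqref{eq:assumptioncoefficient} and \eqref{eq:assumptionf}. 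Carath\'eodory theory then furnishes a unique absolutely continuous solution $c^N$ on $[-1,0]$.

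Since $u^N$ is smooth in $t$ and vanishes on $\partial_{pa}Q_1^+$, I can test the Galerkin identity against $u^N$ itself and repeat the computation proving Theorem \ref{thm:uniquenessofweaksolution}, this time without needing any Steklov averaging since $\partial_tu^N$ is already an $L^2$ function, to obtain the $N$-uniform bound $\|u^N\|_{V_2(Q_1^+)}\le CF_0$. Along a subsequence, $u^N\rightharpoonup u$ weakly in $L^2((-1,0);H^1_{0,L}(B_1^+))$ and weakly-$*$ in $L^\infty((-1,0);L^2(B_1^+,x_n^p\,\ud x))$, so $u\in\mathring V_2(Q_1^+)$. Multiplying the Galerkin equation by an auxiliary $\eta\in C^1_c([-1,0))$, integrating over $(-1,0)$, and passing to the weak limit recovers \eqref{eq:definitionweaksolution} first against test functions of the form $\eta(t)\varphi_j(x)$; a density argument---leveraging Lemma \ref{lem:sobolevdense} when $p>0$ and a standard mollification/cutoff in the $V^{1,1}_2$-norm when $-1<p<0$---then extends \eqref{eq:definitionweaksolution} to all $\varphi\in\mathring V^{1,1}_2(Q_1^+)$.

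It remains to upgrade $u$ to an element of $\mathring V_2^{1,0}(Q_1^+)$ with $u(\cdot,-1)=0$. I would derive this by applying (the proof of) Lemma \ref{lem:Steklovapproximation} with $k=0$ to $u$ itself on arbitrary subintervals $(s_1,s_2)\subset[-1,0]$, producing
\[
\Bigl|\int_{B_1^+}a(\cdot,s_2)x_n^pu^2(\cdot,s_2)\,\ud x-\int_{B_1^+}a(\cdot,s_1)x_n^pu^2(\cdot,s_1)\,\ud x\Bigr|\le\int_{s_1}^{s_2}h(t)\,\ud t,
\]
for some $L^1$ function $h$ built from the right-hand side and the lower-order terms via Theorems \ref{thm:weightedsobolev} and \ref{thm:weightedsobolev2} together with \eqref{eq:assumptioncoefficient}. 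This forces $s\mapsto u(\cdot,s)$ to have a continuous $L^2(B_1^+,x_n^p\,\ud x)$-representative, and the initial condition $u(\cdot,-1)=0$ passes to the weak limit from $u^N(\cdot,-1)=0$. The main obstacle is precisely this continuity-in-time step: $\partial_tu$ is only a distribution and the weighted trace space $L^2(B_1^+,x_n^p\,\ud x)$ precludes a direct appeal to the classical Lions embedding theorem, so the Steklov-average machinery behind Lemma \ref{lem:Steklovapproximation} must be carried through again for the limit $u$ rather than its smooth approximations.
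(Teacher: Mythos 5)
Your Galerkin construction is a genuinely different route from the paper's, which regularizes the degenerate weight (replacing $x_n^p$ by $(x_n+\va)^p$) and smooths $a$, and which needs a two-stage approximation with auxiliary integrability assumptions before removing them. The ODE step (Carath\'eodory theory for $M(t)\dot c^N+A(t)c^N=F(t)$), the $N$-uniform bound $\|u^N\|_{V_2(Q_1^+)}\le CF_0$ obtained by testing with $u^N$ itself, and the passage to the limit against tensor test functions $\eta(t)\varphi_j(x)$ followed by density (Lemma \ref{lem:sobolevdense} for $p>0$) are all workable, and this route has the advantage that the weight $x_n^p$ is never perturbed. Uniqueness, as in the paper, is Theorem \ref{thm:uniquenessofweaksolution}.

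The genuine gap is the final step, namely showing $u\in C([-1,0];L^2(B_1^+,x_n^p\,\ud x))$, which Definition \ref{defn:weaksolutionglobal} requires, and on which you lean by ``applying (the proof of) Lemma \ref{lem:Steklovapproximation} with $k=0$.'' That appeal is circular: Lemma \ref{lem:Steklovapproximation} assumes $u\in C([-1,0];L^2(B_1^+,x_n^p\,\ud x))$, and its proof uses exactly this continuity to identify the endpoint terms in the Steklov limit (the step producing \eqref{eq:localmaxapp22}); without it you cannot even make sense of the two fixed-time integrals in your displayed inequality, since $u(\cdot,s)$ is so far only defined for a.e.\ $s$. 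Moreover, even granted that inequality, it only shows that the weighted energy $s\mapsto\int_{B_1^+}a(\cdot,s)x_n^pu(\cdot,s)^2\,\ud x$ has a continuous representative; strong $L^2(x_n^p\,\ud x)$-continuity of $s\mapsto u(\cdot,s)$ additionally requires weak continuity in time and control of the time-dependent coefficient $a$ inside the weight, which is precisely where the paper's proof spends its effort: it establishes \eqref{eq:continuityV2-1}--\eqref{eq:continuityV2-2} for every $s$ after redefining $u$ on a null set, proves weak continuity \eqref{eq:weakconvergenceweight}, proves norm convergence \eqref{eq:normconvergenceweight} via the mollified-in-time test functions $\varphi_\va$, and uses the continuity of $a$ in \eqref{eq:normconvergenceweight00} before combining the two. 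Relatedly, your test functions $\eta(t)\varphi_j(x)$ with $\eta\in C^1_c([-1,0))$ only give the time-integrated identity with no endpoint term; recovering \eqref{eq:definitionweaksolution} for a.e.\ $s$, including $\int_{B^+_1}a(x,s)x_n^{p}u(x,s)\varphi(x,s)\,\ud x$, needs a ramp/Lebesgue-point argument or the very continuity in question. Note finally that the hypothesis that $a$ is continuous is needed essentially only in this continuity-in-time step (your scheme only uses it for continuity of the mass matrix, where measurability and \eqref{eq:ellip2} would suffice), so a complete proof along your lines must still import, in substance, the paper's argument from \eqref{eq:continuityV2-1} onward applied to your Galerkin limit.
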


\begin{proof} 
For two real numbers $r_1$ and $r_2$, we denote  $r_1\vee r_2=\max(r_1,r_2)$. We first consider the case with an additionally assume that $\pa_t a, c \in L^q(Q_1^+,x_n^p\vee 1\,\ud x\ud t)$ and $f\in L^\frac{2\chi}{\chi-1}(Q_1^+,x_n^p\vee 1\,\ud x\ud t)$, where $\chi$ is the constant in Theorem \ref{thm:weightedsobolev} or Theorem \ref{thm:weightedsobolev2}. An approximation argument in the end would remove this assumption. 

For all $\va\in(0,1)$, let $a^\va\in C^2(\overline Q_1^+)$ be such that $a^\va\to a$ uniformly on $Q_1^+$, and $\pa_t a^\va\to \pa_t a$ in $L^q(Q_1^+,x_n^p\vee 1\,\ud x\ud t)$. Then there exists a unique energy weak solution $u_\va\in C ([-1,0]; L^2(B_1^+)) \cap  L^2((-1,0];H_{0}^1(B_1^+))$ to the uniformly parabolic equation
\begin{align}\label{eq:degiorgilinearappappendix}
&a^\va \cdot (x_n+\va)^{p} \pa_t u_\va -D_j(a_{ij} D_i u_\va+d_j u_\va)+b_iD_i u_\va+c(x_n+\va)^p u_\va+c_0 u_\va\nonumber \\
&\quad=(x_n+\va)^pf+f_0 -D_if_i \quad \mbox{in }Q_1^+
\end{align}
with  $u_\va\equiv 0$ on $\pa_{pa} Q_1^+$. That is, 
\begin{equation}\label{eq:definitionweaksolution22}
\begin{split}
&\int_{B^+_1}a^\va(x,s) (x_n+\va)^{p}  u_\va(x,s) \varphi(x,s)\,\ud x-\int_{-1}^s\int_{B_1^+} (x_n+\va)^{p}(\varphi\partial_t a^\va+a^\va\partial_t \varphi)u_\va\,\ud x\ud t\\
&=- \int_{-1}^s\int_{B_1^+} \big(a_{ij}D_iu_\va D_j\varphi+d_ju_\va D_j\varphi+b_jD_ju_\va \varphi+c(x_n+\va)^p u_\va \varphi+c_0u_\va \varphi\big)\,\ud x\ud t\\
&\quad +\int_{-1}^s\int_{B_1^+} ((x_n+\va)^pf\varphi+f_0\varphi+f_jD_j\varphi)\,\ud x\ud t
\end{split}
\end{equation}
for every $\varphi\in \mathring W^{1,1}_2(Q^+_1)$ satisfying $\varphi (\cdot,-1)\equiv 0$ in $B_1^+$ (in the trace sense).  By the same proof of \eqref{eq:energyestimateu}, we have
\begin{align}\label{eq:energysmoothcase}
&\sup_{t\in[-1,0]}\int_{B_1^+}(x_n+\va)^{p} u_\va^2\,\ud x+\|\nabla u_\va\|^2_{L^2(Q_1^+)}\le  CF_\va^2,
\end{align}
where 
\[
F_\va=\|f\|_{L^\frac{2\chi}{2\chi-1}(Q_1^+,(x_n+\va)^p\ud x\ud t)}+\|f_0\|_{L^\frac{2\chi}{2\chi-1}(Q_1^+)}+ \sum_{j=1}^n\|f_j\|_{L^2(Q_1^+)}.
\] 
Hence, if $p\ge 0$, then
\[
\|u_\va\|^2_{V_2(Q_1^+)}\le   CF_\va^2.
\]
If $-1<p<0$, then by \eqref{eq:energysmoothcase} and the proof of Theorem \ref{thm:weightedsobolev2}, we have
\begin{align}\label{eq:weightedsobolev22}
 \left(\int_{Q^+_{R,T}} (x_n+\va)^p |u_\va|^{2\chi}\ud x \ud t \right)^{\frac{1}{\chi}}  \le  CF_\va^2.
\end{align}

Therefore, by Theorem \ref{thm:weightedsobolev} and Theorem \ref{thm:weightedsobolev2}, for all $p>-1$, there exist   $u\in L^{2\chi}(Q_1^+)\cap L^2((-1,0];H_{0}^1(B_1^+))$ and a subsequence $\{u_{\va_j}\}$,  such that $u_{\va_j} \rightharpoonup u$ weakly  in $L^{2\chi}(Q_1^+)$ and $Du_{\va_j} \rightharpoonup Du$ weakly in $L^{2}(Q_1^+)$.  Let $\varphi\in  C^\infty(Q^+_1)$ be such that $\varphi \equiv 0$ near the parabolic boundary $\pa_{pa}Q^+_1$ and let
\[
h_j(s):= \int_{B^+_1}a^{\va_j}(x,s) (x_n+\va_j)^{p}  u_{\va_j}(x,s) \varphi(x,s)\,\ud x.
\]
By \eqref{eq:definitionweaksolution22}, \eqref{eq:energysmoothcase}, \eqref{eq:weightedsobolev22}, Theorem \ref{thm:weightedsobolev}, and the absolute continuity of Lebesgue integrals (applying to the right hand side of \eqref{eq:definitionweaksolution22}), we know that $h_j$ is uniformly bounded and equicontinuous on $[-1,0]$. By the Ascoli-Arzela Theorem, there is a subsequence of $\{h_j\}$, which is still denoted by $\{h_j\}$, such that $h_j$ uniformly converges to a function $h\in C([-1,0])$. On the other hand, since $u_{\va_j} \rightharpoonup u$ weakly  in $L^{2\chi}(Q_1^+)$, we have that for every interval $I\subset[-1,0]$,
\[
\int_I h_j(s)\,\ud s \to \int_I \int_{B^+_1}a(x,s) x_n^{p}  u(x,s) \varphi(x,s)\,\ud x\ud s.
\]
Hence,
\[
h(s)=\int_{B^+_1}a(x,s) x_n^{p}  u(x,s) \varphi(x,s)\,\ud x\quad\mbox{a.e. in }[-1,0].
\]
Therefore, if one considers such a $\varphi$ independent of the time variable, then we know from \eqref{eq:energysmoothcase} that $u\in L^\infty([-1,0]; L^2(B_1^+,x_n^{p}\ud x))$, and it is straightforward to verify by sending $\va_j\to 0$ in \eqref{eq:definitionweaksolution22} that $u$ satisfies \eqref{eq:definitionweaksolution} for every $\varphi\in  C^\infty(Q^+_1)$ being such that $\varphi \equiv 0$ near the parabolic boundary $\pa_{pa}Q^+_1$. 

When $p\ge 0$, then by a standard density argument, it is straightforward to verify that $u$ satisfies \eqref{eq:definitionweaksolution}  for every $\varphi\in \mathring W^{1,1}_2(Q^+_1)$ satisfying $\varphi (\cdot,-1)\equiv 0$ in $B_1^+$ (in the trace sense). By Lemma \ref{lem:sobolevdense}, this $u$ satisfies \eqref{eq:definitionweaksolution} for every $\varphi\in \mathring V^{1,1}_2(Q^+_1)$ satisfying $\varphi (\cdot,-1)\equiv 0$ in $B_1^+$.


When $-1<p<0$, we also use approximation arguments. Let  $\varphi\in \mathring V^{1,1}_2(Q^+_1)$ satisfy $\varphi (\cdot,-1)\equiv 0$ in $B_1^+$. Using Minkowski's integral inequality, for every $\delta>0$, there exists $\mu>0$ such that 
\[
\|\varphi\|_{V^{1,1}_2(Q^+_1\cap\{x_n<\mu\})} +\sup_{s\in (-1,0]}\|\varphi(\cdot,s)\|_{L^2(B^+_1\cap\{x_n<\mu\}, x_n^p\ud x)}+\|\varphi\|_{L^{2\chi}(Q^+_1\cap\{x_n<\mu\})}<\delta,
\] 
where $\chi$ is the one in Theorem \ref{thm:weightedsobolev2}. Let $\eta_\mu$ be a smooth cut-off  function such that $\eta\equiv 1$ on $[\mu,+\infty)$ and $\eta\equiv 0$ on $[0,\mu/2]$. Let $\varphi_1(x,t)=\eta(x_n)\varphi(x,t)$ and $\varphi_2(x,t)=(1-\eta(x_n))\varphi(x,t)$. Using the fact that $V^{1,1}_2(Q^+_{1})\subset W^{1,1}_2(Q^+_{1})$ when $-1<p<0$, we have \eqref{eq:definitionweaksolution22}. Similar to the above, by using the weak convergence of $u_\va$, it is straightforward to verify that
\begin{align*}
&\lim_{\va\to 0}\int_{B^+_1}a^\va(x,s) (x_n+\va)^{p}  u_\va(x,s) \varphi_1(x,s)\,\ud x=\int_{B^+_1}a(x,s) x_n^{p}  u(x,s) \varphi_1(x,s)\,\ud x\ \ \mbox{a.e. }s\in[-1,0]
\end{align*}
and
\begin{align*}
&\lim_{\va\to 0}\int_{-1}^s\int_{B_1^+} (x_n+\va)^{p}(\varphi_1\partial_t a^\va+a^\va\partial_t \varphi_1)u_\va\,\ud x\ud t=\int_{-1}^s\int_{B_1^+} x_n^{p}(\varphi_1\partial_t a+a\partial_t \varphi_1)u\,\ud x\ud t.
\end{align*}
By using Theorem \ref{thm:weightedsobolev2}, H\"older's inequality, \eqref{eq:energysmoothcase} and \eqref{eq:weightedsobolev22}, we can verify that
\begin{align*}
\left|\int_{B^+_1}a^\va(x,s) (x_n+\va)^{p}  u_\va(x,s) \varphi_2(x,s)\,\ud x-\int_{-1}^s\int_{B_1^+} (x_n+\va)^{p}(\varphi_2\partial_t a^\va+a^\va\partial_t \varphi_2)u_\va\,\ud x\ud t\right|&\le C\delta,\\
\left|\int_{B^+_1}a(x,s) x_n^{p}  u(x,s) \varphi_2(x,s)\,\ud x-\int_{-1}^s\int_{B_1^+} x_n^{p}(\varphi_2\partial_t a+a\partial_t \varphi_2)u\,\ud x\ud t\right|&\le C\delta.
\end{align*}
Then by sending $\va\to 0$ and then $\delta\to 0$ in \eqref{eq:definitionweaksolution22}, it follows that \eqref{eq:definitionweaksolution} holds for every $\varphi\in \mathring V^{1,1}_2(Q^+_1)$ satisfying $\varphi (\cdot,-1)\equiv 0$ in $B_1^+$. 

Next, we want to verify that $u\in C ([-1,0]; L^2(B_1^+,x_n^{p}\ud x)) $. Note that we have
\begin{equation}\label{eq:continuityV2-1}
\begin{split}
&\int_{B^+_1}a(x,s) x_n^{p}  u(x,s) \varphi(x,s)\,\ud x-\int_{-1}^s\int_{B_1^+} x_n^{p}a u\partial_t \varphi \,\ud x\ud t\\
&=\int_{-1}^s\int_{B_1^+} (g_0\varphi+g_jD_j\varphi)\,\ud x\ud t\quad\mbox{a.e. }s\in (-1,0],
\end{split}
\end{equation}
where
\begin{align*}
g_j&= f_j-a_{ij}D_iu-d_ju,\\
g_0&=x_n^pf+f_0-b_jD_ju-c_0u+x_n^p(\pa_t a+c) u.
\end{align*}
Hence, we know that $g_j\in L^2(Q_1^+), j=1,\cdots,n$, and $g_0\in L^{\frac{2\chi}{2\chi-1}}(Q_1^+)$. Moreover, we clearly have
\begin{equation}\label{eq:continuityV2-2}
\|u(\cdot,s)\|_{L^2(B_1^+,x_n^p\ud x)}\le \|u\|_{V_2(Q_1^+)},\quad\mbox{a.e. }s\in(-1,0].
\end{equation}
Denote
\[
I:=\{s\in[-1,0]: \eqref{eq:continuityV2-1} \mbox{ and } \eqref{eq:continuityV2-2} \mbox{ hold for }s.\}
\]
Then we know that $I$ is of measure zero. We can redefine $u(x,s)$ such that 
 \eqref{eq:continuityV2-1} and \eqref{eq:continuityV2-2} for $s\in[-1,0]\setminus I$. Indeed, because of $\eqref{eq:continuityV2-2}$, for every $s_0\in [-1,0]\setminus I$, there exists $\{s_k\}\subset I$ such that $s_k\to s_0$ and $u(\cdot,s_k)\rightharpoonup v(\cdot)$ in $L^2(B_1^+,x_n^p\ud x)$. We redefine $u(\cdot,s_0)=v(\cdot)$. Then \eqref{eq:continuityV2-1} and \eqref{eq:continuityV2-2} hold for $s_0$, and moreover, by \eqref{eq:continuityV2-1}, this $v(\cdot)$ is independent on the choice  of the sequence $\{s_k\}$. Thus, we can assume that \eqref{eq:continuityV2-1} and \eqref{eq:continuityV2-2} hold for all $s\in[-1,0]$.
 
Let $Q_{1,s,h}^+=B_1^+\times(s,s+h)$ when $s,s+h\in(-1,0)$ (here, we assume $h>0$, and the argument for the case $h<0$ can be modified correspondingly). From \eqref{eq:continuityV2-1}, we obtain
\begin{equation}\label{eq:continuityV2-3}
\begin{split}
&\int_{B^+_1}a(x,s+h) x_n^{p}  u(x,s+h) \varphi(x,s+h)\,\ud x-\int_{B^+_1}a(x,s) x_n^{p}  u(x,s) \varphi(x,s)\,\ud x\\
&=\int_{Q_{1,s,h}^+} x_n^{p}a u\partial_t \varphi \,\ud x\ud t+\int_{Q_{1,s,h}^+} (g_0\varphi+g_jD_j\varphi)\,\ud x\ud t.
\end{split}
\end{equation}
By choosing $\varphi$ as a function in $C^\infty_c(B_1^+)$, and since $a$ is continuous in $\overline Q_1^+$, we have
\begin{align*}
\left|\int_{B^+_1}[a(x,s+h)-a(x,s)] x_n^{p}  u(x,s+h) \varphi(x)\,\ud x\right|\to 0 \quad\mbox{as }h\to 0.
\end{align*}
Hence,
\begin{equation}\label{eq:weakconvergenceweight}
\lim_{h\to 0}\int_{B^+_1} x_n^{p} a(x,s) (u(x,s+h) -u(x,s)) \varphi(x)\,\ud x=0\quad\mbox{uniformly in }s.
\end{equation}
By a density argument, \eqref{eq:weakconvergenceweight} holds for all $\varphi\in L^2(B_1^+,x_n^p\ud x)$. 

Choose $\va>0$ small such that $s\pm\va, s+h\pm\va \in (-1,0)$. Let
\[
\tilde u(x,t)=
\begin{cases}
u(x,s+h),\quad (x,t)\in B_1^+\times(s+h,\infty),\\
u(x,t), \quad (x,t)\in B_1^+\times(s,s+h],\\
u(x,s), \quad (x,t)\in B_1^+\times(-1,s].
\end{cases}
\]
and
\[
\varphi_\va(x,t)=\frac{1}{2\va}\int_{t-\va}^{t+\va}\tilde u(x,\tau)\,\ud \tau.
\]
Then $\varphi_\va\in \mathring V^{1,1}_2(Q^+_1)$ and \eqref{eq:continuityV2-3} holds for $\varphi_\va$. Note that
 \begin{align*}
&\int_{s}^{s+h}\int_{B_1^+} x_n^{p}au\partial_t \varphi_\va\,\ud x\ud t\\
&=\frac{1}{2\va} \int_{s}^{s+h}\int_{B_1^+} x_n^{p}\big[a(x,t)u(x,t)\tilde u(x,t+\va)-a(x,t)u(x,t)\tilde u(x,t-\va)\big]\,\ud x\ud t\\
&=\frac{1}{2\va}\int_{B_1^+} x_n^{p}\int_{s+h-\va}^{s+h} a(x,t)u(x,t)u(x,s+h)\,\ud x\ud t\\
&\quad-\frac{1}{2\va}\int_{B_1^+} x_n^{p}\int_{s}^{s+\va} a(x,t)u(x,t)u(x,s)\,\ud x\ud t\\
&\quad + \frac{1}{2\va}\int_{B_1^+} x_n^{p}\int_{s+\va}^{s+h} u(x,t-\va)u(x,t)[a(x,t-\va)-a(x,t)]\,\ud x \ud t.
\end{align*}
Using the continuity of $a$ and \eqref{eq:weakconvergenceweight}, we have
 \begin{align*}
&\lim_{\va \to 0}\int_{s}^{s+h}\int_{B_1^+} x_n^{p}au\partial_t \varphi_\va\,\ud x\ud t\\
&=\frac{1}{2}\int_{B_1^+} x_n^{p}a(x,s+h)u(x,s+h)^2\,\ud x-\frac{1}{2}\int_{B_1^+} x_n^{p}a(x,s)u(x,s)^2\,\ud x.
\end{align*}
Setting $\varphi=\varphi_\va$ in \eqref{eq:continuityV2-3} and then letting $\va\to 0$, we have by using \eqref{eq:weakconvergenceweight} that
\begin{align*}
&\frac{1}{2}\int_{B_1^+} x_n^{p} a(x,s+h)u(x,s+h)^2\,\ud x-\frac{1}{2}\int_{B_1^+} x_n^{p} a(x,s)u(x,s)^2\,\ud x\\
&=-\frac{1}{2}\int_{Q_{1,s,h}^+} x_n^{p}u^2\partial_t a\,\ud x\ud t+\int_{Q_{1,s,h}^+} (g_0\varphi+g_jD_j\varphi)\,\ud x\ud t.
\end{align*}
Hence, 
\[
\lim_{h\to 0}\int_{B_1^+} x_n^{p} a(x,s+h)u(x,s+h)^2\,\ud x=\int_{B_1^+} x_n^{p} a(x,s)u(x,s)^2\,\ud x.
\]
Since $a\in C(\overline Q_1^+)$, we have
\begin{equation}\label{eq:normconvergenceweight00}
\lim_{h\to 0}\int_{B_1^+} x_n^{p} [a(x,s+h)-a(x,s)]u(x,s+h)^2\,\ud x=0,
\end{equation}
and thus,
\begin{equation}\label{eq:normconvergenceweight}
\lim_{h\to 0}\int_{B_1^+} x_n^{p} a(x,s)u(x,s+h)^2\,\ud x=\int_{B_1^+} x_n^{p} a(x,s)u(x,s)^2\,\ud x.
\end{equation}
It follows from \eqref{eq:weakconvergenceweight} and \eqref{eq:normconvergenceweight} that
\[
\lim_{h\to 0}\int_{B^+_1} x_n^{p} a(x,s) |u(x,s+h) -u(x,s)|^2 \,\ud x=0.
\]
Since $a\ge\lambda>0$, we obtain
\[
\lim_{h\to 0}\int_{B^+_1} x_n^{p} |u(x,s+h) -u(x,s)|^2 \,\ud x=0.
\]
Hence, $u\in C ([-1,0]; L^2(B_1^+,x_n^{p}\ud x)) $, and thus,  $u\in \mathring V_2^{1,0}(Q^+_{1})$.

Now let us use another approximation to remove the assume that $\pa_t a, c \in L^q(Q_1^+,x_n^p\vee 1\,\ud x\ud t)$  and $f\in L^\frac{2\chi}{\chi-1}(Q_1^+,x_n^p\vee 1\,\ud x\ud t)$.  Suppose \eqref{eq:assumptioncoefficient} and \eqref{eq:assumptionf} hold. Let $a^\va\in C^2(\overline Q_1^+)$ be such that $a^\va\to a$ uniformly on $Q_1^+$ and $\pa_t a^\va\to \pa_t a$ in $L^q(Q_1^+,x_n^p\,\ud x\ud t)$, $c^\va\in C^2(\overline Q_1^+)$ be such that $c^\va\to c$  in $L^q(Q_1^+,x_n^p\,\ud x\ud t)$, and $f^\va\in C^2(\overline Q_1^+)$ be such that $f^\va\to f$ in $L^\frac{2\chi}{\chi-1}(Q_1^+,x_n^p\,\ud x\ud t)$. Then as proved in the above, there exists a weak solution $u_\va\in \mathring V_2^{1,0}(Q^+_{1})$ to the  parabolic equation
\[
a^\va \cdot x_n^{p} \pa_t u_\va -D_j(a_{ij} D_i u_\va+d_j u_\va)+b_iD_i u_\va+c^\va x_n^p u_\va+c_0 u_\va=x_n^pf^\va+f_0 -D_if_i \quad \mbox{in }Q_1^+
\]
with  $u_\va\equiv 0$ on $\pa_{pa} Q_1^+$. Then by the energy estimate in Theorem \ref{thm:uniquenessofweaksolution} and the same argument as above, one can show that $u_\va$ will converge to a  weak solution of \eqref{eq:linear-eq} with the full boundary condition $u\equiv 0$ on $\pa_{pa} Q_1^+$. 

Finally, the uniqueness follows from Theorem \ref{thm:uniquenessofweaksolution}.
\end{proof}

\subsection{$W^{1,1}_2$ regularity}

Next, we want to study the $W^{1,1}_2$ regularity of  weak solutions to the equation \eqref{eq:linear-eq} with slightly stronger assumptions on the coefficients.   Consider the following equation
\begin{equation}\label{eq:linear-eq-app}
a x_n^{p} \pa_t u -D_j[a_{ij} D_i u+(x_n^{p/2}\wedge 1)d_j u]+(x_n^{p/2}\wedge 1)b_iD_i u+cx_n^{p}u+c_0 u=x_n^{p}f\quad \mbox{in }Q_1^+,
\end{equation}
where $x_n^{p/2}\wedge 1=\min(x_n^{p/2},1)$. For the coefficients, besides \eqref{eq:ellip2},  we suppose that 
\begin{equation}\label{eq:assumptioncoefficient2}
\left\|\pa_t a\right\|_{L^q(Q_1^+,x_n^p\ud x\ud t)}+ \|a_{ij}\|_{Lip(\overline Q_1^+)}+\|d_{j}\|_{Lip(\overline Q_1^+)}+\|c_0\|_{Lip(\overline Q_1^+)}+\||b_j|+|c|\|_{L^\infty(Q_1^+)}\le\Lambda,
\end{equation}
for some $q>\frac{\chi}{\chi-1}$. We also suppose that $-\mathrm{div} (A \nabla ) +c_0$ is coercive, where $A=(a_{ij})$, i.e., there exists a constant $\bar \lda>0$ such that
\be \label{eq:coer-app}
\int_{B_1^+} A\nabla \phi \nabla \phi +c_0 \phi^2 \ge \bar \lda \int_{B_1^+} \phi^2 \quad \quad \forall \phi\in H^1_0(B_1^+), a.e.\ t\in[-1,1].
\ee
Note that \eqref{eq:coer-app} implies that there exists $\tilde\lambda>0$ depending only on $\bar\lambda,\lambda,\Lambda$ and $n$ such that
\be \label{eq:coer-app2}
\int_{B_1^+} A\nabla \phi \nabla \phi +c_0 \phi^2 \ge  \tilde\lda \int_{B_1^+} |\nabla\phi|^2 \quad \quad \forall \phi\in H^1_0(B_1^+), a.e.\ t\in[-1,1].
\ee

\begin{thm}\label{thm:energyestimateut}
Suppose $a$ is  continuous in $\overline Q_1^+$, $A$ is symmetric, and the conditions \eqref{eq:rangep}, \eqref{eq:ellip2},  \eqref{eq:assumptioncoefficient2} and \eqref{eq:coer-app} hold. Suppose that $f\in L^2(Q_1^+,x^p\ud x\ud t)$. Let $u$ be the weak solution of \eqref{eq:linear-eq-app} with the full boundary condition $u\equiv 0$ on $\pa_{pa} Q_1^+$. Then
\begin{equation}\label{eq:energyestimateut}
\sup_{t\in(-1,0)}\int_{B_1^+} |\nabla u(x,t)|^2\,\ud x+\int_{Q_1^+} x_n^{p}|\pa_t u|^2\,\ud x\ud t\le C \int_{Q_1^+} x_n^pf^2\,\ud x\ud t,
\end{equation}
where $C>0$ depends only on  $\lda, \bar\lambda,\Lambda,n$ and $p$.
\end{thm}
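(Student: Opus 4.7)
The plan is to test the equation \eqref{eq:linear-eq-app} with $\partial_t u$. Multiplying by $\partial_t u$ and integrating by parts in $x$ (boundary contributions on $\partial B_1^+\times[-1,s]$ vanish thanks to the Dirichlet condition), one obtains
\begin{align*}
&\int_{-1}^s\int_{B_1^+} a\,x_n^p (\partial_t u)^2\,\ud x\ud t + \int_{-1}^s\int_{B_1^+}\big(a_{ij} D_i u + (x_n^{p/2}\wedge 1)d_j u\big)D_j\partial_t u\,\ud x\ud t\\
&\quad + \int_{-1}^s\int_{B_1^+}\big((x_n^{p/2}\wedge 1)b_i D_i u + cx_n^p u + c_0 u\big)\partial_t u\,\ud x\ud t = \int_{-1}^s\int_{B_1^+} x_n^p f\,\partial_t u\,\ud x\ud t.
\end{align*}
Using the symmetry of $A$, the identity $a_{ij}D_iuD_j\partial_tu = \tfrac12\partial_t(a_{ij}D_iuD_ju) - \tfrac12(\partial_ta_{ij})D_iuD_ju$ together with the analogous one for $c_0 u\,\partial_t u$ transforms the principal and $c_0$ contributions into
\[
\tfrac{1}{2}\int_{B_1^+}\big(a_{ij}D_iuD_ju + c_0u^2\big)(x,s)\,\ud x - \tfrac{1}{2}\int_{-1}^s\int_{B_1^+}\big((\partial_ta_{ij})D_iuD_ju + (\partial_tc_0)u^2\big)\,\ud x\ud t,
\]
the initial-time contribution vanishing because $u(\cdot,-1)=0$. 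Ellipticity \eqref{eq:ellip2} and coercivity \eqref{eq:coer-app2} then produce the two terms on the left-hand side of \eqref{eq:energyestimateut}.

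The remaining task is to bound the mixed quantities. The drift term $(x_n^{p/2}\wedge 1)d_juD_j\partial_tu$ is first integrated by parts in $t$, yielding a boundary piece at $t=s$ controlled by $\int(|\nabla u|^2+u^2)(\cdot,s)$ (the $u^2$ part reduced to $|\nabla u|^2$ by Hardy's inequality, then absorbable into $\sup_s\int|\nabla u|^2$ with a small constant) and an interior piece containing $(x_n^{p/2}\wedge 1)d_j\partial_tu\,D_ju$. For this latter, together with $(x_n^{p/2}\wedge 1)b_iD_iu\,\partial_tu$, $cx_n^pu\,\partial_tu$, and $x_n^p f\,\partial_tu$, the key observation is that $(x_n^{p/2}\wedge 1)^2 \le x_n^p$ when $p\ge 0$ (since $(x_n^{p/2}\wedge 1)=x_n^{p/2}$ there), while for $-1<p<0$ one has $x_n^{p/2}\ge 1$ on $B_1^+$ so $(x_n^{p/2}\wedge 1)=1\le x_n^{p/2}$; in either regime Young's inequality peels off each mixed term as $\tfrac{\delta}{2}x_n^p(\partial_tu)^2$ (absorbed on the left) plus $C|\nabla u|^2$, $Cu^2$, $Cx_n^pu^2$, or $Cx_n^p f^2$. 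All residual $\int\!\!\int(|\nabla u|^2+u^2+x_n^pu^2)$ pieces are then controlled by the basic energy estimate \eqref{eq:energyestimateu} of Theorem \ref{thm:uniquenessofweaksolution}, whose right-hand side is in turn dominated by $\|f\|_{L^2(Q_1^+,x_n^p\ud x\ud t)}$ via H\"older's inequality on the finite measure $x_n^p\,\ud x\ud t$.

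Since $\partial_tu$ lives only in the weighted space $L^2(x_n^p\ud x\ud t)$ and the equation degenerates on $\{x_n=0\}$, this testing must be justified by approximation. Following the scheme in the proof of Theorem \ref{thm:existenceofweaksolution}, I would replace $x_n^p$ by $(x_n+\varepsilon)^p$ to obtain a uniformly parabolic approximate problem with $\partial_tu_\varepsilon\in L^2(Q_1^+)$, carry out the above identities at the $\varepsilon$-level (where $\partial_t u_\varepsilon$ is a bona fide test function), and pass to the limit using weak convergence and lower semicontinuity of the weighted norms, identifying the limit with $u$ through the uniqueness Theorem \ref{thm:uniquenessofweaksolution}. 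The main obstacle is ensuring the constants at the $\varepsilon$-level are independent of $\varepsilon$; this is delivered by the fact that the key inequalities $(x_n^{p/2}\wedge 1)^2\le x_n^p$ (for $p\ge 0$) and $1\le x_n^{p/2}$ on $B_1^+$ (for $-1<p<0$) remain valid with $x_n^p$ replaced by $(x_n+\varepsilon)^p$, while the hypotheses \eqref{eq:assumptioncoefficient2} and \eqref{eq:coer-app} are themselves $\varepsilon$-independent.
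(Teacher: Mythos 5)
Your overall strategy coincides with the paper's: approximate the weight by $(x_n+\va)^p$, test (a surrogate of) the equation with $\pa_t u_\va$, use the symmetry of $A$ to write $a_{ij}D_iuD_j\pa_tu$ as a total time derivative minus a $\pa_t a_{ij}$ term, invoke the coercivity \eqref{eq:coer-app2}, absorb the mixed terms by Young's inequality via $(x_n^{p/2}\wedge 1)^2\le x_n^p$, control the residual quantities by the energy estimate \eqref{eq:energyestimateu} of Theorem \ref{thm:uniquenessofweaksolution} (with $\|f\|_{L^{2\chi/(2\chi-1)}(Q_1^+,x_n^p\ud x\ud t)}\le C\|f\|_{L^2(Q_1^+,x_n^p\ud x\ud t)}$ by H\"older), and pass to the limit. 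However, one step fails as written: your treatment of the time-boundary term produced by the drift $d_j$. After integrating by parts in $t$, the contribution $\int_{B_1^+}(x_n^{p/2}\wedge 1)\,d_j\, u\, D_ju(\cdot,s)\,\ud x$ cannot be ``absorbed into $\sup_s\int|\nabla u|^2$ with a small constant'' by Hardy's inequality. Indeed, any Young split gives $\va\int|\nabla u(\cdot,s)|^2+\frac{C}{\va}\int u(\cdot,s)^2$, and once Hardy converts the second piece back into $\frac{4C}{\va}\int|\nabla u(\cdot,s)|^2$, the total prefactor $\va+\frac{4C}{\va}$ is bounded below by a fixed constant of order $\Lambda$, which is in general much larger than the coercivity constant $\tilde\lda$ multiplying $\int|\nabla u(\cdot,s)|^2$ on the left; so there is nothing small to absorb.

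The correct handling, which is exactly what the paper's proof does, is not to absorb the zeroth-order part at all: estimate
\[
\int_{B_1^+}(x_n^{p/2}\wedge 1)|d_j|\,|u|\,|D_ju|(\cdot,s)\,\ud x\le \va\int_{B_1^+}|\nabla u(\cdot,s)|^2\,\ud x+C_\va\int_{B_1^+}x_n^{p}u(\cdot,s)^2\,\ud x,
\]
using precisely your inequality $(x_n^{p/2}\wedge 1)^2\le x_n^p$, and then control $\int x_n^pu(\cdot,s)^2\le\sup_t\int x_n^pu^2\le C\|f\|_{L^2(Q_1^+,x_n^p\ud x\ud t)}^2$ by the already established $V_2$ bound; this is why the intermediate inequality in the paper's proof carries the term $C\int_{B_1^+}(x_n+\va)^pu_\va(x,t)^2\,\ud x$ on its right-hand side. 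With this one-line correction your argument goes through. A secondary, purely technical point: even at the $\va$-level, testing directly with $\pa_tu_\va$ in the divergence-form term requires mixed derivatives $D_x\pa_tu_\va$ that Schauder theory does not provide globally on the Lipschitz domain $\overline{B_1^+}$; the paper circumvents this by using time difference quotients $u_\va^h$ instead of $\pa_tu_\va$, and you would need the same (standard) device inside your approximation scheme.
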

\begin{proof}
We first assume that $f\in L^2(Q_1^+,x^p\vee 1\ud x\ud t)$ and $\pa_t a\in L^q(Q_1^+,x_n^p\vee1\,\ud x\ud t)$.

For $\va>0$, let $a^\va, a_{ij}^\va, d_j^\va, c_0^\va\in C^\infty(\R^n)$ be such that $a^\va\to a$, $a_{ij}^\va\to a_{ij}$, $d_j^\va\to d_j$, $c_0^\va\to c_0$ uniformly on  $\overline Q_1^+$,  $\pa_t a^\va\to \pa_t a$ in $L^q(Q_1^+,x_n^p\vee 1\,\ud x\ud t)$, and 
\[
 \|a_{ij}^\va\|_{Lip(\overline Q_1^+)}+\|d_{j}^\va\|_{Lip(\overline Q_1^+)}\le C\Lambda.
\]
Let $b_{i}^\va, c^\va\in C^\infty(\R^n)$ be such that $b_{i}^\va\to b_{i}$, $c^\va\to c$ in $L^q(\overline Q_1^+)$ for some $q>\frac{\chi}{\chi-1}$, and   
\[
\|b_{i}^\va\|_{L^\infty(Q_1^+)}+\|c^\va\|_{L^\infty(Q_1^+)}\le C\Lambda.
\] 
Let $f_\va\in C_c^\infty(Q_1^+)$ be such that $f_\va\to f$ in $L^2(Q_1^+,x^p\vee 1\,\ud x\ud t)$ as $\va\to 0$.

Let $u_\va\in C ([-1,0]; L^2(B_1^+)) \cap  L^2((-1,0];H_{0}^1(B_1^+))$ be the unique weak solution of
\be\label{eq:degiorgilinearappappendix2}
\begin{split}
&a^\va \cdot (x_n+\va)^{p} \pa_t u_\va -D_j[a_{ij}^\va D_i u_\va+((x_n+\va)^{p/2}\wedge (1+\va)^{p/2})d_j^\va u_\va]\\
&\quad +((x_n+\va)^{p/2}\wedge (1+\va)^{p/2})b_i^\va D_i u_\va+c_0^\va u_\va+c^\va(x_n+\va)^{p}u_\va=(x_n+\va)^{p} f_\va  \quad \mbox{in }Q_1^+
\end{split}
\ee
with  $u_\va\equiv 0$ on $\pa_{pa} Q_1^+$. By the Schauder regularity theory, we know that $D_{x} u_\va, \pa_t u_\va\in C(\overline Q_1^+)$.

For small $h>0$, denote $$u_\va^h(x,t)=\frac{u_\va(x,t+h)-u_\va(x,t)}{h}$$ for all $-1\le t\le -h$, and denote the left hand side of \eqref{eq:degiorgilinearappappendix2} as $I(x,t)$. Then we have for  all $-1<t<-h$,
\begin{equation}\label{eq:partialttestfunction}
\begin{split}
&\int_{B_1^+\times(-1,t]} (I(x,s+h)+I(x,s))u_\va^h(x,s)\,\ud x\ud s\\
&=\int_{B_1^+\times(-1,t]}(x_n+\va)^{p/2}(f_\va (x,s+h)+f_\va (x,s))u_\va^h(x,s)\,\ud x\ud s.
\end{split}
\end{equation}
Using the symmetry of $A$, we have
\begingroup
\allowdisplaybreaks
\begin{align*}
&\int_{B_1^+}\int_{-1}^t [a_{ij}^\va(x,s+h)D_iu_\va(x,s+h)+ a_{ij}^\va(x,s)D_iu_\va(x,s)]D_ju_\va^h(x,s)\,\ud s\ud x\\
&=\frac{1}{h}\int_{B_1^+}\int_{t}^{t+h} a_{ij}^\va D_iu_\va D_ju_\va \,\ud s\ud x - \frac{1}{h}\int_{B_1^+}\int_{-1}^{-1+h} a_{ij}^\va D_iu_\va D_ju_\va\,\ud s\ud x\\
&\quad+ \int_{B_1^+}\int_{-1}^{t} \frac{a_{ij}^\va(x,s)-a_{ij}^\va(x,s+h)}{h}D_iu_\va(x,s)D_ju_\va(x,s+h) \,\ud s\ud x \\
& \to \int_{B_1^+} a_{ij}(x,t)D_i u_\va(x,t)D_j u_\va(x,t)\,\ud x -\int_{B_1^+}\int_{-1}^{t} \pa_s a_{ij}^\va D_iu_\va D_ju_\va\,\ud s\ud x\quad\mbox{as }h\to 0,
\end{align*}
\endgroup
where we used that $D_xu\in C^{0}(\overline B_1^+\times[-1,0])$ and $u(x,-1)\equiv 0$. 
Here, we used $u_\va^h(x,t)$ instead of $\pa_t u_\va$ to avoid involving $D_x\pa_t u_\va$ in the calculation.  Also,
\begingroup
\allowdisplaybreaks
\begin{align*}
&\int_{B_1^+}\int_{-1}^t ((x_n+\va)^{p/2}\wedge (1+\va)^{p/2})[d_{j}^\va(x,s+h)u_\va(x,s+h)+ d_{j}^\va(x,s)u_\va(x,s)]\pa_ju_\va^h(x,s)\,\ud s\ud x\\
& \to 2\int_{B_1^+} ((x_n+\va)^{p/2}\wedge (1+\va)^{p/2})d_{j}^\va(x,t)u_\va(x,t)\pa_j u_\va(x,t)\,\ud x \\
&\quad-2\int_{B_1^+}\int_{-1}^{t} ((x_n+\va)^{p/2}\wedge (1+\va)^{p/2})u_\va \pa_s d_{j}^\va  D_j u_\va\,\ud s\ud x\\
&\quad -2 \int_{B_1^+}\int_{-1}^{t} ((x_n+\va)^{p/2}\wedge (1+\va)^{p/2})d_{j}^\va\pa_s u_\va   D_j u_\va\,\ud s\ud x\quad\mbox{as }h\to 0.
\end{align*}
\endgroup
Using similar arguments, by sending $h\to 0$ in \eqref{eq:partialttestfunction}, and using  \eqref{eq:ellip2}, \eqref{eq:assumptioncoefficient2}, \eqref{eq:coer-app} (or \eqref{eq:coer-app2}) and H\"older's inequality, we have
\begingroup
\allowdisplaybreaks
\begin{align*}
&\int_{B_1^+\times(-1,t]} (x_n+\va)^{p} |\pa_s u_\va|^2 \,\ud x\ud s +\int_{B_1^+}  |\nabla u_\va(x,t)|^2\,\ud x \\
&\le  C \int_{B_1^+} (x_n+\va)^pu_\va(x,t)^2\,\ud x+ C \int_{B_1^+\times(-1,t]} [ |\nabla u_\va|^2 + (x_n+\va)^p(f_\va^2+u_\va^2)] \,\ud x\ud s.
\end{align*}
\endgroup
Then it follows from \eqref{eq:energysmoothcase} that
\begin{align}\label{eq:weakderivativeint}
\sup_{t\in[-1,0]}\int_{B_1^+}  |\nabla u_\va(x,t)|^2\,\ud x+ \int_{Q_1^+} (x_n+\va)^{p} |\pa_t u_\va|^2 \,\ud x\ud t \le C\int_{Q_1^+} (x_n+\va)^p f_\va^2\,\ud x\ud t. 
\end{align}
Therefore, $\int_{(B_1^+\cap\{x_n>\delta\})\times(-1,0]} |\pa_t u_\va|^2 \le C(\delta)$ for every $\delta>0$. This implies the existence of weak derivative $\pa_t u$, and that $\pa_t u_\va$ weakly converges to $\pa_t u$ in $L^2((B_1^+\cap\{x_n>\delta\})\times(-1,0])$ for every $\delta$. Since
\begin{align*}
&\int_{Q_1^+\cap\{x_n>\delta\}} [(x_n+\va)^{p}-x_n^p] |\pa_t u_\va|^2 \,\ud x\ud t \to 0\mbox{ as }\va\to 0,\\
& \int_{Q_1^+\cap\{x_n>\delta\}} x_n^p |\pa_t u|^2 \,\ud x\ud t \le\liminf_{\va\to 0}  \int_{Q_1^+\cap\{x_n>\delta\}} x_n^p |\pa_t u_\va|^2 \,\ud x\ud t, 
\end{align*}
we have from \eqref{eq:weakderivativeint} by sending $\va\to 0$ that 
\[
\sup_{t\in[-1,0]}\int_{B_1^+}  |\nabla u(x,t)|^2\,\ud x+\int_{Q_1^+\cap\{x_n>\delta\}} x_n^p |\pa_t u|^2 \,\ud x\ud t \le  C\int_{Q_1^+} x_n^p f^2\,\ud x\ud t. 
\]
Then, \eqref{eq:energyestimateut} follows by sending $\delta \to 0$ and using the monotone convergence theorem.

Now let us use another approximation to remove the assume that $f\in L^2(Q_1^+,x^p\vee 1\ud x\ud t)$ and $\pa_t a\in L^q(Q_1^+,x_n^p\vee1\,\ud x\ud t)$. Let $a^\va\in C^2(\overline Q_1^+)$ be such that $a^\va\to a$ uniformly on $Q_1^+$ and $\pa_t a^\va\to \pa_t a$ in $L^q(Q_1^+,x_n^p\,\ud x\ud t)$, and $f_\va\in C^2_c(\overline Q_1^+)$ be such that $f_\va\to f$ in $L^2(Q_1^+,x_n^p\,\ud x\ud t)$. Then there exists a weak solution $u_\va\in \mathring V_2^{1,0}(Q^+_{1})$ to the  parabolic equation
\[
a^\va \cdot x_n^{p} \pa_t u_\va -D_j[a_{ij} D_i u_\va+(x_n^{p/2}\wedge 1)d_j u_\va]+(x_n^{p/2}\wedge 1)b_iD_i u_\va+c x_n^p u_\va+c_0 u_\va=x_n^pf_\va  \quad \mbox{in }Q_1^+
\]
with  $u_\va\equiv 0$ on $\pa_{pa} Q_1^+$. By the argument of Theorem \ref{thm:existenceofweaksolution}, $u_\va$ will converge to a  weak solution of \eqref{eq:linear-eq-app} with the full boundary condition $u\equiv 0$ on $\pa_{pa} Q_1^+$. By the same argument as above, one can show that
\[
\sup_{t\in(-1,0)}\int_{B_1^+} |\nabla u_\va(x,t)|^2\,\ud x+\int_{Q_1^+} x_n^{p}|\pa_t u_\va|^2\,\ud x\ud t\le C \int_{Q_1^+} x_n^pf_\va^2\,\ud x\ud t.
\] 
Then the conclusion follows by sending $\va\to 0$.
\end{proof}

\section{Boundedness of weak solutions}\label{sec:bound}

\subsection{A maximum principle}
Suppose
\begin{equation}\label{eq:assumptionf4}
F_1:=\|f\|_{L^{\frac{2q\chi}{q\chi+\chi-q}}(Q_1^+,x_n^p\ud x\ud t)}+\|f_0\|_{L^{\frac{2q\chi}{q\chi+\chi-q}}(Q_1^+)}+ \sum_{j=1}^n\|f_j\|_{L^{2q}(Q_1^+)}<\infty,
\end{equation}
where $q$ is the one in \eqref{eq:assumptioncoefficient}.

\begin{thm}\label{thm:aweakmaxprinciple}
Suppose $u\in C ([-1,0]; L^2(B_1^+,x_n^{p}\ud x)) \cap  L^2((-1,0];H_{0,L}^1(B_1^+))$ is a weak solution of \eqref{eq:linear-eq} with the partial boundary condition \eqref{eq:linear-eq-D}, where the coefficients of the equation satisfy \eqref{eq:rangep}, \eqref{eq:ellip2},  \eqref{eq:assumptioncoefficient} and \eqref{eq:assumptionf4}. Suppose that all $d_j=0$ and $c\ge 0$. Then 
\begin{equation*}
\|u\|_{L^\infty(Q_1^+)}\le \left\{
  \begin{aligned}
  & \sup_{\pa_{pa}Q_1^+}|u|+ CF_1 |Q_1^+|^{\frac{1}{2}(1-\frac{1}{q}-\frac{1}{\chi})},  \quad  &\mbox{for }   p\ge 0, \\
  &  \sup_{\pa_{pa}Q_1^+}|u|+ CF_1 |Q_1^+|_p^{\frac{1}{2}(1-\frac{1}{q}-\frac{1}{\chi})}, \quad   &\mbox{for }    -1<p<0,
\end{aligned}
\right.
\end{equation*}
where $|Q_1^+|$ denotes the Lebesgue measure of $Q_1^+$,  $|Q_1^+|_p$ is defined in \eqref{eq:pmeasure} in the below, and $C>0$ depends only on $\lambda,\Lambda,n,p$ and $q$.
\end{thm}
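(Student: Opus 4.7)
The plan is a De Giorgi--Stampacchia iteration based on the energy inequality of Lemma~\ref{lem:Steklovapproximation}. By applying the result to both $u$ and $-u$ (the hypotheses $d_j=0,\ c\ge 0$ are preserved, and $F_1$ is unchanged since the signs of $f,f_0,f_j$ flip), it suffices to bound $\sup u$. Set $M_0:=\sup_{\pa_{pa}Q_1^+}u^+\ge 0$; for $k\ge M_0$, take $\varphi:=(u-k)^+$, which lies in $\mathring V_2^{1,0}(Q_1^+)$ by Lemma~\ref{lem:cutoffconstantspace}. Since $d_j\equiv 0$ and since $\int cx_n^p u\varphi\ge 0$ on $A(k):=\{u>k\}\subset\{u\ge M_0\ge 0\}$ (so this term may be kept on the left and dropped), Lemma~\ref{lem:Steklovapproximation} yields a simplified energy inequality whose right-hand side consists of $\varphi^2$-coefficient terms, $k^2\int_{A(k)}|c_0|$, and the $f,f_0,f_j$-source terms restricted to $A(k)$.

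Second, I would estimate each right-hand side term by H\"older's inequality combined with the weighted parabolic Sobolev embedding (Theorem~\ref{thm:weightedsobolev} for $p\ge 0$, Theorem~\ref{thm:weightedsobolev2} for $-1<p<0$). Let $\mu(k):=|A(k)|$ for $p\ge 0$ and $\mu(k):=|A(k)|_p:=\int_{A(k)}x_n^p$ for $-1<p<0$. The condition $q>\chi/(\chi-1)$ gives $q'=q/(q-1)<\chi$, so
\[
\|\varphi\|_{L^{2q'}(Q_1^+)}^2\le\|\varphi\|_{L^{2\chi}(Q_1^+)}^2\mu(k)^{1/q'-1/\chi}\le C\|\varphi\|_{V_2}^2\mu(k)^\delta,\qquad \delta:=1-1/q-1/\chi>0,
\]
and similarly for the $x_n^p$-weighted variant, using $x_n\in(0,1)$ to freely compare weighted and unweighted measures up to constants. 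The $f$-source terms receive a three-factor H\"older split with exponents $r_1=2q\chi/(q\chi+\chi-q)$, $r_2=2\chi$, $r_3=2q'$ dictated by \eqref{eq:assumptionf4}, followed by Young's inequality, producing $\epsilon\|\varphi\|_{V_2}^2+CF_1^2\mu(k)^{1-1/q}$, while $k^2\int_{A(k)}|c_0|\le Ck^2\mu(k)^{1-1/q}$. Choosing $k$ large enough that $C\mu(k)^\delta\le 1/2$ and absorbing yields
\[
\|\varphi\|_{V_2(Q_1^+)}^2\le C(k^2+F_1^2)\mu(k)^{1-1/q}.
\]

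Finally, the iteration is closed by the standard observation $(h-k)^2\mu(h)\le\int_{A(h)}\varphi_k^2\le C\|\varphi_k\|_{V_2}^2\mu(k)^{1-1/\chi}$ (H\"older together with the Sobolev embedding), which combined with the previous estimate gives
\[
\mu(h)\le\frac{C(k^2+F_1^2)}{(h-k)^2}\mu(k)^{1+\delta},\qquad h>k\ge k_*\ge M_0.
\]
Since $1+\delta>1$, Stampacchia's lemma applied to the non-increasing function $\mu$ produces $d>0$ with $\mu(k_*+d)=0$, whence $\|u\|_{L^\infty(Q_1^+)}\le k_*+d$. The main technical obstacle will be the $k^2$-factor in this recursion, coming from the $k^2|c_0|$ term of Lemma~\ref{lem:Steklovapproximation}: to obtain the clean form $M_0+CF_1\,\mu(Q_1^+)^{\delta/2}$ (rather than a bound involving $(M_0+F_1)\mu^{\delta/2}$), one must either solve the Stampacchia inequality self-consistently using $k_*\le M_0+d$, or else pass first to $v:=u-M_0$, so that the boundary data vanish and the $M_0$-contribution is absorbed into a modified source satisfying $\tilde F_1\le F_1+CM_0$, and then apply a homogeneous-boundary version of the iteration.
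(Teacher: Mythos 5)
Your plan follows the same skeleton as the paper's: test with $(u-k)^+$ in Lemma~\ref{lem:Steklovapproximation}, estimate the coefficient and source terms by H\"older's inequality and Theorems~\ref{thm:weightedsobolev}/\ref{thm:weightedsobolev2}, and close a Stampacchia iteration on the level-set measure. Two points deserve attention.

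First, the absorption step differs. You propose to absorb the $\varphi^2$-coefficient terms by taking $k$ large enough that $C\mu(k)^\delta\le 1/2$. But $\mu(k)$ only becomes small as $k\to\infty$ (and $\mu(M_0)\le|Q_1^+|$ is of order one), so your iteration would have to start from a threshold $k_*$ that is a priori much larger than $\sup_{\partial_{pa}}|u|$, and one then loses the clean additive $\sup_{\partial_{pa}}|u|$ in the final bound. The paper instead restricts first to a short time slab $B_1^+\times(-1,-1+\tau]$: its space--time measure is already small, so $C\|\varphi\|_{L^2}^2\le C\|\varphi\|_{L^{2\chi}}^2|\mathrm{slab}|^{1-1/\chi}\le\tfrac12\|\varphi\|_{L^{2\chi}}^2$ holds with $k_*=\sup_{\partial_{pa}}|u|$. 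Since $\tau$ can be fixed at a small constant depending only on the data, iterating over finitely many slabs recovers the bound on all of $Q_1^+$. Your write-up should adopt this slicing rather than large-$k$ absorption.

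Second, your observation about the $k^2$ factor is sharp and points to a genuine subtlety. From the $k^2\,|c_0|$ contribution in Lemma~\ref{lem:Steklovapproximation} (which survives even with $d_j\equiv 0$ and $c\ge 0$, since no sign is imposed on $c_0$), the recursion necessarily carries $k^2+F_1^2$ rather than $F_1^2$ alone, yet the paper's displayed estimates and final recursion $\psi(h)\le CF_1^{2\chi}(h-k)^{-2\chi}\psi(k)^{\beta}$ show no such term. Neither of your proposed repairs fully removes it: in remedy (b), passing to $v=u-M_0$ does not eliminate the $c_0\,v\,(v-k)^+$ source of the $k^2|c_0|$ term (it simply resets the starting level to $k_0=0$), and the slice-by-slice compounding of the $M_0$-dependent increment then gives a multiplicative rather than additive constant on $\sup_{\partial_{pa}}|u|$; remedy (a) likewise yields $\|u\|\le C'\sup_{\partial_{pa}}|u|+CF_1|Q_1^+|^{\delta/2}$ with $C'>1$. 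The clean way to achieve the stated additive conclusion is to also assume $c_0\ge0$ (or $c_0\equiv 0$), so that $c_0\,u\,(u-k)^+\ge 0$ on $\{u>k\}$ for $k\ge 0$ and the term is simply dropped from the left-hand side; then the paper's recursion and iteration over slabs go through exactly as written.
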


\begin{proof}
It follows from Lemma \ref{lem:Steklovapproximation} that \eqref{eq:uastestfunction} holds. Let $\varphi$ be the one there.  Using \eqref{eq:assumptioncoefficient}, Theorem \ref{thm:weightedsobolev}, Theorem \ref{thm:weightedsobolev2} and H\"older's inequality, one obtains for $p\ge 0$ that
\begin{align*}
\int_{-1}^s\int_{B_1^+} \varphi^2 ((|\partial_t a|+|c|)x_n^p+\sum_j b_j^2+|c_0|)\,\ud x\ud t&\le \va \|\varphi\|_{V_2(B_1^+\times(-1,s))}^2+C_\va \|\varphi\|_{L^2(B_1^+\times(-1,s))}^2,
\end{align*}
and for $-1<p<0$ that
\begin{align*}
&\int_{-1}^s\int_{B_1^+} \varphi^2 ((|\partial_t a|+|c|)x_n^p+\sum_j b_j^2+c)\,\ud x\ud t\\
&\le \va \|\varphi\|_{V_2(B_1^+\times(-1,s))}^2+C_\va \|\varphi\|_{L^2(B_1^+\times(-1,s),x_n^p\ud x\ud t)}^2.
\end{align*}
Similarly,  we have 
\begin{align*}
\int_{-1}^s\int_{B_1^+\cap\{u>k\}} \sum_j f_j^2\,\ud x\ud t&\le CF_1^2 |\{u>k\}\cap(B_1^+\times(-1,s))|^{1-\frac{1}{q}},\\
\int_{-1}^s\int_{B_1^+\cap\{u>k\}} f_0\varphi\,\ud x\ud t&\le  \va \|\varphi\|_{V_2(B_1^+\times(-1,s))}^2+C_\va F_1^2|\{u>k\}\cap(B_1^+\times(-1,s))|^{1-\frac{1}{q}},\\
\int_{-1}^s\int_{B_1^+\cap\{u>k\}} x_n^pf\varphi\,\ud x\ud t&\le  \va \|\varphi\|_{V_2(B_1^+\times(-1,s))}^2+C_\va F_1^2|\{u>k\}\cap(B_1^+\times(-1,s))|_p^{1-\frac{1}{q}},
\end{align*}
where
\begin{equation}\label{eq:pmeasure}
|E|_{p}=\int_{E} x_n^p\,\ud x\ud t\quad\mbox{for every measurable set }E\subset Q_1^+.
\end{equation}
By choosing $\va>0$ small, and using Theorem \ref{thm:weightedsobolev} and Theorem \ref{thm:weightedsobolev2}, we have for $p\ge 0$ that 
\[
\|\varphi\|_{L^{2\chi}(B_1^+\times(-1,s))}^2\le C \|\varphi\|_{L^2(B_1^+\times(-1,s))}^2+CF_1^2|\{u>k\}\cap(B_1^+\times(-1,s))|^{1-\frac{1}{q}},
\]
and for $-1<p<0$ that
\[
\|\varphi\|_{L^{2\chi}(B_1^+\times(-1,s),x_n^p\ud x\ud t)}^2\le C \|\varphi\|_{L^2(B_1^+\times(-1,s),x_n^p\ud x\ud t)}^2+CF_1^2|\{u>k\}\cap(B_1^+\times(-1,s))|_{p}^{1-\frac{1}{q}}.
\]

When $s+1$ is sufficiently small, we have for $p\ge 0$ that 
\[
C \|\varphi\|_{L^2(B_1^+\times(-1,s))}^2\le \frac12\|\varphi\|_{L^{2\chi}(B_1^+\times(-1,s))}^2,
\]
and for $-1<p<0$ that
\[
C \|\varphi\|_{L^2(B_1^+\times(-1,s),x_n^p\ud x\ud t)}^2\le \frac12\|\varphi\|_{L^{2\chi}(B_1^+\times(-1,s),x_n^p\ud x\ud t)}^2.
\]
Hence, we have 
\begin{align*}
\|(u-k)^+\|_{L^{2\chi}(B_1^+\times(-1,s))}^2&\le CF_1^2|\{u>k\}\cap(B_1^+\times(-1,s))|^{1-\frac{1}{q}}\quad\mbox{if }p\ge 0\\
\|(u-k)^+\|_{L^{2\chi}(B_1^+\times(-1,s),x_n^p\ud x\ud t)}^2&\le CF_1^2|\{u>k\}\cap(B_1^+\times(-1,s))|_{p}^{1-\frac{1}{q}}\quad\mbox{if }-1<p<0.
\end{align*}
For $h>k$, we have
\begin{align*}
\|(u-k)^+\|_{L^{2\chi}(B_1^+\times(-1,s))}^2\ge (h-k)^2|\{u>h\}\cap(B_1^+\times(-1,s))|^{\frac{1}{\chi}},\\
\|(u-k)^+\|_{L^{2\chi}(B_1^+\times(-1,s),x_n^p\ud x\ud t)}^2\ge (h-k)^2|\{u>h\}\cap(B_1^+\times(-1,s))|_{p}^{\frac{1}{\chi}}.
\end{align*}
Hence, if we denote 
\begin{equation*}
\psi(k)=\left\{
  \begin{aligned}
  &  |\{u>k\}\cap(B_1^+\times(-1,s))|,  \quad  &\mbox{for }   p\ge 0, \\
  &  |\{u>k\}\cap(B_1^+\times(-1,s))|_p, \quad   &\mbox{for }    -1<p<0,
\end{aligned}
\right.
\end{equation*}
then
\[
\psi(h)\le \frac{CF_1^{2\chi}}{(h-k)^{2\chi}}\psi(k)^{\beta},
\]
where $\beta=(1-\frac1q)\chi>1$ by the assumption of $q$. Define
\[
k_s=\sup_{\pa_{pa}Q_1^+}|u|+d-\frac{d}{2^s}.
\]
Then
\[
\psi(k_{s+1})\le \frac{CF_1^{2\chi}2^{2\chi}}{d^{2\chi}} (4^\chi)^s \psi(k_s)^\beta.
\]
Similar to \eqref{eq:nonlineariteration} and \eqref{eq:nonlineariteration2}, we can choose $C>0$ such that for $d\ge CF_1 |B_1^+\times(-1,s)|^{\frac{1}{2}(1-\frac{1}{q}-\frac{1}{\chi})}$, we have
\[
\psi\left(\sup_{\pa_{pa}Q_1^+}|u|+d\right)=0.
\]
That is,
\begin{equation*}
\sup_{B_1^+\times(-1,s)} u\le \left\{
  \begin{aligned}
  & \sup_{\pa_{pa}Q_1^+}|u|+ CF_1 |B_1^+\times(-1,s)|^{\frac{1}{2}(1-\frac{1}{q}-\frac{1}{\chi})},  \quad  &\mbox{for }   p\ge 0, \\
  &  \sup_{\pa_{pa}Q_1^+}|u|+ CF_1 |B_1^+\times(-1,s)|_p^{\frac{1}{2}(1-\frac{1}{q}-\frac{1}{\chi})}, \quad   &\mbox{for }    -1<p<0.
\end{aligned}
\right.
\end{equation*}
Keeping iterating for $s$ with a uniform step size, we obtain that
\begin{equation*}
\sup_{Q_1^+} u\le \left\{
  \begin{aligned}
  & \sup_{\pa_{pa}Q_1^+}|u|+ CF_1 |Q_1^+|^{\frac{1}{2}(1-\frac{1}{q}-\frac{1}{\chi})},  \quad  &\mbox{for }   p\ge 0, \\
  &  \sup_{\pa_{pa}Q_1^+}|u|+ CF_1 |Q_1^+|_p^{\frac{1}{2}(1-\frac{1}{q}-\frac{1}{\chi})}, \quad   &\mbox{for }    -1<p<0.
\end{aligned}
\right.
\end{equation*}
Applying the same result to the equation of $-u$, the conclusion follows.
\end{proof}

\subsection{A local maximum principle}
The following is the Caccioppoli inequality of weak solutions to \eqref{eq:linear-eq} and \eqref{eq:linear-eq-D}, which is the starting point of the De Giorgi iteration.
\begin{thm}\label{thm:caccipolli}
Suppose $u\in C ([-1,0]; L^2(B_1^+,x_n^{p}\ud x)) \cap  L^2((-1,0];H_{0,L}^1(B_1^+))$ is a weak solution of \eqref{eq:linear-eq} with the partial boundary condition \eqref{eq:linear-eq-D}, where the coefficients of the equation satisfy \eqref{eq:rangep}, \eqref{eq:ellip2},  \eqref{eq:assumptioncoefficient} and \eqref{eq:assumptionf4}. Let $x_0\in\pa' B_1^+$, $Q_{\rho,\tau}^+=B^+_\rho(x_0)\times(t_0,t_0+\tau]\subset Q_1^+$, $k\ge 0$, $\va\in(0,1]$, and $\xi\in  V_2^{1,1}(Q_{\rho,\tau}^+)$ such that $\xi=0$ on $\pa''B_\rho(x_0)\times (t_0,t_0+\tau]$ and  $0\le\xi\le 1$. Then 
\begin{align*}
&\max\left(\sup_{t\in(t_0,t_0+\tau)}\int_{B^+_\rho(x_0)}x_n^{p} a [\xi(u-k)^+]^2(x,t)\,\ud x, \lambda\iint_{Q_{\rho,\tau}^+} |D[\xi(u-k)^+]|^2\,\ud x\ud s\right)\\
&\le (1+\va)\int_{B^+_\rho(x_0)}x_n^{p} a [\xi(u-k)^+]^2(x,t_0)\,\ud x +C\iint_{Q_{\rho,\tau}^+} (|D\xi|^2+|\xi\pa_t\xi|x_n^p) [(u-k)^+]^2 \,\ud x\ud t \\
&\quad +\frac{C}{\va^\kappa}\left( \|[(u-k)^+] \xi\|_{L^2(Q_{\rho,\tau}^+)}^2 +  \|[(u-k)^+] \xi\|_{L^2(Q_{\rho,\tau}^+,  x_n^p\ud x\ud t)}^2\right)\\
&\quad +\frac{C}{\va^\kappa}(k^2+F_1^2)\left( |\{u>k\}\cap Q_{\rho,\tau}^+|^{1-\frac1q}+|\{u>k\}\cap Q_{\rho,\tau}^+|_p^{1-\frac1q} \right),
\end{align*}
where $|\cdot|_p$ is defined in \eqref{eq:pmeasure}, $C>0$ depends only on $\lambda,\Lambda,n,p$ and $q$, $\kappa>0$ depends only on $q$ and $\chi$, and $F_1$ is given in \eqref{eq:assumptionf4}.
\end{thm}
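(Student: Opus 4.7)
The plan is to test the weak formulation \eqref{eq:definitionweaksolution} with $\varphi = \xi^2(u-k)^+$. Since $k\ge 0$ and $u\equiv 0$ on $\partial'B_1^+$, the cut-off $(u-k)^+$ belongs to $H^1_{0,L}(B^+_\rho(x_0))$ for a.e.\ $t$ by Lemma \ref{lem:cutoffconstantspace}; combined with the vanishing of $\xi$ on $\partial''B_\rho(x_0)\times(t_0,t_0+\tau]$, this makes $\xi^2(u-k)^+$ a morally admissible test function. It need not lie in $\mathring V^{1,1}_2$ directly, so I would first regularize exactly as in the proof of Lemma \ref{lem:Steklovapproximation}, using Steklov averages $u_h$ in time together with a time cut-off $\eta_\delta$ supported in a sub-interval $[t_0,t_1]\subset(t_0,t_0+\tau]$, and then send $h,\delta\to 0$ using Lemma \ref{lem:steklovaverageconvergence} and Lemma \ref{lem:cutoffconstantconvergence}. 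Writing $w:=(u-k)^+$, the resulting identity relates the slice $\int x_n^p a\xi^2 w^2$ at $t_1$ to the one at $t_0$, plus spatial integrals of the diffusion, drift, zeroth-order, and forcing terms over $B_\rho^+\times[t_0,t_1]$.

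The main contribution on the left comes from the ellipticity \eqref{eq:ellip2}: expanding $D_j(\xi^2 w) = \xi^2 D_j w + 2\xi w D_j\xi$ and using $D_iu=D_iw$ on $\{u>k\}$,
\[
\iint a_{ij}D_iu\,D_j(\xi^2 w) \;\ge\; \lambda\iint \xi^2|Dw|^2 - 2\Lambda\iint \xi|Dw||D\xi|\,w,
\]
and the cross term is absorbed by Cauchy--Schwarz at the cost of $C\iint w^2|D\xi|^2$. The pointwise identity $|D(\xi w)|^2\le 2\xi^2|Dw|^2+2w^2|D\xi|^2$ then converts $\lambda\iint \xi^2|Dw|^2$ into $\tfrac{\lambda}{2}\iint |D(\xi w)|^2$, delivering the gradient term on the left. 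Choosing $t_1\in(t_0,t_0+\tau]$ to approach (up to $\va$) the supremum of $t\mapsto \int x_n^p a\xi^2 w^2(\cdot,t)$ then produces both halves of the $\max$ on the left.

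For the lower-order and forcing terms I would split $u=w+\min(u,k)$ on $\{u>k\}$, with $|\min(u,k)|\le k$, and treat the $w$-part and the $k$-part separately. The $w$-contributions against $|\partial_t a|$, $|c|$, $b_j^2+d_j^2$, $|c_0|$ are controlled by H\"older with exponent $q$ from \eqref{eq:assumptioncoefficient}, reducing to the $L^{2q/(q-1)}$ norm (or its $x_n^p$-weighted analogue, according to the sign of $p$) of $\xi w$; by Lemma \ref{lem:interpolation} together with Theorem \ref{thm:weightedsobolev} (for $p\ge 0$) or Theorem \ref{thm:weightedsobolev2} (for $-1<p<0$), these norms are bounded by
\[
\va\|\xi w\|_{V_2}^2 + C\va^{-\kappa}\bigl(\|\xi w\|_{L^2(Q_{\rho,\tau}^+)}^2+\|\xi w\|_{L^2(Q_{\rho,\tau}^+,x_n^p\ud x\ud t)}^2\bigr),
\]
with $\kappa$ depending only on $q$ and $\chi$. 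The $k$-contributions, being supported in $\{u>k\}\cap Q_{\rho,\tau}^+$, yield the $k^2\bigl(|\{u>k\}\cap Q_{\rho,\tau}^+|^{1-1/q}+|\{u>k\}\cap Q_{\rho,\tau}^+|_p^{1-1/q}\bigr)$ term after H\"older. The forcing terms $f,f_0,f_j$ are bounded by exactly the same H\"older–Sobolev scheme using \eqref{eq:assumptionf4}, producing the $F_1^2$ factor. Finally, the $\partial_t(a\xi^2)=\xi^2\partial_t a+2a\xi\partial_t\xi$ from the time-derivative integration by parts is split: the $\xi\partial_t\xi\,x_n^p$ piece gives the $|\xi\partial_t\xi|x_n^p w^2$ term on the right, while the $\xi^2\partial_t a$ piece is again controlled by the $L^q$--Sobolev scheme; absorbing a small portion of the latter into $\int x_n^p a\xi^2 w^2(x,t_0)$ produces the factor $(1+\va)$ in front of the initial slice.

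The main obstacle I anticipate is the careful bookkeeping needed to realize the precise $(1+\va)$ on the initial slice rather than a crude $C$: this forces one to isolate the $\xi^2\partial_t a$ contribution from the bulk lower-order errors and absorb it into $\int x_n^p a\xi^2 w^2(x,t_0)$ at the price of a large $\va^{-\kappa}$ on the other norms. The parallel treatment of $p\ge 0$ and $-1<p<0$, which requires running Theorem \ref{thm:weightedsobolev} and Theorem \ref{thm:weightedsobolev2} simultaneously and carrying both $L^2$ and $L^2(x_n^p\,\ud x\ud t)$ norms through the estimate, adds a further layer of accounting but no conceptual difficulty.
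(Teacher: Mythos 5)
Your proposal is correct and follows essentially the same route as the paper: test the weak formulation with $\varphi=\xi^2(u-k)^+$ (justified via Steklov averaging as in Lemma \ref{lem:Steklovapproximation}), use the ellipticity \eqref{eq:ellip2} and H\"older with the exponent $q$ from \eqref{eq:assumptioncoefficient}--\eqref{eq:assumptionf4}, then invoke Theorem \ref{thm:weightedsobolev} / Theorem \ref{thm:weightedsobolev2} and absorb the $\va\|\xi(u-k)^+\|_{V_2}^2$ pieces to obtain the stated bound with the $\va^{-\kappa}$ constants. The only cosmetic difference is your bookkeeping of where the $(1+\va)$ factor arises, which does not affect the argument.
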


\begin{proof}
Similar to the proof of  Lemma \ref{lem:Steklovapproximation}, we can assume $u\in V^{1,1}_2(Q^+_1)$, since otherwise we can use its Steklov average to remove this assumption. 

Taking $\varphi=\xi^2(u-k)^+$ in \eqref{eq:definitionweaksolution}, and using \eqref{eq:ellip2} and H\"older's inequality, one obtains that

\begingroup
\allowdisplaybreaks
\begin{align}
&\frac{1}{2}\int_{B^+_\rho(x_0)}x_n^{p} a [\xi(u-k)^+]^2(x,t)\,\ud x-\frac{1}{2}\int_{B^+_\rho(x_0)}x_n^{p} a [\xi(u-k)^+]^2(x,t_0)\,\ud x\nonumber\\
&+\frac{3\lambda}{4} \int_{t_0}^t\int_{B_\rho^+(x_0)} |D(\xi(u-k)^+)|^2\,\ud x\ud s\nonumber\\
&\le C\int_{t_0}^t\int_{B_\rho^+(x_0)} (|D\xi|^2+|\xi\pa_t\xi|x_n^p) [(u-k)^+]^2 \,\ud x\ud s  \nonumber\\
&\quad +C\int_{t_0}^t\int_{B_\rho^+(x_0)} [(u-k)^+]^2 \xi^2 \left( (|\pa_t a|+|c|) x_n^p +\sum_j d_j^2+\sum_j b_j^2+ |c_0| \right)\,\ud x\ud s  \nonumber\\
&\quad + C\int_{t_0}^t\int_{B_\rho^+(x_0)}  \xi^2k^2\chi_{\{u>k\}} \left(|c|x_n^p+|c_0|+\sum_j d_j^2\right)\,\ud x\ud s\nonumber\\
&\quad+C\int_{t_0}^t\int_{B_\rho^+(x_0)\cap\{u>k\}} \left(|f|x_n^p\xi^2(u-k)^++ |f_0|\xi^2(u-k)^++\xi^2\sum_j f_j^2\right)\,\ud x\ud s. \label{eq:uastestfunction2}
\end{align}
\endgroup

Using \eqref{eq:assumptioncoefficient}, Theorem \ref{thm:weightedsobolev}, Theorem \ref{thm:weightedsobolev2} and H\"older's inequality one obtains
\begin{align*}
&\int_{t_0}^t\int_{B_\rho^+(x_0)} [(u-k)^+]^2 \xi^2  ((|\partial_t a|+|c|)x_n^p+\sum_j b_j^2+|c_0|+\sum_j d_j^2)\,\ud x\ud s\\
&\quad \le \va \|[(u-k)^+] \xi\|_{V_2(Q_{\rho,\tau}^+)}^2+\frac{C}{\va^\kappa} \|[(u-k)^+] \xi\|_{L^2(Q_{\rho,\tau}^+)}^2+\frac{C}{\va^\kappa} \|[(u-k)^+] \xi\|_{L^2(Q_{\rho,\tau}^+, x_n^p\ud x\ud t)}^2,\\
&k^2\iint_{Q_{\rho,\tau}^+} \chi_{\{u>k\}}\Big[|c|x_n^p+|c_0|+\sum_j d_j^2\Big]\,\ud x\ud t\\
&\le C k^2 |\{u>k\}\cap Q_{\rho,\tau}^+|^{1-\frac1q}+C k^2 |\{u>k\}\cap Q_{\rho,\tau}^+|_p^{1-\frac1q},
\end{align*}
and
\begin{align*}
\iint_{Q_{\rho,\tau}^+\cap\{u>k\}} \sum_j f_j^2\,\ud x\ud t&\le F_1^2 |\{u>k\}\cap Q_{\rho,\tau}^+)|^{1-\frac{1}{q}},\\
\iint_{Q_{\rho,\tau}^+\cap\{u>k\}} |f_0|\xi^2(u-k)^+\,\ud x\ud t&\le  \va \|\varphi\|_{V_2(Q_{\rho,\tau}^+)}^2+\frac{C}{\va^\kappa} F_1^2|\{u>k\}\cap Q_{\rho,\tau}^+|^{1-\frac{1}{q}},\\
\iint_{Q_{\rho,\tau}^+\cap\{u>k\}} |f|x_n^p\xi^2(u-k)^+\,\ud x\ud t&\le  \va \|\varphi\|_{V_2(Q_{\rho,\tau}^+)}^2+\frac{C}{\va^\kappa} F_1^2|\{u>k\}\cap Q_{\rho,\tau}^+|_p^{1-\frac{1}{q}}.
\end{align*}
By choosing $\va>0$ small, and using Theorem \ref{thm:weightedsobolev} and Theorem \ref{thm:weightedsobolev2}, the conclusion follows from \eqref{eq:uastestfunction2}.
\end{proof}

Now we can prove the local-in-time boundedness of weak solutions up to $\{x_n=0\}$.
\begin{thm}\label{thm:localboundedness}
Suppose $u\in C ([-1,0]; L^2(B_1^+,x_n^{p}\ud x)) \cap  L^2((-1,0];H_{0,L}^1(B_1^+))$ is a weak solution of \eqref{eq:linear-eq} with the partial boundary condition \eqref{eq:linear-eq-D}, where the coefficients of the equation satisfy \eqref{eq:rangep}, \eqref{eq:ellip2}, and 
\begin{align}
\Big\||\pa_t a|+|c|\Big\|_{L^q(Q_1^+,x_n^p\ud x\ud t)}+\Big\|\sum_{j=1}^n(b_j^2+d_j^2)+|c_0|\Big\|_{L^q(Q_1^+)}&\le\Lambda \label{eq:localbddnesscoeffcient}\\
F_1:=\|f\|_{L^{\frac{2q\chi}{q\chi+\chi-q}}(Q_1^+,x_n^p\ud x\ud t)}+\|f_0\|_{L^{\frac{2q\chi}{q\chi+\chi-q}}(Q_1^+)}+ \sum_{j=1}^n\|f_j\|_{L^{2q}(Q_1^+)}&<\infty\label{eq:localbddnessf}
\end{align}
for some $q>\max(\frac{\chi}{\chi-1},\frac{n+p+2}{2}, \frac{n+2p+2}{p+2})$.  Denote $\mathcal{Q}_R^+=B_R^+(x_0)\times(t_0-R^{p+2},t_0]\subset Q_1^+$, where $x_0\in\pa' B_1^+$. Then we have, for any $\gamma>0$,
\[
\|u\|_{L^\infty(\mathcal Q_{R/2}^+)}\le C\left(R^{-\frac{n+p+2}{\gamma}}\|u\|_{L^\gamma(\mathcal Q_R^+)}+F_1 R^{1-\frac{n+p+2}{2q}} \right),
\]
where $C>0$ depends only on $\lambda,\Lambda,n,p$, $q$ and $\gamma$.
\end{thm}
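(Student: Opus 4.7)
The plan is to run a De~Giorgi iteration built on the Caccioppoli estimate of Theorem~\ref{thm:caccipolli} combined with the weighted parabolic Sobolev embeddings of Theorems~\ref{thm:weightedsobolev}--\ref{thm:weightedsobolev2}, followed by an interpolation on nested cylinders to convert the $L^2$ norm to any $L^\gamma$. The parabolic rescaling $(x,t)\mapsto(x_0+Ry,t_0+R^{p+2}s)$ preserves the structure of \eqref{eq:linear-eq}; under it, $|\mathcal{Q}_R^+|\sim R^{n+p+2}$ and the $F_1$-norms rescale exactly by the $R$-power appearing on the right-hand side. It therefore suffices to prove the dimensionless version on $\mathcal{Q}_1^+$, which by the usual absorbing lemma reduces to establishing
\[
\|u\|_{L^{\infty}(\mathcal{Q}_{\sigma_1}^{+})}\le C(\sigma_{2}-\sigma_{1})^{-\alpha}\|u\|_{L^{\gamma}(\mathcal{Q}_{\sigma_{2}}^{+})}+CF_{1}
\]
for all $1/2\le\sigma_1<\sigma_2\le1$ and some $\alpha=\alpha(n,p,q,\gamma)>0$.

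\textbf{De~Giorgi iteration, case $\gamma=2$.} Set $r_s=\sigma_1+(\sigma_2-\sigma_1)2^{-s}$, $k_s=M(1-2^{-s})$ with $M>0$ to be chosen, and pick $\xi_s\in V_2^{1,1}(\mathcal{Q}_{r_s}^{+})$ with $0\le\xi_s\le 1$, $\xi_s\equiv 1$ on $\mathcal{Q}_{r_{s+1}}^{+}$, $\xi_s\equiv 0$ on the lateral boundary $\partial''B_{r_s}^+\times(t_0-r_s^{p+2},t_0]$ and at the initial slice $B_{r_s}^+\times\{t_0-r_s^{p+2}\}$, satisfying $|D\xi_s|\lesssim 2^s/(\sigma_2-\sigma_1)$ and $|\xi_s\partial_t\xi_s|\lesssim 2^{s(p+2)}/(\sigma_2-\sigma_1)^{p+2}$. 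Applying Theorem~\ref{thm:caccipolli} with these $\xi_s$ and $k_s$ (the initial contribution dropping by construction), then Theorem~\ref{thm:weightedsobolev} when $p\ge 0$ or Theorem~\ref{thm:weightedsobolev2} when $-1<p<0$, together with H\"older's inequality and Chebyshev's bound $|\{u>k_{s+1}\}\cap\mathcal{Q}_{r_{s+1}}^{+}|\lesssim 4^s M^{-2}A_s$ for $A_s:=\iint_{\mathcal{Q}_{r_s}^{+}}[(u-k_s)^+]^2(1+x_n^p)\,\ud x\ud t$, one derives a nonlinear recurrence $A_{s+1}\le C\,b^{s}\,N(M,F_1)\,A_s^{1+\sigma_0}$ with $\sigma_0>0$; the positivity of $\sigma_0$ is precisely what the lower bounds $q>(n+p+2)/2$ (when $p\ge0$) and $q>(n+2p+2)/(p+2)$ (when $-1<p<0$) guarantee. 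A standard iteration lemma then yields $A_s\to 0$ provided $M\ge C[(\sigma_2-\sigma_1)^{-\alpha}\|u\|_{L^2(\mathcal{Q}_{\sigma_2}^{+})}+F_1]$, and the same argument applied to $-u$ gives the $L^\infty$--$L^2$ bound.

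\textbf{Upgrade to general $\gamma$.} For $\gamma\ge 2$, H\"older's inequality directly replaces $L^2$ by $L^\gamma$ on the right. For $0<\gamma<2$, insert $\|u\|_{L^2(\mathcal{Q}_{\sigma_2}^{+})}^{2}\le \|u\|_{L^\infty(\mathcal{Q}_{\sigma_2}^{+})}^{2-\gamma}\|u\|_{L^\gamma(\mathcal{Q}_{\sigma_2}^{+})}^{\gamma}$ into the $L^\infty$--$L^2$ inequality and apply Young's inequality to obtain, with $\Phi(r):=\|u\|_{L^\infty(\mathcal{Q}_r^{+})}$,
\[
\Phi(\sigma_1)\le\tfrac{1}{2}\Phi(\sigma_2)+C(\sigma_2-\sigma_1)^{-2\alpha/\gamma}\|u\|_{L^\gamma(\mathcal{Q}_{\sigma_2}^{+})}+CF_1
\]
for all $1/2\le\sigma_1<\sigma_2\le 1$. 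The classical absorbing lemma then yields $\Phi(1/2)\le C\|u\|_{L^\gamma(\mathcal{Q}_1^{+})}+CF_1$, and undoing the rescaling produces the claimed estimate.

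\textbf{Main obstacle.} The delicate point is tracking both weighted and unweighted $L^2$ and $L^{2\chi}$ norms throughout the iteration: the Caccioppoli right-hand side of Theorem~\ref{thm:caccipolli} mixes the two, while the Sobolev upgrade produces $L^{2\chi}$ with weight $1$ or $x_n^p$ depending on the sign of $p$. For $-1<p<0$ one must shuttle between the two measures via H\"older on $x_n^p$, which is exactly where $p>-1$ is essential (so that $x_n^{p\alpha}$ stays integrable for a suitable $\alpha$); this is also what sharpens the threshold to $q>(n+2p+2)/(p+2)$ and renders the recurrence exponent $\sigma_0$ strictly positive.
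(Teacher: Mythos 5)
Your proposal is correct in outline and follows essentially the same route as the paper's proof: the Caccioppoli estimate of Theorem \ref{thm:caccipolli} together with the weighted Sobolev inequalities (Theorems \ref{thm:weightedsobolev}, \ref{thm:weightedsobolev2}) drives a De Giorgi iteration yielding an $L^\infty$--$L^2$ bound, which is then upgraded to $L^\gamma$ by interpolation and the Giaquinta--Giusti absorption lemma, with the parabolic scaling $(x,t)\mapsto(x_0+Ry,\,t_0+R^{p+2}s)$ producing the factors $R^{-\frac{n+p+2}{\gamma}}$ and $F_1R^{1-\frac{n+p+2}{2q}}$. Two small corrections to your accounting: the thresholds $q>\frac{n+p+2}{2}$ and $q>\frac{n+2p+2}{p+2}$ are what keep the rescaled coefficients and data admissible uniformly in $R$ (the positivity of the De Giorgi exponent needs only $q>\frac{\chi}{\chi-1}$), and for $-1<p<0$ the iteration controls the weighted norm $\|u\|_{L^2(\cdot,\,x_n^p\ud x\ud t)}$, so the passage to the unweighted $L^\gamma$ norm must go through a weighted $L^{\tilde\gamma}$ with $\tilde\gamma=\frac{(1+p)\gamma}{1-p}<\gamma$ and H\"older in the weight (using $p>-1$), which is precisely the shuttling you flag but do not carry out in your upgrade step.
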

\begin{proof}
Let $\theta\in(0,1)$. We will consider $R=1$ first, and then scale it back. 
We would like to first show that
\begin{align}\label{eq:boundednessbyL2}
\|u\|_{L^\infty(\mathcal{Q}_{\theta}^+)}\le C\left[(1-\theta)^{-1/\beta}\Big(\|u\|_{L^2(Q_1^+)}+ \|u\|_{L^2(Q_1^+,\ x_n^p\,\ud x\ud t)}\Big)+F_1\right],
\end{align}
where $\beta=1-\frac{1}{q}-\frac{1}{\chi}$. We only need to prove \eqref{eq:boundednessbyL2} for $\theta\in(1/2,1)$.

Let 
$$
\rho_m=\theta +2^{-m}(1-\theta), \quad k_m=k(2-2^{-m}),\quad m=0,1,2,\cdots,
$$
where $k>0$ to be fixed later. For brevity, we denote $Q_m^+=Q^+_{\rho_m}=B_{\rho_m}^+(x_0)\times(t_0-\rho_m^{p+2},t_0]$, and we take $\xi_m$ to be a cut-off function such that $\xi\in V_2^{1,1}(Q_{m}^+)$, $\xi_m\equiv 1$ on $Q_{m+1}^+$,  $\xi=0$ on $Q_{m}^+\setminus Q^+_{(\rho_m+\rho_{m+1})/2}$, and $|D\xi_m|^2+|\pa_t\xi_m|\le C(n) (\rho_m-\rho_{m+1})^{-2}.$ 

Let 
\begin{equation}\label{2}
\varphi_m=\left\{
  \begin{aligned}
  &  \|(u-k_m)^+\|_{L^2(Q_m^+)}^2  \quad  &\mbox{if }   p\ge 0, \\
  &  \|(u-k_m)^+\|_{L^2(Q_m^+,x_n^p\ud x\ud t)}^2 \quad   &\mbox{if }    -1<p<0.
\end{aligned}
\right.
 \end{equation}

Case 1: Suppose $p\ge 0$. By Theorem \ref{thm:caccipolli} and Theorem \ref{thm:weightedsobolev}, we have
\begin{align*}
& \|\xi_m(u-k_{m+1})^+\|^2_{L^{2\chi}(Q_m^+)}\\
&\le C  \|\xi_m(u-k_{m+1})^+\|^2_{V_2(Q_m^+)}\\
&\le \frac{C2^{2m}}{(1-\theta)^2}\|(u-k_{m+1})^+\|_{L^2(Q_{m}^+)}^2 +C(k^2+F_1^2)|A_m(k_{m+1})|^{1-\frac1q},
\end{align*}
where
\[
A_m(k)=\{u>k\}\cap Q_{m}^+,\ \mbox{ and }|A_m(k)| \mbox{ is the Lebesgue measure of }A_m(k).
\]
Take $k\ge F_1 $. Then,
\begin{align*}
\varphi_{m+1}&\le \|\xi_m(u-k_{m+1})^+\|^2_{L^{2}(Q_m^+)}\\
& \le \|\xi_m(u-k_{m+1})^+\|^2_{L^{2\chi}(Q_m^+)} |A_m(k_{m+1})|^{1-\frac{1}{\chi}}\\
&\le \frac{C2^{2m}}{(1-\theta)^2} \varphi_m |A_m(k_{m+1})|^{1-\frac{1}{\chi}}+ Ck^2|A_m(k_{m+1})|^{2-\frac{1}{\chi}-\frac{1}{q}}.
\end{align*}
Notice that
\[
\varphi_m=\|(u-k_m)^+\|_{L^2(Q_m^+)}^2 \ge (k_{m+1}-k_m)^2|A_m(k_{m+1})|= \frac{k^2}{2^{2m+2}} |A_m(k_{m+1})|.
\]
Hence,
\begin{equation}\label{eq:iterationstep0}
\varphi_{m+1}\le \frac{C2^{2m}}{(1-\theta)^2}  \left(\frac{2^{2m+2}}{k^2}\right)^{1-\frac{1}{\chi}}\varphi_m^{2-\frac{1}{\chi}}+ Ck^2 \left(\frac{2^{2m+2}}{k^2}\right)^{2-\frac{1}{\chi}-\frac1q}\varphi_m^{2-\frac{1}{\chi}-\frac1q}.
\end{equation}

Case 2: Suppose $-1<p<0$. By Theorem \ref{thm:caccipolli} and Theorem \ref{thm:weightedsobolev2}, we have
\begin{align*}
& \|\xi_m(u-k_{m+1})^+\|^2_{L^{2\chi}(Q_m^+,\ x_n^p\ud x\ud t)}\\
&\le C  \|\xi_m(u-k_{m+1})^+\|^2_{V_2(Q_m^+)}\\
&\le \frac{C2^{2m}}{(1-\theta)^2}\|(u-k_{m+1})^+\|_{L^2(Q_{m}^+,\ x_n^p\ud x\ud t)}^2 +C(k^2+F_1^2)|A_m(k_{m+1})|_{p}^{1-\frac1q},
\end{align*}
where
\[
A_m(k)=\{u>k\}\cap Q_{m}^+,\ \mbox{ and }|\cdot|_{p} \mbox{ is defined in }\eqref{eq:pmeasure}.
\]
Take $k\ge F_1 $. Then,
\begin{align*}
\varphi_{m+1}&\le \|\xi_m(u-k_{m+1})^+\|^2_{L^{2}(Q_m^+,\ x_n^p\ud x\ud t)}\\
& \le \|\xi_m(u-k_{m+1})^+\|^2_{L^{2\chi}(Q_m^+,\ x_n^p\ud x\ud t)} |A_m(k_{m+1})|_{p}^{1-\frac{1}{\chi}}\\
&\le \frac{C2^{2m}}{(1-\theta)^2} \varphi_m |A_m(k_{m+1})|_{p}^{1-\frac{1}{\chi}}+ Ck^2|A_m(k_{m+1})|_{p}^{2-\frac{1}{\chi}-\frac{1}{q}}.
\end{align*}
Notice that
\[
\varphi_m=\|(u-k_m)^+\|_{L^2(Q_m^+,\ x_n^p\ud x\ud t)}^2 \ge (k_{m+1}-k_m)^2|A_m(k_{m+1})|_{p}= \frac{k^2}{2^{2m+2}} |A_m(k_{m+1})|_{p}.
\]
Hence, \eqref{eq:iterationstep0} also holds.

Now let us start from \eqref{eq:iterationstep0} which holds for all $p>-1$. If we further take $k\ge \|u\|_{L^2(Q_1^+)} + \|u\|_{L^2(Q_1^+,\ x_n^p\,\ud x\ud t)} $, then
\[
y_m:=\frac{\varphi_m}{k^2 }\le 1.
\]
Thus,
\[
y_{m+1}\le \frac{C 2^{2m(2-\frac{1}{\chi})}y_m^{1+\beta}}{(1-\theta)^2}.
\]
If
\begin{equation}\label{eq:nonlineariteration}
y_0=\frac{\|(u-k)^+\|_{L^2(Q_1^+)}^2}{k^2}\le \frac{\|u\|_{L^2(Q_1^+)}^2}{k^2}\le \bar y= \frac{(1-\theta)^{2/\beta}}{C^{1/\beta}} 4^{(\frac{1}{\chi}-2)\frac{1}{\beta^2}},
\end{equation}
then one can show by induction that 
\[
y_m\le \frac{\bar y}{(4^{2-\frac{1}{\chi}})^{\frac{m}{\beta}}},
\]
and thus, 
\begin{equation}\label{eq:nonlineariteration2}
\lim_{m\to\infty}y_m=0.
\end{equation} 
That is,
\[
\sup_{Q_{1/2}^+}u \le 2k.
\]
Therefore, we  only need to choose 
\[
k=F_1+\frac{C}{(1-\theta)^{1/\beta}}(\|u\|_{L^2(Q_1^+)}+ \|u\|_{L^2(Q_1^+,\ x_n^p\,\ud x\ud t)} ).
\]
This proves \eqref{eq:boundednessbyL2}.

Now we will use a scaling argument. For any $R\in(0,1]$, define
\begin{equation}\label{eq:rescaledequationcoefficients}
\begin{split}
\tilde u(x,t)&=u(x_0+Rx,t_0+R^{p+2}t),\quad \tilde a(x,t)=a(x_0+Rx,t_0+R^{p+2}t),\\
 \tilde a_{ij}(x,t)&=a_{ij}(x_0+Rx,t_0+R^{p+2}t), \quad \tilde d_j(x,t)=d_j(x_0+Rx,t_0+R^{p+2}t), \\ 
 \tilde b_j(x,t)&=b_j(x_0+Rx,t_0+R^{p+2}t), \quad \tilde c_0(x,t)=c_0(x_0+Rx,t_0+R^{p+2}t),\\
  \tilde c(x,t)&=c(x_0+Rx,t_0+R^{p+2}t), \quad \tilde f(x,t)=f(x_0+Rx,t_0+R^{p+2}t),\\
 \tilde f_j(x,t)&=f_j(x_0+Rx,t_0+R^{p+2}t), \quad \tilde f_0(x,t)=f_0(x_0+Rx,t_0+R^{p+2}t).
 \end{split}
\end{equation}
Then
\begin{equation}\label{eq:rescaledequation}
\begin{split}
& \tilde ax_n^{p} \pa_t  \tilde u-D_j( \tilde a_{ij} D_i  \tilde u+ R\tilde d_j  \tilde u)+ R\tilde b_iD_i  \tilde u+ R^{p+2}\tilde c x_n^p \tilde u + R^2\tilde c_0 \tilde u\\
 &\quad= R^{p+2}x_n^pf+R^2\tilde f_0 -RD_i \tilde f_i \quad \mbox{in }Q_1^+.
 \end{split}
\end{equation}
Note that since $q>\max(\frac{n+p+2}{2}, \frac{n+2p+2}{p+2})$, then
\begin{equation}\label{eq:rescaledequationcoefficients1}
\begin{split}
&\Big\||\pa_t  \tilde a|+R^{p+2}|\tilde c|\Big\|_{L^q(Q_1^+,x_n^p\ud x\ud t)}+ \Big\|\sum_{j=1}^n(R^2 \tilde b_j^2+R^2 \tilde d_j^2)+R^2| \tilde c_0|\Big\|_{L^q(Q_1^+)}\\
&\le [R^{p+2-\frac{n+2p+2}{q}}+R^{2-\frac{n+p+2}{q}}] \Lambda\le\Lambda,
 \end{split}
\end{equation}
and
\begin{equation}\label{eq:rescaledequationcoefficients2}
\begin{split}
&R^{p+2}\|f\|_{L^{\frac{2q\chi}{q\chi+\chi-q}}(Q_1^+,x_n^p\ud x\ud t)}+R^2\|\tilde f_0\|_{L^{\frac{2q\chi}{q\chi+\chi-q}}(Q_1^+)}+ R\sum_{j=1}^n\|\tilde f_j\|_{L^{2q}(Q_1^+)}\\
&\le CR^{1-\frac{n+p+2}{2q}}  F_1 \le CF_1.
 \end{split}
\end{equation}
Hence, it follows from \eqref{eq:boundednessbyL2} that
\begin{align*}
&\|\tilde u\|_{L^\infty(\mathcal{Q}_{\theta}^+)}\le C\left((1-\theta)^{-1/\beta}(\|\tilde u\|_{L^2(Q_1^+)}+\|\tilde u\|_{L^2(Q_1^+,\ x_n^p\ud x\ud t)})+F_1\right),
\end{align*}
where in the second inequality we used that $q>\max(\frac{\chi}{\chi-1},\frac{n+p+2}{2})$ and $R\le 1$. 
Scaling the estimate of $\tilde u$ back to $u$, we then obtain  for $p\ge 0$ that
\begin{align*}
\|u\|_{L^\infty(\mathcal{Q}_{\theta R}^+)} &\le \|\tilde u\|_{L^\infty(\mathcal{Q}_\theta^+)}\\
&\le C\left(\frac{1}{R^{\frac{n+p+2}{2}}(1-\theta)^{1/\beta}}\| u\|_{L^2(\mathcal{Q}_R^+)}+ F_1\right)\\
&\le C\left(\frac{1}{R^{\frac{n+p+2}{2}}(1-\theta)^{1/\beta}}\| u\|_{L^\infty(\mathcal{Q}_R^+)}^{\frac{2-\gamma}{2}} \| u\|^\frac{\gamma}{2}_{L^\gamma(\mathcal{Q}_R^+)}+F_1\right)\\
&\le \frac{1}{2} \|u\|_{L^\infty(\mathcal{Q}_{ R}^+)} + \frac{C}{R^{\frac{n+p+2}{\gamma}}(1-\theta)^{\frac{2}{\beta\gamma}}} \| u\|_{L^\gamma(\mathcal{Q}_R^+)}+C F_1\\
&\le \frac{1}{2} \|u\|_{L^\infty(\mathcal{Q}_{ R}^+)} + \frac{C}{(R-\theta R)^{\alpha}} \| u\|_{L^\gamma(\mathcal{Q}_R^+)}+CF_1,
\end{align*}
where $\alpha= \max(\frac{n+p+2}{\gamma},\frac{2}{\beta\gamma})$. 
By an iterative lemma, Lemma 1.1 in Giaquinta-Giusti \cite{GG}, we have
\[
\|u\|_{L^\infty(\mathcal{Q}_{\theta }^+)} \le  \frac{C}{(1-\theta )^{\alpha}} \| u\|_{L^\gamma(\mathcal{Q}_1^+)}+CF_1.
\]
Applying this estimate to $\tilde u$ again, and scaling it back to $u$, we obtain the desired estimate.

Similarly, for $-1<p<0$ and $\gamma\in (0,2)$, we let $\tilde\gamma=\frac{(1+p)\gamma}{1-p}<\gamma$. Then we have
\begingroup
\allowdisplaybreaks
\begin{align*}
\|u\|_{L^\infty(\mathcal{Q}_{\theta R}^+)} &\le \|\tilde u\|_{L^\infty(\mathcal{Q}_\theta^+)}\\
&\le C\left(\frac{1}{R^{\frac{n+2p+2}{2}}(1-\theta)^{1/\beta}}\| u\|_{L^2(\mathcal{Q}_R^+,\ x_n^p\ud x\ud t)}+  F_1\right)\\
&\le C\left(\frac{1}{R^{\frac{n+2p+2}{2}}(1-\theta)^{1/\beta}}\| u\|_{L^\infty(\mathcal{Q}_R^+)}^{\frac{2-\tilde\gamma}{2}} \| u\|^\frac{\tilde\gamma}{2}_{L^{\tilde\gamma}(\mathcal{Q}_R^+,\ x_n^p\ud x\ud t)}+  F_1\right)\\
&\le \frac{1}{2} \|u\|_{L^\infty(\mathcal{Q}_{ R}^+)} + \frac{C}{R^{\frac{n+2p+2}{\tilde\gamma}}(1-\theta)^{\frac{2}{\beta\tilde\gamma}}} \| u\|_{L^{\tilde\gamma}(\mathcal{Q}_R^+,\ x_n^p\ud x\ud t)}+C  F_1\\
&\le \frac{1}{2} \|u\|_{L^\infty(\mathcal{Q}_{ R}^+)} + \frac{C}{(R-\theta R)^{\tilde \alpha}} \| u\|_{L^{\tilde\gamma}(\mathcal{Q}_R^+,\ x_n^p\ud x\ud t)}+CF_1,
\end{align*}
\endgroup
where $\tilde \alpha= \max(\frac{n+2p+2}{\tilde\gamma},\frac{2}{\beta\tilde\gamma})$. 
By an iterative lemma, Lemma 1.1 in Giaquinta-Giusti \cite{GG},  we have
\[
\|u\|_{L^\infty(\mathcal{Q}_{\theta }^+)} \le  \frac{C}{(1-\theta )^{\tilde \alpha}} \| u\|_{L^{\tilde\gamma}(\mathcal{Q}_1^+,\ x_n^p\ud x\ud t)}+CF_1.
\]
Since
\[
\| u\|_{L^{\tilde\gamma}(\mathcal{Q}_1^+,\ x_n^p\ud x\ud t)} \le C \| u\|_{L^\gamma(\mathcal{Q}_1^+)},
\]
which follows from H\"older's inequality, then
\[
\|u\|_{L^\infty(\mathcal{Q}_{\theta }^+)} \le  \frac{C}{(1-\theta )^{\tilde \alpha}} \| u\|_{L^\gamma(\mathcal{Q}_1^+)}+CF_1.
\]
Applying this estimate to $\tilde u$ again, and scaling it back to $u$, we obtain the desired estimate.
\end{proof}

If it additionally satisfies $u(\cdot,-1)=0$, then we can show the boundedness up to the initial time.
\begin{thm}\label{thm:localboundednesstobottom}
Suppose $u\in C ([-1,0]; L^2(B_1^+,x_n^{p}\ud x)) \cap  L^2((-1,0];H_{0,L}^1(B_1^+))$ is a weak solution of \eqref{eq:linear-eq} with the partial boundary condition \eqref{eq:linear-eq-D} and the initial condition $u(\cdot,-1)=0$, where the coefficients of the equation satisfy \eqref{eq:rangep}, \eqref{eq:ellip2}, \eqref{eq:localbddnesscoeffcient} and \eqref{eq:localbddnessf} for some $q>\max(\frac{\chi}{\chi-1},\frac{n+p+2}{2},\frac{n+2p+2}{p+2})$.  Denote $\widetilde{\mathcal{Q}}_R^+=B_R^+(x_0)\times(-1,-1+R^{p+2}]\subset Q_1^+$, where $x_0\in\pa' B_1^+$. Then we have, for any $\gamma>0$,
\[
\|u\|_{L^\infty(\widetilde{\mathcal{Q}}_{R/2}^+)}\le C\left(R^{-\frac{n+p+2}{\gamma}}\|u\|_{L^\gamma(\widetilde{\mathcal{Q}}_R^+)}+F_1 R^{1-\frac{n+p+2}{2q}} \right),
\]
where $C>0$ depends only on $\lambda,\Lambda,n,p$, $q$ and $\gamma$.
\end{thm}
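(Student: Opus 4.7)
The plan is to follow the De Giorgi iteration scheme used in Theorem \ref{thm:localboundedness}, with a simplification reflecting the vanishing initial condition $u(\cdot,-1)=0$: the initial-time term in the Caccioppoli inequality of Theorem \ref{thm:caccipolli} becomes automatically zero for any $k\ge 0$, so one does not need a cutoff in the time direction. First I would reduce to $R=1$ by the rescaling \eqref{eq:rescaledequationcoefficients}, noting that the rescaled solution $\tilde u$ still satisfies $\tilde u(\cdot,-1)=0$ on $B_1^+\times(-1,0]$ and that the coefficient bounds \eqref{eq:rescaledequationcoefficients1} and the data bound \eqref{eq:rescaledequationcoefficients2} persist because $R\le 1$ and $q$ exceeds the required thresholds.

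Next, I would set up nested cylinders $\widetilde Q_m^+=B_{\rho_m}^+(x_0)\times(-1,-1+\rho_m^{p+2}]$ with $\rho_m=\theta+2^{-m}(1-\theta)$ and truncation levels $k_m=k(2-2^{-m})$ for a $k>0$ to be determined. Unlike in Theorem \ref{thm:localboundedness}, I would use purely spatial cutoffs $\xi_m=\xi_m(x)$ with $\xi_m\equiv 1$ on $B_{\rho_{m+1}}$, $\xi_m\equiv 0$ outside $B_{(\rho_m+\rho_{m+1})/2}$, and $|D\xi_m|\le C2^m/(1-\theta)$. Applying Theorem \ref{thm:caccipolli} on $\widetilde Q_m^+$ with this $\xi_m$ and threshold $k_{m+1}\ge 0$, the bottom term
\[
(1+\va)\int_{B_{\rho_m}^+} x_n^p a\,[\xi_m(\tilde u-k_{m+1})^+]^2(x,-1)\,\ud x
\]
vanishes since $(\tilde u-k_{m+1})^+(\cdot,-1)\equiv 0$, and the $|\xi_m\pa_t\xi_m|$ term drops because $\xi_m$ is time-independent.

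Combining the resulting Caccioppoli estimate with the parabolic Sobolev inequality (Theorem \ref{thm:weightedsobolev} when $p\ge 0$, Theorem \ref{thm:weightedsobolev2} when $-1<p<0$) yields the $L^{2\chi}$ bound on $\xi_m(\tilde u-k_{m+1})^+$ over $\widetilde Q_m^+$. Since $\xi_m\equiv 1$ on $\widetilde Q_{m+1}^+$, the exact same recursive nonlinear inequality \eqref{eq:iterationstep0} that was derived in the proof of Theorem \ref{thm:localboundedness} then holds for
\[
\varphi_m=\begin{cases}\|(\tilde u-k_m)^+\|_{L^2(\widetilde Q_m^+)}^2,&p\ge 0,\\ \|(\tilde u-k_m)^+\|_{L^2(\widetilde Q_m^+,\,x_n^p\ud x\ud t)}^2,&-1<p<0.\end{cases}
\]
Choosing $k$ of size $F_1+(1-\theta)^{-1/\beta}\bigl(\|\tilde u\|_{L^2(\widetilde Q_1^+)}+\|\tilde u\|_{L^2(\widetilde Q_1^+,\,x_n^p\ud x\ud t)}\bigr)$ drives $\varphi_m\to 0$ along \eqref{eq:nonlineariteration}--\eqref{eq:nonlineariteration2}, giving $\sup_{\widetilde Q_{\theta}^+}\tilde u\le 2k$, and the bound for $-u$ is symmetric.

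The last step is to upgrade the $L^2$-type norm on the right to an $L^\gamma$ norm for arbitrary $\gamma>0$: scaling back to the original variables and running the Giaquinta-Giusti iteration lemma on $\theta$ exactly as in the closing paragraphs of the proof of Theorem \ref{thm:localboundedness}, first reducing to $L^{\gamma}$ when $p\ge 0$ and to $L^{\tilde\gamma}$ (with $\tilde\gamma=\tfrac{(1+p)\gamma}{1-p}$) followed by H\"older when $-1<p<0$, yields the stated estimate. I do not anticipate any real obstacle here; the argument is a direct adaptation of the proof of Theorem \ref{thm:localboundedness}. The only care needed is selecting the correct Sobolev inequality by the sign of $p$ and tracking the $R$-powers through the rescaling—all routine.
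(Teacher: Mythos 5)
Your proposal is correct and matches the paper's intended argument: the paper omits the proof, stating only that it is almost identical to that of Theorem \ref{thm:localboundedness} and in fact simpler because no cutoff in the time variable is needed, which is precisely the simplification you exploit (the initial-time term in Theorem \ref{thm:caccipolli} vanishes since $u(\cdot,-1)=0$ and $k\ge 0$, so purely spatial cutoffs suffice). The subsequent De Giorgi iteration, rescaling, and Giaquinta--Giusti step are carried out exactly as in the paper's proof of Theorem \ref{thm:localboundedness}.
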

The proof of this theorem is almost identical to that of Theorem \ref{thm:localboundedness} (which is actually simpler since we do not need to cut off in the time variables). We omit the details.

Combining Theorems \ref{thm:localboundedness} and \ref{thm:localboundednesstobottom}, we have
\begin{thm}\label{thm:localboundednessglobal}
Suppose $u\in C ([-1,0]; L^2(B_1^+,x_n^{p}\ud x)) \cap  L^2((-1,0];H_{0,L}^1(B_1^+))$ is a weak solution of \eqref{eq:linear-eq} with the partial boundary condition \eqref{eq:linear-eq-D} and the initial condition $u(\cdot,-1)=0$, where the coefficients of the equation satisfy \eqref{eq:rangep}, \eqref{eq:ellip2}, \eqref{eq:localbddnesscoeffcient} and \eqref{eq:localbddnessf} for some $q>\max(\frac{\chi}{\chi-1},\frac{n+p+2}{2},\frac{n+2p+2}{p+2})$.   Then we have, for any $\gamma>0$,
\[
\|u\|_{L^\infty(B_{1/2}^+\times(-1,0])}\le C\left(\|u\|_{L^\gamma(\mathcal{Q}_1^+)}+F_1 \right),
\]
where $C>0$ depends only on $\lambda,\Lambda,n,p$, $q$ and $\gamma$.
\end{thm}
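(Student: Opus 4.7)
The plan is a finite covering argument that assembles the desired $L^\infty$ bound on $B_{1/2}^+\times(-1,0]$ from Theorem \ref{thm:localboundedness}, Theorem \ref{thm:localboundednesstobottom}, and classical interior parabolic $L^\infty$--$L^\gamma$ estimates. Since both boundary-type theorems are local and of the same $L^\infty$--$L^\gamma$ form with the same data term $F_1$, the core of the proof reduces to a geometric check that every $(x_0,t_0)\in B_{1/2}^+\times(-1,0]$ lies in a subcylinder to which one of the three estimates applies, with a universally bounded prefactor.

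For the boundary layer $\{x\in B_{1/2}^+:x_n<1/8\}\times(-1,0]$, I would project $(x_0,t_0)$ to $\bar x=(x_0',0)\in\pa' B_1^+$. Since $|x_0'|\le 1/2$, we have $B_R^+(\bar x)\subset B_1^+$ for every $R\in(0,1/2]$. If $t_0+1\ge (1/2)^{p+2}$, then $\mathcal{Q}_{1/2}^+(\bar x,t_0)\subset Q_1^+$ and $(x_0,t_0)\in\mathcal{Q}_{1/4}^+(\bar x,t_0)$, so Theorem \ref{thm:localboundedness} applied with $R=1/2$ gives $|u(x_0,t_0)|\le C(\|u\|_{L^\gamma(\mathcal{Q}_1^+)}+F_1)$. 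If instead $t_0+1\le (1/4)^{p+2}$, then $(x_0,t_0)\in\widetilde{\mathcal{Q}}_{1/4}^+(\bar x)$ and Theorem \ref{thm:localboundednesstobottom} with $R=1/2$ yields the same bound. For the intermediate range $t_0+1\in\big((1/4)^{p+2},(1/2)^{p+2}\big)$, I would pick $R=(t_0+1)^{1/(p+2)}\in(1/4,1/2)$ in Theorem \ref{thm:localboundedness}: then $\mathcal{Q}_R^+(\bar x,t_0)\subset Q_1^+$ by construction, $(x_0,t_0)\in\mathcal{Q}_{R/2}^+(\bar x,t_0)$ since $(x_0)_n<1/8\le R/2$, and the prefactors $R^{-(n+p+2)/\gamma}$ and $R^{1-(n+p+2)/(2q)}$ are both uniformly bounded because $R\in[1/4,1/2]$.

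For the interior region $\{x\in B_{1/2}^+:x_n\ge 1/8\}\times(-1,0]$, any Euclidean ball $B_{1/16}(x_0)$ is contained in $\{x_n\ge 1/16\}\cap B_1^+$, so the weight $x_n^p$ is bounded between two positive constants depending only on $p$; hence equation \eqref{eq:linear-eq} becomes uniformly parabolic with the standard (unweighted) parabolic scaling on this ball. The assumption $q>\max\big(\tfrac{\chi}{\chi-1},\tfrac{n+p+2}{2},\tfrac{n+2p+2}{p+2}\big)$ comfortably exceeds the integrability required by the classical parabolic De Giorgi--Nash--Moser theory, which yields an interior $L^\infty$--$L^\gamma$ estimate $|u(x_0,t_0)|\le C(\|u\|_{L^\gamma(\mathcal{Q}_1^+)}+F_1)$ whenever $t_0+1\ge (1/16)^2$; for the remaining initial-time strip $t_0+1<(1/16)^2$, the same classical theory applied to a cylinder anchored at the initial time $t=-1$ (using $u(\cdot,-1)\equiv 0$) gives the identical bound.

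Taking the supremum over $(x_0,t_0)\in B_{1/2}^+\times(-1,0]$ completes the proof, with the constant depending only on $\lda,\Lda,n,p,q,\gamma$. The only mildly delicate point is the geometric trade-off in the intermediate time range of the boundary layer: one must balance $R$ against $t_0+1$ so that $R^{p+2}\le t_0+1$ and $(x_0)_n<R/2$ are simultaneously feasible, which is why the threshold separating the ``boundary layer'' from the ``interior'' should be chosen as $(x_0)_n<1/8$ rather than $1/4$. Everything else is routine; in particular, the interior case reduces immediately to classical parabolic theory and does not require any new ingredient from the earlier sections.
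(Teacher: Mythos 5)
Your proposal is correct and, in fact, more careful than the paper's own proof, which consists of the single line \emph{``It follows from Theorems \ref{thm:localboundedness} and \ref{thm:localboundednesstobottom}.''} That one-line proof tacitly elides a genuine point that you identify explicitly: both boundary theorems are anchored at points $\bar x\in\partial' B_1^+$ and produce estimates on half-cylinders $\mathcal{Q}_{R/2}^+(\bar x,t_0)$ or $\widetilde{\mathcal{Q}}_{R/2}^+(\bar x)$ with spatial radius $R/2\le 1/2$ and temporal extent tied to the degenerate scaling $R^{p+2}$. For $p>-1$ these cannot by themselves cover points $(x_0,t_0)$ with $(x_0)_n$ close to $1/2$ in the intermediate time range (the union of $(-1,-1+(1/2)^{p+2}]$ and $(-(1/2)^{p+2},0]$ has a gap precisely because $2\cdot(1/2)^{p+2}<1$ when $p>-1$). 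Your decomposition into a boundary layer $\{x_n<1/8\}$ with the three-way time split anchored at $R\in[1/4,1/2]$, plus an interior region $\{x_n\ge 1/8\}$ handled by uniformly parabolic estimates, is the correct way to close this gap, and the geometric bookkeeping (in particular the choice $1/8$ rather than $1/4$, so that $(x_0)_n<1/8\le R/2$ holds uniformly over the intermediate range $R\in(1/4,1/2)$) is sound.

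One point worth tightening in your interior case: you appeal to ``classical parabolic De Giorgi--Nash--Moser theory,'' but the equation retains the coefficient $a(x,t)$ in front of $\partial_t u$, and $a$ is only assumed to satisfy $\partial_t a\in L^q(Q_1^+,x_n^p\,\ud x\ud t)$, so the textbook statements do not apply verbatim. As the paper itself remarks just before Theorem \ref{thm:uniformholdernearboundary}, a ``small adaptation'' is needed for this coefficient; in the present framework the cleanest route is to observe that the paper's own Caccioppoli inequality (Theorem \ref{thm:caccipolli}) already accounts for $a$, and the De Giorgi iteration in the proof of Theorems \ref{thm:localboundedness} and \ref{thm:localboundednesstobottom} goes through in the interior with $x_n^p$ replaced by a constant (effectively the $p=0$, $\chi=\frac{n+2}{n}$ case), for which the standing hypothesis $q>\max\big(\frac{\chi}{\chi-1},\frac{n+p+2}{2},\frac{n+2p+2}{p+2}\big)$ does indeed imply $q>\frac{n+2}{2}$ for every $p>-1$, as you assert. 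With that replacement of citation the proof is complete.
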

\begin{proof}
It follows from Theorems \ref{thm:localboundedness} and \ref{thm:localboundednesstobottom}.\end{proof}

\section{H\"older regularity}\label{sec:holderregularity}

\subsection{Improvement of oscillations centered at the boundary}\label{subsec:holderregularity}
Throughout this subsection, we assume all the assumptions in Theorem \ref{thm:localboundedness} and let $u$ be as in Theorem \ref{thm:localboundedness}.  Suppose 
\[
 M:=\|u\|_{L^\infty(B_{3/4}^+\times(-3/4,0])}.
\]
Let
\[
\ud \mu_p(t)=a(x,t)x_n^p\,\ud x,\quad \ud \nu_p=a(x,t)x_n^p\,\ud x\ud t
\]
and
\[
|A|_{\mu_p(t)}= \int_{A}a(x,t)x_n^p\,\ud x\quad\mbox{for } A\subset B_1^+,\quad |\widetilde A|_{\nu_p}= \int_{\widetilde A}a(x,t)x_n^p\,\ud x\ud t\quad\mbox{for }  \widetilde A\subset Q_1^+.
\]
Recall that for $x_0\in\partial R^n_+$, 
$$\mathcal{Q}_R(x_0,t_0):=B_R(x_0)  \times (t_0-R^{p+2}, t_0], \quad \mathcal{Q}_R^+(x_0,t_0):=B_R^+(x_0)  \times (t_0-R^{p+2}, t_0].$$ We simply write it as $\mathcal{Q}_R$ and $\mathcal{Q}_R^+$ if $(x_0,t_0)=(0,0)$.  

\begin{lem}\label{lem:measurechange}
There exists $C>0$ depending only on $n$ and $p$ such that for every $\va\in(0,1)$, every $R\in(0,1]$, every $\delta>0$, every $\widetilde A\subset B_R^+\times [0,\delta R^{p+2}]$, if 
\[
\frac{|\widetilde A|_{\nu_p}}{|B_R^+\times [0,\delta R^{p+2}]|_{\nu_p}} \le\va^{p+1},
\]
then
\[
\frac{|\widetilde A|}{|B_R^+\times [0,\delta R^{p+2}]|}\le C\max(\va^{p+1}, \va).
\]
\end{lem}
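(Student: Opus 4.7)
The plan is a direct splitting argument that separates the region near the boundary $\{x_n=0\}$ from the bulk. First I would record the two elementary volume computations I will need: since $\lambda \le a \le \Lambda$, for any measurable $E$ we have $\lambda \int_E x_n^p\,\ud x\ud t \le |E|_{\nu_p}\le \Lambda\int_E x_n^p\,\ud x\ud t$, and by scaling $x = Ry$,
\[
|B_R^+\times[0,\delta R^{p+2}]|_{\nu_p}\;\asymp\;\delta R^{p+2}\int_{B_R^+}x_n^p\,\ud x\;=\;c(n,p)\,\delta R^{n+2p+2},
\]
while $|B_R^+\times[0,\delta R^{p+2}]| = c(n)\,\delta R^{n+p+2}$. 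Both constants depend only on $n$ and $p$, and the lower bound for the $\nu_p$-measure uses $p>-1$ so that $x_n^p$ is locally integrable.

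Next I decompose $\widetilde A = A_1\cup A_2$ where $A_1 = \widetilde A\cap\{x_n<\va R\}$ and $A_2 = \widetilde A\cap\{x_n\ge \va R\}$. For the boundary layer,
\[
|A_1|\;\le\;|\{x'\in\R^{n-1}:|x'|<R\}|\cdot (\va R)\cdot \delta R^{p+2}\;\le\;C(n)\,\va\,\delta R^{n+p+2},
\]
which, dividing by $|B_R^+\times[0,\delta R^{p+2}]|$, gives a ratio $\le C\va$. For $A_2$ I use that on $\{x_n\ge \va R\}\cap B_R^+$ one has $x_n^p\ge (\va R)^p$ when $p\ge 0$ (monotonicity in the right direction) and $x_n^p\ge R^p$ when $-1<p<0$ (since $x_n\le R$). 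Hence
\[
|A_2|\;\le\;\frac{|\widetilde A|_{\nu_p}}{\lambda\cdot\min((\va R)^p,R^p)}.
\]
Inserting the hypothesis $|\widetilde A|_{\nu_p}\le \va^{p+1}|B_R^+\times[0,\delta R^{p+2}]|_{\nu_p}\le C\va^{p+1}\delta R^{n+2p+2}$ and dividing by $|B_R^+\times[0,\delta R^{p+2}]| \asymp \delta R^{n+p+2}$ gives a ratio bounded by $C\va$ in the case $p\ge 0$ and by $C\va^{p+1}$ in the case $-1<p<0$.

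Combining the two pieces, in both cases one obtains
\[
\frac{|\widetilde A|}{|B_R^+\times[0,\delta R^{p+2}]|}\;\le\;C\bigl(\va+\va^{p+1}\bigr)\;\le\;C\max(\va^{p+1},\va),
\]
which is the claimed estimate. There is no real obstacle; the only subtlety is that the sign of $p$ determines whether the pointwise lower bound $x_n^p\ge (\va R)^p$ or $x_n^p\ge R^p$ is the useful one on the set $\{x_n\ge \va R\}$, which is exactly the reason the two terms $\va$ and $\va^{p+1}$ both appear.
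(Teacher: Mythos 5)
Your proof is correct and takes essentially the same route as the paper: the splitting of $\widetilde A$ at $x_n=\va R$ together with the pointwise bound $x_n^p\ge (\va R)^p$ is exactly the paper's argument for $p\ge 0$, while for $-1<p<0$ the paper skips the splitting and uses $x_n^p\ge R^p$ on all of $\widetilde A$, which is precisely your $A_2$ estimate (your extra $A_1$ term, of size $C\va\le C\va^{p+1}$ in that range, is harmless). The only difference is cosmetic: you run one unified decomposition for both signs of $p$ instead of treating the two cases separately.
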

\begin{proof}
If $-1<p<0$, then 
\[
\va^{p+1}\ge \frac{|\widetilde A|_{\nu_p}}{|B_R^+\times [0,\delta R^{p+2}]|_{\nu_p}} \ge  \frac{\lambda R^p |\widetilde A|}{C R^{n+2p+2}} \ge \frac{1}{C} \frac{|\widetilde A|}{|B_R^+\times [0,\delta R^{p+2}]|}.
\]
If $p\ge 0$, then we have
\begin{align*}
\frac{|\widetilde A|}{|B_R^+\times [0,\delta R^{p+2}]|}&\le \frac{C}{\delta R^{n+2+p}}\left(\int_{\widetilde A\cap\{x_n\le \va R\}}\,\ud x\ud t+\int_{\widetilde A\cap\{x_n> \va R\}}\,\ud x\ud t\right)\\
&\le \frac{C}{\delta R^{n+2+p}}\left(\int_{\widetilde A\cap\{x_n\le \va R\}}\,\ud x\ud t+\frac{1}{\va^pR^p}\int_{\widetilde A\cap\{x_n> \va R\}}x_n^p\,\ud x\ud t\right)\\
&\le \frac{C}{\delta R^{n+2+p}}\left(\va \delta R^{n+2+p}+\frac{C \va^{p+1}\delta R^{n+2+2p}}{\va^pR^p}\right)\\
&=C\va.
\end{align*}

\end{proof}

We have the following De Giorgi lemmas.

\begin{lem}\label{lem:smallonlargeset} 
Let $0<R\le 1$ and 
\[
0\le \sup_{\mathcal{Q}_R^+} u\le \mu\le  M.
\]
Then there exists  $0<\gamma_0<1$ depending only on  $\lambda,\Lambda,n,p$ and $q$ such that for $0\le k<\mu$, if
\begin{equation*}
H:=\mu-k>
\left\{
  \begin{aligned}
  &  (M+F_1)R^{1-\frac{n+p+2}{2q}},  \quad  &\mbox{for }   p\ge 0, \\
  &  (M+F_1)R^{\frac{p+2}{2}-\frac{n+2p+2}{2q}}, \quad   &\mbox{for }    -1<p<0,
\end{aligned}
\right.
\end{equation*}
and
\[
\frac{|\{(x,t)\in \mathcal{Q}_R^+: u(x,t)>k\} |_{\nu_{p}}}{|\mathcal{Q}_R^+|_{\nu_{p}}} \le \gamma_0,
\]
then
\[
u\le \mu-\frac{H}{2}\quad\mbox{in }\mathcal{Q}_{R/2}^+.
\]
\end{lem}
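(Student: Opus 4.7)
The plan is to run a De Giorgi iteration on shrinking parabolic cylinders around $(0,0)$ with truncation levels rising toward $\mu - H/2$. Set
\[
k_m := \mu - \tfrac{H}{2} - \tfrac{H}{2^{m+1}}, \qquad R_m := \bigl(\tfrac{1}{2} + \tfrac{1}{2^{m+1}}\bigr)R, \qquad \mathcal{Q}_m := B^+_{R_m}\times (-R_m^{p+2}, 0],
\]
so $k_0 = k$, $k_m \uparrow \mu - H/2$, and $\mathcal{Q}_m \downarrow \mathcal{Q}_{R/2}^+$. I would pick cut-offs $\xi_m \in V_2^{1,1}(\mathcal{Q}_m)$ that equal $1$ on $\mathcal{Q}_{m+1}$, vanish near $\partial'' B_{R_m}^+\times[-R_m^{p+2},0]$ and at the initial slice $t=-R_m^{p+2}$, and satisfy $|D\xi_m|^2 \le C\cdot 4^m R^{-2}$, $|\xi_m \partial_t \xi_m| \le C\cdot 2^m R^{-(p+2)}$. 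Write $A_m := \{u > k_m\}\cap \mathcal{Q}_m$ and $Y_m := |A_m|_{\nu_p}/|\mathcal{Q}_m|_{\nu_p}$, so the hypothesis reads $Y_0 \le \gamma_0$.

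First, I would apply the Caccioppoli inequality of Theorem \ref{thm:caccipolli} with truncation level $k_m$ and cut-off $\xi_m$ (the vanishing initial trace kills the $t_0$ boundary term). Using $(u-k_m)^+ \le \mu - k_m \le H$ on $\mathcal{Q}_R^+$, the terms involving $|D\xi_m|^2$ and $|\xi_m \partial_t \xi_m| x_n^p$ are bounded by $CH^2 4^m R^{-2}|A_m|$ and $CH^2 2^m R^{-(p+2)}|A_m|_p$; the $L^2$ self-terms are absorbed similarly (for $R\le 1$); and the remaining $C(k_m^2+F_1^2)(|A_m|^{1-1/q}+|A_m|_p^{1-1/q})$ piece is handled using $k_m \le M$ together with the assumption $H \ge (M+F_1)R^{1-(n+p+2)/(2q)}$ when $p\ge 0$ (resp.\ $H \ge (M+F_1)R^{(p+2)/2-(n+2p+2)/(2q)}$ when $-1<p<0$). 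This scaling is exactly what is needed so that the $F_1$ and $M$ contributions get dominated by $H^2$ times a suitable power of $|A_m|_{\nu_p}/|\mathcal{Q}_m|_{\nu_p}$ after Hölder.

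Next, I would apply the parabolic Sobolev embedding of Theorem \ref{thm:weightedsobolev} (for $p\ge 0$) or Theorem \ref{thm:weightedsobolev2} (for $-1<p<0$) to the left-hand side of the Caccioppoli estimate, then combine it with the Chebyshev-type bound
\[
(k_{m+1}-k_m)^2\, |A_{m+1}|_{\nu_p} = \bigl(H/2^{m+2}\bigr)^2\, |A_{m+1}|_{\nu_p} \le \iint_{\mathcal{Q}_m} \bigl[(u-k_m)^+\xi_m\bigr]^2\, d\nu_p,
\]
and Hölder's inequality on the Sobolev output to obtain a recurrence of the form
\[
Y_{m+1} \le C\, b^{m}\, Y_m^{1+\beta},
\]
with $b>1$ and $\beta := (1-1/q)-1/\chi>0$ by the assumption $q>\chi/(\chi-1)$; the factor $b^m$ collects the geometric losses coming from $|D\xi_m|^2$, $|\partial_t \xi_m|$, and the gap $(k_{m+1}-k_m)^{-2}$. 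A standard nonlinear iteration (as used in the proof of Theorem \ref{thm:localboundedness}) then provides $\gamma_0=\gamma_0(\lambda,\Lambda,n,p,q)$ such that $Y_0\le \gamma_0$ forces $Y_m\to 0$, whence $u\le \mu - H/2$ almost everywhere on $\mathcal{Q}_{R/2}^+$.

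The main technical obstacle is the bookkeeping between the two measures $|\cdot|$ and $|\cdot|_p$ appearing in the Caccioppoli output and the single $\nu_p$-normalized ratio $Y_m$ that drives the iteration. Because the available Sobolev inequality changes character at $p=0$, one must convert, via Hölder or the preceding Lemma \ref{lem:measurechange}, between $L^{2\chi}$-bounds in the unweighted (resp.\ $x_n^p$-weighted) parabolic measure and $Y_m$, and verify that the precise power of $R$ entering the threshold on $H$ is exactly what is needed so that the $(M+F_1)$-contribution does not overwhelm the leading $H^2 Y_m^{1+\beta}$ term and the recurrence remains genuinely superlinear.
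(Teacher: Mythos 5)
Your overall De Giorgi scheme (the same levels $k_m$, the same shrinking cylinders, Caccioppoli $+$ Sobolev $+$ Chebyshev, then a superlinear iteration started from a smallness threshold $\gamma_0$) is exactly the paper's approach, and for $-1<p<0$ your plan closes as in the paper: there the weighted Sobolev inequality (Theorem \ref{thm:weightedsobolev2}) outputs an $x_n^p$-weighted $L^{2\chi}$ bound and, since $x_n^p\ge R^p$ on $B_R^+$ for negative $p$, every unweighted term on the right-hand side of Theorem \ref{thm:caccipolli} is dominated by its $\nu_p$-weighted counterpart, so a recurrence in $Y_m=|A_m|_{\nu_p}/|\mathcal{Q}_m|_{\nu_p}$ is genuinely available (the precise exponent, your $1+\beta$ versus the paper's $(1-\tfrac1q)\chi$, is immaterial).

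The gap is in the case $p\ge 0$, precisely at the point you defer as ``bookkeeping.'' For $p\ge0$ the only available Sobolev inequality (Theorem \ref{thm:weightedsobolev}) controls the \emph{unweighted} $L^{2\chi}$ norm, and the dominant right-hand side terms of the Caccioppoli inequality ($H^2 4^m R^{-2}|A_m|$, $H^2|A_m|$, $(M^2+F_1^2)|A_m|^{1-\frac1q}$) are measured in Lebesgue measure. There is no loss-free conversion of $|A_m|$ into a power of $|A_m|_{\nu_p}$: H\"older in $x_n$ (or Lemma \ref{lem:measurechange}) only gives, after normalization, $|A_m|/|\mathcal{Q}_m|\lesssim\bigl(|A_m|_{\nu_p}/|\mathcal{Q}_m|_{\nu_p}\bigr)^{\frac1{p+1}}$, because a set concentrated near $\{x_n=0\}$ has small $\nu_p$-measure but not small Lebesgue measure. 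If you insert this conversion inside the iteration, the exponent in your recurrence degrades from $(1-\tfrac1q)\chi$ (equivalently $1+\beta$) to roughly $(1-\tfrac1q)\chi/(p+1)$, which is $\le 1$ already for moderate $p$ (note $\chi=\frac{n+p+2}{n+p}\to 1$ as $p\to\infty$), so the claimed bound $Y_{m+1}\le C\,b^m Y_m^{1+\beta}$ cannot be derived and the iteration ceases to be superlinear. The paper avoids this by running the recurrence for $p\ge0$ entirely in the Lebesgue ratio $|A_m|/|\mathcal{Q}_R^+|$ (all weighted right-hand terms are then dominated by unweighted ones since $x_n^p\le R^p\le 1$), obtaining a threshold $\theta$ for the unweighted ratio, and invoking Lemma \ref{lem:measurechange} exactly once, on the hypothesis, which is where $\gamma_0=(\theta/C)^{p+1}$ comes from. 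So the fix is to change the iteration variable according to the sign of $p$, not to convert measures within the recurrence.
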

\begin{proof}
Let
\[
r_j=\frac {R}2+\frac{R}{2^{j+1}},\quad k_j=\mu-\frac{H}{2}- \frac{H}{2^{j+1}},\quad j=0,1,2,\cdots.
\]
Let $\eta_j $ be a smooth cut-off function satisfying 
\[
\mbox{supp}(\eta_j) \subset \mathcal{Q}_{r_j}, \quad 0\le \eta_j \le 1, \quad \eta_j=1 \mbox{ in }\mathcal{Q}_{r_{j+1}}^+, 
\]
\[
|D \eta_j(x,t)|^2+|\pa_t \eta_j(x,t)| R^{p}  \le \frac{C(n)}{(r_j-r_{j+1})^2} \quad \mbox{in }\mathcal{Q}_R^+. 
\]

Case 1: $p\ge 0$. Let us consider $n\ge 3$ first. Since $k_j>k\ge 0$, By Theorem \ref{thm:caccipolli} and Theorem \ref{thm:weightedsobolev}, we have
\begin{equation}\label{eq:auxgiorgi1}
 \Big(\int_{\mathcal{Q}_R^+}  |\eta_j v|^{2\chi}\,\ud x \ud t \Big)^{\frac{1}{\chi}} \le C\left[ \frac{2^{2j}}{R^2} \|v\|^2_{L^2(\mathcal{Q}_{r_{j}}^+)} + (M+F_1)^2 |\mathcal{Q}_{r_{j}}^+\cap \{u>k_j\}|^{1-\frac1q}\right],
\end{equation}
where $v=(u-k_j)^+$. Let $A(k_j,\rho)= \{(x,t)\in \mathcal{Q}_{\rho}^+: u> k_j\}$  for $0<\rho\le R$. Then 
\[
 \Big(\int_{\mathcal{Q}_R^+}  |\eta_j v|^{2\chi}\,\ud x \ud t \Big)^{\frac{1}{\chi}} \ge (k_{j+1}- k_j)^2 |A(k_{j+1}, r_{j+1})|^{\frac{1}{\chi}},  
\]
and
\[
\int_{\mathcal{Q}_{r_j}^+}  v^{2} \,\ud x\ud t \le H^2 |A(k_{j}, r_{j})|. 
\]
It follows that 
\begin{align*}
|A(k_{j+1}, r_{j+1})| & \le  C\left[ \frac{2^{4j}}{R^2}|A(k_{j}, r_{j})| + \frac{2^{2j}(M+F_1)^2}{H^2} |A(k_{j}, r_{j})|^{1-\frac1q}\right]^\chi\\
&\le  C\left[ \frac{2^{4j}}{R^2}|A(k_{j}, r_{j})| + \frac{2^{2j}}{R^{2-\frac{n+p+2}{q}}} |A(k_{j}, r_{j})|^{1-\frac1q}\right]^\chi\\
&\le C\left[ \frac{16^{j}}{R^{2-\frac{n+p+2}{q}}} |A(k_{j}, r_{j})|^{1-\frac1q}\right]^\chi,
\end{align*}
where we used the assumption on $H$, and $|A(k_{j}, r_{j})| \le |\mathcal{Q}_R^+|\le CR^{n+p+2}$. Hence
\begin{equation}\label{eq:auxgiorgi2}
\frac{|A(k_{j+1}, r_{j+1})| }{ |\mathcal{Q}_R^+|}\le C 16^{j\chi}\left( \frac{|A(k_{j}, r_{j})|}{|\mathcal{Q}_R^+|}\right)^{(1-\frac1q)\chi},
\end{equation}
where we used that $\chi=\frac{n+p+2}{n+p}$. Therefore, similarly to \eqref{eq:nonlineariteration} and \eqref{eq:nonlineariteration2}, there exists $\theta\in(0,1)$ such that if $\frac{|A(k_{0}, r_{0})|}{|\mathcal{Q}_R^+|}\le\theta$, then
\[
\lim_{j\to \infty}\frac{|A(k_{j+1}, r_{j+1})|}{|\mathcal{Q}_R^+|}=0.
\]
By Lemma \ref{lem:measurechange}, we only need to choose $\gamma_0=(\theta/C)^{p+1}$.

Now, let us  consider $n=1, 2$. By Theorem \ref{thm:caccipolli} and Theorem \ref{thm:weightedsobolev}, \eqref{eq:auxgiorgi1} would become
\[
 \Big(\int_{\mathcal{Q}_R^+}  |\eta_j v|^{2\chi}\,\ud x \ud t \Big)^{\frac{1}{\chi}} \le C R^{\frac{p+2-n}{p+2}}\left[ \frac{2^{2j}}{R^2} \|v\|^2_{L^2(\mathcal{Q}_{r_{j}}^+)} + (M+F_1)^2 |\mathcal{Q}_{r_{j}}^+\cap \{u>k_j\}|^{1-\frac1q}\right].
\]
By using $\chi=\frac{p+2}{p+1}$, one will still obtain \eqref{eq:auxgiorgi2}. Then the left proof is the same as above.

Case 2: $-1<p<0$. We still consider $n\ge 3$ first. By Theorem \ref{thm:caccipolli} and Theorem \ref{thm:weightedsobolev2}, we have
\begin{equation}\label{eq:auxgiorgi1-2}
 \Big(\int_{\mathcal{Q}_R^+}  |\eta_j v|^{2\chi}x_n^p\,\ud x \ud t \Big)^{\frac{1}{\chi}} \le C\left[ \frac{2^{2j}}{R^2} \|v\|^2_{L^2(\mathcal{Q}_{r_{j}}^+,x_n^p\ud x\ud t)} + (M+F_1)^2 |\mathcal{Q}_{r_{j}}^+\cap \{u>k_j\}|_{\nu_p}^{1-\frac1q}\right],
\end{equation}
where $v=(u-k_j)^+$. Since 
\[
 \Big(\int_{\mathcal{Q}_R^+}  |\eta_j v|^{2\chi}x_n^p\,\ud x \ud t \Big)^{\frac{1}{\chi}} \ge (k_{j+1}- k_j)^2 |A(k_{j+1}, r_{j+1})|_{\nu_p}^{\frac{1}{\chi}},  
\]
and
\[
\int_{\mathcal{Q}_{r_j}^+}  v^{2} \,\ud x\ud t \le H^2 |A(k_{j}, r_{j})|_{\nu_p},
\]
it follows that 
\begin{align*}
|A(k_{j+1}, r_{j+1})|_{\nu_p} & \le  C\left[ \frac{2^{4j}}{R^2}|A(k_{j}, r_{j})|_{\nu_p} + \frac{2^{2j}(M+F_1)^2}{H^2} |A(k_{j}, r_{j})|_{\nu_p}^{1-\frac1q}\right]^\chi\\
&\le  C\left[ \frac{2^{4j}}{R^2}|A(k_{j}, r_{j})|_{\nu_p} + \frac{2^{2j}}{R^{p+2-\frac{n+2p+2}{q}}} |A(k_{j}, r_{j})|_{\nu_p}^{1-\frac1q}\right]^\chi\\
&\le C\left[ \frac{16^{j}}{R^{p+2-\frac{n+2p+2}{q}}} |A(k_{j}, r_{j})|_{\nu_p}^{1-\frac1q}\right]^\chi,
\end{align*}
where we used the assumption on $H$, and $|A(k_{j}, r_{j})|_{\nu_p} \le |\mathcal{Q}_R^+|_{\nu_p}\le CR^{n+2p+2}$. Hence
\begin{equation}\label{eq:auxgiorgi2-2}
\frac{|A(k_{j+1}, r_{j+1})|_{\nu_p} }{ |\mathcal{Q}_R^+|_{\nu_p}}\le C 16^{j\chi}\left( \frac{|A(k_{j}, r_{j})|_{\nu_p}}{|\mathcal{Q}_R^+|_{\nu_p}}\right)^{(1-\frac1q)\chi},
\end{equation}
where we used that $\chi=\frac{n+2p+2}{n+p}$. Hence, there exists $\theta\in(0,1)$ such that if $\frac{|A(k_{0}, r_{0})|_{\nu_p}}{|\mathcal{Q}_R^+|_{\nu_p}}\le\theta$, then
\[
\lim_{j\to \infty}\frac{|A(k_{j+1}, r_{j+1})|_{\nu_p}}{|\mathcal{Q}_R^+|_{\nu_p}}=0.
\]

If $n=1, 2$, then by Theorem \ref{thm:caccipolli} and Theorem \ref{thm:weightedsobolev2}, \eqref{eq:auxgiorgi1-2} would become
\begin{align*}
& \Big(\int_{\mathcal{Q}_R^+}  |\eta_j v|^{2\chi}x_n^p\,\ud x \ud t \Big)^{\frac{1}{\chi}} \\
 &\le C R^{\frac{p+4-n}{3}}\left[ \frac{2^{2j}}{R^2} \|v\|^2_{L^2(\mathcal{Q}_{r_{j}}^+,x_n^p\ud x\ud t)} + (M+F_1)^2 |\mathcal{Q}_{r_{j}}^+\cap \{u>k_j\}|_{\nu_p}^{1-\frac1q}\right].
\end{align*}
By using $\chi=\frac{3}{2}$, one will still obtain \eqref{eq:auxgiorgi2-2}. Then the left proof is the same as above.
\end{proof}

\begin{lem}\label{lem:decay-1} 
Let  $0<R\le \frac12$. Suppose 
\[
0\le \sup_{B_{2R}^+ \times [t_0, t_0+ R^{p+2}] } u\le\mu\le M.
\]  
Then  there exists  $C>1$ depending only on  $\lambda,\Lambda,n,p$ and $q$ such that for every $\ell\in\mathbb{Z}^+$, there holds either
\begin{equation}\label{eq:dichonomyH}
\mu\le 
\left\{
  \begin{aligned}
  &  2^{\ell} (M+F_1)R^{1-\frac{n+p+2}{2q}},  \quad  &\mbox{for }   p\ge 0, \\
  &  2^{\ell} (M+F_1)R^{\frac{p+2}{2}-\frac{n+2p+2}{2q}}, \quad   &\mbox{for }    -1<p<0,
\end{aligned}
\right.
\end{equation}
or
\begin{equation}\label{eq:thechoiceofell}
\frac{|\{(x,t)\in B_{R}^+ \times [t_0, t_0+ R^{p+2}] : u(x,t)>\mu-\frac{\mu}{2^\ell}\}|_{\nu_{p}}}{|B_{R}^+ \times [t_0, t_0+ R^{p+2}]|_{\nu_{p}}}\le C\ell^{-\min\left(\frac{1}{4},\frac{1}{4(p+1)}\right)}.
\end{equation}

\end{lem}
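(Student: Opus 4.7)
I would prove the stated dichotomy by contradiction.  Assume \eqref{eq:dichonomyH} fails; then $\mu/2^m$ exceeds $(M+F_1)$ times the relevant power of $R$ for every $m\le\ell$, which is what will later let us absorb the lower-order Caccioppoli terms.  Set the truncation levels $k_m=\mu(1-2^{-m})$, so $k_{m+1}-k_m=\mu/2^{m+1}$, write $U=B_R^+(x_0)\times[t_0,t_0+R^{p+2}]$ and $U_2=B_{2R}^+(x_0)\times[t_0,t_0+R^{p+2}]$, and for a.e.\ $t\in[t_0,t_0+R^{p+2}]$ define
\[
A_m(t)=\{x\in B_R^+(x_0):u(x,t)>k_m\},\qquad D_m(t)=A_m(t)\setminus A_{m+1}(t).
\]
Denote by $|\tilde A_m|_{\nu_p}$ and $|\tilde D_m|_{\nu_p}$ the $\nu_p$-measures of the corresponding space-time subsets of $U$.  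Since $D_0,\ldots,D_{\ell-1}$ are pairwise disjoint, $\sum_{m=0}^{\ell-1}|\tilde D_m|_{\nu_p}\le|U|_{\nu_p}$, so a pigeonhole step yields some $m^*\in\{0,\ldots,\ell-1\}$ with $|\tilde D_{m^*}|_{\nu_p}\le|U|_{\nu_p}/\ell$.

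The crux is to convert this smallness of $\tilde D_{m^*}$ into smallness of $|\tilde A_{m^*+1}|_{\nu_p}$.  First I would apply Theorem~\ref{thm:caccipolli} on $U_2$ with $k=k_{m^*}$ and a cutoff $\xi\equiv 1$ on $U$ vanishing on $\pa'' B_{2R}^+(x_0)\times[t_0,t_0+R^{p+2}]$ and at the initial time.  Using $(u-k_{m^*})^+\le\mu/2^{m^*}$, $k_{m^*}\le M$, and the failure of \eqref{eq:dichonomyH} --- which is precisely calibrated so that the $k^2$- and $F_1^2$-type Caccioppoli tails (whose $R$-exponents differ for $p\ge 0$ versus $-1<p<0$) are dominated by the principal term $(\mu/2^{m^*})^2R^{n+p}$ --- one obtains
\[
\iint_{U}|\nabla(u-k_{m^*})^+|^2\,\ud x\,\ud t\le C(\mu/2^{m^*})^2R^{n+p}.
\]

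Second, for a.e.\ $t$ I would extend $u(\cdot,t)$ by zero across $\{x_n=0\}$ (permissible because $u=0$ on $\pa'B_1^+$), so that the extension lies in $H^1(B_R(x_0))$, and apply Theorem~\ref{thm:degiorgiisoperimetricelliptic} with $\va=\min(1/4,1/(4(p+1)))$ and levels $k_{m^*}<k_{m^*+1}$.  The sub-level set now contains the whole lower half-ball, so $|\{u\le k_{m^*}\}\cap B_R(x_0)|_{|x_n|^p}\gtrsim R^{n+p}$ and this factor drops out of the pointwise isoperimetric bound.  Integrating in $t$ via Cauchy--Schwarz together with the elementary Hölder estimate
\[
\int_{t_0}^{t_0+R^{p+2}}|D_{m^*}(t)|_{\mu_p(t)}^{2\va}\,\ud t\le R^{(p+2)(1-2\va)}|\tilde D_{m^*}|_{\nu_p}^{2\va},
\]
and substituting the Caccioppoli bound, the $\mu/2^{m^*}$ factors cancel exactly and a direct computation shows the accumulated $R$-exponent collapses to $(n+2p+2)(1-\va)$, matching $|U|_{\nu_p}^{1-\va}\sim R^{(n+2p+2)(1-\va)}$.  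This yields
\[
|\tilde A_{m^*+1}|_{\nu_p}\le C|U|_{\nu_p}^{1-\va}|\tilde D_{m^*}|_{\nu_p}^{\va}\le C|U|_{\nu_p}\,\ell^{-\va},
\]
and since $\tilde A_\ell\subset\tilde A_{m^*+1}$ this is exactly \eqref{eq:thechoiceofell}.

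The main technical obstacle is this two-step bookkeeping: on the one hand, verifying that the Caccioppoli tails are absorbed uniformly in both regimes $p\ge 0$ and $-1<p<0$ --- which is why \eqref{eq:dichonomyH} takes a two-part form and why the hypothesis $q>\max\bigl(\chi/(\chi-1),(n+p+2)/2,(n+2p+2)/(p+2)\bigr)$ on the integrability is imposed --- and on the other, checking the algebraic identity by which the $R$-exponents coming respectively from Theorem~\ref{thm:degiorgiisoperimetricelliptic}, the Hölder gain in time, and the Caccioppoli gradient bound combine cleanly to $|U|_{\nu_p}^{1-\va}$ with no residual power of $R$.
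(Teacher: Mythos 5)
Your argument is correct and follows essentially the same route as the paper: both extend $u$ by zero across $\{x_n=0\}$, combine the Caccioppoli estimate (Theorem~\ref{thm:caccipolli}, with \eqref{eq:dichonomyH} failing used to absorb the $(M+F_1)^2$ tail) with the slicewise De Giorgi isoperimetric inequality (Theorem~\ref{thm:degiorgiisoperimetricelliptic}) and a Hölder gain in time, and track the $R$-exponents to the clean form $|\tilde A_{j+1}|_{\nu_p}\le C|U|_{\nu_p}^{1-\va}|\tilde D_j|_{\nu_p}^{\va}$. The only difference is the final combinatorial step: you pick a single good index $m^*$ by pigeonhole and then use that $\tilde A_\ell\subset\tilde A_{m^*+1}$, whereas the paper raises the inequality to the power $1/\va$, sums it over $j=0,\dots,\ell-1$, and invokes the monotonicity of $|\tilde A_j|_{\nu_p}$ — two equivalent bookkeeping devices yielding the same $\ell^{-\va}$ decay.
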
 

\begin{proof} 
We extend $u$ to be identically zero in $(B_1\setminus B_1^+)\times[-1,0]$, which will still be denoted as $u$. Let 
\[
k_j= \mu- \frac{\mu}{2^j},
\]
and
\[
A(k_j,R;t)= B_R \cap \{u(\cdot, t)>k_j\}, \quad A(k_j,R)= B_{R} \times [t_0, t_0+ R^{p+2}]  \cap \{u>k_j\}.
\]
Since $k_j\ge 0$, we have $A(k_j,R;t)= B_R^+ \cap \{u(\cdot, t)>k_j\}$ and
\begin{equation}\label{eq:measureauto}
\int_{B_R\setminus A(k_{j},R;t)}|x_n|^p\ud x\ge \int_{B_R\setminus B_R^+}|x_n|^p\ud x\ge C R^{n+p}.
\end{equation}
Then by Theorem \ref{thm:degiorgiisoperimetricelliptic}, we have 
\begin{align*}
&(k_{j+1}-k_j) |A(k_{j+1},R;t)|_{\mu_{p}(t)} R^{n+p}\\
& \le C R^{n+2p+1+\frac{n(1-2\va)}{2}-\va p} \left(\int_{B_R^+} |\nabla  (u-k_j)^+|^2\,\ud x\right)^{1/2} |A(k_{j},R;t) \setminus A(k_{j+1},R;t)|_{\mu_{p}(t)}^{\va},
\end{align*}
where we choose $\va=\min\left(\frac{1}{4},\frac{1}{4(p+1)}\right)$. Integrating in the time variable, and using H\"older's inequality again, we have
\begin{align*}
&\int_{t_0}^{t_0+  R^{p+2}} |A(k_{j+1},R;t)|_{\mu_{p}(t)} \,\ud t\\&
\le \frac{C2^{j+1}}{\mu} R^{p+1+\frac{n(1-2\va)}{2}-\va p+\frac{(p+2)(1-2\va)}{2}} |A(k_{j},R) \setminus A(k_{j+1},R)|_{\nu_{p}}^{\va}\\
&\quad\cdot \left(\int_{B_R^+\times [t_0, t_0+ R^{p+2}]  } |\nabla (u-k_j)^+|^2\,\ud x \ud t\right)^{1/2} .
\end{align*}
It follows from Theorem \ref{thm:caccipolli}  (with $\xi$ independent of $t$) that
\begin{align*}
&\int_{t_0}^{t_0+  R^{p+2}}\int_{B_R^+  } |\nabla (u-k_j)^+|^2 \,\ud x \ud t \\&
\le C \Big(\int_{B_{2R}^+} |(u-k_j)^+(t_0)|^2 x_n^{p} \,\ud x+\frac{1}{R^2}\int_{t_0}^{t_0+  R^{p+2}}\int_{B_{2R}^+  } |(u-k_j)^+|^{2} \,\ud x\ud t \\
& \quad+\int_{t_0}^{t_0+  R^{p+2}}\int_{B_{2R}^+  } |(u-k_j)^+|^{2} (1\vee x_n^p)\,\ud x\ud t + (M+F_1)^2|B_{2R}^+ \times [t_0, t_0+ R^{p+2}]|^{1-\frac{1}{q}}\Big )\\
&\le C\left( \frac{\mu^2}{ 4^j} R^{n+p}+\frac{\mu^2}{ 4^j} R^{n+p} +\frac{\mu^2}{ 4^j} R^{n+2p+2} + (M+F_1)^2 R^{(n+2+p)(1-1/q)}\right). 
\end{align*}
If \eqref{eq:dichonomyH} fails for some $\ell$, then we have
\[
\int_{t_0}^{t_0+  R^{p+2}}\int_{B_R^+  } |\nabla (u-k_j)^+|^2 \,\ud x \ud t \le C \frac{\mu^2}{ 4^j} R^{n+p}
\]
for $j\le \ell$, where we used that $R^{\frac{p+2}{2}-\frac{n+2p+2}{2q}}\ge R^{1-\frac{n+p+2}{2q}}$ for $p<0$. Hence, 
\[
 |A(k_{j+1},R)|_{\nu_{p}} \le C R^{p+1+\frac{n(1-2\va)}{2}-\va p+\frac{(p+2)(1-2\va)}{2}+\frac{n+p}{2}} |A(k_{j},R) \setminus A(k_{j+1},R)|_{\nu_{p}}^{\va}
\]
or 
\[
( |A(k_{j+1},R)|_{\nu_{p}})^{\frac{1}{\va}} \le C R^{\left(\frac{1}{\va}-1\right)(n+2p+2)} |A(k_{j},R) \setminus A(k_{j+1},R)|_{\nu_{p}}. 
\]
Taking a summation, we have 
\begin{align*}
\ell (|A(k_{\ell},R)|_{\nu_{p}})^{\frac{1}{\va}} \le \sum_{j=0}^{\ell-1}  (|A(k_{j+1},R)|_{\nu_{p}})^{\frac{1}{\va}} &\le  C R^{\left(\frac{1}{\va}-1\right)(n+2p+2)} |B_{R}^+ \times [t_0, t_0+ R^{p+2}]|_{\nu_{p}} \\& \le  C  (|B_{R}^+ \times [t_0, t_0+R^{p+2}]|_{\nu_{p}})^{\frac{1}{\va}}. 
\end{align*}
The lemma follows.
\end{proof}

Now we can prove the H\"older continuity on the boundary.
\begin{thm}\label{thm:holderontheboundary}
Suppose $u\in C ([-1,0]; L^2(B_1^+,x_n^{p}\ud x)) \cap  L^2((-1,0];H_{0,L}^1(B_1^+))$ is a weak solution of \eqref{eq:linear-eq} with the partial boundary condition \eqref{eq:linear-eq-D}, where the coefficients of the equation satisfy \eqref{eq:rangep}, \eqref{eq:ellip2}, \eqref{eq:localbddnesscoeffcient} and \eqref{eq:localbddnessf} for some $q>\max(\frac{\chi}{\chi-1},\frac{n+p+2}{2}, \frac{n+2p+2}{p+2})$.   Let $\bar x\in\pa' B_{1/2}$ and $\bar t\in (-1/4,0])$. Then there exist $\alpha>0$ and $C>0$, both of which depend only on  $\lambda,\Lambda,n,p$ and $q$, such that
\[
|u(x,t)-u(\bar x, \bar t)|\le C(M+F_1) (|x-\bar x|+|t-\bar t|^{\frac{1}{p+2}})^\alpha 
\]
for every $(x,t)\in B_{1/2}^+\times(-1/4,0]$.
\end{thm}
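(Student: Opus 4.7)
The strategy is a boundary oscillation-decay iteration combining Lemma~\ref{lem:smallonlargeset} and Lemma~\ref{lem:decay-1}. For each $(\tilde x,\tilde t)\in\partial' B_{1/2}\times(-1/4,0]$ and small $r>0$, write $M_r=\sup_{\mathcal{Q}_r^+(\tilde x,\tilde t)} u$, $m_r=\inf_{\mathcal{Q}_r^+(\tilde x,\tilde t)} u$, $\omega_r=M_r-m_r$, and let $\gamma>0$ be the positive exponent appearing in the first alternative of Lemma~\ref{lem:decay-1} (which is positive under the hypothesis $q>\max(\frac{n+p+2}{2},\frac{n+2p+2}{p+2})$). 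The plan is to prove, for $r\le r_0$ (with $r_0>0$ depending only on $\lambda,\Lambda,n,p,q$ and chosen so all cylinders used lie in $Q_1^+$), the recursion
\[
\omega_{r/2}\le\theta\,\omega_{2r}+C(M+F_1)\,r^{\gamma},
\]
with $\theta\in(0,1)$ and $C>0$ depending only on $\lambda,\Lambda,n,p,q$.

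First, I fix $\ell\in\mathbb{Z}_+$, depending only on the structural constants, so that the constant $C\ell^{-\min(1/4,1/(4(p+1)))}$ in Lemma~\ref{lem:decay-1} is at most the smallness threshold $\gamma_0$ of Lemma~\ref{lem:smallonlargeset}, and set $\theta=1-2^{-\ell-2}$. For the one-step decay I use the dichotomy: either $M_{2r}\ge\omega_{2r}/2$ or $-m_{2r}\ge\omega_{2r}/2$. In the first case, apply Lemma~\ref{lem:decay-1} on the slab $B_{2r}^+(\tilde x)\times[\tilde t-r^{p+2},\tilde t]\subset\mathcal{Q}_{2r}^+(\tilde x,\tilde t)$ with $\mu=M_{2r}$. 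If its first alternative holds, then $\omega_{2r}\le 2M_{2r}\le 2^{\ell+1}(M+F_1)r^{\gamma}$, giving the error term directly; otherwise the super-level set at height $k=M_{2r}-M_{2r}/2^\ell$ has density $\le\gamma_0$, and precisely because the first alternative failed we also have $H:=M_{2r}/2^\ell>(M+F_1)r^{\gamma}$, which is the hypothesis of Lemma~\ref{lem:smallonlargeset}. That lemma applied on $\mathcal{Q}_r^+(\tilde x,\tilde t)$ then yields $u\le M_{2r}-M_{2r}/2^{\ell+1}$ on $\mathcal{Q}_{r/2}^+(\tilde x,\tilde t)$, so $\omega_{r/2}\le\omega_{2r}-\omega_{2r}/2^{\ell+2}=\theta\omega_{2r}$. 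The second case is symmetric: $-u$ satisfies an equation of the same form with $(f,f_0,f_j)$ replaced by $(-f,-f_0,-f_j)$, preserving every coefficient bound and the constant $F_1$, and still vanishing on $\partial' B_1^+$, so the argument applied to $-u$ raises the infimum by the same proportion.

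A standard iteration lemma (e.g.\ Lemma~1.1 of~\cite{GG}) converts the recursion into $\omega_r\le C(M+F_1)r^{\alpha}$ for $r\le r_0$, with $\alpha\in(0,\gamma)$ depending only on $\theta$ and $\gamma$, uniformly over boundary centers $(\tilde x,\tilde t)\in\partial' B_{1/2}\times(-1/4,0]$. For $(x,t)\in B_{1/2}^+\times(-1/4,0]$, set $\rho=|x-\bar x|+|t-\bar t|^{1/(p+2)}$ and $t^*=\max(t,\bar t)$. If $\rho\ge r_0$ the estimate is immediate from $\|u\|_{L^\infty}\le M$; if $\rho<r_0$, then $(\bar x,t^*)\in\partial' B_{1/2}\times(-1/4,0]$ and both $(x,t)$ and $(\bar x,\bar t)$ lie in $\mathcal{Q}_{2\rho}^+(\bar x,t^*)$. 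Letting $\rho\to 0$, the oscillation decay forces $u$ to extend continuously to $\partial' B_{1/2}\times(-1/4,0]$ with trace value $u(\bar x,\bar t)=0$, and hence
\[
|u(x,t)-u(\bar x,\bar t)|\le\omega_{2\rho}(\bar x,t^*)\le C(M+F_1)\rho^{\alpha}.
\]

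The main obstacle I anticipate is the careful bookkeeping needed to interface the two lemmas: Lemma~\ref{lem:decay-1} delivers a super-level density on a forward slab $B_R^+\times[t_0,t_0+R^{p+2}]$, whereas Lemma~\ref{lem:smallonlargeset} consumes one on the backward cylinder $\mathcal{Q}_R^+$. Choosing $t_0=\tilde t-r^{p+2}$ aligns the two, and verifying that the quantitative thresholds $\mu>2^{\ell}(M+F_1)r^{\gamma}$ (failure of the first alternative in Lemma~\ref{lem:decay-1}) and $H>(M+F_1)r^{\gamma}$ (hypothesis of Lemma~\ref{lem:smallonlargeset}) match, both in the $p\ge 0$ and $-1<p<0$ regimes, is exactly what makes the dichotomy close.
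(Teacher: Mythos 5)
Your proposal is correct and follows essentially the same route as the paper: it combines Lemma \ref{lem:decay-1} (with $t_0=\tilde t-r^{p+2}$, so the forward slab coincides with $\mathcal{Q}_r^+$) and Lemma \ref{lem:smallonlargeset} into the dichotomy ``oscillation contracts by a fixed factor or is already below $C(M+F_1)r^{\gamma}$,'' applied to $u$ and $-u$, and then iterates; your handling of $t^*=\max(t,\bar t)$ and of the boundary value $u(\bar x,\bar t)=0$ matches what the paper leaves implicit after its normalization $(\bar x,\bar t)=(0,0)$. The only slip is the citation for the final step: the oscillation-decay recursion $\omega_{r/2}\le\theta\,\omega_{2r}+C(M+F_1)r^{\gamma}$ is closed by the iteration lemma of Han--Lin (Lemma 3.4 in \cite{HL}, or Lemma B.2 in \cite{JX19}), not Lemma 1.1 of \cite{GG}, which is the absorption lemma used earlier for the $L^\infty$ bound.
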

\begin{proof}
Without loss of generality, we assume $(\bar x,\bar t)=(0,0)$.  For $R\in(0,1/2]$, denote
\[
\mu(R)=\sup_{\mathcal{Q}_R^+}u, \quad \widetilde\mu(R)=\inf_{\mathcal{Q}_R^+}u, \quad\omega(R)=\mu(R)-\widetilde\mu(R).
\]
Let $\gamma_0$ be the one in Lemma \ref{lem:smallonlargeset}. We can choose $\ell$ sufficiently large so that
\[
C\ell^{-\min\left(\frac{1}{4},\frac{1}{4(p+1)}\right)}\le\gamma_0,
\]
where $C$ is the one in \eqref{eq:thechoiceofell}.  Then it follows from Lemma \ref{lem:smallonlargeset} and Lemma \ref{lem:decay-1} that either
\begin{equation}\label{eq:dichonomyH-11}
\mu(R)\le 
\left\{
  \begin{aligned}
  &  2^{\ell} (M+F_1)R^{1-\frac{n+p+2}{2q}},  \quad  &\mbox{for }   p\ge 0, \\
  &  2^{\ell} (M+F_1)R^{\frac{p+2}{2}-\frac{n+2p+2}{2q}}, \quad   &\mbox{for }    -1<p<0,
\end{aligned}
\right.
\end{equation}
or
\begin{equation}\label{eq:oscalter2-0}
\mu(R/4)\le \mu(R)-\frac{\mu(R)}{2^{\ell+1}}.
\end{equation}
Applying these estimates to $-u$, we have
either
\begin{equation}\label{eq:dichonomyH-12}
\tilde \mu(R)\ge 
\left\{
  \begin{aligned}
  & - 2^{\ell} (M+F_1)R^{1-\frac{n+p+2}{2q}},  \quad  &\mbox{for }   p\ge 0, \\
  & - 2^{\ell} (M+F_1)R^{\frac{p+2}{2}-\frac{n+2p+2}{2q}}, \quad   &\mbox{for }    -1<p<0,
\end{aligned}
\right.
\end{equation}
or
\begin{equation}\label{eq:oscalter2-1}
\tilde\mu(R/4)\ge \tilde\mu(R)-\frac{\tilde\mu(R)}{2^{\ell+1}}.
\end{equation}
In any case, we will obtain 
\begin{equation*}
\omega(R/4)\le 
\left\{
  \begin{aligned}
  &  (1-2^{-\ell-1})\omega(R)+2^{\ell+1} (M+F_1)R^{1-\frac{n+p+2}{2q}},  \quad  &\mbox{for }   p\ge 0, \\
  &  (1-2^{-\ell-1})\omega(R)+2^{\ell+1} (M+F_1)R^{\frac{p+2}{2}-\frac{n+2p+2}{2q}}, \quad   &\mbox{for }    -1<p<0.
\end{aligned}
\right.
\end{equation*}
By an iterative lemma, e.g. Lemma 3.4 in Han-Lin \cite{HL} (or Lemma B.2 in \cite{JX19}), there exist $\alpha$ and $C$, both of which depend only on $\lambda,\Lambda,n,p$ and $q$, such that
\[
\omega(R)\le C(M+F_1)R^{\alpha}\quad\,\forall R\in(0,R_0],
\]
from which the conclusion follows.
\end{proof}

\subsection{Interior H\"older estimates}

When $x$ is away from the boundary $\partial'B_1^+$, the equation \eqref{eq:linear-eq} is uniformly parabolic. We observe that all the assumptions in Theorem \ref{thm:localboundedness} are stronger than those in the uniformly parabolic case (which corresponds to $p=0$ and $\chi=\frac{n+2}{n}$). Therefore, using the same proof for uniformly parabolic equations, with a small adaptation to the existence of the coefficient $a$ in front of $\partial_t u$, one can show the following interior H\"older estimate.

\begin{thm}\label{thm:uniformholdernearboundary}
Suppose $u\in C ([-1,0]; L^2(B_1^+,x_n^{p}\ud x)) \cap  L^2((-1,0];H_{0,L}^1(B_1^+))$ is a weak solution of \eqref{eq:linear-eq} with the partial boundary condition \eqref{eq:linear-eq-D}, where the coefficients of the equation satisfy \eqref{eq:rangep}, \eqref{eq:ellip2}, \eqref{eq:localbddnesscoeffcient} and \eqref{eq:localbddnessf} for some $q>\max(\frac{\chi}{\chi-1},\frac{n+p+2}{2}, \frac{n+2p+2}{p+2})$.  Then there exist $\alpha>0$ and $C>0$, both of which depend only on  $\lambda,\Lambda,n,p$ and $q$, such that for every $(x,t), (y,s)\in B_{1/4}(e_n/2)\times(-1/4,0]$, there holds
\[
|u(x,t)-u(y,s)|\le C(M+F_1) (|x-y|+|t-s|)^\alpha,
\]
where $e_n=(0,\cdots,0,1)$.
\end{thm}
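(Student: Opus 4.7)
The plan is to observe that on any ball $B_{1/4}(e_n/2)$ we have $1/4 \le x_n \le 3/4$, so the weight $x_n^p$ is bounded above and below by positive constants depending only on $p$. Consequently, absorbing the weight into the coefficients by setting $\tilde a := a x_n^p$, $\tilde c := c x_n^p$, $\tilde f := x_n^p f$, the equation \eqref{eq:linear-eq} reduces on $B_{1/2}(e_n/2) \times (-1,0]$ to the uniformly parabolic equation
\[
\tilde a\, \partial_t u - D_j(a_{ij} D_i u + d_j u) + b_i D_i u + (\tilde c + c_0) u = \tilde f + f_0 - D_i f_i,
\]
where $\tilde a$ is bounded between two positive constants, and the integrability assumptions \eqref{eq:localbddnesscoeffcient}, \eqref{eq:localbddnessf} transfer to standard (unweighted) $L^q$ bounds on all coefficients and data. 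Thus one is essentially in the classical De Giorgi-Nash-Moser situation.

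With the equation in this form, the proof follows the same four-step blueprint as the boundary case (Theorem \ref{thm:holderontheboundary}), but using unweighted estimates. First, I would establish an interior Caccioppoli inequality for $(u-k)^\pm$ on cylinders $Q_R(x_0,t_0) = B_R(x_0) \times (t_0-R^2, t_0]$ with $B_{2R}(x_0) \subset B_{1/2}(e_n/2)$, taking $\xi^2 (u-k)^+$ as a test function and using Steklov averaging as in Lemma \ref{lem:Steklovapproximation}; the only adaptation needed is that $\tilde a \partial_t u$ contributes a term $\tfrac12 \tilde a \partial_t[(u-k)^+]^2$ and an extra $-\tfrac12 [(u-k)^+]^2 \partial_t \tilde a$, both of which are controlled by the hypothesis $\|\partial_t a\|_{L^q(Q_1^+, x_n^p \ud x\ud t)} \le \Lambda$ (which gives a uniform $L^q$ bound on $\partial_t \tilde a$ on our interior region). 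Second, I would prove the first De Giorgi lemma (smallness on a super-level set implies pointwise reduction of the supremum) using the classical parabolic Sobolev embedding in place of Theorem \ref{thm:weightedsobolev}. Third, I would prove the second De Giorgi lemma (the super-level set of the maximum cannot have measure close to that of the cylinder for too many dyadic levels) using the classical De Giorgi isoperimetric inequality in place of Theorem \ref{thm:degiorgiisoperimetricelliptic}; note that the measure $\mu_p$ is now equivalent to Lebesgue measure.

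Combining these two lemmas yields, for every $(x_0,t_0) \in B_{1/4}(e_n/2) \times (-1/4,0]$ and every sufficiently small $R$, a dichotomy leading to the oscillation decay
\[
\omega(R/4) \le (1 - 2^{-\ell - 1}) \omega(R) + C(M+F_1) R^{\beta}
\]
for some $\beta > 0$, by the same argument as in the final paragraph of the proof of Theorem \ref{thm:holderontheboundary}. A standard iteration lemma (Lemma 3.4 of Han-Lin) then produces exponents $\alpha, C$ depending only on $\lambda,\Lambda,n,p,q$ such that $\omega(R) \le C(M+F_1) R^\alpha$, from which the claimed H\"older estimate follows. The main (minor) obstacle is bookkeeping the presence of the coefficient $\tilde a$ in front of $\partial_t u$ in both the Caccioppoli step and the isoperimetric/energy estimate; this is identical to the boundary analysis and requires no new idea.
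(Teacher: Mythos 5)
Your reduction to a uniformly parabolic problem and your Caccioppoli step are fine, but the heart of the interior argument is missing. In the boundary case (Theorem \ref{thm:holderontheboundary}) the oscillation dichotomy closes because the zero boundary condition gives the free measure bound \eqref{eq:measureauto}: after extending $u$ by zero, the sub-level set $\{u\le k\}$ automatically occupies a fixed fraction of $B_R$ \emph{at every time}, so the isoperimetric lemma (Lemma \ref{lem:decay-1}) applies directly. In the interior there is no such foothold: the only information available is that, at the single initial time slice of the cylinder, either $u$ or $-u$ has a super-level set of measure at most half of the ball. To run the analogue of the isoperimetric step (Lemma \ref{lem:interiordecay-1}), whose hypothesis \eqref{eq:assumptionofmeasure} must hold for \emph{all} times in the cylinder, one needs an additional lemma propagating the initial-time measure bound forward in time (the paper's Lemma \ref{lem:decay-2}), obtained from the energy inequality with a cutoff in space only. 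Your proposal lists only the two De Giorgi lemmas and asserts the dichotomy follows ``by the same argument as in the final paragraph of the proof of Theorem \ref{thm:holderontheboundary}''; that argument does not contain, and cannot replace, this propagation step, so as written the oscillation decay does not close.

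Moreover, this propagation step is precisely where the coefficient in front of $\partial_t u$ is \emph{not} just bookkeeping: the natural quantity being compared at different times is $\int a(x,t)x_n^p[(u-k)^+]^2\xi^2\,\ud x$, i.e.\ a measure whose density $a(x,t)x_n^p$ varies in $t$. One must control $\bigl||B_R|_{\mu_p(\tau_1)}-|B_R|_{\mu_p(\tau_2)}\bigr|$ through the $L^q$ bound on $\partial_t a$ in \eqref{eq:localbddnesscoeffcient}, which is what forces the restriction $R\le R_0$ and the careful choice of levels and parameters in Lemma \ref{lem:decay-2}; the paper explicitly notes that this lemma was not needed at the boundary and that its proof differs from the uniformly parabolic case with $a\equiv 1$. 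So your closing claim that the adaptation ``is identical to the boundary analysis and requires no new idea'' is not accurate; supplying this measure-propagation lemma (with the time-dependent density handled as above) is the missing ingredient needed to complete your argument.
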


The proof Theorem \ref{thm:uniformholdernearboundary} will be proved as follows. We only need to prove the H\"older continuity at the point $(e_n/2,0)$. 

Similar to Theorem \ref{thm:caccipolli}, we have the Caccipolli inequality around the point $(e_n/2,0)$. Let  $Q_{\rho,\tau}=B_\rho(e_n/2)\times(t_0,t_0+\tau]\subset Q_{1/2}(e_n/2,0)$, $k\in\R$, $\va\in(0,1]$, and $\xi\in  V_2^{1,1}(Q_{\rho,\tau})$ such that $\xi=0$ on $\pa B_\rho(e_n/2)\times (t_0,t_0+\tau]$ and  $0\le\xi\le 1$. Then 
\begin{align}
&\max\left(\sup_{t\in(t_0,t_0+\tau)}\int_{B_\rho(e_n/2)}x_n^{p} a [\xi(u-k)^+]^2(x,t)\,\ud x, \lambda\iint_{Q_{\rho,\tau}} |D[\xi(u-k)^+]|^2\,\ud x\ud s\right)\nonumber\\
&\le (1+\va)\int_{B_\rho(e_n/2)}x_n^{p} a [\xi(u-k)^+]^2(x,t_0)\,\ud x +C\iint_{Q_{\rho,\tau}} (|D\xi|^2+|\xi\pa_t\xi|x_n^p) [(u-k)^+]^2 \,\ud x\ud t \nonumber\\
&\quad +\frac{C}{\va^\kappa}\left( \|[(u-k)^+] \xi\|_{L^2(Q_{\rho,\tau})}^2 + (k^2+F_1^2)|\{u>k\}\cap Q_{\rho,\tau}|^{1-\frac1q} \right). \label{eq:caccipolliu}
\end{align}

\begin{lem}\label{lem:smallonlargesetinterior} 
Let $0<R\le 1/2$ and 
\[
 \sup_{Q_R(e_n/2,0)} u\le \mu\le  M.
\]
Then there exists  $0<\gamma_0<1$ depending only on  $\lambda,\Lambda,n,p$ and $q$ such that for $k<\mu$, if
\begin{equation*}
H:=\mu-k>(M+F_1)R^{1-\frac{n+2}{2q}},
\end{equation*}
and
\[
\frac{|\{(x,t)\in Q_R(e_n/2,0): u(x,t)>k\} |_{\nu_{p}}}{|Q_R(e_n/2,0)|_{\nu_{p}}} \le \gamma_0,
\]
then
\[
u\le \mu-\frac{H}{2}\quad\mbox{in }Q_{R/2}(e_n/2,0).
\]
\end{lem}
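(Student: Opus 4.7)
The plan is to carry out a De Giorgi iteration paralleling the proof of Lemma \ref{lem:smallonlargeset}, but greatly simplified because the cylinder $Q_R(e_n/2,0)$ is interior: on $B_R(e_n/2)$ with $R\le 1/2$, the coordinate $x_n\in[1/2-R,1/2+R]\subset[0,1]$, so the weight $x_n^p a(x,t)$ is bounded above and below by positive constants depending only on $\lda,\Lda,p$. Hence $\nu_p$ is comparable to Lebesgue measure on $Q_R(e_n/2,0)$, and the standard (unweighted) parabolic Sobolev embedding with exponent $\chi_0:=(n+2)/n$ is available on $H^1_0(B_{r}(e_n/2))$ functions; this is what produces the factor $1-(n+2)/(2q)$ in the hypothesis on $H$, rather than the weighted $\chi$-analogues appearing in Lemma \ref{lem:smallonlargeset}.

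Define
$$r_j=\frac{R}{2}+\frac{R}{2^{j+1}},\quad k_j=\mu-\frac{H}{2}-\frac{H}{2^{j+1}},\qquad j=0,1,2,\ldots,$$
and choose smooth cutoffs $\eta_j$ supported in $Q_{r_j}(e_n/2,0)$ with $\eta_j\equiv 1$ on $Q_{r_{j+1}}(e_n/2,0)$, vanishing at the bottom time slice $t=-r_j^2$ and on $\pa B_{r_j}(e_n/2)$, with $|D\eta_j|^2+|\pa_t\eta_j|\le C\cdot 4^j/R^2$. Writing $v=(u-k_j)^+$ and $A_j=Q_{r_j}(e_n/2,0)\cap\{u>k_j\}$, I apply the interior Caccioppoli estimate \eqref{eq:caccipolliu} with $\xi=\eta_j$, $k=k_j$; the initial-data term vanishes by the choice of $\eta_j$, and $0\le k_j\le M$ lets me bound $k_j^2+F_1^2\le (M+F_1)^2$, producing
$$\sup_t\!\int x_n^p a(\eta_jv)^2\,\ud x+\iint|D(\eta_jv)|^2\,\ud x\ud t\le \frac{C\cdot 4^j}{R^2}\iint_{A_j}v^2\,\ud x\ud t+C(M+F_1)^2|A_j|^{1-\frac1q}.$$
Combined with the interior equivalence $x_n^p a\asymp 1$ and the standard parabolic Sobolev inequality, this gives
$$\Bigl(\iint|\eta_jv|^{2\chi_0}\,\ud x\ud t\Bigr)^{1/\chi_0}\le \frac{C\cdot 4^j}{R^2}\iint_{A_j}v^2\,\ud x\ud t+C(M+F_1)^2|A_j|^{1-\frac1q}.$$

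Using $v\le H$ on $A_j$, $(k_{j+1}-k_j)^2|A_{j+1}|^{1/\chi_0}\le \bigl(\iint|\eta_jv|^{2\chi_0}\bigr)^{1/\chi_0}$ with $k_{j+1}-k_j=H/2^{j+2}$, the hypothesis $H\ge(M+F_1)R^{1-(n+2)/(2q)}$, and $|A_j|\le CR^{n+2}$ to absorb the forcing, I obtain the recursion
$$\frac{|A_{j+1}|}{|Q_R(e_n/2,0)|}\le C\cdot 16^{j\chi_0}\Bigl(\frac{|A_j|}{|Q_R(e_n/2,0)|}\Bigr)^{(1-1/q)\chi_0}.$$
The assumption $q>\max(\chi/(\chi-1),(n+p+2)/2,(n+2p+2)/(p+2))$ implies $q>(n+2)/2=\chi_0/(\chi_0-1)$ in every case (checking the ranges of $p$ and $n$ separately), so $(1-1/q)\chi_0>1$ and the classical fast-geometric iteration lemma (as applied in \eqref{eq:nonlineariteration}-\eqref{eq:nonlineariteration2}) furnishes a universal threshold $\theta_0\in(0,1)$ with $|A_0|/|Q_R(e_n/2,0)|\le\theta_0\Rightarrow|A_j|\to 0$, that is, $u\le\mu-H/2$ on $Q_{R/2}(e_n/2,0)$. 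Picking $\gamma_0$ small enough (in terms of $\theta_0$ and the $\nu_p$-Lebesgue comparability constants) turns the measure hypothesis $|A_0|_{\nu_p}/|Q_R(e_n/2,0)|_{\nu_p}\le\gamma_0$ into $|A_0|/|Q_R(e_n/2,0)|\le\theta_0$. The only non-routine obstacle is the case-by-case verification $(1-1/q)\chi_0>1$ from the standing hypothesis on $q$; all other constants are universal by the interior equivalence of weights.
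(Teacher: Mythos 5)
Your proposal follows what is essentially the paper's intended route: the paper omits this proof, stating it is ``almost identical'' to that of Lemma \ref{lem:smallonlargeset}, and in the interior regime that iteration collapses to the uniformly parabolic De Giorgi scheme you describe, with the unweighted exponent $\chi_0=\frac{n+2}{n}$ and standard cylinders $Q_R$ -- which is exactly what produces the threshold $R^{1-\frac{n+2}{2q}}$ and the requirement $q>\frac{n+2}{2}$ that you verify case by case. Your use of the interior Caccioppoli inequality \eqref{eq:caccipolliu} with time-dependent cutoffs killing the initial slice, the recursion in $j$, and the conversion of the $\nu_p$-density hypothesis into a Lebesgue-density smallness are all consistent with how the paper runs Lemma \ref{lem:smallonlargeset} and how this lemma is consumed in Lemmas \ref{lem:interiordecay-1}--\ref{lem:decay-3}.

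One imprecision is worth fixing. You assert that on $B_R(e_n/2)$ with $R\le 1/2$ one has $x_n\in[1/2-R,1/2+R]$ and hence $x_n^p a\asymp 1$ with constants depending only on $\lambda,\Lambda,p$. This is false in the borderline regime: when $R=1/2$ the ball is tangent to $\{x_n=0\}$, and for $R$ close to $1/2$ the two-sided comparability constant degenerates like $(1/2-R)^{|p|}$, so as written your $\gamma_0$ and the constants in the recursion would depend on $R$, contrary to the statement. The repair is routine: either observe that in all applications of this lemma (through Lemmas \ref{lem:interiordecay-1}, \ref{lem:decay-2}, \ref{lem:decay-3} and Theorem \ref{thm:uniformholdernearboundary}) one only uses $R\le 1/4$, where $x_n\ge 1/4$; or, to cover $R\le 1/2$ as stated, start the iteration from radius $3R/4$ instead of $R$ (on $B_{3R/4}(e_n/2)$ one has $x_n\ge 1/8$, and the density hypothesis transfers because $|Q_R(e_n/2,0)|_{\nu_p}\le C|Q_{3R/4}(e_n/2,0)|_{\nu_p}$ with $C$ universal, in the spirit of Lemma \ref{lem:measurechange}). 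A second, very minor point: $k$ need not be nonnegative here, so ``$0\le k_j\le M$'' is not automatic; but the density hypothesis with $\gamma_0<1$ forces $k\ge\operatorname{ess\,inf}_{Q_R}u\ge -M$, hence $k_j^2+F_1^2\le 2(M+F_1)^2$, which is all your estimate needs.
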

The proof of Lemma \ref{lem:smallonlargesetinterior} is almost identical to that of Lemma \ref{lem:smallonlargeset}, and thus, we omit it.

\begin{lem}\label{lem:interiordecay-1} 
Let $0<\delta\le 1$, $0<R\le \frac14$, $0<\sigma<1$ and 
\[
\sup_{B_{2R}(e_n/2) \times [t_0, t_0+\delta R^{2}] } u\le\mu\le M.
\]  
Suppose that $k<\mu$ and
\begin{equation}\label{eq:assumptionofmeasure}
|\{x\in B_R(e_n/2): u(x,t)>k\}|_{\mu_{p}(t)}\le (1-\sigma) |B_R(e_n/2)|_{\mu_{p}(t)} \quad \mbox{for any } t_0\le t\le  t_0+\delta R^{2}.
\end{equation}
Then  there exists  $C>1$ depending only on  $\lambda,\Lambda,n,p$ and $q$ such that for every $\ell\in\mathbb{Z}^+$, there holds either
\begin{equation}\label{eq:dichonomyHinterior}
H:=\mu-k\le   2^{\ell} (M+F_1)R^{1-\frac{n+2}{2q}},  
\end{equation}
or
\[
\frac{|\{(x,t)\in B_{R}(e_n/2) \times [t_0, t_0+\delta R^{2}] : u(x,t)>\mu-\frac{H}{2^\ell}\}|_{\nu_{p}}}{|B_{R}(e_n/2) \times [t_0, t_0+\delta R^{2}]|_{\nu_{p}}}\le \frac{C}{\sigma \sqrt{\delta  \ell}}.
\]
\end{lem}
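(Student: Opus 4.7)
The plan is to adapt the argument of Lemma \ref{lem:decay-1} to the interior setting, with \eqref{eq:assumptionofmeasure} playing the role that $u\equiv 0$ on $\{x_n=0\}$ played there. The first observation is that on $B_{2R}(e_n/2)$ with $R\le 1/4$ we have $x_n\ge 1/4$, so the weight $a(x,t)x_n^p$ is bounded between two positive constants depending only on $\lda,\Lda,p$; hence the Lebesgue measure, $|\cdot|_{\mu_p(t)}$ and $|\cdot|_{\nu_p}$ are mutually comparable on subsets of $B_{2R}(e_n/2)$. Setting $k_j=\mu-H/2^j$, $A_j(t)=\{x\in B_R(e_n/2):u(x,t)>k_j\}$ and $A_j=\{(x,t):u>k_j\}\cap B_R(e_n/2)\times[t_0,t_0+\delta R^2]$, the inclusion $A_j(t)\subset A_0(t)$ combined with \eqref{eq:assumptionofmeasure} produces a uniform lower bound $|B_R(e_n/2)\setminus A_j(t)|\ge c\sigma|B_R(e_n/2)|$ valid for all admissible $t$ and all $j\ge 0$.

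Next I apply the classical (unweighted) De Giorgi isoperimetric inequality on $B_R(e_n/2)$ at each time slice to $v(\cdot,t)=\min(\max(u,k_j),k_{j+1})-k_j$, which gives
\[
(k_{j+1}-k_j)|A_{j+1}(t)|\,|B_R(e_n/2)\setminus A_j(t)|\le C R^{n+1}\int_{A_j(t)\setminus A_{j+1}(t)}|\nabla u|\,\ud x.
\]
Inserting the lower bound on the complement, integrating in $t$, and applying Cauchy--Schwarz yields
\[
\sigma\,\tfrac{H}{2^{j+1}}R^n|A_{j+1}|\le C R^{n+1}\,|A_j\setminus A_{j+1}|^{1/2}\Big(\iint_{B_R(e_n/2)\times[t_0,t_0+\delta R^2]}|\nabla(u-k_j)^+|^2\,\ud x\ud t\Big)^{1/2}.
\]
The gradient integral is controlled by the interior Caccioppoli inequality \eqref{eq:caccipolliu} applied with a time-independent cutoff $\xi(x)$ equal to $1$ on $B_R(e_n/2)$ and vanishing outside $B_{2R}(e_n/2)$, with $|D\xi|\le C/R$. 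The pointwise bound $(u-k_j)^+\le H/2^j$ on the cylinder $B_{2R}(e_n/2)\times[t_0,t_0+\delta R^2]$ absorbs the initial-time term and the cutoff terms, while the failure of \eqref{eq:dichonomyHinterior} absorbs the term $C(k_j^2+F_1^2)|A_j|^{1-1/q}\le C(M+F_1)^2R^{(n+2)(1-1/q)}$, producing
\[
\iint_{B_R(e_n/2)\times[t_0,t_0+\delta R^2]}|\nabla(u-k_j)^+|^2\,\ud x\ud t\le C\,\frac{H^2}{4^j}R^n\qquad\text{for every }j\le\ell.
\]

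Substituting this bound into the previous display and squaring gives $\sigma^2|A_{j+1}|^2\le CR^{n+2}|A_j\setminus A_{j+1}|$. Summing over $j=0,\ldots,\ell-1$, using the monotonicity $|A_\ell|\le|A_{j+1}|$ for $j+1\le\ell$ together with the telescoping $\sum_{j=0}^{\ell-1}|A_j\setminus A_{j+1}|\le|A_0|\le C\delta R^{n+2}$, one obtains $\ell\sigma^2|A_\ell|^2\le C\delta R^{2n+4}$. Dividing by $|B_R(e_n/2)\times[t_0,t_0+\delta R^2]|$, which is comparable to $\delta R^{n+2}$, and reverting to the weighted measure $\nu_p$ by the comparability above, gives the claimed bound $C/(\sigma\sqrt{\delta\ell})$. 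The main delicacy is the Caccioppoli bookkeeping: one must verify that, under the failure of \eqref{eq:dichonomyHinterior}, the inequality $(M+F_1)^2R^{(n+2)(1-1/q)}\le C H^2R^n/4^j$ holds uniformly for $j\le\ell$, and this is precisely the condition that the exponent $1-\frac{n+2}{2q}$ in the alternative is designed to guarantee.
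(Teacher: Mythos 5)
Your proposal is correct, and in fact it is more careful than what the paper's own one-line proof literally says. The paper asserts that the proof is ``identical'' to that of Lemma~\ref{lem:decay-1} (modulo redefining $k_j$ and replacing \eqref{eq:measureauto} by \eqref{eq:assumptionofmeasure}), but Lemma~\ref{lem:decay-1} uses the weighted isoperimetric inequality of Theorem~\ref{thm:degiorgiisoperimetricelliptic} with exponent $\va=\min\bigl(\tfrac14,\tfrac{1}{4(p+1)}\bigr)<\tfrac12$, which produces a decay of the form $\ell^{-\va}$. Taken at face value, that argument does not yield the $\ell^{-1/2}$ rate appearing in the statement of Lemma~\ref{lem:interiordecay-1}. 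You instead observe that on $B_R(e_n/2)$ with $R\le\tfrac14$ the weight $a(x,t)x_n^p$ is bounded between two positive constants, so the Lebesgue, $\mu_p(t)$ and $\nu_p$ measures are mutually comparable there, and one can apply the \emph{classical unweighted} De Giorgi isoperimetric inequality at each time slice followed by Cauchy--Schwarz in $x$ and $t$; this is exactly what delivers the sharp exponent $\ell^{-1/2}$. The remainder of your bookkeeping---using the hypothesis \eqref{eq:assumptionofmeasure} in place of the forced zero set, bounding $\iint|\nabla(u-k_j)^+|^2\lesssim H^2 R^n/4^j$ via the interior Caccioppoli inequality \eqref{eq:caccipolliu} with a time-independent cutoff, absorbing the $(M+F_1)^2|A_j|^{1-1/q}$ term under the failure of \eqref{eq:dichonomyHinterior} (the inequality $(M+F_1)^2R^{(n+2)(1-1/q)}\le H^2R^n/4^j$ for $j\le\ell$ is exactly what the threshold is designed for), squaring, telescoping, and dividing by $|B_R\times[t_0,t_0+\delta R^2]|_{\nu_p}\approx\delta R^{n+2}$---is correct. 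One minor slip: you assert pointwise comparability of the weight ``on subsets of $B_{2R}(e_n/2)$,'' but for $R$ close to $\tfrac14$ the larger ball $B_{2R}(e_n/2)$ touches $\{x_n=0\}$, so the pointwise bound fails there. This is harmless because you only need pointwise comparability on $B_R(e_n/2)$ (for the isoperimetric step and for converting to $\nu_p$), while on $B_{2R}(e_n/2)$ only the integral bound $\int_{B_{2R}(e_n/2)}x_n^p\,\ud x\le C(p)R^n$ is used in the Caccioppoli step, and that holds for all $R\le\tfrac14$ and all $p>-1$.
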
 
\begin{proof}
The proof is very similar to that of Lemma \ref{lem:decay-1} with the following two changes.  The first is that $k_j$ should be defined as $k_j=\mu-\frac{H}{2^j}$ instead.  The second is that the estimate \eqref{eq:measureauto} should be replaced by the assumption \eqref{eq:assumptionofmeasure}. The left proofs are identical so that we omit it.
\end{proof}

The next lemma was not needed in the proof of Theorem \ref{thm:holderontheboundary}, and its proof is slightly different from the uniformly parabolic equations with $a\equiv 1$. Thus, we provide a proof.

\begin{lem}\label{lem:decay-2} 
Let $0<\sigma<1$.  There exist $R_0\in(0,\frac14)$ and $s_0>1$ depending only on   $\lambda,\Lambda,n,p, q$ and $\sigma$  such that the following holds. Let $R\in(0,R_0]$ and 
\[
\sup_{B_{2R}(e_n/2) \times [t_0, t_0+R^{2}] } u\le\mu\le M.
\]
Suppose that $k<\mu$ and
\[
|\{x\in B_R(e_n/2): u(x,t_0)>k\}|_{\mu_{p}(t_0)}\le (1-\sigma) |B_R(e_n/2)|_{\mu_{p}(t_0)}.
\]  
Then either
\begin{equation}\label{eq:dichonomyH2}
H:=\mu-k\le  2^{s_0} (M+F_1)R^{1-\frac{n+2}{2q}}
\end{equation}
or
\[
|\{x\in B_R(e_n/2): u(x,t)>\mu-\frac{H}{2^{s_0}}\}|_{\mu_{p}(t)}\le (1-\frac{\sigma}{2}) |B_R(e_n/2)|_{\mu_{p}(t)} \quad \mbox{for all } t_0\le t\le  t_0+R^{p+2} . 
\] 
\end{lem}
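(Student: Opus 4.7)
The plan is to establish a logarithmic energy inequality in the spirit of De Giorgi--DiBenedetto and to choose $s_0$ large so that logarithmic concentration amplifies the initial smallness at $t_0$ into smallness at every $t\in[t_0,t_0+R^{p+2}]$, overriding the error constants generated in the energy estimate.

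Since $R\le 1/4$, on $B_R(e_n/2)$ we have $x_n\in[1/4,3/4]$, so by \eqref{eq:ellip2} the weight $a(x,t)x_n^p$ is bounded between two positive constants depending only on $\lambda,\Lambda,p$; in particular $|E|_{\mu_p(t)}$ is uniformly (in $t$) comparable to Lebesgue measure, with a comparability constant that approaches $1+o(1)$ as $R_0\to 0$ by integrating $\partial_t a$ in time and invoking \eqref{eq:localbddnesscoeffcient} and H\"older together with $q>(n+p+2)/(p+2)$ (which follows from our running assumption on $q$). For $s\in\mathbb{Z}^+$ to be chosen, introduce the logarithmic truncation
$$\Psi_s(u):=\Big[\log\frac{H/2}{H/2-(u-k_1)^+ + H/2^{s+1}}\Big]_+,\qquad k_1:=\mu-H/2.$$
Then $\Psi_s\equiv 0$ on $\{u\le k_1\}\supseteq\{u\le k\}$, $0\le\Psi_s\le s\log 2$ throughout, $\Psi_s\ge(s-1)\log 2$ on $\{u\ge\mu-H/2^s\}$, and $\Psi_s''=(\Psi_s')^2$, $|\Psi_s'|\le 2^{s+1}/H$ where $\Psi_s>0$.

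Let $\xi=\xi(x)$ be a time-independent spatial cutoff with $\xi\equiv 1$ on $B_{R(1-\eta)}(e_n/2)$, $\xi\equiv 0$ on $\partial B_R(e_n/2)$, $|D\xi|\le C/(R\eta)$. Testing the Steklov-averaged weak form of \eqref{eq:linear-eq} with $\varphi=\xi^2\Psi_s(u)\Psi_s'(u)=\tfrac{1}{2}\xi^2(\Psi_s^2)'(u)$ and proceeding as in Lemma \ref{lem:Steklovapproximation}: the time-derivative term yields $\tfrac{1}{2}[\int x_n^p a\,\xi^2\Psi_s^2(u)\,dx]_{t_0}^t$ plus an $O(s^2)$ error controlled via $\partial_t a\in L^q(\mu_p)$ in \eqref{eq:localbddnesscoeffcient}; the principal part of the diffusion term $\iint\xi^2a_{ij}D_iuD_ju\cdot(1+\Psi_s)(\Psi_s')^2\ge 0$ is discarded, and its cross term with $D\xi$ is absorbed via Young, leaving only $C\iint|D\xi|^2\Psi_s^2$; the lower order and source terms are bounded through \eqref{eq:localbddnesscoeffcient}, \eqref{eq:localbddnessf}, H\"older and Young, using $\Psi_s\le s\log 2$ and $|\Psi_s'|\le 2^{s+1}/H$. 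Collecting, for every $t\in[t_0,t_0+R^{p+2}]$,
$$\int_{B_R} x_n^p a\,\xi^2\Psi_s^2(u)(x,t)\,dx\le\int_{B_R} x_n^p a\,\xi^2\Psi_s^2(u)(x,t_0)\,dx + C_1 s^2 R_0^{\gamma}|B_R| + C_2\frac{4^s(M+F_1)^2}{H^2}R^{2-(n+2)/q}|B_R|,$$
for some $\gamma=\gamma(n,p,q)>0$.

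At $t_0$, $\Psi_s=0$ on $\{u(\cdot,t_0)\le k\}$ of $\mu_p(t_0)$-measure $\ge\sigma|B_R|_{\mu_p(t_0)}$, so the initial integral is $\le(s\log 2)^2(1-\sigma)|B_R|_{\mu_p(t_0)}$; at time $t$, $\Psi_s^2\ge((s-1)\log 2)^2$ on $A_s(t):=\{u(\cdot,t)>\mu-H/2^s\}$, so the left-hand side dominates $((s-1)\log 2)^2|A_s(t)\cap B_{R(1-\eta)}|_{\mu_p(t)}$. If the alternative \eqref{eq:dichonomyH2} fails for $s_0$, then $4^{s_0}(M+F_1)^2R^{2-(n+2)/q}/H^2\le 1$, so the third error is $O(1)\cdot|B_R|$. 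Divide by $((s_0-1)\log 2)^2$ and absorb $\log 2$ into constants: pick $s_0$ large so that $(s_0/(s_0-1))^2\le 1+\sigma/16$ and the isolated $C_2/(s_0-1)^2$ contribution is $\le\sigma/16\cdot|B_R|_{\mu_p(t)}$; then $R_0$ small so that $C_1 s_0^2 R_0^{\gamma}\le\sigma/16\cdot|B_R|_{\mu_p(t)}$ and the continuity observation yields $|B_R|_{\mu_p(t_0)}\le(1+\sigma/16)|B_R|_{\mu_p(t)}$; finally $\eta=\eta(\sigma)$ small so that $|B_R\setminus B_{R(1-\eta)}|_{\mu_p(t)}\le\sigma/16\cdot|B_R|_{\mu_p(t)}$. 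Then
$$|A_{s_0}(t)|_{\mu_p(t)}\le(1+\sigma/16)^2(1-\sigma)|B_R|_{\mu_p(t)}+\tfrac{3\sigma}{16}|B_R|_{\mu_p(t)}\le(1-\sigma/2)|B_R|_{\mu_p(t)},$$
where the last inequality holds for all $\sigma\in(0,1)$, completing the proof.

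The main obstacle is the rigorous derivation of the logarithmic energy inequality above within the weighted, Steklov-averaged setting (paralleling Lemma \ref{lem:Steklovapproximation}): one must justify the nonlinear test function $\Psi_s\Psi_s'(u)$ via Steklov approximation, control the $\partial_t a$ contribution uniformly in $s$, and ensure that the constants $C_1, C_2$ and the exponent $\gamma$ are independent of $s$, so that the only $s$-exponential factor $4^s/H^2$ is absorbed by the dichotomy \eqref{eq:dichonomyH2}. A secondary subtlety is the $\mu_p(t_0)$ vs.\ $\mu_p(t)$ comparability, which is the reason we need $q>(n+p+2)/(p+2)$ to obtain the continuity of $t\mapsto|B_R|_{\mu_p(t)}$.
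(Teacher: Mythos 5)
Your route is genuinely different from the paper's: the paper never uses a logarithmic test function here. It runs the Caccioppoli inequality \eqref{eq:caccipolliu} with the plain truncation $\xi^2(u-k)^+$, splits $[t_0,t_0+R^{2}]$ into $N=\delta^{-1}$ short slabs of length $\delta R^{2}$, and propagates the density bound slab by slab: on each slab it combines the energy inequality (with the $\mu_p(t_0)$--$\mu_p(t)$ comparison via $\partial_t a\in L^q$, exactly as you do) with Lemma \ref{lem:interiordecay-1} to raise the level from $\mu-H2^{-s_j}$ to $\mu-H2^{-s_{j+1}}$ and lose only $\sigma/(4N)$ of density per slab. Your one-shot DiBenedetto-type logarithmic estimate is, in principle, a legitimate and arguably cleaner alternative for this uniformly parabolic regime (on $B_R(e_n/2)$ the weight $a x_n^p$ is comparable to a constant), and it avoids the time-slab induction and the isoperimetric lemma altogether.

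However, the energy inequality you state is not the one the computation gives, and with your version the final bookkeeping does not close. First, you cannot both discard the good term $\iint\xi^2(1+\Psi_s)(\Psi_s')^2a_{ij}D_iuD_ju$ and absorb the $D\xi$ cross term (and the drift terms $b_jD_ju\,\varphi$, $d_ju D_j\varphi$, which also contain $\nabla u$) "via Young''; a portion of that good term must be retained precisely to absorb them. Second, the absorption has to be done so that the cutoff error is \emph{linear} in $\Psi_s$, i.e. $C\iint\Psi_s|D\xi|^2\lesssim s_0\,\eta^{-2}|B_R|$ over the time window of length $\sim R^2$: the whole mechanism of the logarithmic method is that this error, after dividing by $((s_0-1)\log 2)^2$, is $O(1/(\eta^2 s_0))$ and becomes small by taking $s_0$ large. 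Your stated error $C\iint|D\xi|^2\Psi_s^2\lesssim s_0^2\eta^{-2}|B_R|$ (times the length of the time interval over $R^2$) survives the division by $(s_0-1)^2$ as a fixed constant $C/\eta^2$, which cannot be made $\le\sigma/16$; and the rescue you propose—hiding it in $C_1s^2R_0^\gamma|B_R|$—only produces a small factor if the time window is $R^{p+2}$ with $p>0$ (over the natural window $[t_0,t_0+R^{2}]$, which is what the paper actually uses and what Lemma \ref{lem:decay-3} needs, there is no power of $R$ at all, and for $-1<p<0$ the factor $R^{p}$ goes the wrong way). So as written the key step fails for general $p>-1$; the repair is the standard one (retain the good term, Young with weight $(1+\Psi_s)(\Psi_s')^2$, get the linear-in-$s_0$ cutoff error, and take $s_0$ large), after which your scheme goes through. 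Two minor points: on $\{u>\mu-H/2^{s}\}$ your $\Psi_s$ is only $\ge s\log 2-\log 3$, not $(s-1)\log 2$ (harmless, shift the level to $\mu-H/2^{s+1}$), and the $R^{p+2}$ in the conclusion of the statement is a typo of the paper for $R^{2}$—your hypothesis $\sup$ over $[t_0,t_0+R^2]$ would not even cover the claimed window when $-1<p<0$.
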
  

\begin{proof}  Let $\eta$ be a cut-off function supported in $B_R(e_n/2)$ and $\eta=1$ in $B_{\beta R}(e_n/2)$, where $0<\beta<1$ will be fixed later.  Let $0<\delta \le 1$  and 
\[
A^\delta (k,R)= \{B_R(e_n/2) \times[t_0, t_0+\delta R^{2}] \}\cap \{u>k\}.
\]
Let $k_1>1$.  By \eqref{eq:caccipolliu},  we have
\begin{align*}
&\sup_{t_0<t< t_0+\delta  R^{2}} \int_{B_R(e_n/2)} x_n^p a v^{2}\eta^2  \,\ud x  \\&
\quad \le (1+\va) \int_{B_R(e_n/2)}  x_n^p a v^{2}\eta^2 \,\ud x\Big|_{t_0}  + \frac{C}{\va^\kappa } \left(\frac{H^2  |A^\delta (k ,R)| }{(1-\beta)^2R^2}+ (M+F_1)^2 |A^\delta (k ,R)|^{1-\frac{1}{q}} \right),
\end{align*} 
where $v=(u-k)^+$. Note that 
\begin{align*}
\int_{B_R(e_n/2)} x_n^p a v^{2}\eta^2  \,\ud x \Big|_t &\ge (1-2^{-k_1})^2 H^2 |B_{\beta R}(e_n/2) \cap \{u(x,t)> \mu- H 2^{-k_1}\}|_{\mu_{p}(t)},\\
\int_{B_R(e_n/2)} x_n^p a  v^{2}\eta^2  \,\ud x\Big|_{t_0}  &\le H^2\{x\in B_R(e_n/2): u(x,t_0)>k\}|_{\mu_{p}(t_0)} \\&\le  (1-\sigma) H^2 |B_R(e_n/2)|_{\mu_{p}(t_0)}.
\end{align*}
It follows that if \eqref{eq:dichonomyH2} fails, then for all $t\in [t_0, t_0+\delta R^{p+2}]$, 
\begin{align*}
& |B_{\beta R}(e_n/2) \cap \{u(x,t)> \mu- H 2^{-k_1}\}|_{\mu_{p}(t)} \\&\le |B_R(e_n/2)|_{\mu_{p}(t_0)}  \frac{(1+\va) (1-\sigma)}{(1-2^{-k_1})^2 }+\frac{CR^{n}}{\va^\kappa }\left[ \frac{C }{(1-\beta)^2} \mathscr{A}^\delta(k ,R)+\left(\mathscr{A}^\delta(k ,R)\right)^{1-\frac1q}  \right],
\end{align*}
where
\begin{equation*}
\mathscr{A}^\delta(k ,R):=  \frac{|A^\delta(k ,R)| }{R^{n+2}}.
\end{equation*}
Hence,
\begin{align*}
& |B_{ R}(e_n/2) \cap \{u(x,t)> \mu- H 2^{-k_1}\}|_{\mu_{p}(t)}  \\&\le |B_R(e_n/2)|_{\mu_{p}(t_0)}  \left( C(1-\beta)+ \frac{(1-\sigma)}{(1-2^{-k_1})^2}+4\va+\frac{C }{(1-\beta)^2\va^\kappa } \left(\mathscr{A}^\delta(k ,R)\right)^{1-\frac1q} \right).
\end{align*}
By choosing $\beta$ such that
\[
(1-\beta)^3=\left(\mathscr{A}^\delta(k ,R)\right)^{1-\frac1q}, 
\] 
we have
\begin{align}\label{eq:smallinitiallater0}
& |B_{ R}(e_n/2) \cap \{u(x,t)> \mu- H 2^{-k_1}\}|_{\mu_{p}(t)}  \nonumber\\
&\le |B_R(e_n/2)|_{\mu_{p}(t_0)}  \left( \frac{(1-\sigma)}{(1-2^{-k_1})^2}+4\va+\frac{C }{\va^\kappa } \left(\mathscr{A}^\delta(k ,R)\right)^{\frac13(1-\frac1q)} \right).
\end{align}

For every $t_0\le\tau_1\le\tau_2<t_0+R^{p+2}$, we have
\begin{align*}
||B_R(e_n/2)|_{\mu_{p}(\tau_1)}-|B_R(e_n/2)|_{\mu_{p}(\tau_2)}|&\le \int_{B_R(e_n/2)}|a(x,\tau_1)-a(x,\tau_2)|x_n^p\,\ud x\\
&\le \int_{t_0}^{t_0+R^{p+2}}\int_{B_R(e_n/2)}|\pa_\tau a(x,\tau)|x_n^p\,\ud x\ud \tau\\
&\le \Lambda \left(\int_{t_0}^{t_0+R^{2}}\int_{B_R(e_n/2)}x_n^{p}\,\ud x\ud \tau\right)^\frac{q-1}{q}\\
&\le C R^{\frac{(n+2)(q-1)}{q}}\\
&= C\theta R^{n},
\end{align*}
where we used \eqref{eq:localbddnesscoeffcient} in the third inequality, and
\[
\theta= R^{2-\frac{n+2}{q}}.
\]
Then \eqref{eq:smallinitiallater0} becomes
\begin{align*}
& |B_{ R}(e_n/2) \cap \{u(x,t)> \mu- H 2^{-k_1}\}|_{\mu_{p}(t)}  \nonumber\\
&\le |B_R(e_n/2)|_{\mu_{p}(t)}  \left( \frac{(1+C\theta)(1-\sigma)}{(1-2^{-k_1})^2}+C\va+\frac{C }{\va^\kappa } \left(\mathscr{A}^\delta(k ,R)\right)^{\frac13(1-\frac1q)} \right).
\end{align*}
If we let 
\[
\va=\left(\mathscr{A}^\delta(k ,R)\right)^{\frac{1}{3(1+\kappa )}(1-\frac1q)},
\]
 then
\begin{align}\label{eq:smallinitiallater}
& |B_{ R}(e_n/2) \cap \{u(x,t)> \mu- H 2^{-k_1}\}|_{\mu_{p}(t)} \nonumber\\
&\le |B_R(e_n/2)|_{\mu_{p}(t)}  \left( \frac{(1+C\theta)(1-\sigma)}{(1-2^{-k_1})^2}+C\left(\mathscr{A}^\delta(k ,R)\right)^{\frac{1}{3(1+\kappa )}(1-\frac1q)}\right).
\end{align}
Since 
\[
\mathscr{A}^\delta(k ,R)\le C\delta,
\]
we fix an $a$ such that
\[
C\delta^{\frac{1}{3(1+\kappa )}(1-\frac1q)}  <\frac{1}{8}\min(1-\sigma,\sigma).
\]
We choose $\delta$ slightly smaller if necessary to make $\delta^{-1}$ to be an integer. Let $N=\delta^{-1}$ and denote
 \[
 t_j=t_0+j\delta R^{2}\quad j=1,2,\cdots,N.
 \]
 We will inductively prove that there exist $s_1<s_2<\cdots<s_N$ such that
 \begin{equation}\label{eq:densitypropa}
|B_{ R}(e_n/2) \cap \{u(x,t)> \mu- H 2^{-s_j}\}|_{\mu_{p}(t)}  \le \left(1-\sigma+\frac{j}{4N} \sigma\right)  |B_R(e_n/2)|_{\mu_{p}(t)}
\end{equation}
for all $t_{j-1}\le t\le t_j$, where all the $s_j$ depend only on  $\lambda,\Lambda,n,p, q$ and $\sigma$, from which the conclusion of this lemma follow.

Let us consider $j=1$ first.   

Since $2-\frac{n+2}{q}>0$, there exist $R_0$ small and $k_0$ large, depending on $\sigma$, such that  for all  $k_1\ge k_0$ and $R\le R_0$, we have
\[
\frac{(1+C\theta)(1-\sigma)}{(1-2^{-k_1})^2 }\le 1-\sigma+\frac{\sigma}{8N}.
\]
Then, 
\[
|B_{ R}(e_n/2) \cap \{u(x,t)> \mu- H 2^{-k_1}\}|_{\mu_{p}(t)}  \le \left(1-\frac{3}{4} \sigma\right)  |B_R(e_n/2)|_{\mu_{p}(t)} 
\]
 for all $t\in [t_0, t_1]$.  Applying Lemma \ref{lem:interiordecay-1}, for every $k_2>k_1$, we have
\[
|A^\delta(\mu- H 2^{-k_2},R)|_{\nu_p} \le \frac{C}{\sigma\sqrt{\delta(k_2-k_1)}} |\{B_R(e_n/2) \times[t_0, t_0+\delta R^{2}] \}|_{\nu_p}\le \frac{C\sqrt{\delta}\,R^{n+2}}{\sigma\sqrt{k_2-k_1}}.
\]
Hence,
\[
\frac{|A^\delta(\mu- H 2^{-k_2},R)|}{R^{n+2}}\le \frac{C\sqrt{\delta}}{\sigma\sqrt{k_2-k_1}}.
\]
Hence, we can choose $k_2$ large enough such hat
\[
C\left( \mathscr{A}^\delta(\mu- H 2^{-k_2} ,R) \right)^{\frac{1}{3(1+\kappa )}(1-\frac1q)}\le \frac{\sigma}{8N}.
\]
Let $k_1=k_0$ and $s_1=k_1+k_2$. By replacing $H$ by $H 2^{-k_2}$ in \eqref{eq:smallinitiallater}, it follows that
\[
|B_{ R}(e_n/2) \cap \{u(x,t)> \mu- H 2^{-s_1}\}|_{\mu_{p}(t)}  \le \left(1-\sigma+\frac{1}{4N} \sigma\right)  |B_R(e_n/2)|_{\mu_{p}(t)}\quad\mbox{for all } t_{0}\le t\le t_1.
\]
This prove \eqref{eq:densitypropa} for $j=1$. The proof for $j=2,3,\cdots,N$ is similar, and we omit it.
\end{proof}

Combining the above three lemmas, we will have the following improvement of oscillations. 
\begin{lem}\label{lem:decay-3} 
Let $0<\sigma<1$.  There exist $R_0\in(0,\frac12)$ and $s>1$ depending only on   $\lambda,\Lambda,n,p, q$ and $\sigma$  such that the following holds. Let $R\in(0,R_0]$ and 
\[
\sup_{B_{2R}(e_n/2) \times [-R^{2},0] } u\le\mu\le M.
\]
Suppose that $k<\mu$ and
\[
|\{x\in B_R(e_n/2): u(x,-R^{2})>k\}|_{\mu_{p}(-R^{2})}\le (1-\sigma) |B_R(e_n/2)|_{\mu_{p}(-R^{2})}.
\]  
Then either
\begin{equation}\label{eq:dichonomyH3}
H:=\mu-k\le 2^{s} (M+F_1)R^{1-\frac{n+2}{2q}}
\end{equation}
or
\[
\sup_{Q_{R/2}(e_n/2,0)} u\le \mu-\frac{H}{2^{s}}.  
\]

\end{lem}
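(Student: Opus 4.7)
The plan is to combine the three preceding lemmas in a standard De Giorgi--DiBenedetto ``improvement of oscillation'' cascade: first propagate the one-time density assumption to a density bound valid for every time slice in the cylinder (Lemma \ref{lem:decay-2}), then shrink the $\nu_p$-measure of the superlevel set as a power of the shifted height (Lemma \ref{lem:interiordecay-1}), and finally convert the small-measure bound into a pointwise reduction of the supremum (Lemma \ref{lem:smallonlargesetinterior}). Since the equation is uniformly parabolic on $B_1(e_n/2)$ (the weight $x_n^p$ is bounded above and below there), the natural scaling is $R^2$ throughout, which matches the parabolic cylinders $Q_R(e_n/2,0) = B_R(e_n/2) \times [-R^2,0]$ used in these three lemmas.

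\textbf{Step 1.} Apply Lemma \ref{lem:decay-2} with $t_0 = -R^2$ and the given $\sigma$. This produces $R_0^{(1)} = R_0^{(1)}(\lambda,\Lambda,n,p,q,\sigma)$ and $s_0 = s_0(\lambda,\Lambda,n,p,q,\sigma)$ such that for every $R \le R_0^{(1)}$, either $H \le 2^{s_0}(M+F_1) R^{1-(n+2)/(2q)}$ (which will be absorbed into the alternative \eqref{eq:dichonomyH3} by choosing $s \ge s_0$), or the density bound propagates:
\[
\bigl|\{x \in B_R(e_n/2) : u(x,t) > \mu - H/2^{s_0}\}\bigr|_{\mu_p(t)} \le \bigl(1-\tfrac{\sigma}{2}\bigr)\bigl|B_R(e_n/2)\bigr|_{\mu_p(t)} \quad \text{for all } t\in[-R^2,0].
\]

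\textbf{Step 2.} Feed this into Lemma \ref{lem:interiordecay-1} applied at level $\mu - H/2^{s_0}$, with $\delta=1$ and $\sigma$ replaced by $\sigma/2$. The height in that lemma becomes $H/2^{s_0}$, so alternative \eqref{eq:dichonomyHinterior} for parameter $\ell$ reads $H/2^{s_0} \le 2^{\ell}(M+F_1) R^{1-(n+2)/(2q)}$ (again, absorbed into \eqref{eq:dichonomyH3}); otherwise,
\[
\frac{|\{(x,t)\in Q_R(e_n/2,0): u(x,t) > \mu - H/2^{s_0+\ell}\}|_{\nu_p}}{|Q_R(e_n/2,0)|_{\nu_p}} \le \frac{2C}{\sigma\sqrt{\ell}}.
\]
Choose $\ell_0 = \ell_0(\lambda,\Lambda,n,p,q,\sigma)$ so that $2C/(\sigma\sqrt{\ell_0}) \le \gamma_0$, where $\gamma_0$ is the universal constant from Lemma \ref{lem:smallonlargesetinterior}.

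\textbf{Step 3.} Apply Lemma \ref{lem:smallonlargesetinterior} with $k$ replaced by $k' := \mu - H/2^{s_0+\ell_0}$. Its quantitative hypothesis $\mu - k' > (M+F_1) R^{1-(n+2)/(2q)}$ is precisely the negation of the alternative \eqref{eq:dichonomyH3} with $s := s_0 + \ell_0 + 1$, so in the non-degenerate case the lemma yields
\[
u \le \mu - \frac{H}{2^{s_0+\ell_0+1}} = \mu - \frac{H}{2^{s}} \quad \text{in } Q_{R/2}(e_n/2,0),
\]
which is the second alternative of Lemma \ref{lem:decay-3}. Setting $R_0 = R_0^{(1)}$ and $s = s_0+\ell_0+1$ gives constants depending only on $\lambda,\Lambda,n,p,q,\sigma$.

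The main obstacle is purely bookkeeping: at each of the three steps a ``bad'' branch appears in which $H$ is already small at a scale $R^{1-(n+2)/(2q)}$ up to a power of $2$, and we must verify that these powers can all be swallowed into the single exponent $s$ of \eqref{eq:dichonomyH3}. A secondary point to handle carefully is the consistency of time intervals between Lemmas \ref{lem:smallonlargesetinterior}--\ref{lem:decay-2} (all using $R^2$-scaling in the interior), which forces the choice $t_0 = -R^2$ in Step 1 and $\delta = 1$ in Step 2 so that the cylinder on which density is propagated matches the cylinder on which the measure-shrinking estimate is integrated.
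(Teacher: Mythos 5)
Your proposal is correct and takes essentially the same route as the paper's proof: propagate the initial-slice density to all times via Lemma \ref{lem:decay-2}, shrink the $\nu_p$-measure of the superlevel set via Lemma \ref{lem:interiordecay-1} (with $\delta=1$, $\sigma$ replaced by $\sigma/2$, and the shifted height $H/2^{s_0}$), and conclude with Lemma \ref{lem:smallonlargesetinterior}, absorbing all intermediate ``bad'' branches into \eqref{eq:dichonomyH3}. The only difference is bookkeeping: the paper assumes \eqref{eq:dichonomyH3} fails for an $s$ fixed at the end (taking $\ell=s-s_0-1$), whereas you choose $\ell_0$ first and set $s=s_0+\ell_0+1$, which is the same argument.
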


\begin{proof} Let $R_0$ and $s_0$ be those from Lemma \ref{lem:decay-2}. Suppose \eqref{eq:dichonomyH3} fails for some $s>s_0$, which will be fixed in the end. 
Then it follows from  Lemma \ref{lem:decay-2} that
\[
|\{x\in B_R(e_n/2): u(x,t)>\mu-\frac{H}{2^{s_0}}\}|_{\mu_{p}(t)}\le (1-\frac{\sigma}{2}) |B_R(e_n/2)|_{\mu_{p}(t)} \quad \mbox{for every } t_0\le t\le  t_0+R^{2} . 
\] 
Then using Lemma \ref{lem:interiordecay-1}, we have
\[
\frac{|\{(x,t)\in B_{R}(e_n/2) \times [t_0, t_0+R^{2}] : u(x,t)>\mu-\frac{H}{2^{s-1}}\}|_{\nu_{p}}}{|B_{R}(e_n/2) \times [t_0, t_0+R^{2}]|_{\nu_{p}}}\le \frac{C}{\sigma \sqrt{s-s_0-1}}.
\]
Let $\gamma_0$ be the one in Lemma \ref{lem:smallonlargesetinterior}. We can choose $s$ sufficiently large so that
\[
\frac{C}{\sigma \sqrt{s-s_0-1}}\le\gamma_0.
\]
Then it follows from Lemma \ref{lem:smallonlargesetinterior} that 
\[
\sup_{Q_{R/2}(e_n/2,0)} u\le \mu-\frac{H}{2^{s}}.  
\]
\end{proof}

\begin{rem}\label{rem:decay-3}
From the above proof, for $\delta_0\le\delta\le\delta_0^{-1}$, if we consider the problem in $B_{2R}(e_n/2)\times[-\delta R^{p+2},0]$ instead of $B_{2R}(e_n/2)\times[- R^{p+2},0]$, then the conclusion in Lemma \ref{lem:decay-3} still holds, where the constant $s$ would additionally depend on $\delta_0$.
\end{rem}


\begin{proof}[Proof of Theorem \ref{thm:uniformholdernearboundary}]
We only need to prove the H\"older continuity at the point $(e_n/2,0)$. 
 Let $R_0$ be the one in Lemma \ref{lem:decay-3} with $\sigma=1/2$. For $R\in(0,R_0]$, denote
\[
\mu(R)=\sup_{Q_R(e_n/2,0)}u, \quad \widetilde\mu(R)=\inf_{\mathcal{Q}_R(e_n/2,0)}u, \quad\omega(R)=\mu(R)-\widetilde\mu(R).
\]
Then one of the following two inequalities must hold:
\begin{align}
\left|\left\{x\in B_{\frac{R}{2}}(e_n/2): u\Big(x,-(\frac R2)^{2}\Big)>\mu(R)- \frac12\omega(R)\right\}\right|_{\mu_{p}(-(\frac R2)^{2})}\le \frac12 |B_{\frac R2}(e_n/2)|_{\mu_{p}(-(\frac R2)^{2})}, \label{eq:dichonomy1}\\
\left|\left\{x\in B_{\frac R2}(e_n/2): u\Big(x,-(\frac R2)^{2}\Big)<\widetilde\mu(R)+ \frac12\omega(R)\right\}\right|_{\mu_{p}(-(\frac R2)^{2})}\le \frac12 |B_{\frac R2}(e_n/2)|_{\mu_{p}(-(\frac R2)^{2})}.\label{eq:dichonomy2}
\end{align}
If \eqref{eq:dichonomy1} holds, then by Lemma \ref{lem:decay-3}, there exists $s>1$ such that
either
\begin{equation}\label{eq:oscalter1}
\frac{\omega(R)}{2}\le 2^{s} (M+F_1)R^{1-\frac{n+2}{2q}}
\end{equation}
or
\begin{equation}\label{eq:oscalter2}
\mu(R/4)\le \mu(R)-\frac{\omega(R)}{2^{s+2}}.
\end{equation}
If \eqref{eq:dichonomy2} holds, then by applying the above estimates to $-u$, one has either \eqref{eq:oscalter1}  or 
\begin{equation}\label{eq:oscalter3}
\widetilde\mu(R/4)\ge \widetilde\mu(R)+\frac{\omega(R)}{2^{s+2}}.
\end{equation}
In any case, we obtain 
\begin{equation*}
\omega(R/4)\le  (1-2^{-s-2})\omega(R)+2^{s+1} (M+F_1)R^{1-\frac{n+2}{2q}}.
\end{equation*}
By an iterative lemma, e.g. Lemma 3.4 in Han-Lin \cite{HL} (or Lemma B.2 in \cite{JX19}), there exist $\alpha$ and $C$, both of which depend only on $\lambda,\Lambda,n,p$ and $q$, such that
\[
\omega(R)\le C(M+F_1)R^{\alpha}\quad\,\forall R\in(0,R_0],
\]
from which the conclusion follows.
\end{proof}

\subsection{H\"older estimates near the boundary}

Together with the H\"older regularity at the boundary in Theorem \ref{thm:holderontheboundary} and the interior H\"older regularity in Theorem \ref{thm:uniformholdernearboundary}, one can obtain the H\"older regularity up to the boundary. 
\begin{thm}\label{thm:holdernearboundary}
Suppose $u\in C ([-1,0]; L^2(B_1^+,x_n^{p}\ud x)) \cap  L^2((-1,0];H_{0,L}^1(B_1^+))$ is a weak solution of \eqref{eq:linear-eq} with the partial boundary condition \eqref{eq:linear-eq-D}, where the coefficients of the equation satisfy \eqref{eq:rangep}, \eqref{eq:ellip2}, \eqref{eq:localbddnesscoeffcient} and \eqref{eq:localbddnessf} for some $q>\max(\frac{\chi}{\chi-1},\frac{n+p+2}{2}, \frac{n+2p+2}{p+2})$.  Then for every $\gamma>0$, there exist $\theta>0$ and $C>0$, both of which depend only on  $\lambda,\Lambda,n,p,\gamma$ and $q$, such that for every $(x,t), (y,s)\in B_{1/2}^+\times(-1/4,0]$, there holds
\[
|u(x,t)-u(y,s)|\le C(\|u\|_{L^\gamma({\mathcal{Q}}_1^+)}+F_1) (|x-y|+|t-s|)^\theta.
\]
\end{thm}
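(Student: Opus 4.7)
The plan is to combine the boundary H\"older estimate (Theorem~\ref{thm:holderontheboundary}) with a rescaled form of the interior estimate (Theorem~\ref{thm:uniformholdernearboundary}), using Theorem~\ref{thm:localboundednessglobal} to replace the $L^\infty$ quantity $M$ in those estimates by $\|u\|_{L^\gamma(\mathcal{Q}_1^+)}$. As a preparatory step, Theorem~\ref{thm:localboundednessglobal} (applied on suitable balls) gives
$$M := \|u\|_{L^\infty(B_{3/4}^+ \times (-3/4, 0])} \le C(\|u\|_{L^\gamma(\mathcal{Q}_1^+)} + F_1),$$
and since $u \equiv 0$ on $\{x_n = 0\}$, Theorem~\ref{thm:holderontheboundary} applied at the foot $(\bar x, t)$ with $\bar x := (x', 0)$ yields the pointwise boundary decay
$$|u(x, t)| \le C(M + F_1)\, x_n^{\alpha_1}, \qquad (x, t) \in B_{1/2}^+ \times (-1/4, 0],$$
where $\alpha_1 > 0$ is the boundary exponent.

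Fix $(x, t), (y, s) \in B_{1/2}^+ \times (-1/4, 0]$, set $r := |x - y| + |t - s|^{1/(p+2)}$, and assume without loss of generality $x_n \le y_n$ and $t \ge s$. If $x_n \le 4 r$, then both $(x, t)$ and $(y, s)$ lie within parabolic distance $\le 6 r$ of $(\bar x, t)$, so the triangle inequality combined with Theorem~\ref{thm:holderontheboundary} gives $|u(x, t) - u(y, s)| \le C(M + F_1)\, r^{\alpha_1}$.

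If instead $x_n > 4 r$, I rescale around $(\bar x, t)$: set $R := 2 x_n$ and
$$\tilde u(\tilde z, \tilde \tau) := u(\bar x + R \tilde z,\; t + R^{p+2} \tilde \tau).$$
Since $\bar x_n = 0$, the function $\tilde u$ satisfies the equation \eqref{eq:rescaledequation} on $Q_1^+$ with partial Dirichlet condition on $\{\tilde z_n = 0\}$; by \eqref{eq:rescaledequationcoefficients1}--\eqref{eq:rescaledequationcoefficients2} the rescaled coefficients still satisfy \eqref{eq:localbddnesscoeffcient} with the same $\Lambda$, while the rescaled forcing satisfies $\tilde F_1 \le C R^\varepsilon F_1$ with $\varepsilon := 1 - (n + p + 2)/(2q) > 0$. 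The boundary decay from the first paragraph, applied on the pulled-back region, gives $\|\tilde u\|_{L^\infty(B_{3/4}^+ \times (-3/4, 0])} \le C(M + F_1)\, R^{\alpha_1}$. Since $(x, t)$ corresponds to $(\tilde z, \tilde \tau) = (e_n/2, 0)$ and $(y, s)$ to some $(\tilde y, \tilde s) \in B_{1/4}(e_n/2) \times (-1/4, 0]$ (using $r/R \le 1/8$), Theorem~\ref{thm:uniformholdernearboundary} applied to $\tilde u$ yields
$$|u(x, t) - u(y, s)| \le C\bigl[(M + F_1) R^{\alpha_1} + R^\varepsilon F_1\bigr] (r/R)^{\alpha_2},$$
where $\alpha_2 > 0$ is the interior exponent. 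Setting $\theta := \min(\alpha_1, \alpha_2, \varepsilon)$ and using $R \le 1$ together with $R \ge 4 r$, the factors of $R$ absorb into $r^\theta$, producing $|u(x, t) - u(y, s)| \le C(M + F_1)\, r^\theta$ in this case as well.

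The principal obstacle is that the naive interior estimate applied to $\tilde u$ carries a factor $R^{-\alpha_2}$ that would blow up as $x_n \to 0$; this singularity is precisely cancelled by the gain $R^{\alpha_1}$ in $\|\tilde u\|_\infty$ coming from the Dirichlet boundary condition and the pointwise boundary decay of the first paragraph. Keeping track of coefficient and forcing bounds under rescaling uses the strict inequalities $q > \max((n+p+2)/2,\, (n+2p+2)/(p+2))$ in an essential way, ensuring that all relevant scaling exponents in \eqref{eq:rescaledequationcoefficients1}--\eqref{eq:rescaledequationcoefficients2} are strictly positive.
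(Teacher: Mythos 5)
Your strategy is essentially the paper's: both arguments glue the boundary estimate (Theorem \ref{thm:holderontheboundary}) to the interior estimate (Theorem \ref{thm:uniformholdernearboundary}) by rescaling about the boundary foot point at scale comparable to $x_n$, using \eqref{eq:rescaledequation}--\eqref{eq:rescaledequationcoefficients2}. The gluing mechanism differs, though: the paper never uses decay of $\|\tilde u\|_{L^\infty}$; after normalizing $M+F_1=1$ it applies the rescaled interior estimate only at quadratically small scales ($|\tilde x-\bar x|\le R^2$, $|t|\le R^{2p+4}$), where $(\mathrm{dist}/R)^{\beta}\le \mathrm{dist}^{\beta/2}$, and goes through the boundary points via Theorem \ref{thm:holderontheboundary} otherwise, accepting halved exponents. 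You instead extract the pointwise decay $|u|\le C(M+F_1)x_n^{\alpha_1}$ first, so the rescaled sup-norm gains $R^{\alpha_1}$, which cancels the factor $(r/R)^{\alpha_2}$ through the interpolation $R^{\alpha_1}(r/R)^{\alpha_2}\le r^{\min(\alpha_1,\alpha_2)}$, with a dichotomy at the linear scale $r\sim x_n$. Both mechanisms work; yours is the classical ``boundary growth plus interior estimate'' version and yields the exponent $\min(\alpha_1,\alpha_2,\varepsilon)$ rather than roughly $\alpha/2$.

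Two details need repair. First, with $R=2x_n$ and $x_n$ not small, the rescaled cylinder $B_R^+(\bar x)\times(t-R^{p+2},t]$ need not lie in $Q_1^+$ (e.g.\ $|x'|+2x_n$ can exceed $1$, and $t-R^{p+2}<-1$ when $p$ is near $-1$), and the preimage of $B_{3/4}^+\times(-3/4,0]$, on which you need the decay bound for $\|\tilde u\|_{L^\infty}$, need not lie in $B_{1/2}^+\times(-1/4,0]$ where your first-paragraph decay was established. This is routine to fix: establish the decay on a slightly larger region (say $B_{5/8}^+\times(-1/2,0]$, by applying Theorem \ref{thm:holderontheboundary} after translation/dilation and covering), run your scaling argument only for $x_n\le 1/16$, and for $x_n>1/16$ note that in the regime $r<x_n/4$ both points lie in a fixed compact set at positive distance from $\{x_n=0\}$, where the equation is uniformly parabolic and the interior estimate applies directly at a fixed scale after a finite covering. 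Second, Theorem \ref{thm:localboundednessglobal} cannot be cited to bound $M$, since it assumes $u(\cdot,-1)=0$, which is not among the hypotheses of Theorem \ref{thm:holdernearboundary}; the bound $M\le C(\|u\|_{L^\gamma(\mathcal{Q}_1^+)}+F_1)$ should instead come from the local-in-time Theorem \ref{thm:localboundedness} (plus interior local boundedness and a covering), which suffices because the sup is taken over $(-3/4,0]$, away from the initial time; this is what the paper does. Finally, remember the harmless conversion from your parabolic distance $r=|x-y|+|t-s|^{1/(p+2)}$ to the stated modulus $(|x-y|+|t-s|)^{\theta}$, which only adjusts $\theta$ since $p+2\ge 1$ and $|t-s|\le 1$.
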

\begin{proof}
By normalization, we assume $\sup_{B_{3/4}\times[-3/4,0]}|u|+F_1=1$. For any $\bar x=(0,\bar x_n )\in B_{1/2}^+$, we let $R:=\bar x_n>0$, and rescale the solution and the coefficients as in \eqref{eq:rescaledequationcoefficients} with $x_0=0$.  Then \eqref{eq:rescaledequation}, \eqref{eq:rescaledequationcoefficients1} and \eqref{eq:rescaledequationcoefficients2} hold. By Theorem \ref{thm:uniformholdernearboundary}, there exist $C>1$ and $0<\beta<1$, both of which depend only on  $\lambda,\Lambda,n,p$ and $q$, such that  
\begin{equation}\label{eq:holderafterscaling}
|\tilde u(e_n,0)-\tilde u(y,s)|\le C ||y-e_n|+\sqrt{s}|^\beta \quad \mbox{for all }(y,s) \mbox{ such that }|y-e_n|+\sqrt{s}<1/2.
\end{equation}

Consider $t\in (-1/2,0]$. If $|t|\le R^{2p+4}$, then we have
\[
|u(\bar x,t)-u(\bar x, 0)|=|\tilde u(e_n,t/R^{p+2})-\tilde u(e_n,0)|\le C |t/R^{p+2}|^{\beta/2}\le C t^{\beta/4},
\]
where we used \eqref{eq:holderafterscaling} in the first inequality. If $|t|\ge R^{2p+4}$, then we have
\begin{align*}
|u(\bar x,t)-u(\bar x, 0)|& \le |u(\bar x,t)-u(0,t)|+|u(0,t)-u(0,0)|+|u(0,0)-u(\bar x, 0)|\\
& \le C(R^{\alpha}+|t|^{\frac{\alpha}{p+2}})\\
&\le C |t|^{\frac{\alpha}{2(p+2)}},
\end{align*}
where we used Theorem \ref{thm:holderontheboundary} in the second inequality. This shows that $u$ is H\"older continuous in the time variable.

Consider $\tilde x=(\tilde x',\tilde x_n)\in B_{1/2}^+$ such that $\tilde x_n\le \bar x_n$. If $\tilde x\in B_{R^2}(\bar x)$, then we have 
\[
|u(\bar x,0)-u(\tilde x, 0)|=|\tilde u(e_n,0)-\tilde u(\tilde x/R,0)|\le C ||\tilde x-\bar x|/R|^{\beta}\le C |\tilde x-\bar x|^{\beta/2},
\]
where we used \eqref{eq:holderafterscaling} in the first inequality. If $\tilde x\not\in B_{R^2}(\bar x)$, then we have 
\begin{align*}
|u(\bar x,0)-u(\tilde x, 0)|& \le |u(\bar x,0)-u(0,0,0)|+|u(0,0,0)-u(\tilde x',0,0)|+|u(\tilde x',0,0)-u(\tilde x, 0)|\\
& \le C(R^{\alpha}+|\tilde x_n|^\alpha)\\
&\le C |\bar x-\tilde x|^{\frac{\alpha}{2}},
\end{align*}
where we used Theorem \ref{thm:holderontheboundary} in the second inequality. This shows that $u$ is H\"older continuous in the spatial variables.

Together with Theorem \ref{thm:localboundedness}, we finish the proof of this theorem.
\end{proof}

\subsection{H\"older estimates up to the initial time}
We can also show H\"older estimates up to the initial time.
\begin{thm}\label{thm:holderboundarybottom}
Suppose $u\in C ([-1,0]; L^2(B_1^+,x_n^{p}\ud x)) \cap  L^2((-1,0];H_{0,L}^1(B_1^+))$ is a weak solution of \eqref{eq:linear-eq} with the partial boundary condition \eqref{eq:linear-eq-D} and the initial condition $u(\cdot,-1)=0$, where the coefficients of the equation satisfy \eqref{eq:rangep}, \eqref{eq:ellip2}, \eqref{eq:localbddnesscoeffcient} and \eqref{eq:localbddnessf} for some $q>\max(\frac{\chi}{\chi-1},\frac{n+p+2}{2}, \frac{n+2p+2}{p+2})$.   Let $\bar x\in\pa' B_{1/4}$. Then for every $\gamma>0$, there exist $\alpha>0$ and $C>0$, both of which depend only on  $\lambda,\Lambda,n,p,\gamma$ and $q$, such that
\[
|u(x,t)-u(\bar x, -1)|\le C(\|u\|_{L^\gamma({\mathcal{Q}}_1^+)}+F_1) (|x-\bar x|+|t+1|^{\frac{1}{p+2}})^\alpha 
\]
for every $(x,t)\in B_{1/4}^+\times[-1,-\frac34]$.
\end{thm}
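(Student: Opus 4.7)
The plan is to mirror the proof of Theorem \ref{thm:holderontheboundary} at the ``corner'' point $(\bar x,-1)$, replacing the backward cylinders $\mathcal{Q}_R^+(x_0,t_0)$ used there by the forward cylinders
\[
\widetilde{\mathcal{Q}}_R^+(\bar x):=B_R^+(\bar x)\times [-1,\,-1+R^{p+2}],
\]
and exploiting the initial condition $u(\cdot,-1)\equiv 0$ in place of the lateral-boundary information. By normalization we may assume $\|u\|_{L^\gamma(\mathcal{Q}_1^+)}+F_1=1$, and by Theorem \ref{thm:localboundednesstobottom} we may also assume $\|u\|_{L^\infty(\widetilde{\mathcal{Q}}_{1/2}^+(\bar x))}\le M$ for a controlled constant $M$.

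First I would establish forward-cylinder analogs of Lemma \ref{lem:smallonlargeset} and Lemma \ref{lem:decay-1}, stated on $\widetilde{\mathcal{Q}}_R^+(\bar x)$ rather than on backward cylinders. The proofs of these two lemmas rely only on (i) the Caccioppoli inequality of Theorem \ref{thm:caccipolli}, which holds verbatim on any sub-cylinder of $Q_1^+$; (ii) the weighted parabolic Sobolev inequalities (Theorems \ref{thm:weightedsobolev}, \ref{thm:weightedsobolev2}); and (iii) the De Giorgi isoperimetric inequality (Theorem \ref{thm:degiorgiisoperimetricelliptic}) applied in the $x$-variable after extending $u$ by zero across $\partial' B_1^+$. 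The crucial simplification at the initial corner is that the initial-trace term
\[
\int_{B_R^+(\bar x)} x_n^p a\,[\xi(u-k)^+]^2(x,-1)\,\ud x
\]
vanishes identically whenever $k\ge 0$, since $u(\cdot,-1)\equiv 0$; this plays the same role in the Caccioppoli estimate on the forward cylinder that the analogous term did on the backward cylinder in Lemma \ref{lem:decay-1}. The rest of both proofs carry over with no change, yielding:

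\textbf{(Forward De Giorgi.)} If $0\le \sup_{\widetilde{\mathcal{Q}}_R^+(\bar x)}u\le \mu\le M$ and $H:=\mu-k$ exceeds $(M{+}F_1)R^{1-\frac{n+p+2}{2q}}$ (resp.\ $(M{+}F_1)R^{\frac{p+2}{2}-\frac{n+2p+2}{2q}}$ for $p<0$), then $\gamma_0$-smallness of the measure of $\{u>k\}\cap\widetilde{\mathcal{Q}}_R^+(\bar x)$ with respect to $\nu_p$ implies $u\le\mu-H/2$ on $\widetilde{\mathcal{Q}}_{R/2}^+(\bar x)$; and the dichotomy for the level-set measure of Lemma \ref{lem:decay-1} holds with $\widetilde{\mathcal{Q}}_R^+$ replacing $\mathcal{Q}_R^+$.

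Next I would set $\mu(R):=\sup_{\widetilde{\mathcal{Q}}_R^+(\bar x)}u$, $\widetilde\mu(R):=\inf_{\widetilde{\mathcal{Q}}_R^+(\bar x)}u$, $\omega(R):=\mu(R)-\widetilde\mu(R)$, and combine the two forward lemmas as in the proof of Theorem \ref{thm:holderontheboundary}: choose $\ell$ large enough that the measure-estimate constant $C\ell^{-\min(1/4,1/(4(p+1)))}$ falls below $\gamma_0$, and apply the forward version of Lemma \ref{lem:decay-1} both to $u$ and to $-u$. Since $u(\cdot,-1)\equiv 0$ makes both $\mu(R)\ge 0$ and $\widetilde\mu(R)\le 0$, this gives
\[
\omega(R/4)\le (1-2^{-\ell-1})\omega(R)+2^{\ell+1}(M+F_1)R^{\beta_p},
\]
with $\beta_p=1-\frac{n+p+2}{2q}$ (resp.\ $\frac{p+2}{2}-\frac{n+2p+2}{2q}$ when $-1<p<0$), both positive by the standing assumption on $q$. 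An iterative lemma (Lemma 3.4 of Han--Lin or Lemma B.2 of \cite{JX19}) then yields $\omega(R)\le C(M+F_1)R^{\alpha}$ for all $R\in(0,R_0]$, and the desired Hölder estimate at $(\bar x,-1)$ follows.

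The main obstacle is really a bookkeeping one: verifying that Lemmas \ref{lem:smallonlargeset} and \ref{lem:decay-1} transfer cleanly to forward cylinders at the corner $(\bar x,-1)$. Once one observes that $u(\cdot,-1)\equiv 0$ eliminates the only ``past'' boundary term in the Caccioppoli inequality for non-negative truncation levels, the De Giorgi iteration proceeds exactly as in Lemma \ref{lem:decay-1}, and the isoperimetric inequality requires no modification because $\bar x\in\partial' B_{1/4}$ still gives the needed spatial ``room'' outside $B_R^+(\bar x)\subset B_1$ on which $u$ vanishes after zero extension. Combining with Theorem \ref{thm:localboundednesstobottom} to replace $M$ by $\|u\|_{L^\gamma(\mathcal{Q}_1^+)}+F_1$ completes the argument.
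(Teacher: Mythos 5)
Your proposal is correct, but it is organized differently from the paper's own proof, so it is worth comparing the two. You re-run the full two-lemma dichotomy of Theorem \ref{thm:holderontheboundary} on forward cylinders $B_R^+(\bar x)\times[-1,-1+R^{p+2}]$, which requires a forward analogue of Lemma \ref{lem:smallonlargeset}; this does hold, with an even simpler proof, because for levels $k\ge 0$ the initial-trace term in Theorem \ref{thm:caccipolli} at $t_0=-1$ vanishes thanks to $u(\cdot,-1)\equiv 0$, so only spatial cutoffs are needed (the same observation behind Theorem \ref{thm:localboundednesstobottom}); and Lemma \ref{lem:decay-1} needs no modification at all, since it is already stated on forward intervals $[t_0,t_0+R^{p+2}]$ and with $t_0=-1$ the initial term in its proof simply drops out. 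The paper instead avoids proving any forward version of Lemma \ref{lem:smallonlargeset}: near $t=-1$ it runs a direct De Giorgi iteration on thin slabs $B_{r_j}^+(\bar x)\times(-1,-1+\delta r_j^{p+2})$ with levels $k_j$ increasing only up to $\mu(R)/2$, the required smallness of the starting level-set measure coming from choosing the thickness $\delta$ small rather than from a measure hypothesis; this yields $u\le \mu(R)/2$ (and $u\ge\widetilde\mu(R)/2$) on the initial slab, while away from the slab it invokes Lemmas \ref{lem:smallonlargeset} and \ref{lem:decay-1} as already proved, and then merges the two regions into the oscillation decay before applying the same iteration lemma. Your route buys an argument literally parallel to Theorem \ref{thm:holderontheboundary} (and close in spirit to the referee's remark that one may extend $u\equiv 0$ for $t<-1$ and apply that theorem directly), at the cost of restating and reverifying the two De Giorgi lemmas on forward cylinders; the paper's route reuses those lemmas verbatim and only adds the short thin-slab iteration, at the cost of the two-region bookkeeping. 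If you write yours out, note that the forward Lemma \ref{lem:smallonlargeset} genuinely needs $k\ge 0$ (also when applied to $-u$, which is fine since $(-u)(\cdot,-1)=0$), and keep the radii such that $B_{2R}^+(\bar x)\times[-1,-1+R^{p+2}]\subset Q_1^+$, e.g. $R\le 3/8$ for $\bar x\in\pa' B_{1/4}$.
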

\begin{proof}
Let $M=\|u\|_{L^\infty(B_{3/4}^+\times(-1,-1/4))}$,
\[
\mu(R)=\sup_{\mathcal{Q}_R^+(\bar x,-1)}u, \quad \widetilde\mu(R)=\inf_{\mathcal{Q}_R^+(\bar x,-1)}u, \quad\omega(R)=\mu(R)-\widetilde\mu(R),
\]
\[
r_j=\frac {R}2+\frac{R}{2^{j+1}},\quad k_j=\frac{\mu(R)}{2}- \frac{\mu(R)}{2^{j+1}},\quad j=0,1,2,\cdots.
\]
For brevity, we denote
\[
\mathcal{Q}_{j,\delta}^+=B_{r_j}^+(\bar x)\times(-1,-1+\delta r_j^{p+2}).
\]
Let $\eta_j(x) $ be a smooth cut-off function satisfying 
\[
\mbox{supp}(\eta_j) \subset B_{r_j}(\bar x), \quad 0\le \eta_j \le 1, \quad \eta_j=1 \mbox{ in }B_{r_{j+1}}(\bar x), 
\]
\[
|D \eta_j(x,t)|^2  \le \frac{C(n)}{(r_j-r_{j+1})^2} \quad \mbox{in }B_R(\bar x). 
\]

Case 1: $p\ge 0$. Let us consider $n\ge 3$ first. By Theorem \ref{thm:caccipolli} and Theorem \ref{thm:weightedsobolev}, we have
\begin{equation}\label{eq:auxgiorgiinitial}
 \Big(\int_{\mathcal{Q}_{j,\delta}^+}  |\eta_j v|^{2\chi}\,\ud x \ud t \Big)^{\frac{1}{\chi}} \le C\left[ \frac{2^{2j}}{R^2} \|v\|^2_{L^2(\mathcal{Q}_{j,\delta}^+)} + (M+F_1)^2 |\mathcal{Q}_{j,\delta}^+\cap \{u>k_j\}|^{1-\frac1q}\right],
\end{equation}
where $v=(u-k_j)^+$. Let $A(k,r_j)= \{(x,t)\in \mathcal{Q}_{j,\delta}^+: u> k\}$. Then 
\[
 \Big(\int_{\mathcal{Q}_{j,\delta}^+}  |\eta_j v|^{2\chi}\,\ud x \ud t \Big)^{\frac{1}{\chi}} \ge (k_{j+1}- k_j)^2 |A(k_{j+1}, r_{j+1})|^{\frac{1}{\chi}},  
\]
and
\[
\int_{\mathcal{Q}_{j,\delta}^+}  v^{2} \,\ud x\ud t \le \mu^2 |A(k_{j}, r_{j})|. 
\]
If
\[
\mu\ge (M+F_1)R^{1-\frac{n+p+2}{2q}},
\]
then
\begin{align*}
|A(k_{j+1}, r_{j+1})| & \le  C\left[ \frac{2^{4j}}{R^2}|A(k_{j}, r_{j})| + \frac{2^{2j}(M+F_1)^2}{\mu^2} |A(k_{j}, r_{j})|^{1-\frac1q}\right]^\chi\\
&\le  C\left[ \frac{2^{4j}}{R^2}|A(k_{j}, r_{j})| + \frac{2^{2j}}{R^{2-\frac{n+p+2}{q}}} |A(k_{j}, r_{j})|^{1-\frac1q}\right]^\chi\\
&\le C\left[ \frac{16^{j}}{R^{2-\frac{n+p+2}{q}}} |A(k_{j}, r_{j})|^{1-\frac1q}\right]^\chi,
\end{align*}
where we used $|A(k_{j}, r_{j})| \le \delta|\mathcal{Q}_{j,\delta}^+|\le C\delta R^{n+p+2}$. Hence
\begin{equation}\label{eq:auxgiorgiinitial2}
\frac{|A(k_{j+1}, r_{j+1})| }{ |\mathcal{Q}_R^+|}\le C 16^{j\chi}\left( \frac{|A(k_{j}, r_{j})|}{|\mathcal{Q}_R^+|}\right)^{(1-\frac1q)\chi},
\end{equation}
where we used that $\chi=\frac{n+p+2}{n+p}$. Therefore, similarly to \eqref{eq:nonlineariteration} and \eqref{eq:nonlineariteration2}, there exists $\delta_0\in(0,1)$ such that if $\delta\le\delta_0$, then
\begin{equation}\label{eq:auxgiorgiinitial3}
\lim_{j\to \infty}\frac{|A(k_{j+1}, r_{j+1})|}{|\mathcal{Q}_R^+|}=0.
\end{equation}

Now, let us  consider $n=1, 2$. By Theorem \ref{thm:caccipolli} and Theorem \ref{thm:weightedsobolev}, \eqref{eq:auxgiorgi1} would become
\[
 \Big(\int_{\mathcal{Q}_R^+}  |\eta_j v|^{2\chi}\,\ud x \ud t \Big)^{\frac{1}{\chi}} \le C R^{\frac{p+2-n}{p+2}}\left[ \frac{2^{2j}}{R^2} \|v\|^2_{L^2(\mathcal{Q}_{r_{j}}^+)} + (M+F_1)^2 |\mathcal{Q}_{r_{j}}^+\cap \{u>k_j\}|^{1-\frac1q}\right].
\]
By using $\chi=\frac{p+2}{p+1}$, one will still obtain \eqref{eq:auxgiorgiinitial2} and \eqref{eq:auxgiorgiinitial3}. Then the left proof is the same as above.

Case 2: $-1<p<0$. Again, we consider $n\ge 3$ first. By Theorem \ref{thm:caccipolli} and Theorem \ref{thm:weightedsobolev2}, we have
\begin{equation}\label{eq:auxgiorgiinitial-2}
\begin{split}
& \Big(\int_{\mathcal{Q}_{j,\delta}^+}  |\eta_j v|^{2\chi}x_n^p\,\ud x \ud t \Big)^{\frac{1}{\chi}} \\
 &\le C\left[ \frac{2^{2j}}{R^2} \|v\|^2_{L^2(\mathcal{Q}_{j,\delta}^+,x_n^p\ud x\ud t)} + (M+F_1)^2 |\mathcal{Q}_{j,\delta}^+\cap \{u>k_j\}|_{\nu_p}^{1-\frac1q}\right],
 \end{split}
\end{equation}
where $v=(u-k_j)^+$. Then 
\[
 \Big(\int_{\mathcal{Q}_{j,\delta}^+}  |\eta_j v|^{2\chi}x_n^p\,\ud x \ud t \Big)^{\frac{1}{\chi}} \ge (k_{j+1}- k_j)^2 |A(k_{j+1}, r_{j+1})|_{\nu_p}^{\frac{1}{\chi}},  
\]
and
\[
\int_{\mathcal{Q}_{j,\delta}^+}  v^{2} x_n^p\,\ud x\ud t \le \mu^2 |A(k_{j}, r_{j})|_{\nu_p}. 
\]
If
\[
\mu\ge (M+F_1)R^{\frac{p+2}{2}-\frac{n+2p+2}{2q}},
\]
then
\begin{align*}
|A(k_{j+1}, r_{j+1})|_{\nu_p} & \le  C\left[ \frac{2^{4j}}{R^2}|A(k_{j}, r_{j})|_{\nu_p} + \frac{2^{2j}(M+F_1)^2}{\mu^2} |A(k_{j}, r_{j})|_{\nu_p}^{1-\frac1q}\right]^\chi\\
&\le  C\left[ \frac{2^{4j}}{R^2}|A(k_{j}, r_{j})|_{\nu_p} + \frac{2^{2j}}{R^{p+2-\frac{n+2p+2}{q}}} |A(k_{j}, r_{j})|_{\nu_p}^{1-\frac1q}\right]^\chi\\
&\le C\left[ \frac{16^{j}}{R^{p+2-\frac{n+2p+2}{q}}} |A(k_{j}, r_{j})|_{\nu_p}^{1-\frac1q}\right]^\chi,
\end{align*}
where we used $|A(k_{j}, r_{j})|_{\nu_p} \le \delta|\mathcal{Q}_{j,\delta}^+|_{\nu_p}\le C\delta R^{n+2p+2}$. Hence
\begin{equation}\label{eq:auxgiorgiinitial2-2}
\frac{|A(k_{j+1}, r_{j+1})|_{\nu_p} }{ |\mathcal{Q}_R^+|_{\nu_p}}\le C 16^{j\chi}\left( \frac{|A(k_{j}, r_{j})|_{\nu_p}}{|\mathcal{Q}_R^+|_{\nu_p}}\right)^{(1-\frac1q)\chi},
\end{equation}
where we used that $\chi=\frac{n+2p+2}{n+p}$. Therefore, there exists $\delta_0\in(0,1)$ such that if $\delta\le\delta_0$, then
\begin{equation}\label{eq:auxgiorgiinitial3-2}
\lim_{j\to \infty}\frac{|A(k_{j+1}, r_{j+1})|_{\nu_p}}{|\mathcal{Q}_R^+|_{\nu_p}}=0.
\end{equation}

Now, let us  consider $n=1, 2$. By Theorem \ref{thm:caccipolli} and Theorem \ref{thm:weightedsobolev}, \eqref{eq:auxgiorgi1} would become
\begin{align*}
& \Big(\int_{\mathcal{Q}_R^+}  |\eta_j v|^{2\chi}x_n^p\,\ud x \ud t \Big)^{\frac{1}{\chi}} \\
&\le C R^{\frac{p+4-n}{3}}\left[ \frac{2^{2j}}{R^2} \|v\|^2_{L^2(\mathcal{Q}_{r_{j}}^+,x_n^p\ud x\ud t)} + (M+F_1)^2 |\mathcal{Q}_{r_{j}}^+\cap \{u>k_j\}|_{\nu_p}^{1-\frac1q}\right].
\end{align*}
By using $\chi=\frac{3}{2}$, one will still obtain \eqref{eq:auxgiorgiinitial2-2} and \eqref{eq:auxgiorgiinitial3-2}. Then the left proof is the same as above.

In each case, we have that if $0<\delta\le\delta_0$, then
\[
\sup_{B_{R/2}(\bar x)\times(-1,-1+\delta(R/2)^{p+2})} u\le \frac{\mu(R)}{2}.
\]
Applying this estimate to $-u$, one have 
\[
\inf_{B_{R/2}(\bar x)\times(-1,-1+\delta(R/2)^{p+2})} u\ge \frac{\widetilde \mu(R)}{2}.
\]

Meanwhile, it follows from Lemma \ref{lem:smallonlargeset} and Lemma \ref{lem:decay-1} that there exists $\ell>0$ such that
either
\begin{equation*}
\mu\le  
\left\{
  \begin{aligned}
  &  2^{\ell} (M+F_1)R^{1-\frac{n+p+2}{2q}},  \quad  &\mbox{for }   p\ge 0, \\
  &  2^{\ell} (M+F_1)R^{\frac{p+2}{2}-\frac{n+2p+2}{2q}}, \quad   &\mbox{for }    -1<p<0,
\end{aligned}
\right.
\end{equation*}
or
\[
\sup_{B_{R/4}(\bar x)\times(-1+\delta(R/2)^{p+2},-1+(R/2)^{p+2}]} u\le \mu(R)-\frac{\mu(R)}{2^{\ell}}.  
\]
and either
\begin{equation*}
-\widetilde \mu\le  
\left\{
  \begin{aligned}
  &  2^{\ell} (M+F_1)R^{1-\frac{n+p+2}{2q}},  \quad  &\mbox{for }   p\ge 0, \\
  &  2^{\ell} (M+F_1)R^{\frac{p+2}{2}-\frac{n+2p+2}{2q}}, \quad   &\mbox{for }    -1<p<0,
\end{aligned}
\right.
\end{equation*}
or
\[
\inf_{B_{R/4}(\bar x)\times(-1+\delta(R/2)^{p+2},-1+(R/2)^{p+2}]} u\ge \widetilde\mu(R)-\frac{\widetilde\mu(R)}{2^{\ell}}.  
\]
In any case, we obtain 
\begin{equation*}
\omega(R/4)\le
\left\{
  \begin{aligned}
  &  (1-2^{\ell+1})\omega(R)+ 2^{\ell} (M+F_1)R^{1-\frac{n+p+2}{2q}},  \quad  &\mbox{for }   p\ge 0, \\
  &  (1-2^{\ell+1})\omega(R)+ 2^{\ell} (M+F_1)R^{\frac{p+2}{2}-\frac{n+2p+2}{2q}}, \quad   &\mbox{for }    -1<p<0.
\end{aligned}
\right.
\end{equation*}
By an iterative lemma, e.g. Lemma 3.4 in Han-Lin \cite{HL} (or Lemma B.2 in \cite{JX19}), there exist $\alpha$ and $C$, both of which depend only on $\lambda,\Lambda,n,p$ and $q$, such that
\[
\omega(R)\le C(M+F_1)R^{\alpha}\quad\,\forall R\in(0,1/4].
\]
The conclusion follows from the above and Theorem \ref{thm:localboundednessglobal}.
\end{proof}

It has been pointed by the referee that Theorem \ref{thm:holderboundarybottom} also follows from applying Theorem \ref{thm:holderontheboundary} to the  solution that is extended to be zero for $t<-1$.

Similar to the justifications of Theorem \ref{thm:uniformholdernearboundary} and Theorem \ref{thm:holderboundarybottom}, we also have
\begin{thm}\label{thm:uniformholderinitial}
Suppose $u\in C ([-1,0]; L^2(B_1^+,x_n^{p}\ud x)) \cap  L^2((-1,0];H_{0,L}^1(B_1^+))$ is a weak solution of \eqref{eq:linear-eq} with the partial boundary condition \eqref{eq:linear-eq-D} and the initial condition $u(\cdot,-1)=0$, where the coefficients of the equation satisfy \eqref{eq:rangep}, \eqref{eq:ellip2}, \eqref{eq:localbddnesscoeffcient} and \eqref{eq:localbddnessf} for some $q>\max(\frac{\chi}{\chi-1},\frac{n+p+2}{2}, \frac{n+2p+2}{p+2})$.   Then for every $\gamma>0$, there exist $\alpha>0$ and $C>0$, both of which depend only on  $\lambda,\Lambda,n,p,\gamma$ and $q$, such that for every $(x,-1), (y,s)\in B_{1/4}(e_n/2)\times[-1,-\frac34]$, there holds
\[
|u(x,-1)-u(y,s)|\le C(\|u\|_{L^\gamma({\mathcal{Q}}_1^+)}+F_1) (|x-y|+|s+1|)^\alpha,
\]
where $e_n=(0,\cdots,0,1)$.
\end{thm}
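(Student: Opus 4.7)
The plan is to reduce Theorem~\ref{thm:uniformholderinitial} to the interior H\"older estimate Theorem~\ref{thm:uniformholdernearboundary} via the same extension-by-zero device the authors indicate for Theorem~\ref{thm:holderboundarybottom}. Fix $T>4$ (for concreteness, $T=5$) and define
\[
\tilde u(x,t)=\begin{cases} u(x,t), & t\in[-1,0],\\ 0, & t\in[-T,-1),\end{cases}
\]
and extend the coefficients and data on $[-T,-1)$ by $\tilde a\equiv 1$, $\tilde a_{ij}\equiv\delta_{ij}$, and $\tilde d_j=\tilde b_i=\tilde c=\tilde c_0=\tilde f=\tilde f_0=\tilde f_j\equiv 0$. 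These extensions preserve the ellipticity \eqref{eq:ellip2}, the integrability bound \eqref{eq:localbddnesscoeffcient}, and satisfy $\tilde F_1\le F_1$. Since $u\in C([-1,0];L^2(B_1^+,x_n^p\,\ud x))$ with $u(\cdot,-1)\equiv 0$, the extended function lies in $C([-T,0];L^2(B_1^+,x_n^p\,\ud x))\cap L^2((-T,0];H^1_{0,L}(B_1^+))$.

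The first step is to verify that $\tilde u$ is a weak solution on $B_1^+\times(-T,0]$ of the extended equation with partial boundary condition on $\partial'B_1^+\times(-T,0]$ and initial condition $\tilde u(\cdot,-T)\equiv 0$. Testing \eqref{eq:definitionweaksolution} against $\varphi\in\mathring V^{1,1}_2(B_1^+\times(-T,0])$ with $\varphi(\cdot,-T)\equiv 0$, I would split the time integral at $t=-1$: on $[-T,-1)$ the integrands vanish by construction, while on $[-1,s]$ one recovers the weak formulation for $u$. The only potentially delicate piece is a boundary term at the splitting time, $\int_{B_1^+} a(x,-1)x_n^p u(x,-1)\varphi(x,-1)\,\ud x$, which vanishes by the hypothesis $u(\cdot,-1)=0$; no jump contribution is produced, and $\tilde u$ is a weak solution of the extended equation.

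Next, rescale time by setting $\hat u(x,t):=\tilde u(x,Tt)$ on $B_1^+\times(-1,0]$, which is a weak solution of the corresponding rescaled equation with the same structural hypotheses (the rescaling multiplies the lower-order coefficients and inhomogeneities by fixed powers of $T$; since $T$ depends only on $p$, the constants $\lambda,\Lambda,F_1$ are enlarged only by factors depending on $p$). Applying Theorem~\ref{thm:uniformholdernearboundary} to $\hat u$ yields H\"older continuity on $B_{1/4}(e_n/2)\times(-1/4,0]$; unrescaling shows $\tilde u$ is H\"older continuous on $B_{1/4}(e_n/2)\times(-T/4,0]$, which contains $B_{1/4}(e_n/2)\times[-1,-\tfrac34]$ thanks to $T/4>1$. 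Since $\tilde u\equiv u$ on $[-1,0]$ and $\|\hat u\|_{L^\gamma(\mathcal{Q}_1^+)}\le\|u\|_{L^\gamma(\mathcal{Q}_1^+)}$, combining with Theorem~\ref{thm:localboundednessglobal} delivers the claimed estimate.

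The main subtlety, rather than an obstacle, is the weak-solution check for $\tilde u$ across $t=-1$; this is precisely where the initial condition $u(\cdot,-1)=0$ in $L^2(B_1^+,x_n^p\,\ud x)$ is indispensable, as it annihilates the boundary term that would otherwise appear when gluing the time integrals on $[-T,-1]$ and $[-1,s]$. Once this verification is in place, the remainder of the argument is a fixed-scale rescaling and a direct appeal to the already-established Theorem~\ref{thm:uniformholdernearboundary}, in complete parallel to the authors' reduction of Theorem~\ref{thm:holderboundarybottom} to Theorem~\ref{thm:holderontheboundary}.
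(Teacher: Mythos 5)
Your argument is correct, but it takes a genuinely different route from the paper's. The paper obtains Theorem \ref{thm:uniformholderinitial} by redoing the interior De Giorgi iteration in initial-time cylinders $B_r(e_n/2)\times(-1,-1+\delta r^{2}]$, in parallel with how Theorem \ref{thm:holderboundarybottom} adapts the boundary iteration (no cut-off in time, using $u(\cdot,-1)=0$ directly in the Caccioppoli inequality); you instead extend $u$ by zero for $t<-1$ and invoke the already-proved Theorem \ref{thm:uniformholdernearboundary} after a fixed time dilation --- exactly the referee's alternative recorded after Theorem \ref{thm:holderboundarybottom}, transplanted from the lateral-boundary corner to the interior one. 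Your route avoids duplicating the iteration lemmas near the initial time; its only cost is the verification that the extension is a weak solution, and there your gluing is sound: in fact, by Definition \ref{defn:weaksolutionwithinitialtime} the weak formulation for $u$ already holds for test functions that do not vanish at $t=-1$, so splitting the time integral at $t=-1$ produces no boundary term at all (the vanishing initial datum is what makes that definition consistent), which is the mechanism you describe. Two details should be tightened. First, extend $a$ for $t<-1$ by $\tilde a(x,t):=a(x,-1)$ rather than by $1$: since $\partial_t a(x,\cdot)\in L^q$ in time for a.e.\ $x$, the trace $a(\cdot,-1)$ exists and lies in $[\lambda,\Lambda]$, and this choice makes $\tilde a$ absolutely continuous in time with $\partial_t\tilde a$ equal to $\partial_t a$ for $t>-1$ and to $0$ for $t<-1$, so \eqref{eq:localbddnesscoeffcient} holds verbatim for the extended coefficients; with $\tilde a\equiv1$ the distributional time derivative of $\tilde a$ acquires a singular part at $t=-1$ and the hypotheses of Theorem \ref{thm:uniformholdernearboundary} are violated as stated (the jump is in fact harmless because it is always paired with $\tilde u=0$, but seeing that requires reopening the Steklov-average computations rather than quoting the theorem). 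Second, Theorem \ref{thm:uniformholdernearboundary} is stated in terms of $M=\|u\|_{L^\infty(B^+_{3/4}\times(-3/4,0])}$, while Theorem \ref{thm:localboundednessglobal} bounds $\|u\|_{L^\infty(B^+_{1/2}\times(-1,0])}$; replacing $M$ by $C(\|u\|_{L^\gamma(\mathcal{Q}_1^+)}+F_1)$ needs the same routine covering/rescaling step the paper uses elsewhere (e.g.\ in Theorems \ref{thm:holdernearboundary} and \ref{thm:uniformholderglobal}), which you should acknowledge explicitly.
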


Together with Theorem \ref{thm:uniformholdernearboundary} and Theorem \ref{thm:uniformholderinitial}, using similar scaling arguments to those in the proof of Theorem \ref{thm:holdernearboundary}, we have
\begin{thm}\label{thm:uniformholdernearinitial}
Suppose $u\in C ([-1,0]; L^2(B_1^+,x_n^{p}\ud x)) \cap  L^2((-1,0];H_{0,L}^1(B_1^+))$ is a weak solution of \eqref{eq:linear-eq} with the partial boundary condition \eqref{eq:linear-eq-D} and the initial condition $u(\cdot,-1)=0$, where the coefficients of the equation satisfy \eqref{eq:rangep}, \eqref{eq:ellip2}, \eqref{eq:localbddnesscoeffcient} and \eqref{eq:localbddnessf} for some $q>\max(\frac{\chi}{\chi-1},\frac{n+p+2}{2}, \frac{n+2p+2}{p+2})$.  Then for every $\gamma>0$, there exist $\alpha>0$ and $C>0$, both of which depend only on  $\lambda,\Lambda,n,p,\gamma$ and $q$, such that for every $(x,t), (y,s)\in B_{1/4}(e_n/2)\times[-1,0]$, there holds
\[
|u(x,t)-u(y,s)|\le C(\|u\|_{L^\gamma({\mathcal{Q}}_1^+)}+F_1) (|x-y|+|t-s|)^\alpha,
\]
where $e_n=(0,\cdots,0,1)$.
\end{thm}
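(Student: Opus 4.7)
The plan is to reduce Theorem~\ref{thm:uniformholdernearinitial} to Theorem~\ref{thm:uniformholdernearboundary} by extending $u$ by zero to times $t<-1$, and then applying Theorem~\ref{thm:uniformholdernearboundary} to the extended solution on a family of sliding time windows; this is in the same spirit as the paper's remark that Theorem~\ref{thm:holderboundarybottom} follows from Theorem~\ref{thm:holderontheboundary} via zero-extension in time. Concretely, I would define
\[
\widetilde u(x,t)=\begin{cases}u(x,t),&(x,t)\in B_1^+\times[-1,0],\\ 0,&(x,t)\in B_1^+\times(-2,-1),\end{cases}
\]
which, thanks to the trace condition $u(\cdot,-1)\equiv 0$, lies in $C([-2,0];L^2(B_1^+,x_n^p\,dx))\cap L^2((-2,0];H^1_{0,L}(B_1^+))$, and extend the coefficients to $B_1^+\times(-2,-1)$ by setting $\widetilde a(x,t):=a(x,-1)$ and $\widetilde a_{ij}(x,t):=a_{ij}(x,-1)$ (to avoid a delta-function jump in $\partial_t a$), and $b_j,d_j,c,c_0,f,f_0,f_j\equiv 0$ for $t<-1$. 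Under these choices, the extended problem satisfies \eqref{eq:ellip2}, \eqref{eq:localbddnesscoeffcient}, and \eqref{eq:localbddnessf} on $B_1^+\times(-2,0]$ with the same structural constants, and a direct check from Definition~\ref{defn:weaksolutionp} shows that $\widetilde u$ is a weak solution of the extended equation with boundary condition \eqref{eq:linear-eq-D} on $B_1^+\times(-2,0]$, using $u(\cdot,-1)\equiv 0$ in the trace sense to guarantee that no boundary contribution appears at the join $t=-1$.

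Next, for each $t_0\in[-1,0]$, apply Theorem~\ref{thm:uniformholdernearboundary} to the time-translate $v(x,\tau):=\widetilde u(x,\tau+t_0)$, which solves the corresponding equation on $B_1^+\times(-1,0]$ with coefficients obeying the same structural bounds uniformly in $t_0$. This yields
\[
|u(x,t)-u(y,s)|\le C(\|u\|_{L^\gamma(\mathcal{Q}_1^+)}+F_1)(|x-y|+|t-s|)^\theta
\]
for all $(x,t),(y,s)\in B_{1/4}(e_n/2)\times(t_0-1/4,t_0]$, with constants $C,\theta$ depending only on $\lambda,\Lambda,n,p,\gamma,q$ and uniform in $t_0$; here the control of the $L^\infty$ norm needed in Theorem~\ref{thm:uniformholdernearboundary} follows from Theorem~\ref{thm:localboundednessglobal}. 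Letting $t_0$ range over $[-1,0]$, the union $\bigcup_{t_0\in[-1,0]}(t_0-1/4,t_0]=(-5/4,0]$ contains $[-1,0]$. Given two points $(x,t),(y,s)\in B_{1/4}(e_n/2)\times[-1,0]$ with $|t-s|\le 1/4$, taking $t_0=\max(t,s)$ places both of them into a single window and the displayed estimate applies; for $|t-s|>1/4$ the bound is trivial since $(4|t-s|)^\theta\ge 1$ and $\|u\|_{L^\infty}$ is already controlled by Theorem~\ref{thm:localboundednessglobal}.

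The main obstacle is the verification that $\widetilde u$ is genuinely a weak solution of the extended equation on the enlarged cylinder $B_1^+\times(-2,0]$. The delicate points are the matching at $t=-1$, which uses $u(\cdot,-1)\equiv 0$ in the trace sense in an essential way, and the careful choice of coefficient extensions so that the hypotheses \eqref{eq:ellip2} and \eqref{eq:localbddnesscoeffcient} are preserved with the same structural constants without introducing a distributional jump in $\partial_t a$. Once this is in place, everything reduces to a direct application of Theorem~\ref{thm:uniformholdernearboundary} and a covering of $[-1,0]$ by parabolic time-windows of length $1/4$.
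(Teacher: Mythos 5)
Your proposal is correct, but it follows a different route from the paper. The paper deduces Theorem \ref{thm:uniformholdernearinitial} by first proving a separate estimate at the initial time for interior points (Theorem \ref{thm:uniformholderinitial}, the interior analogue of Theorem \ref{thm:holderboundarybottom}, obtained by rerunning the De Giorgi iteration on cylinders sitting on $\{t=-1\}$) and then gluing it to the interior estimate of Theorem \ref{thm:uniformholdernearboundary} through the scaling/case analysis of the kind written out in the proofs of Theorems \ref{thm:holdernearboundary} and \ref{thm:uniformholderglobal} (splitting according to whether $|t-s|$ is small or large compared with $\bar t+1$). You instead extend $u$ by zero for $t<-1$, extend $a$ by its trace $a(\cdot,-1)$ so that $\partial_t\widetilde a$ acquires no singular part and \eqref{eq:localbddnesscoeffcient} is preserved, set the remaining data to zero, and then apply Theorem \ref{thm:uniformholdernearboundary} on sliding time windows; this is exactly the mechanism the paper itself credits to the referee after Theorem \ref{thm:holderboundarybottom} for the boundary case, transplanted to the interior setting. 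Your verification that $\widetilde u$ is a weak solution on the enlarged cylinder is the one genuinely delicate point, and it works because Definition \ref{defn:weaksolutionwithinitialtime} admits test functions that do not vanish at $t=-1$, so the restriction of a test function on the extended cylinder to $[-1,0]$ is admissible and the portion of each integral over $(t_0-1,-1)$ vanishes identically; the strong continuity $u\in C([-1,0];L^2(B_1^+,x_n^p\,\ud x))$ together with $u(\cdot,-1)=0$ also gives $\widetilde u\in C$ across the join, as needed for the hypotheses of Theorem \ref{thm:uniformholdernearboundary}. What your route buys is the elimination of the separate initial-time De Giorgi argument and of the two-regime scaling analysis, reducing everything to a single already-proved theorem; what the paper's route buys is the intermediate statement Theorem \ref{thm:uniformholderinitial} itself (used nowhere else here, but stated as a result) and a proof entirely within the original cylinder, with no need to extend coefficients or re-examine the weak formulation. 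The only loose ends in your sketch, namely the control of $M=\|u\|_{L^\infty(B_{3/4}^+\times(-3/4,0])}$ by Theorem \ref{thm:localboundednessglobal} (which is stated on $B_{1/2}^+$ and needs the routine covering adjustment) and the harmless edge case $|t-s|=1/4$ in the window selection, are at the same level of informality as the paper's own proofs and do not affect correctness.
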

\begin{proof}
The proof is in the same spirit as that of Theorem \ref{thm:holdernearboundary}. We omit the details, and one can also refer to the proof of Theorem \ref{thm:uniformholderglobal} in the below.
\end{proof}

Finally, we have the space-time global H\"older estimate:
\begin{thm}\label{thm:uniformholderglobal}
Suppose $u\in C ([-1,0]; L^2(B_1^+,x_n^{p}\ud x)) \cap  L^2((-1,0];H_{0,L}^1(B_1^+))$ is a weak solution of \eqref{eq:linear-eq} with the partial boundary condition \eqref{eq:linear-eq-D} and the initial condition $u(\cdot,-1)=0$, where the coefficients of the equation satisfy \eqref{eq:rangep}, \eqref{eq:ellip2}, \eqref{eq:localbddnesscoeffcient} and \eqref{eq:localbddnessf} for some $q>\max(\frac{\chi}{\chi-1},\frac{n+p+2}{2}, \frac{n+2p+2}{p+2})$.   Then for every $\gamma>0$, there exist $\alpha>0$ and $C>0$, both of which depend only on  $\lambda,\Lambda,n,p,\gamma$ and $q$, such that for every $(x,t), (y,s)\in B_{1/2}^+\times[-1,0]$, there holds
\[
|u(x,t)-u(y,s)|\le C(\|u\|_{L^\gamma({\mathcal{Q}}_1^+)}+F_1) (|x-y|+|t-s|)^\alpha.
\]
\end{thm}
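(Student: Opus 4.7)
The plan is to reduce Theorem \ref{thm:uniformholderglobal} to Theorem \ref{thm:holdernearboundary}---which already furnishes the same H\"older estimate on $B_{1/2}^+\times(-1/4,0]$---by means of the zero-extension trick noted in the remark after Theorem \ref{thm:holderboundarybottom} combined with a single time rescaling. The upshot is that the scaling approach used in Theorem \ref{thm:holdernearboundary}, which patches interior and lateral-boundary estimates together, need not be repeated if we first enlarge the time cylinder so that the initial time $t=-1$ becomes interior in time for an extended solution to which Theorem \ref{thm:holdernearboundary} applies directly.

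First I extend $u$ to the enlarged cylinder $B_1^+\times(-4,0]$ by setting $\widetilde u(x,t):=u(x,t)$ for $t\in(-1,0]$ and $\widetilde u(x,t):=0$ for $t\in(-4,-1]$. Because $u(\cdot,-1)\equiv 0$ in the trace sense, $\widetilde u$ lies in $C([-4,0];L^2(B_1^+,x_n^p\,\ud x))\cap L^2((-4,0];H^1_{0,L}(B_1^+))$. I extend the coefficients $a,a_{ij},d_j,b_i,c,c_0$ to $(-4,-1]$ by constant-in-time continuation (e.g.\ $a(x,t):=a(x,-1)$) and the source terms $f,f_0,f_i$ by zero; these extensions preserve the structural hypotheses \eqref{eq:rangep}, \eqref{eq:ellip2}, \eqref{eq:localbddnesscoeffcient}, and \eqref{eq:localbddnessf} on the enlarged cylinder up to fixed multiplicative constants. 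By splitting the time integral in the weak formulation \eqref{eq:definitionweaksolution} at $t=-1$, using the initial condition $u(\cdot,-1)\equiv 0$ to discard the boundary term there, and noting that both $\widetilde u$ and the extended source terms vanish on $(-4,-1]$, one checks that $\widetilde u$ is a weak solution on $B_1^+\times(-4,0]$ with zero initial data at $t=-4$ in the sense of Definition \ref{defn:weaksolutionwithinitialtime}.

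Next, I rescale time by $v(x,\tau):=\widetilde u(x,4\tau)$ for $(x,\tau)\in B_1^+\times(-1,0]$. Then $v$ satisfies \eqref{eq:linear-eq} with $a$ replaced by $a/4$ and all other coefficients and source terms evaluated at $(x,4\tau)$; the ellipticity constants become $\lambda/4,\Lambda/4$, and the weighted $L^q$ norms of the coefficients and source terms are bounded by constant multiples of the corresponding norms for $\widetilde u$, so all hypotheses of Theorem \ref{thm:holdernearboundary} hold for $v$ with constants depending only on $\lambda,\Lambda,n,p,q$. Applying Theorem \ref{thm:holdernearboundary} to $v$ yields, for every $(x,\tau_1),(y,\tau_2)\in B_{1/2}^+\times(-1/4,0]$,
\[
|v(x,\tau_1)-v(y,\tau_2)|\le C\bigl(\|v\|_{L^\gamma(\mathcal{Q}_1^+)}+F_1\bigr)\bigl(|x-y|+|\tau_1-\tau_2|\bigr)^\theta.
\]
The substitution $t_i=4\tau_i$ maps $\tau_i\in(-1/4,0]$ precisely onto $t_i\in(-1,0]$ with $v(x,\tau_i)=u(x,t_i)$, so after absorbing the factor $4$ into $C$ this is the desired estimate on $B_{1/2}^+\times(-1,0]$, and continuity of $u$ at $t=-1$ together with $u(\cdot,-1)\equiv 0$ extends it to the closed endpoint.

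The main technical obstacle is verifying that $\widetilde u$ is a weak solution across the artificial interface $t=-1$. For $s\in(-1,0]$, all contributions from the sub-interval $(-4,-1]$ in the extended weak formulation vanish because $\widetilde u$ and the extended source terms are zero there, so the extended identity reduces to the original one over $(-1,s]$; the wrinkle is that the extended test functions need not vanish at $t=-1$, but this is permissible precisely because $u(\cdot,-1)\equiv 0$, in the same manner that Definition \ref{defn:weaksolutionwithinitialtime} enlarges the admissible test class compared with Definition \ref{defn:weaksolutionp}.
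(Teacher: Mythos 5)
Your argument is correct, but it takes a genuinely different route from the paper. The paper proves the global estimate by a pointwise scaling argument: for each $(\bar x,\bar t)$ it sets $R=\max(\bar x_n,(\bar t+1)^{\frac{1}{p+2}})$, rescales as in \eqref{eq:rescaledequationcoefficients}, and patches together several ingredients --- Theorem \ref{thm:holdernearboundary} (lateral boundary), Theorem \ref{thm:holderboundarybottom} (corner: lateral boundary at the initial time), and Theorems \ref{thm:uniformholdernearboundary}, \ref{thm:uniformholderinitial}, \ref{thm:uniformholdernearinitial} (interior in space, up to the initial time) --- through a two-case analysis in space and time. You instead reduce everything to Theorem \ref{thm:holdernearboundary} alone: extend $u$ by zero to $t\in(-4,-1]$ (legitimate precisely because $u(\cdot,-1)=0$ and Definition \ref{defn:weaksolutionwithinitialtime} admits test functions that need not vanish at $t=-1$, so all extra terms in the extended weak formulation vanish), then compress time by the fixed factor $4$ so that the estimate of Theorem \ref{thm:holdernearboundary} on $B_{1/2}^+\times(-1/4,0]$ covers the whole interval $(-1,0]$ in the original variables, with the endpoint $t=-1$ recovered by continuity since $u\in C([-1,0];L^2(B_1^+,x_n^p\,\ud x))$. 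This is in the spirit of the referee's remark recorded after Theorem \ref{thm:holderboundarybottom}, and it is more economical: it avoids redoing the initial-time and corner oscillation lemmas. What it costs is the verification that the zero extension is a weak solution across $t=-1$ (which you carry out correctly) and the bookkeeping that the time dilation stays inside the structural class: the coefficient of $x_n^p\pa_t$ becomes $a/4$, so one applies \eqref{eq:ellip2} with constants $\lambda/4$ and $\Lambda$, and the norms in \eqref{eq:localbddnesscoeffcient}--\eqref{eq:localbddnessf} are only decreased by the dilation, so the same $q$ works and the final constants depend only on $\lambda,\Lambda,n,p,q,\gamma$.

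One small imprecision: the ``constant-in-time continuation'' of $a_{ij},b_i,d_j,c,c_0$ to $(-4,-1]$ is not well defined, since these coefficients are merely measurable or $L^q$ in time and have no trace at $t=-1$. This does not harm the argument: on $(-4,-1]$ every term of the weak formulation containing these coefficients is multiplied by $\widetilde u$ or $D\widetilde u$, which vanish there, so any fixed admissible extension (say $a_{ij}=\tfrac{\lambda+\Lambda}{2}\delta_{ij}$ and $b_i=d_j=c=c_0=0$) serves. Only $a$ requires a genuine continuous extension, and that is available: since $\pa_t a\in L^q(Q_1^+,x_n^p\,\ud x\ud t)$, for a.e.\ $x$ the function $a(x,\cdot)$ has a trace at $t=-1$, and the constant continuation preserves $\lambda\le a\le\Lambda$ and creates no singular part of $\pa_t a$ at the interface. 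With these trivial adjustments your proof is complete.
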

\begin{proof}
By Theorem \ref{thm:localboundednessglobal} and normalization, we assume $\sup_{B_{3/4}^+\times[-1,0]}|u|+F_1=1$. For any $\bar x=(0,\bar x_n )\in B_{1/4}^+$ and $\bar t\in (-1,0]$, we let $R:=\max(\bar x_n, (\bar t+1)^{\frac{1}{p+2}})>0$, and rescale the solution and the coefficients as in \eqref{eq:rescaledequationcoefficients} with $x_0=0$.  Then we have \eqref{eq:rescaledequation} in $\mathcal{Q}^+_{1/R}$. Also, \eqref{eq:rescaledequationcoefficients1} and \eqref{eq:rescaledequationcoefficients2} hold with $Q_1^+$ replaced by $\widetilde Q^+=B_2^+\times(-R^{-p-2},-R^{-p-2}+1])$.

Case 1: $R=\bar x_n$.

Consider $s\in (-1,\bar t]$. If $|\bar t-s|\le R^{2p+4}$, then by Theorem \ref{thm:uniformholdernearinitial},  we have
\[
|u(\bar x,\bar t)-u(\bar x, s)|=|\tilde u(e_n,\bar t/R^{p+2})-\tilde u(e_n,s/R^{p+2})|\le C |(\bar t-s)/R^{p+2}|^{\alpha/2}\le C |\bar t-s|^{\alpha/4}.
\]
If $|\bar t-s|\ge R^{2p+4}$, then we have
\begin{align*}
|u(\bar x,\bar t)-u(\bar x, s)|& \le |u(\bar x,\bar t)-u(\bar x,-1)|+|u(\bar x,s)-u(\bar x, -1)|\\
& = |\tilde u(e_n,\bar t/R^{p+2})-\tilde u(e_n,-1/R^{p+2})|+|\tilde u(e_n,s/R^{p+2})-\tilde u(e_n, -1/R^{p+2})|\\
& \le C|t+1|^{\alpha}\\
&\le CR^{(p+2)\alpha}\\
&\le C |\bar t-s|^{\frac{\alpha}{2}}.
\end{align*}
This shows that $u$ is H\"older continuous in the time variable.

Consider $\tilde x=(\tilde x',\tilde x_n)\in B_{1/2}^+$ such that $\tilde x_n\le \bar x_n$. If $\tilde x\in B_{R^2}(\bar x)$, then by Theorem \ref{thm:uniformholdernearinitial},  we have
\[
|u(\bar x,\bar t)-u(\tilde x, \bar t)|=|\tilde u(e_n,\bar t/R^{p+2})-\tilde u(\tilde x/R,\bar t/R^{p+2})|\le C ||\tilde x-\bar x|/R|^{\beta}\le C |\tilde x-\bar x|^{\beta/2}.
\]
 If $\tilde x\not\in B_{R^2}(\bar x)$, then we have 
\begin{align*}
|u(\bar x,\bar t)-u(\tilde x, \bar t)|& \le |u(\bar x,\bar t)-u(0,-1)|+|u(0,-1)-u(\tilde x',0,-1)|+|u(\tilde x',0,-1)-u(\tilde x, \bar t)|\\
& \le C(R^{\alpha}+|\bar t+1|^\alpha)\\
&\le C(R^{\alpha}+|\bar t+1|^\alpha)\\
&\le C |\bar x-\tilde x|^{\frac{\alpha}{2}},
\end{align*}
where we used Theorem \ref{thm:holderboundarybottom} in the second inequality. This shows that $w$ is H\"older continuous in the spatial variables.

Case 2: $R= (\bar t+1)^{\frac{1}{p+2}}$.

Consider $s\in (-1,\bar t]$. If $|\bar t-s|\le R^{2p+4}$, then by Theorem \ref{thm:holdernearboundary},  we have
\[
|u(\bar x,\bar t)-u(\bar x, s)|=|\tilde u(\bar x/R,\bar t/R^{p+2})-\tilde u(\bar x/R,s/R^{p+2})|\le C |(\bar t-s)/R^{p+2}|^{\alpha/2}\le C |\bar t-s|^{\alpha/4}.
\]
If $|\bar t-s|\ge R^{2p+4}$, then by Theorem \ref{thm:holderboundarybottom}, we have
\begin{align*}
|u(\bar x,\bar t)-u(\bar x, s)|& \le |u(\bar x,\bar t)-u(0,-1)|+|u(0,-1)-u(\bar x, s)|\\
& \le CR^{\alpha}\\
&\le C |\bar t-s|^{\frac{\alpha}{2(p+2)}}.
\end{align*}
This shows that $u$ is H\"older continuous in the time variable.

Consider $\tilde x=(\tilde x',\tilde x_n)\in B_{1/2}^+$ such that $\tilde x_n\le \bar x_n$. If $\tilde x\in B_{R^2}(\bar x)$, then by Theorem \ref{thm:uniformholdernearboundary},  we have
\[
|u(\bar x,\bar t)-u(\tilde x, \bar t)|=|\tilde u(e_n,\bar t/R^{p+2})-\tilde u(\tilde x/R,\bar t/R^{p+2})|\le C ||\tilde x-\bar x|/R|^{\beta}\le C |\tilde x-\bar x|^{\beta/2}.
\]
 If $\tilde x\not\in B_{R^2}(\bar x)$, then we have 
\begin{align*}
|u(\bar x,\bar t)-u(\tilde x, \bar t)|& \le |u(\bar x,\bar t)-u(0,-1)|+|u(0,-1)-u(\tilde x',0,-1)|+|u(\tilde x',0,-1)-u(\tilde x, \bar t)|\\
& \le C(R^{\alpha}+|\bar t+1|^\alpha)\\
&\le C(R^{\alpha}+|\bar t+1|^\alpha)\\
&\le C |\bar x-\tilde x|^{\frac{\alpha}{2}},
\end{align*}
where we used Theorem \ref{thm:holderboundarybottom} in the second inequality. This shows that $u$ is H\"older continuous in the spatial variables.

Together with Theorem \ref{thm:localboundednessglobal}, we finish the proof of this theorem.
\end{proof}

\subsection{The Cauchy-Dirichlet problem}
In the end, let us go back to the Cauchy-Dirichlet problem in general domains mentioned at the beginning:
\begin{equation} \label{eq:finalgeneral}
\begin{split}
a\omega^{p} \pa_t u-D_j(a_{ij} D_i u+d_j u)+b_iD_i u+\omega^pcu+c_0u&=\omega^pf+f_0 -D_if_i\quad \mbox{in }\Omega \times(-1,0],\\
u&=0\quad \mbox{on }\partial_{pa}(\Omega \times(-1,0]),
\end{split}
\end{equation}
where $\Omega\subset\R^n$, $n\ge 1$, is a smooth bounded open set, and $\omega$ is a smooth function in $\overline\Omega$ comparable to the distance function $d(x):=\dist(x,\partial\Omega)$, that is, $0<\inf_{\Omega}\frac{\omega}{d}\le \sup_{\Omega}\frac{\omega}{d}<\infty$, and $p>-1$ is a constant.

Suppose there exist $0<\lambda\le\Lambda<\infty$ such that 
\be \label{eq:ellip2final}
\lda\le a(x,t)\le \Lda,  \quad \lda |\xi|^2 \le \sum_{i,j=1}^na_{ij}(x,t)\xi_i\xi_j\le \Lda |\xi|^2, \quad\forall\ (x,t)\in \Omega \times(-1,0],\ \forall\ \xi\in\R^n,
\ee
and
\begin{align}
\Big\||\pa_t a|+|c|\Big\|_{L^q(\Omega \times(-1,0],x_n^p\ud x\ud t)}+\Big\|\sum_{j=1}^n(b_j^2+d_j^2)+|c_0|\Big\|_{L^q(\Omega \times(-1,0])}&\le\Lambda, \label{eq:localbddnesscoeffcientfinal}\\
F_2:=\|f\|_{L^{\frac{2q\chi}{q\chi+\chi-q}}(\Omega \times(-1,0],x_n^p\ud x\ud t)}+\|f_0\|_{L^{\frac{2q\chi}{q\chi+\chi-q}}(\Omega \times(-1,0])}+ \sum_{j=1}^n\|f_j\|_{L^{2q}(\Omega \times(-1,0])}&<\infty\label{eq:localbddnessffinal}
\end{align}
for some $q>\max(\frac{\chi}{\chi-1},\frac{n+p+2}{2}, \frac{n+2p+2}{p+2})$, where $\chi>1$ is the constant in Theorem \ref{thm:weightedsobolev} or Theorem \ref{thm:weightedsobolev2} depending on the value of $p$.

We say that $u$ is a weak solution of \eqref{eq:finalgeneral} if $u\in C ((-1,0]; L^2(\Omega,\omega^{p}\ud x)) \cap  L^2((-1,0];H_{0}^1(\Omega)) $, $u(\cdot,-1)\equiv 0$, and satisfies
\begin{equation}\label{eq:definitionweaksolutionfinal}
\begin{split}
&\int_{\Omega}a(x,s) \omega(x)^{p}  u(x,s) \varphi(x,s)\,\ud x-\int_{-1}^s\int_{\Omega} \omega^{p}(\varphi\partial_t a+a\partial_t \varphi)u\,\ud x\ud t\\
&\quad+ \int_{-1}^s\int_{\Omega} \big(a_{ij}D_iuD_j\varphi+d_juD_j\varphi+b_jD_ju\varphi+c \omega^p u \varphi+c_0u\varphi\big)\,\ud x\ud t\\
&=\int_{-1}^s\int_{\Omega} (\omega^p f\varphi+f_0\varphi+f_jD_j\varphi)\,\ud x\ud t\quad\mbox{a.e. }s\in (-1,0]
\end{split}
\end{equation}
for every $\varphi\in \{g\in L^2(\Omega \times(-1,0]): \partial_tg\in L^2(\Omega \times(-1,0],\omega^{p}\ud x\ud t), D_i g \in L^2(\Omega \times(-1,0]), i=1,\cdots,n, g=0\mbox{ on } \partial\Omega\times (-1,0]\}.$ 

\begin{thm}\label{thm:uniformholderglobalfinal}
Suppose  $p>-1$, \eqref{eq:ellip2final}, \eqref{eq:localbddnesscoeffcientfinal} and \eqref{eq:localbddnessffinal} hold for some $q>\max(\frac{\chi}{\chi-1},\frac{n+p+2}{2}, \frac{n+2p+2}{p+2})$.   Then there exists a unique weak solution $u\in C ((-1,0]; L^2(\Omega,\omega^{p}\ud x)) \cap  L^2((-1,0];H_{0}^1(\Omega)) $ of \eqref{eq:finalgeneral}. Furthermore,  for every $\gamma>0$, there exist $\alpha>0$ and $C>0$, both of which depend only on $\lambda,\Lambda,n,\Omega, p,\gamma$ and $q$, such that for every $(x,t), (y,s)\in \Omega\times[-1,0]$, there holds
\[
|u(x,t)-u(y,s)|\le C(\|u\|_{L^\gamma(\Omega\times[-1,0])}+F_2) (|x-y|+|t-s|)^\alpha.
\]
\end{thm}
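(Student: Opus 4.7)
The plan is to reduce Theorem \ref{thm:uniformholderglobalfinal} to the half-ball results of Sections \ref{sec:weaksolution}--\ref{sec:holderregularity}---specifically Theorems \ref{thm:existenceofweaksolution}, \ref{thm:uniquenessofweaksolution}, \ref{thm:localboundednessglobal} and \ref{thm:uniformholderglobal}---through a standard flattening-the-boundary argument, combined with classical interior uniformly-parabolic theory on the bulk of $\om$.

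For \textbf{existence and uniqueness}, I would mimic the approximation scheme of Theorem \ref{thm:existenceofweaksolution} directly on $\om$: replace $\w^p$ by $(\w+\va)^p$, mollify $a,c,f$, and solve the resulting uniformly parabolic problem on $\om\times(-1,0]$ (with zero parabolic boundary data) by standard Galerkin methods. The weighted Sobolev and Hardy inequalities of Section \ref{sec:sobolev} carry over verbatim to $\om$ with $x_n$ replaced by $d(x)$, hence by $\w$ up to constants, so the $\va$-uniform energy estimate of Theorem \ref{thm:uniquenessofweaksolution} continues to hold and yields $\|u_\va\|_{V_2(\om\times(-1,0])}\le CF_2$. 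Passing to a weak limit along $\va\to0$, and repeating the continuity-in-time argument from the end of the proof of Theorem \ref{thm:existenceofweaksolution}, produces a solution in $C([-1,0];L^2(\om,\w^p\ud x))\cap L^2((-1,0];H^1_0(\om))$. Uniqueness is the analog of Theorem \ref{thm:uniquenessofweaksolution} applied to the difference of two solutions.

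For the \textbf{H\"older estimate}, I would patch together two local estimates covering a finite atlas of $\overline\om\times[-1,0]$. First, at interior points: if $B_{2\rho}(x_0)\subset\om$, then $\w$ is bounded above and below on $B_{2\rho}(x_0)$ by positive constants depending on $\om$ and $\rho$, so the equation \eqref{eq:finalgeneral} is uniformly parabolic in a mixed divergence/nondivergence form of the type already treated in Theorem \ref{thm:uniformholdernearboundary} (whose proof only uses the presence of the coefficient $a$ in front of $\pa_t u$). This yields an interior H\"older estimate with constants controlled polynomially in $\rho$ and in $d(x_0)$. Second, at a boundary point $x_0\in\pa\om$: choose a $C^2$ diffeomorphism $\Psi$ from a neighborhood $U$ of $x_0$ in $\overline\om$ onto $B_1^+$ with $\Psi(U\cap\pa\om)\subset\pa' B_1^+$. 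Since $\pa\om$ is smooth and $\w$ is comparable to $d$, we may write $\w\circ\Psi^{-1}(y)=y_n\,h(y)$ with $h$ smooth and bounded away from $0$ and $\infty$. Thus $\w^p=y_n^p h^p$, and $h^p$ is absorbed into the coefficient of $\pa_t\tilde u$. The pushed-forward equation for $\tilde u:=u\circ\Psi^{-1}$ has the form \eqref{eq:linear-eq}--\eqref{eq:linear-eq-D}, with transformed coefficients $\tilde a,\tilde a_{ij},\tilde b_i,\tilde d_j,\tilde c,\tilde c_0$ satisfying \eqref{eq:ellip2}, \eqref{eq:localbddnesscoeffcient}, \eqref{eq:localbddnessf}, with $F_1\le CF_2$, and with $\tilde u(\cdot,-1)=0$ preserved. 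Applying Theorem \ref{thm:uniformholderglobal} to $\tilde u$ and changing variables back produces a H\"older estimate near $(x_0,t)$ valid on all of $[-1,0]$.

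The \textbf{main obstacle}---though essentially bookkeeping---is verifying that the flattening $\Psi$ does not destroy the quantitative hypotheses: one must check that $D\Psi \cdot A\cdot(D\Psi)^t$ remains uniformly elliptic with constants depending only on $\lda,\Lda$ and $\|\Psi\|_{C^2}$; that the first-order terms generated by the chain rule (involving $D\Psi$ and $D^2\Psi$, acting on $D\tilde u$ and on $\tilde u$ through $D_j[(D_i\Psi^{-1})_k\cdot]$) can be grouped into $\tilde b_i$ and $\tilde d_j$ with their $L^q$ norms still bounded by \eqref{eq:localbddnesscoeffcientfinal}, in view of the equivalence of $y_n^p\ud y$ and $\w^p\ud x$ modulo Jacobian factors bounded above and below; and that the identity $\w\circ\Psi^{-1}(y)=y_n h(y)$ holds with $h$ smooth and strictly positive, which is where the smoothness of $\pa\om$ and $\w\sim d$ enter decisively. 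Once this is established, covering $\overline\om$ by finitely many such charts and adding the interior estimate in the same spirit as the proof of Theorem \ref{thm:holdernearboundary} yields a uniform H\"older modulus on $\om\times[-1,0]$, while the $L^\infty$ bound $\|u\|_{L^\infty}\le C(\|u\|_{L^\gamma}+F_2)$ follows from Theorem \ref{thm:localboundednessglobal} patched via the same diffeomorphisms.
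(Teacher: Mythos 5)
Your overall route is the same as the paper's: existence and uniqueness by running the approximation scheme and energy estimates of Theorems \ref{thm:existenceofweaksolution} and \ref{thm:uniquenessofweaksolution} on $\om$, and the global H\"older bound by flattening the boundary, invoking Theorems \ref{thm:uniformholderglobal} and \ref{thm:uniformholdernearinitial} in each boundary chart, combining with the interior uniformly parabolic estimate, and covering $\overline\om\times[-1,0]$; the bookkeeping you describe for the diffeomorphism (ellipticity of $D\Psi\, A\, (D\Psi)^t$, absorption of chain-rule terms into $\tilde b_i,\tilde d_j$, and $\w\circ\Psi^{-1}(y)=y_n h(y)$ with $h$ smooth and positive) is exactly what the paper's ``flattening boundary technique and covering argument'' refers to.

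There is one genuine gap in the existence part as you wrote it. Theorem \ref{thm:uniformholderglobalfinal} does \emph{not} assume that $a$ is continuous, whereas the continuity-in-time argument at the end of the proof of Theorem \ref{thm:existenceofweaksolution}, which you propose to ``repeat'', uses $a\in C(\overline Q_1^+)$ in an essential way (it is what allows passing from the weak convergence statement \eqref{eq:weakconvergenceweight} and the convergence of $\int x_n^p a\,u(\cdot,s+h)^2$ to the norm convergence \eqref{eq:normconvergenceweight}, via \eqref{eq:normconvergenceweight00}). If you only assume \eqref{eq:ellip2final}--\eqref{eq:localbddnessffinal}, that step does not go through as stated. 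The paper's fix is to observe that, under the hypotheses of this theorem, the approximating solutions $u_\va$ are \emph{uniformly H\"older continuous up to the boundary and up to the initial time} (by the estimates of Section \ref{sec:holderregularity}, which require no continuity of $a$), with constants independent of $\va$; this uniform equicontinuity gives locally uniform convergence of a subsequence and hence the required membership of the limit in $C([-1,0];L^2(\om,\w^p\ud x))$, bypassing the continuity of $a$ entirely. You should replace your appeal to the time-continuity argument of Theorem \ref{thm:existenceofweaksolution} by this compactness argument (or add the continuity of $a$ as a hypothesis, which would weaken the statement). The rest of your proposal matches the paper's proof.
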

\begin{proof}
The H\"older estimate of the weak solution follows from Theorem \ref{thm:uniformholderglobal}, Theorem \ref{thm:uniformholdernearinitial}, the flattening boundary technique and a covering argument.

The uniqueness of the weak solution follows from a similar energy estimate to that in Theorem \ref{thm:uniquenessofweaksolution}.

The existence of  weak solutions follows by a similar argument to the proof of Theorem \ref{thm:existenceofweaksolution}. Here, we do not need to assume $a$ to be continuous, since the approximating solutions in the proof of Theorem \ref{thm:existenceofweaksolution} under the assumption of this theorem will be uniformly H\"older continuous up to the boundary. The argument there will go through without the assumption of the continuity of $a$. We leave the details to the readers. 
\end{proof}

\small

\bigskip
\newpage

\noindent T. Jin

\noindent Department of Mathematics, The Hong Kong University of Science and Technology\\
Clear Water Bay, Kowloon, Hong Kong\\[1mm]
Email: \textsf{tianlingjin@ust.hk}

\medskip

\noindent J. Xiong

\noindent School of Mathematical Sciences, Laboratory of Mathematics and Complex Systems, MOE\\ Beijing Normal University, 
Beijing 100875, China\\[1mm]
Email: \textsf{jx@bnu.edu.cn}


\begin{thebibliography}{99}


\bibitem{AP} D. G. Aronson and L. A. Peletier, 
         \emph{Large time behaviour of solutions of the porous medium equation in bounded domains}.
J. Differential Equations \textbf{39} (1981), no. 3, 378--412.

\bibitem{BFi} M. Bonforte and A. Figalli, \emph{The Cauchy-Dirichlet Problem for the fast diffusion equation on bounded domains}, preprint 2022.

\bibitem{CS} F. Chiarenza and R. Serapioni,
          \emph{Degenerate parabolic equations and Harnack inequality.}
Ann. Mat. Pura Appl. (4) \textbf{137} (1984), 139--162.


\bibitem{CS2} F. Chiarenza and R. Serapioni,
          \emph{A remark on a Harnack inequality for degenerate parabolic equations.} 
          Rend. Sem. Mat. Univ. Padova \textbf{73} (1985), 179--190.

\bibitem{CS3} F. Chiarenza and R. Serapioni,
          \emph{Pointwise estimates for degenerate parabolic equations.}
         Appl. Anal. \textbf{23} (1987), no. 4, 287--299.



\bibitem{DH} P. Daskalopoulos and R. Hamilton, 
       \emph{Regularity of the free boundary for the porous medium equation.} J.  Amer. Math. Soc. \textbf{11} (1998), 899--965.
       
                   
\bibitem{DKV} E. DiBenedetto, Y.C. Kwong and V. Vespri,
          \emph{Local space-analyticity of solutions of certain singular parabolic equations}.
          Indiana Univ. Math. J. \textbf{40} (2) (1991), 741--765.


\bibitem{DP21-1} H. Dong and T. Phan, 
\emph{On parabolic and elliptic equations with singular or degenerate coefficients}. arXiv:2007.04385.

\bibitem{DP21-2} H. Dong and T. Phan, 
\emph{Parabolic and elliptic equations with singular or degenerate coefficients: The Dirichlet problem}. Trans. Amer. Math. Soc. \textbf{374} (2021), no. 9, 6611--6647.


\bibitem{DP21-3} H. Dong and T. Phan, 
\emph{Regularity for parabolic equations with singular or degenerate coefficients}. 
Calc. Var. Partial Differential Equations \textbf{60} (2021), no. 1, Paper No. 44, 39 pp.

\bibitem{DP21-4} H. Dong and T. Phan, 
\emph{Weighted mixed-norm Lp-estimates for elliptic and parabolic equations in non-divergence form with singular coefficients.} Rev. Mat. Iberoam. \textbf{37} (2021), no. 4, 1413--1440.

\bibitem{DPT} H. Dong, T. Phan and H.V. Tran,
\emph{Degenerate linear parabolic equations in divergence form on the upper half space}.
To appear in Trans. Amer. Math. Soc., arXiv:2107.08033.


\bibitem{FKS} E.B. Fabes,  C.E. Kenig and R.P. Serapioni, 
\emph{The local regularity of solutions of degenerate elliptic equations.} 
Comm. Partial Differential Equations \textbf{7} (1982), no. 1, 77--116.



\bibitem{FP} P.M.N. Feehan and C.A. Pop,
\emph{A Schauder approach to degenerate-parabolic partial differential equations with unbounded coefficients.}
J. Differential Equations \textbf{254} (2013), no. 12, 4401--4445.

\bibitem{GG} M. Giaquinta and E. Giusti,
         \emph{On the regularity of the minima of variational integrals}.
         Acta Math. \textbf{148} (1982), 31--46.
         
\bibitem{GW0} C.E. Guti\'errez and R.L. Wheeden, 
\emph{Mean value and Harnack inequalities for degenerate parabolic equations}. 
Colloq. Math. \textbf{60/61} (1990), no. 1, 157--194. 

\bibitem{GW} C.E. Guti\'errez and R.L. Wheeden, 
\emph{Harnack's inequality for degenerate parabolic equations.}
Comm. Partial Differential Equations \textbf{16} (1991), no. 4-5, 745--770.
                      
\bibitem{HL} Q. Han and F. Lin,  
          Elliptic partial differential equations. Second edition. Courant Lecture Notes in Mathematics, New York; American Mathematical Society, Providence, RI, 2011 .

\bibitem{JX19} T. Jin and J. Xiong, 
\emph{Optimal boundary regularity for fast diffusion equations in bounded domains}. Amer. J. Math. \textbf{145} (2023), no. 1, 151--219.

\bibitem{JX22} T. Jin and J. Xiong, 
\emph{Regularity of solutions to the Dirichlet problem for fast diffusion equations}. arXiv:2201.10091. 

\bibitem{JRX}  T. Jin, X. Ros-Oton and J. Xiong, 
\emph{Optimal regularity and fine asymptotics for the porous medium equation in bounded domains}. arXiv:2211.06124.


\bibitem{Koch} H. Koch, 
           \emph{Non-Euclidean singular integrals and the porous medium equation.} 
           Habilitation. Universit\"at Heidelberg, Heidelberg, 1999. 


\bibitem{Kruzkov} S.N. Kruzkov, 
\emph{Some properties of solutions of elliptic equations. }
Dokl. Akad. Nauk SSSR \textbf{150} (1963), 470--473.


\bibitem{MS} M. K. V. Murthy and G. Stampacchia,  
\emph{Boundary value problems for some degenerate-elliptic operators.} 
Ann. Mat. Pura Appl. (4) \textbf{80} (1968), 1--122.




\bibitem{STV1} Y. Sire, S. Terracini and S. Vita, 
\emph{Liouville type theorems and regularity of solutions to
degenerate or singular problems part I: even solutions.} 
Comm. Partial Differential Equations \textbf{46} (2021), no. 2, 310--361.

\bibitem{STV2} Y. Sire, S. Terracini and S. Vita, 
\emph{Liouville type theorems and regularity of solutions to
degenerate or singular problems part II: odd solutions.} Math. Eng. \textbf{3} (2021), no. 1, Paper No. 5, 50 pp.

\bibitem{Trudinger1} N.S. Trudinger, 
\emph{On the regularity of generalized solutions of linear, non-uniformly elliptic equations.} 
Arch. Rational Mech. Anal. \textbf{42} (1971), 50--62.

\bibitem{Trudinger2} N.S. Trudinger,  
\emph{Linear elliptic operators with measurable coefficients}. 
Ann. Scuola Norm. Sup. Pisa Cl. Sci. (3) \textbf{27} (1973), 265--308.

\bibitem{WWYZ} C. Wang, L. Wang, J. Yin and S. Zhou, 
\emph{H\"older continuity of weak solutions of a class of linear
equations with boundary degeneracy}. J. Differential Equations \textbf{239} (2007), no. 1, 99--131.

\end{thebibliography}
\end{document}